\let\epsilon\varepsilon
\newtheorem{theorem}{Theorem}
\newtheorem{definition}[theorem]{Definition}
\newtheorem{lemma}[theorem]{Lemma}
\newtheorem{remark}[theorem]{Remark}
\title{Structure-preserving approximations of the Serre-Green-Naghdi\\equations in standard and hyperbolic form}
\author{Hendrik Ranocha$^1$  and Mario Ricchiuto$^2$}
\date{{\small $^1$ Institute of Mathematics, Johannes Gutenberg University Mainz, \\Staudingerweg 9, 55130 Mainz, Germany\\ORCID: \url{https://orcid.org/0000-0002-3456-2277}}\\[5pt]
{\small$^2$ INRIA, U. Bordeaux, CNRS, Bordeaux INP, IMB, UMR 5251, \\200 Av. de la Vieille Tour, 33400 Talence, France\\ORCID: \url{https://orcid.org/0000-0002-1679-7339}}}
\begin{document}
\maketitle
\begin{abstract}
    We develop structure-preserving numerical methods for the
    Serre-Green-Naghdi equations, a model for weakly dispersive
    free-surface waves. We consider both the classical form, requiring the inversion of a non-linear elliptic operator,
    and a hyperbolic approximation of the equations, allowing fully explicit time stepping. Systems for both flat and variable topography are studied.
    Our novel numerical methods conserve both the
    total water mass and the total energy. In addition,
    the methods for the original Serre-Green-Naghdi equations
    conserve the total momentum for flat bathymetry.
    For variable topography, all the methods proposed are well-balanced for the lake-at-rest state.
    We provide  a theoretical setting allowing us to construct schemes
    of any kind (finite difference, finite element, discontinuous Galerkin, spectral, etc.)
    as long as summation-by-parts operators are available in the chosen setting.
    Energy-stable variants are proposed by adding a consistent high-order artificial viscosity term.
    The proposed methods are validated through a large set of benchmarks
    to verify all the theoretical properties.
    Whenever possible, comparisons with exact, reference numerical, or  experimental data are carried out.
    The impressive advantage of structure preservation, and in particular energy preservation, to resolve accurately dispersive wave propagation
    on very coarse meshes is demonstrated by several of the tests.
\end{abstract}
%\tableofcontents

\section{Introduction}

This paper is devoted to the structure-preserving numerical approximation of the fully nonlinear, weakly dispersive Serre-Green-Naghdi (SGN)
equations for free-surface hydrodynamics.
Dispersive free-surface waves occur in a wide variety of  phenomena going from the propagation of tsunamis \cite{nhess-13-1507-2013,LEGAL2017257,baba21},
to estuarine dynamics and  wave propagation in natural as well man made environments \cite{treske1994,https://doi.org/10.1002/2014JC010267,chass_etal19,FILIPPINI201955,jouy_etal24}.
Many of these applications involve multi-scale wave physics, with often large domains and long-time/distance propagation.
Depth averaged Boussinesq-type equations are used in many existing operational codes for hazard assessment  (see e.g. \cite{kirby16,kazolea2024} and references therein).
These models can be written as a perturbation of the hyperbolic shallow water equations with a dispersive term, which accounts for
some of the  vertical kinematic lost in the depth averaging \cite{Lannes_2020}.
Among these models, the SGN system   \cite{Serre53,GN76,wk95} accounts for the full nonlinearity
of the wave propagation and transformation, and   is endowed with
a rigorous estimate  of the energy  associated  to the wave dynamics (see e.g. \cite{favrie2017rapid,GS24,Lannes_2020,Kazolea2023})
which can be used as a  rigorous criterion to estimate the dissipation during the propagation process.
In the literature,  two main writings  of the SGN equations  have been used for the purpose of numerical approximation: a classical one involving the inversion of
a nonlinear elliptic operator to evaluate the time variation of the velocity \cite{Lannes_2020,Kazolea2023,GS24}, and a system
based on a hyperbolic relaxation of the dispersion operator  \cite{favrie2017rapid,escalante_2020}. Both forms are considered here.\\

Realistic operational applications  require being able to run multi-scale simulations   on reasonably coarse meshes to obtain fast predictions.
A lot of focus is being put on deploying operational codes on modern high-performance parallel architectures \cite{https://doi.org/10.1029/2019MS001957,TAVAKKOL2020106966,gmd-15-5441-2022}.
In this work, we consider the other end of the process, and propose improved numerical methods.
In the literature we find many different numerical approaches to approximate Boussinesq-type,  dispersive shallow water equations.
Many such techniques involve some high-order approximation in space and time, with possibly ad-hoc treatments for the dispersive terms
(hyperbolization, or  some lower-order approximation) to combine efficiency and good numerical dispersion
\cite{wk95,eskilsson2006unstructured,kazolea2012unstructured,rf14,PANDA2014572,lannes2015new,MAZAHERI2016593,filippini2016flexible,duran2017discontinuous,li2019cdg,busto21,cauquis22,TORLO2023997,Kazolea2023}. Some authors  also propose methods with some strong energy stability/dissipation property \cite{svard2023novel,cpr2},
by imitation of what is usually done for hyperbolic conservation laws with entropy stability. Few recent works focus
on  approximations guaranteeing  exact energy conservation   \cite{ranocha2021broad,mitsotakis2021conservative,lampert2024structure} at the discrete level.\\

The theoretical and numerical results discussed in    \cite{el05,el06,10.1093/imamat/hxad030,https://doi.org/10.1111/sapm.12694}
show that there is a very delicate interaction between dissipation, dispersion, and non-linearity.
In particular, while in the purely dispersive setting many initial conditions lead to  appearance of solitary wave fission,
in presence of finite  dissipation  one obtains travelling waves with finite  stationary wavelength and much lower amplitudes.
In this respect, while recent work has shown that discontinuous solutions for Boussinesq models can be constructed  \cite{gnst20,El05-chaos,hoefer14},
such constructions do not rely, as in the case of hyperbolic balance laws, on the notion of a  dissipative solution.
Admissibility conditions for these problems are formulated based on geometrical considerations in phase space, and relate to the celerities of the solution fronts.
There is no notion on the sign of the energy evolution in such conditions.  It is  thus unclear whether one should use numerical dissipation when solving non-dissipative dispersive equations
as a means of stabilization.
This is a major difference between dispersive models and hyperbolic ones.
In addition, the recent work by Jouy et al.\ \cite{jouy_etal24}  has shown that numerical dissipation plays in practice the exact same role
of a physical dissipative regularization. In particular, when using high-order schemes embedding dissipation a gross underestimation
of the wave amplitudes may be obtained on coarse meshes. This is not the case for non-dissipative methods. The results of  \cite{jouy_etal24}  show
that this issue occurs not only for non-dissipative models,  but   also   in presence of physical dissipation terms in the model, e.g., due to friction terms.
Our objective is thus to investigate the construction and validation of structure-preserving methods for the SGN equations,
namely methods which conserve within machine accuracy as many physical properties as possible, including energy,
and which is well-balanced with respect to the well-known lake-at-rest state.

To this end  we use  the framework of summation-by-parts (SBP) operators and split forms \cite{fisher2013discretely}
as a   systematic approach to build exactly energy-conservative   semidiscretizations.
The idea is to follow step-by-step the continuous derivation of the energy balance, and combine the use of SBP differentiation  operators, which allow to  mimic integration by parts,
and use appropriate split forms of the differential equations to mimic the product and chain rule.
To obtain a fully conservative method,  the resulting ordinary differential equations in time can be integrated using relaxation Runge-Kutta (RRK) schemes,
which are   a small modification of classical Runge-Kutta methods allowing to preserve appropriate invariants \cite{ketcheson2019relaxation,doi:10.1137/19M1263480}.\\

The paper is organized as follows.
In the following Section~\ref{sec:basic-discretizations},
we briefly review the techniques that we use for spatial
and temporal discretizations. In particular, we describe
how split forms of the equations can be used to derive
energy-conserving discretizations using SBP operators.
Next, we review such energy-conserving split forms of the
classical shallow water equations in Section~\ref{sec:split-forms-SWE}.
In Section~\ref{sec:SGN_flat}, we review the classical
SGN equations in flat bathymetry as well as
their hyperbolic approximation. To prepare the remainder of this
article, we also explain how to pass from the hyperbolic approximation
to the original system.
Next, we derive a structure-preserving split form and discretization
of the hyperbolic approximation with flat bathymetry in
Section~\ref{sec:SGN_hyperbolic_flat}.
We begin with the hyperbolic approximation since there are less
complicated higher-derivative terms in this case, making it
easier to derive an energy-conserving split form.
Using the previously
established translation rules, we use these results to derive
corresponding structure-preserving methods for the original
SGN equations in Section~\ref{sec:SGN_original_flat}.
We extend the investigations to the case of variable bathymetry
first for the hyperbolic approximation in
Section~\ref{sec:SGN_hyperbolic_variable}
and to the classical SGN equations in
Sections~\ref{sec:SGN_original_mild} and \ref{sec:SGN_original_full}.
We describe how to add stabilizing artificial viscosity/dissipation
in Section~\ref{sec:AV}.
Afterwards, we validate our implementation and present
numerical experiments in Section~\ref{sec:numerical_experiments}.
Finally, we summarize our results and give an outlook on
future work in Section~\ref{sec:summary}.

\section{Brief review of split forms and discretization techniques}
\label{sec:basic-discretizations}

In this section, we briefly review the techniques that we use for spatial
and temporal discretizations. In general, we will use the method of lines,
starting with a semidiscretization in space followed by a time integration.

\subsection{Split forms}

Consider Burgers' equation
\begin{equation}
    u_t + \left(\frac{1}{2} u^2\right)_x = 0
\end{equation}
with periodic boundary conditions. As is well known, a smooth solution
satisfies the energy equality
\begin{equation}
    \left( \frac{1}{2} u^2 \right)_t + \left( \frac{1}{3} u^3 \right)_x = 0.
\end{equation}
Thus, the total energy (squared $L^2$ norm) is conserved.
To prove this, one would typically multiply the PDE by $u$ and use
the chain rule. However, discrete derivative operators can in general
not satisfy a discrete version of the chain rule, in particular for
higher-order operators \cite{ranocha2019mimetic}. Thus, we will use
the split form \cite[eq. (6.40)]{richtmyer1967difference}
\begin{equation}
    u_t + \frac{1}{3} (u^2)_x + \frac{1}{3} u u_x = 0.
\end{equation}
Indeed, multiplying Burgers' equation by the solution $u$
and integrating over the domain yields
\begin{equation}
    \frac{1}{2} \frac{\dif}{\dif t} \int u^2 \dif x
    =
    \int u u_t \dif x
    =
    - \frac{1}{3} \int u (u^2)_x \dif x
    - \frac{1}{3} \int u^2 u_x \dif x
    =
    0.
\end{equation}
Hence, energy conservation can be shown using only integration by parts.
To obtain a semidiscretization satisfying the energy conservation law
at the discrete level, we just need to use the split form derived at the
continuous level and apply discrete derivative operators satisfying a
discrete equivalent of integration by parts.

Although the split form looks like it results in a non-conservative
discretization, one can show that the total mass is still conserved since
\begin{equation}
    \frac{\dif}{\dif t} \int u \dif x
    =
    - \frac{1}{3} \int (u^2)_x \dif x
    - \frac{1}{3} \int u_x u \dif x
    =
    0,
\end{equation}
using again only integration by parts. Moreover, it can be shown that
the discretization of the split form is even locally conservative when
discretized with SBP operators \cite{fisher2013discretely}.
This also holds for more general flux differencing discretizations
that can be entropy-stable but do not need to be related to a split
form \cite{fisher2013high}.

\subsection{Summation-by-parts operators}

SBP operators are discrete derivative operators
designed to mimic integration by parts at the discrete level. Originally,
SBP operators were introduced for finite difference methods
\cite{kreiss1974finite,strand1994summation},
but they can also be used for
finite volume \cite{nordstrom2001finite,nordstrom2003finite},
continuous finite element \cite{hicken2016multidimensional,hicken2020entropy,abgrall2020analysisI},
discontinuous Galerkin \cite{gassner2013skew,carpenter2014entropy,chan2018discretely},
and flux reconstruction methods \cite{huynh2007flux,ranocha2016summation}.
A good overview can be obtained from the review articles
\cite{svard2014review,fernandez2014review} and the application of
various methods from the SBP framework to dispersive wave equations in
\cite{ranocha2021broad}.

We consider periodic boundary conditions in this article. Thus, we will only
briefly recap the corresponding properties of periodic SBP operators.
Further discussions and examples can be found in
\cite{svard2014review,fernandez2014review,ranocha2021broad}.
We use a nodal collocation approach and discretize the spatial domain
using point values of the unknowns at given grid points $x_i$. The discretized
version of a function $u$ is denoted by $\pmb{u}$ with $\pmb{u}_i = u(x_i)$.
In particular, $\pmb{1} = (1, \dots, 1)^T$.
Nonlinear operations are performed pointwise, e.g.,
$(\pmb{u}^2)_i = \pmb{u}_i^2$.

\begin{definition}
    A \emph{periodic SBP operator} on the domain
    $[x_\mathrm{min}, x_\mathrm{max}]$ consists of a grid $\pmb{x}$,
    a symmetric and positive definite mass/norm matrix $M$ satisfying
    $\pmb{1}^T M \pmb{1} = x_\mathrm{max} - x_\mathrm{min}$, and
    a consistent derivative operator $D$ such that
    \begin{equation}
    \label{eq:SBP_periodic}
        M D + D^T M = 0.
    \end{equation}
    It is called {diagonal-norm operator} if $M$ is diagonal.
\end{definition}

A consistent derivative operator $D$ differentiaties constants exactly,
i.e., $D \pmb{1} = \pmb{0}$.
A classical example of a periodic SBP operator is given by central
second-order finite differences, where mass matrix is the identity matrix
scaled by the grid spacing $\Delta x$ and the stencil coefficients of the
derivative operator are $\Delta x^{-1} (-1/2, 0, 1/2)$.

The definition above introduces first-derivative SBP operators. Since we
also need higher derivatives for the dispersive terms, we will use
second-derivative SBP operators
\cite{mattsson2004summation,mattsson2012summation} as well.
Since the second-derivative terms of the Serre-Green-Naghdi equations
have variable coefficients, we will use upwind operators to construct
them in a general way \cite{mattsson2017diagonal}. See also
\cite{ranocha2021broad,ortleb2023stability} for discussions of upwind
SBP operators and discontinuous Galerkin methods for second-derivative
terms.

\begin{definition}
    A \emph{periodic upwind SBP operator} on the domain
    $[x_\mathrm{min}, x_\mathrm{max}]$ consists of a grid $\pmb{x}$,
    a symmetric and positive definite mass/norm matrix $M$ satisfying
    $\pmb{1}^T M \pmb{1} = x_\mathrm{max} - x_\mathrm{min}$, and
    two consistent derivative operators $D_\pm$ such that
    \begin{equation}
    \label{eq:SBP_upwind_periodic}
        M D_+ + D_-^T M = 0,
        \qquad
        M (D_+ - D_-) \text{ is negative semidefinite}.
    \end{equation}
    It is called {diagonal-norm operator} if $M$ is diagonal.
\end{definition}

We will frequently use that the average $D = (D_+ + D_-) / 2$ of
upwind SBP operators is a central SBP operator
\cite{mattsson2017diagonal}.
A classical example of periodic upwind SBP operators is given by the
one-sided first-order finite differences, where the mass matrix is again
the identity matrix scaled by the grid spacing $\Delta x$ and the stencil
coefficients of the upwind derivative operator are
$\Delta x^{-1} (-1, 1, 0)$ and $\Delta x^{-1} (0, -1, 1)$. Thus, the
corresponding central SBP operator is given by the coefficients
$\Delta x^{-1} (-1/2, 0, 1/2)$.

To get a second-derivative operator, one can apply a first-derivative
operator twice. For the classical second-order central SBP operator,
this results in the wide-stencil operator with coefficients
$\Delta x^{-2} (1/4, 0, -1/2, 0, 1/4)$. A better approximation is usually
given by the combination of upwind operators, e.g., $D_+ D_-$ with stencil
coefficients $\Delta x^{-2} (1, -2, 1)$.

In this article, we will only use diagonal-normal SBP operators. We will
use the quadrature rule induced by the mass matrix to compute discrete
versions of integrals or the discrete $L^2$ error.

\subsection{Time integration methods using relaxation}

We will use explicit Runge-Kutta methods for time integration.
Since such explicit time integration methods cannot guarantee
conservation (or dissipation) of nonlinear invariants such as
the energy \cite{ranocha2021strong,offner2020analysis,ranocha2020energy,jungel2017entropy,lozano2018entropy,ranocha2018L2stability,sun2019strong},
we will use relaxation to enforce the conservation of the energy
\cite{ketcheson2019relaxation,ranocha2020relaxation,ranocha2020general}.
This approach has its origins in an idea of Sanz-Serna
\cite{sanzserna1982explicit}. It has been applied successfully to
compressible flows \cite{yan2020entropy,ranocha2020fully,waruszewski2022entropy}
and various other systems conserving or dissipating an energy/entropy
functional. It is particularly useful for long-time simulations of
dispersive wave equations and Hamiltonian systems
\cite{ranocha2021broad,ranocha2020relaxationHamiltonian,mitsotakis2021conservative,ranocha2021rate,biswas2023accurate,zhang2020highly}.

The basic idea is as follows. Consider an ODE $u' = f(u)$ and assume
that the ODE conserves an energy $\eta(u)$, i.e.,
$\forall u\colon \eta'(u) f(u) = 0$. Given a one-step method
computing $u^{n+1} \approx u(t^{n+1})$ from $u^n \approx u(t^n)$,
we enforce conservation of the energy $\eta$ by projecting the numerical
solution along the secant line connecting $u^n$ and $u^{n+1}$ onto the
level set $\eta(u) = \eta(u^n)$. Thus, we need to solve the scalar
root finding problem
\begin{equation}
    \eta(u^{n+1}_\gamma) = \eta(u^n),
    \quad
    u^{n+1}_\gamma = u^n + \gamma (u^{n+1} - u^n),
\end{equation}
for the relaxation parameter $\gamma$. The general theory of relaxation
methods shows that there is a unique solution
$\gamma = 1 + \mathcal{O}(\Delta t^{p-1})$ under rather general assumptions
\cite{ranocha2020general}, where $p$ is the order of accuracy of the
time integration method. Continuing the numerical integration with
$u^{n+1}_\gamma$ instead of $u^{n+1}$ guarantees conservation of the
energy $\eta$, of all linear invariants, and at least the same order of
accuracy $p$ as the original method if the relaxed solution is interpreted
as $u^{n+1}_\gamma \approx u(t^{n+1}_\gamma)$, where the relaxed time is
$t^{n+1}_\gamma = t^n + \gamma \Delta t$.

\section{Split forms of the classical shallow water equations}
\label{sec:split-forms-SWE}

The Serre-Green-Naghdi equations \cite{Serre53,GN76,wk95} (see also \cite{Lannes_2020} and references therein)
and their hyperbolic approximation \cite{favrie2017rapid,busto21}
are extensions of the classical shallow water equations by additional
terms modeling dispersive effects. To prepare deriving structure-preserving
discretizations, we first consider some split forms of the classical shallow
water (Saint-Venant) equations.

\subsection{Flat bathymetry}
\label{sec:split-forms-SWE_flat}

Consider the classical shallow water equations with constant bathymetry
\begin{equation}
\label{eq:SWE_flat_cons}
\begin{aligned}
    & h_t + (h u)_x = 0,\\
    & (h u)_t + \left(h u^2 + \frac{1}{2} g h^2\right)_x = 0,
\end{aligned}
\end{equation}
where $h$ denotes the water height, $u$ the velocity, and $g$ the gravitational
constant. The system admits the energy conservation law
\cite{bouchut2004nonlinear,fjordholm2011well}
\begin{equation}
\label{eq:SWE_flat_energy}
    \biggl( \underbrace{\frac{1}{2} g h^2 + \frac{1}{2} h u^2}_{= E} \biggr)_t
    + \biggl( \underbrace{g h^2 u + \frac{1}{2} h u^3}_{= F} \biggr)_x = 0.
\end{equation}
A split form of the equations can be used to prove energy conservation
using only integration by parts, e.g.,
\cite{gassner2016well,wintermeyer2017entropy}. Indeed, there is even a
two-parameter family of energy-conserving split forms of the classical
shallow water equations \cite{ranocha2017shallow}. A simplified version
discarding some higher-order terms of \cite[Section~4.2]{ranocha2017shallow}
reads\footnote{$a_1 = 3 - 4 \alpha$ and $a_2 = (2 - a_1) / 3$ in the notation of
\cite[Section~4.2]{ranocha2017shallow}.}
% \begin{equation}
% \begin{aligned}
%     & h_t
%         + \frac{3 - a_1}{4} (h u)_x
%         + \frac{1 + a_1}{4} h_x u
%         + \frac{1 + a_1}{4} h u_x
%         = 0,
%     \\
%     & (h u)_t
%         + \frac{1 + a_1}{4} g (h^2)_x
%         + \frac{1 - a_1}{2} g h h_x
%         \\
%         &\qquad
%         + \frac{3 - a_1}{8} (h u^2)_x
%         + \frac{1 + a_1}{8} h (u^2)_x
%         + \frac{1 + a_1}{8} h_x u^2
%         + \frac{1}{2} (h u)_x u
%         + \frac{1}{2} h u u_x
%         = 0.
% \end{aligned}
% \end{equation}
\begin{equation}
\label{eq:SWE_flat_cons_split1}
\begin{aligned}
    & h_t
        + \alpha (h u)_x
        + (1 - \alpha) h_x u
        + (1 - \alpha) h u_x
        = 0,
    \\
    & (h u)_t
        + (1 - \alpha) g (h^2)_x
        - (1 - 2 \alpha) g h h_x
        \\
        &\qquad
        + \frac{\alpha}{2} (h u^2)_x
        + \frac{1 - \alpha}{2} h (u^2)_x
        + \frac{1 - \alpha}{2} h_x u^2
        + \frac{1}{2} (h u)_x u
        + \frac{1}{2} h u u_x
        = 0.
\end{aligned}
\end{equation}
To simplify the following derivation, we focus on the split form
\eqref{eq:SWE_flat_cons_split1} with $\alpha = 0$, i.e.,
\begin{equation}
\label{eq:SWE_flat_cons_split}
\begin{aligned}
    & h_t
        + h_x u
        + h u_x
        = 0,
    \\
    & (h u)_t
        + g (h^2)_x
        - g h h_x
        + \frac{1}{2} h (u^2)_x
        + \frac{1}{2} h_x u^2
        + \frac{1}{2} (h u)_x u
        + \frac{1}{2} h u u_x
        = 0.
\end{aligned}
\end{equation}
Furthermore, to simplify the treatment of the elliptic terms in the
Serre-Green-Naghdi equations, we will use primitive variables $(h, u)$
instead of the conservative variables $(h, h u)$ in the following.
This is described in more detail in Section~\ref{sec:SGN_flat_introduction}.
Thus, using the product rule in time
\begin{equation}
    (h u)_t = h u_t + h_t u = h u_t - h_x u^2 - h u u_x,
\end{equation}
the split form \eqref{eq:SWE_flat_cons_split} becomes
\begin{equation}
\label{eq:SWE_flat_prim_split}
\begin{aligned}
    & h_t
        + h_x u
        + h u_x
        = 0,
    \\
    & h u_t
        + g (h^2)_x
        - g h h_x
        + \frac{1}{2} h (u^2)_x
        - \frac{1}{2} h_x u^2
        + \frac{1}{2} (h u)_x u
        - \frac{1}{2} h u u_x
        = 0.
\end{aligned}
\end{equation}
To prepare the following arguments, we will demonstrate how to obtain
conservation of total water mass, momentum, and energy.

\subsubsection{Flat bathymetry: conservation of the total water mass}

The spatial terms of the first equation of \eqref{eq:SWE_flat_prim_split}
cancel when integrated over the periodic domain due to integration by parts,
i.e.,
\begin{equation}
    0
    =
    \int h_t \dif x
        + \int h_x u \dif x
        + \int h u_x \dif x
    =
    \frac{\dif}{\dif t} \int h \dif x.
\end{equation}
Note that the two terms vanishing via integration by parts yield exactly
the difference of the flux $h u$ at the periodic boundaries.

\subsubsection{Flat bathymetry: conservation of the total momentum}

The time derivative of the momentum is
\begin{equation}
    (h u)_t = h_t u + h u_t.
\end{equation}
Thus, we multiply the first equation of \eqref{eq:SWE_flat_prim_split}
by $u$, add it to the second equation, integrate over the periodic domain,
and obtain
\begin{equation}
\begin{aligned}
    0
    &=
    \int (h u)_t \dif x
    + \int h_x u^2 \dif x
    + \int h u u_x \dif x
    + \int g (h^2)_x \dif x
    - \int g h h_x \dif x
    \\
    &\quad
    + \frac{1}{2} \int h (u^2)_x \dif x
    - \frac{1}{2} \int h_x u^2 \dif x
    + \frac{1}{2} \int (h u)_x u \dif x
    - \frac{1}{2} \int h u u_x \dif x
    \\
    &=
    \int (h u)_t \dif x
    + \int g (h^2)_x \dif x
    - \int g h h_x \dif x
    \\
    &\quad
    + \frac{1}{2} \int h (u^2)_x \dif x
    + \frac{1}{2} \int h_x u^2 \dif x
    + \frac{1}{2} \int (h u)_x u \dif x
    + \frac{1}{2} \int h u u_x \dif x.
\end{aligned}
\end{equation}
Using integration by parts, the two $g h^2$ terms yield a boundary term
$g h^2 / 2$. The first two terms of the last line as well as the last two
terms cancel due to integration by parts, respectively, resulting in the
boundary terms $h u^2 / 2$ and $h u^2 / 2$.
Thus, all terms canceling via
integration by parts sum up to the expected flux $g h^2 / 2 + h u^2$. This shows that \emph{for exact time integration} momentum is conserved.

\subsubsection{Flat bathymetry: conservation of the total energy}

The time derivative of the energy $E$ \eqref{eq:SWE_flat_energy} is
\begin{equation}
    E_t
    =
    \left( \frac{1}{2} g h^2 + \frac{1}{2} h u^2 \right)_t
    =
    g h h_t + \frac{1}{2} h_t u^2 + h u u_t.
\end{equation}
Thus, we multiply the first equation of \eqref{eq:SWE_flat_prim_split}
by $g h + u^2 / 2$, the second equation by $u$, add them, and obtain
\begin{equation}
\begin{aligned}
    0
    &=
    E_t
    + g h h_x u
    + g h^2 u_x
    + \frac{1}{2} h_x u^3
    + \frac{1}{2} h u^2 u_x
    + g (h^2)_x u
    - g h h_x u
    + \frac{1}{2} h u (u^2)_x
    - \frac{1}{2} h_x u^3
    + \frac{1}{2} (h u)_x u^2
    - \frac{1}{2} h u^2 u_x
    \\
    &=
    E_t
    + g \left( h^2 u_x + (h^2)_x u \right)
    + \frac{1}{2} \left( h u (u^2)_x + (h u)_x u^2 \right).
\end{aligned}
\end{equation}
We see that all pairs of terms in parentheses cancel when integrating over the
periodic domain due to integration by parts, resulting in the expected
flux terms $F = g h^2 u + h u^3 / 2$. Thus, the total energy is conserved, \emph{if the time integration is exact}.

\subsection{Variable bathymetry}
\label{sec:split-forms-SWE_var}

The classical shallow water equations with variable bathymetry are
\begin{equation}
\label{eq:SWE_var_cons}
\begin{aligned}
    & h_t + (h u)_x = 0,\\
    & (h u)_t + \left(h u^2 + \frac{1}{2} g h^2\right)_x + g h b_x = 0,
\end{aligned}
\end{equation}
where $b$ denotes the bathymetry (bottom topography).
The system admits the energy conservation law
\cite{bouchut2004nonlinear,fjordholm2011well,gassner2016well}
\begin{equation}
\label{eq:SWE_var_energy}
    \biggl( \underbrace{\frac{1}{2} g h^2 + g h b + \frac{1}{2} h u^2}_{= E} \biggr)_t
    + \biggl( \underbrace{g h^2 u + g h b u + \frac{1}{2} h u^3}_{= F} \biggr)_x = 0.
\end{equation}
The generalization to variable bathymetry of the split form
\eqref{eq:SWE_flat_prim_split} of \cite[Section~5.3]{ranocha2017shallow} is
\begin{equation}
\label{eq:SWE_var_prim_split}
\begin{aligned}
    & h_t
        + h_x u
        + h u_x
        = 0,
    \\
    & h u_t
        + g \bigl( h (h + b) \bigr)_x
        - g (h + b) h_x
        + \frac{1}{2} h (u^2)_x
        - \frac{1}{2} h_x u^2
        + \frac{1}{2} (h u)_x u
        - \frac{1}{2} h u u_x
        = 0.
\end{aligned}
\end{equation}
Conservation of the total water mass follows as in the case of flat bathymetry.
Since \eqref{eq:SWE_var_prim_split} is the same as the previous split form
\eqref{eq:SWE_flat_prim_split} for flat bathymetry $b = 0$, conservation of
the total momentum follows for constant bottom topography as well. In the
energy rate of change $E_t$, we get the new term $g b h_t$,  leading to the   additional terms
\begin{equation}
    g h_x b u
    + g h b u_x
    + g (h b)_x u
    - g h_x b u
    =
    g \left( h b u_x + (h b)_x u \right),
\end{equation}
which vanish due to integration by parts when integrating over the periodic
domain. Thus, the total energy is conserved for variable bathymetry as well.

Moreover, the split form \eqref{eq:SWE_var_prim_split} is well-balanced, i.e.,
it preserves the lake-at-rest steady state $h + b = \mathrm{const}$, $u = 0$.
Indeed, the time derivative of $h$ vanishes for $u = 0$, and the time derivative
of the velocity is given by
\begin{equation}
    0
    =
    h u_t
    + g \bigl( h (h + b) \bigr)_x
    - g (h + b) h_x
    =
    h u_t
    g (h + b) h_x
    - g (h + b) h_x
    =
    h u_t
\end{equation}
if $h + b = \mathrm{const}$ additionally. All of these properties still hold
for semidiscretizations using periodic SBP operators.

\section{Review of the Serre-Green-Naghdi equations for flat bathymetry}
\label{sec:SGN_flat}

In this section, we review the classical form of the Serre-Green-Naghdi
equations for flat bathymetry as well as their hyperbolic approximation.
We present the associated energy conservation laws and describe how to pass
from one set of equations to the other to prepare the derivations later
in this paper.

\subsection{Equations in classical form and elliptic operator}
\label{sec:SGN_flat_introduction}

On a flat bathymetry, the SGN equations can be written as
\begin{equation}
\label{eq:SGN_original_flat_cons}
\begin{aligned}
    & h_t + (h u)_x = 0,\\
    & (h u)_t + \left(h u^2 + \frac{1}{2} g h^2 + \tilde p \right)_x = 0, \\
    & \tilde p = - \dfrac{1}{3} \left( h^3 (\dot u)_x  - 2 h^3 u_x^2 \right),
\end{aligned}
\end{equation}
with the classical notation for the material derivative
\begin{equation}
\label{eq:material_derivative}
    \dot a = a_t + u a_x.
\end{equation}
As in Section~\ref{sec:split-forms-SWE}, $h$ is the water height,
$u$ the velocity, and $g$ the gravitational constant. Compared to the
shallow water equations \eqref{eq:SWE_flat_cons}, the SGN equations
\eqref{eq:SGN_original_flat_cons} contain an additional non-hydrostatic pressure
$\tilde p$.
This system is known to admit an energy conservation law reading
\begin{equation}
\label{eq:SGN_original_flat_energy}
    \biggl( \underbrace{\frac{1}{2} g h^2 + \frac{1}{2} h u^2 + \frac{1}{6} h (\dot h)^2}_{= E} \biggr)_t
    + \biggl( \underbrace{g h^2 u + \frac{1}{2} h u^3 + \frac{1}{6} h (\dot h)^2 u + \tilde p u}_{= F} \biggr)_x = 0.
\end{equation}
Note that both the energy $E$ and the energy flux $F$ are extensions of
the corresponding quantities for the shallow water equations
\eqref{eq:SWE_flat_energy}.
Our objective is to construct discrete approximations of the SGN system
preserving exactly the energy conservation law
\eqref{eq:SGN_original_flat_energy}.

Note that to advance in time system \eqref{eq:SGN_original_flat_cons}
requires the inversion of the operator  $\mathcal{T}(u_t)$, where
\begin{equation}
\label{eq:SGN_original_flat_elliptic_operator}
    \mathcal{T}(v) =  h v - \dfrac{1}{3} ( h^3 v_{x})_x.
\end{equation}
Indeed, writing   system \eqref{eq:SGN_original_flat_cons} in  primitive variables we have
\begin{equation}
\label{eq:SGN_original_flat_prim}
\begin{aligned}
    & h_t + (h u)_x = 0,\\
    & h u_t - \frac{1}{3} (h^3 u_{tx})_x + \frac{1}{2} g (h^2)_x + h u u_x + p_x = 0, \\
    & p = \frac{1}{3} h^3 u_x^2 - \frac{1}{3} h^3 u u_{xx},
\end{aligned}
\end{equation}
where we have rewritten the non-hydrostatic pressure as
\begin{equation}
    \tilde p = -\frac{1}{3} (h^3 u_{tx})_x + p,
\end{equation}
and calculated
\begin{equation}
\begin{aligned}
    \tilde p
    &=
    - \frac{1}{3} \left( h^3 (\dot u)_x - 2 h^3 u_x^2 \right)
    =
    - \frac{1}{3} \left( h^3 (u_t + u u_x)_x - 2 h^3 u_x^2 \right)
    \\
%    &=
%    - \frac{1}{3} h^3 u_{tx}
%    - \frac{1}{3} h^3 (u u_x)_x
%    + \frac{2}{3} h^3 u_x^2
%    \\
    &=
    - \frac{1}{3} h^3 u_{tx}
    - \frac{1}{3} h^3 u u_{xx}
    + \frac{1}{3} h^3 u_x^2.
\end{aligned}
\end{equation}
This  shows the appearance of  the elliptic operator
$\mathcal{T}(u_t)$ \eqref{eq:SGN_original_flat_elliptic_operator}
whose inversion is required  to obtain $u_t$.

\subsection{1-D augmented Lagrangian hyperbolic system}

In this work we will also study the structure-preserving approximation of the
hyperbolic augmented Lagrangian formulation of the fully-nonlinear
SGN equations reading \cite{favrie2017rapid,TKACHENKO2023111901}
\begin{equation}
\label{eq:SGN_hyperbolic_flat_cons}
\begin{aligned}
    & h_t + (h u)_x = 0,\\
    & (h u)_t + \bigg(hu^2 + \frac{1}{2} gh^2 + \frac{\lambda}{3} \eta (1 - \eta / h)  \bigg)_x = 0, \\
    & (h w)_t + ( h w u )_x =   \lambda (1 - \eta / h),\\
    & (h \eta)_t + ( h \eta u )_x = h w.
\end{aligned}
\end{equation}
For $\lambda = 0$, we recover the usual hyperbolic shallow
water system \eqref{eq:SWE_flat_cons} from the first two equations.
The augmented system  with $\lambda >0$ can be shown to be
an approximation of order $\mathcal{O}(1/\lambda)$ of the original one. The rigorous
justification of this fact can be found in \cite{duchene}.
This justifies the fact that the value of  $\lambda$ should be large.
In compact form, we can write the above system as
(with obvious definitions)
\begin{equation*}
    \mathbf{U}_t + \mathbf{F}_x = \mathbf{S}.
\end{equation*}
This system is hyperbolic, with characteristic speeds $\lambda_{1/4}= u\mp c$
and $\lambda_{2/3}=u$, where the celerity is given by
$$
c^2 = gh + \lambda \dfrac{\eta^2}{h^2}.
$$
The model can be shown to admit the mathematical entropy (energy)
conservation law
\begin{equation}
\label{eq:SGN_hyperbolic_flat_energy}
    \biggl( \underbrace{\frac{1}{2} g h^2 + \frac{1}{2} h u^2 + \frac{1}{6} h w^2 + \frac{\lambda}{6} h (1 - \eta / h)^2 }_{= E} \biggr)_t
    + \biggl( \underbrace{g h^2 u + \frac{1}{2} h u^3 + \frac{1}{6} h w^2 u + \frac{\lambda}{6} h (1 - \eta^2 / h^2) u}_{= F} \biggr)_x = 0.
\end{equation}
Please note again that the energy $E$ and the energy flux $F$ are extensions
of the corresponding quantities for the shallow water equations
\eqref{eq:SWE_flat_energy}.
In primitive variables, the system \eqref{eq:SGN_hyperbolic_flat_cons} reads
\begin{equation}
\label{eq:SGN_hyperbolic_flat_prim}
\begin{aligned}
    & h_t + (h u)_x = 0,\\
    & h u_t + \frac{1}{2} g (h^2)_x + h u u_x + \left( \frac{\lambda}{3} \eta (1 - \eta / h) \right)_x = 0, \\
    & h w_t + h u w_x = \lambda (1 - \eta / h),\\
    & \eta_t + \eta_x u = w.
\end{aligned}
\end{equation}

\subsection{Passing from the hyperbolic to the classical form}
\label{sec:from_hyperbolic_to_classical}

It will be very useful later in the paper to use the existing relations between
the hyperbolic and classical systems
since it is easier for us to derive appropriate structure-preserving
methods for the hyperbolic system \eqref{eq:SGN_hyperbolic_flat_prim} and then
deduce the corresponding methods for the classical system \eqref{eq:SGN_original_flat_prim}.
In particular, we can easily pass from the hyperbolic approximation
\eqref{eq:SGN_hyperbolic_flat_prim} to the classical system
\eqref{eq:SGN_original_flat_prim} as follows.
First, we note that
$$
    \left\{\begin{array}{ll}
        \eta_t + u \eta_x = \dot{\eta} = w, \\
        h w_t + h u w_x = h \dot{w} = \lambda (1 - \eta / h)
    \end{array}\right.
    \quad \implies \quad
    \lambda (1 - \eta / h) = h \ddot{\eta}.
$$
We then set
\begin{equation}
\label{eq:pi_def}
    \pi = \lambda (1 - \eta / h) / 3 = \dfrac{h}{3} \ddot{\eta}
\end{equation}
and note that the hyperbolic system \eqref{eq:SGN_hyperbolic_flat_prim}
can also be written as
\begin{equation}
\label{eq:SGN_hyperbolic_flat_prim_pi}
\begin{aligned}
    & h_t + (h u)_x = 0,\\
    & h u_t + g h h_x + h u u_x + (\eta \pi)_x = 0,\\
    & h w_t + h u w_x = 3 \pi,\\
    & h \eta_t + h \eta_x u = h w.
\end{aligned}
\end{equation}
Considering the relaxed limit $\lambda\rightarrow\infty$ in which $\eta\rightarrow h$  \cite{favrie2017rapid,TKACHENKO2023111901}, we set
$$
    \tilde p = \lim\limits_{\eta \to h} \eta \pi = \dfrac{h^2}{3}\ddot{h},
$$
which can be readily shown to be equivalent to the last equation in
\eqref{eq:SGN_original_flat_cons}.
In this limit, the first two equations reduce to the classical SGN system.
Note that we also have $w = \dot{h}$ in the limit $\eta \to h$.
Using this, we can write the classical system \eqref{eq:SGN_original_flat_prim}
in the alternative first-order form (see also \cite{cpr2})
\begin{equation}
\label{eq:SGN_classical_first_order}
\begin{aligned}
    & h_t + (h u)_x = 0,\\
    & h u_t + g h h_x + h u u_x + (h \pi)_x = 0,\\
    & h w_t + h u w_x = 3 \pi,\\
    & w + h u_x = 0,
\end{aligned}
\end{equation}
in which the last two equations define   $w$ and $\pi$.
System \eqref{eq:SGN_classical_first_order} can be directly obtained
from \eqref{eq:SGN_hyperbolic_flat_prim} in the limit $\eta \to h$
(or simply replacing $\eta$ by $h$), and using the mass equation to express
$\dot{h}$.
Note that to march the system in time, we still need to combine the last two equations and replace $\pi$ in the mometum equation,
which at the continuous level still leads to the need to invert operator \eqref{eq:SGN_original_flat_elliptic_operator}.
However, given a discretization for the hyperbolic system, with this correspondence we can   deduce one for the classical  system in first order form \eqref{eq:SGN_classical_first_order}.
Moreover, the SGN energy from \eqref{eq:SGN_original_flat_energy}
can be written as
\begin{equation}
    E = \frac{1}{2} g h^2 + \frac{1}{2} h u^2 + \frac{1}{6} h w^2
\end{equation}
and we have that
$$
 \dif E
 =
 \left( g h + \dfrac{1}{2} u^2 + \dfrac{1}{2} w^2 \right) \dif h
 + u\, (h\,\dif u) + w\, (h\,\dif w)
$$
and
\begin{equation}
    F_x =
    \left( g h + \dfrac{1}{2} u^2 + \dfrac{1}{2} w^2 \right) (hu)_x
    + u\, \left( h u_x + g h h_x + (h\pi)_x \right)
    + w\, \left( huw_x - 3 \pi \right)
    + \pi \left( w + h u_x \right).
\end{equation}
So from \eqref{eq:SGN_classical_first_order},  one obtains   energy conservation using only integration by parts
by multiplying by the transpose of the dual variables $\mathbf{W}^*$
\begin{equation}\label{eq:wstar_flat_noncons}
(    \mathbf{W}^* )^T:=
    \left(
        g h + \frac{1}{2} u^2 + \frac{1}{2} w^2,\;
        u,\;
        w,\;
        \pi
\right).
\end{equation}

\section{Hyperbolic approximation with flat bathymetry}
\label{sec:SGN_hyperbolic_flat}

In this section, we will derive an energy-conservative split form and
corresponding structure-preserving numerical methods for the hyperbolic
approximation \eqref{eq:SGN_hyperbolic_flat_prim} of the Serre-Green-Naghdi
equations with flat bathymetry. We will first derive the split form and
then present the numerical methods.

\subsection{Energy equation}
\label{sec:SGN_hyperbolic_flat_energy}

The energy conservation law \eqref{eq:SGN_hyperbolic_flat_energy}
can be obtained as usual by multiplying the equations
\eqref{eq:SGN_hyperbolic_flat_cons} by the entropy variables
\begin{equation}\label{de1}
    \mathbf{U}^*
    =
    \frac{\partial E}{\partial \mathbf{U}}
    =
    \left(\begin{array}{c}
        g h - \frac{1}{2} h u^2 - \frac{1}{6} h w^2 + \dfrac{\lambda}{2}( 1 -\dfrac{\eta}{h} ) \left( \dfrac{1}{3} +\dfrac{\eta}{h}   \right) \\
        u \\
        \dfrac{1}{3} w \\
        -\dfrac{\lambda}{3 h}(1 -\dfrac{\eta}{h} )
    \end{array}\right)
\end{equation}
and summing them up since
\begin{equation}\label{de0}
    \dif E = (\mathbf{U}^*)^T \dif \mathbf{U}.
\end{equation}
A similar expression can be obtained for the physical/primitive
variables $\mathbf{V} := ( h, u, w, \eta)^T$:
\begin{equation}\label{de2}
    \dif E = (\mathbf{V}^*)^T  \text{diag}(1,h,h,1)\dif \mathbf{V},
    \quad
    \mathbf{V}^* =
    \left(\begin{array}{c}
        g h + \frac{1}{2} u^2 + \frac{1}{6} w^2 + \dfrac{\lambda}{6}(1 - \eta^2 / h^2) \\
         u \\
        \dfrac{1}{3}  w \\
        -\dfrac{\lambda}{3}(1 - \eta / h)
    \end{array}\right).
\end{equation}
Setting $P_h=  \text{diag}(1,h,h,1)$,
we have  that
\begin{equation}
    (\mathbf{V}^*)^T P_h \mathbf{V}_t
    =
    (\mathbf{U}^*)^T \mathbf{U}_t
    =
    E_t
    =
    - F_x.
\end{equation}
Indeed, multiplying \eqref{eq:SGN_hyperbolic_flat_prim} by $\mathbf{V}^*$
and summing up, we get
\begin{equation}
\label{eq:calculate_energy_SGN_hyperbolic_flat_prim}
\begin{aligned}
    -E_t
    &=
    -(\mathbf{V}^*)^T P_h\mathbf{V}_t
    \\
    &=
    \left( g h + \frac{1}{2} u^2 + \frac{1}{6} w^2 + \dfrac{\lambda}{6}(1 - \eta^2 / h^2) \right) (h u)_x
    \\
    &\quad
    + u \left( \frac{1}{2} g (h^2)_x + h u u_x + \frac{\lambda}{3} \left( \eta (1 - \eta / h) \right)_x \right)
    + \frac{w}{3} \left( h u w_x - \lambda (1 - \eta / h) \right)
    %\\
    %&\quad
    - \frac{\lambda}{3} (1 - \eta / h) \left( \eta_x u - w \right)
    \\
    &=
    \left( g h (h u)_x + \frac{1}{2} g (h^2)_x u \right)
    + \left( \frac{1}{2} (h u)_x u^2 + h u^2 u_x \right)
    + \left( \frac{1}{6} (h u)_x w^2 + \frac{1}{3} h u w w_x \right)
    \\
    &\quad
    + \frac{\lambda}{6} \left(
        (1 - \eta^2 / h^2) (h u)_x
        + 2 \left( \eta (1 - \eta / h) \right)_x u
        - 2 (1 - \eta / h) \eta_x u
    \right)
    \\
    &=
    \left( g h^2 u + \frac{1}{2} h u^3 + \frac{1}{6} h u w^2 + \frac{\lambda}{6} h (1 - \eta^2 / h^2) u \right)_x
    =
    F_x.
\end{aligned}
\end{equation}

\subsection{Split form}

To develop an energy-conservative split form of the hyperbolic system
\eqref{eq:SGN_hyperbolic_flat_prim}, we  rewrite each nonlinear term
as a linear combination of terms equivalent when using the product/chain
rule. To reduce the number of parameters we have to deal with simultanesously,
we start by looking at a split form of the mass terms and non-hydrostatic
pressure terms allowing to obtain boundary terms upon integration by parts.
Thus, we start from the hyperbolic system \eqref{eq:SGN_hyperbolic_flat_prim}
and introduce parameters $\alpha, \beta, \gamma \in \mathbb{R}$
to obtain the split form
\begin{equation}
\label{eq:SGN_hyperbolic_flat_split1}
\begin{aligned}
    & h_t + \alpha (h u)_x + (1 - \alpha) h_x u + (1 - \alpha) h u_x = 0,\\
    & h u_t + \frac{1}{2} g (h^2)_x + h u u_x + \frac{\lambda}{3} \beta \left( \eta (1 - \eta / h) \right)_x + \frac{\lambda}{3} (1 - \beta) \frac{\eta^2}{h^2} h_x + \frac{\lambda}{3} (1 - \beta) (1 - 2 \eta / h) \eta_x = 0, \\
    & h w_t + h u w_x = \lambda (1 - \eta / h),\\
    & \eta_t + \gamma \eta_x u + (1 - \gamma) (\eta u)_x - (1 - \gamma) \eta u_x = w.
\end{aligned}
\end{equation}
Multiplying the equations by the corresponding entropy variables
$\mathbf{V}^*$ in primitive variables and summing up the terms contributing
to the non-hydrostatic pressure part of the energy flux $F$, we obtain
\begin{equation}
\begin{aligned}
    &\quad
    \frac{\lambda}{6} (1 - \eta^2 / h^2)
    \left( \alpha (h u)_x + (1 - \alpha) h_x u + (1 - \alpha) h u_x \right)
    \\
    &\quad
    + \frac{\lambda}{3} \beta \left( \eta (1 - \eta / h) \right)_x u
    + \frac{\lambda}{3} (1 - \beta) \frac{\eta^2}{h^2} h_x u
    + \frac{\lambda}{3} (1 - \beta) (1 - 2 \eta / h) \eta_x u
    \\
    &\quad
    - \frac{\lambda}{3}(1 - \eta / h) \gamma \eta_x u
    - \frac{\lambda}{3}(1 - \eta / h) (1 - \gamma) (\eta u)_x
    + \frac{\lambda}{3}(1 - \eta / h) (1 - \gamma) \eta u_x
    \\
    &=
    \alpha \frac{\lambda}{6} (h u)_x
    - \alpha \frac{\lambda}{6} \frac{\eta^2}{h^2} (h u)_x
    + (1 - \alpha) \frac{\lambda}{6} \left( h_x u + h u_x \right)
    + \left( 1 + \alpha - 2 \beta \right) \frac{\lambda}{6} \frac{\eta^2}{h^2} h_x u
    \\
    &\quad
    + \left( 2 (1 - \gamma) - (1 - \alpha) \right) \frac{\lambda}{6} \frac{\eta^2}{h} u_x
    - \beta \frac{\lambda}{3} \left( \frac{\eta^2}{h} \right)_x u
    + \left( \beta + (1 - \beta) - \gamma - (1 - \gamma) \right) \frac{\lambda}{3} \eta_x u
    \\
    &\quad
    + \left( \gamma - 2 (1 - \beta) \right) \frac{\lambda}{3} \frac{\eta}{h} \eta_x u
    + (1 - \gamma) \frac{\lambda}{3} \frac{\eta}{h} (\eta u)_x.
\end{aligned}
\end{equation}
We can compare terms involving the same monomials, which gives the conditions
to be verified to obtain only boundary terms upon integration by parts
of one of them:
\begin{equation*}
\begin{split}
    (hu)_x \; :\; & \textsf{ok} \\
    \dfrac{\eta^2}{h^2}(hu)_x\; :\; & \alpha =0\\
    uh_x \; :\; &   \textsf{ok} \\
    \dfrac{\eta^2}{h^2} h_x u  \; :\; & 1 + \alpha - 2 \beta = 0\\
    \dfrac{\eta^2}{h}u_x  \; :\; & - (1-\alpha) + 2 \beta + 2 (1-\gamma) =0 \\
    \eta_x u\; :\; & \beta + (1-\beta) - \gamma - (1-\gamma) =0  \\
    \dfrac{\eta}{h} \eta_x u\; :\; &     -2 (1-\beta) + \gamma =0 \\
    (\eta u)_x\; :\; &  1-\gamma =0
\end{split}
\end{equation*}
Luckily enough, the above system admits the unique solution
$$
    \alpha = 0, \quad \beta = 1/2, \quad \gamma = 1,
$$
giving the split forms
\begin{equation*}
\begin{aligned}
    (h u)_x
        &\to
        h_x u + h u_x,
    \\
    \dfrac{\lambda}{3} \dfrac{\eta^2}{h^2} h_x + \dfrac{\lambda}{3} (1-2\dfrac{\eta}{h})\eta_x
        &\to
        \dfrac{\lambda}{6} \dfrac{\eta^2}{h^2} h_x +
        \dfrac{\lambda}{6} (1 - 2 \eta / h) \eta_x
        + \dfrac{\lambda}{6} (\eta (1 - \eta / h))_x,
    \\
    \eta_x u
        &\to
        \eta_x u.
\end{aligned}
\end{equation*}
We are thus left now with the shallow water terms plus the vertical mass
equation on $w$. We can choose the split form of the shallow water equation
\eqref{eq:SWE_var_prim_split} of \cite{ranocha2017shallow} for the remaining
shallow water terms.
Finally, we need to consider the terms leading to the
$h u w^2 / 6$ term of the energy flux \eqref{eq:SWE_flat_energy}.
We use the ansatz
\begin{equation}
    \delta h u w_x
    + (1 - \delta) \left(
        (h u w)_x
        - \epsilon (h u)_x w
        - (1 - \epsilon) h_x u w
        - (1 - \epsilon) h u_x w
    \right)
\end{equation}
for the  term $h u w_x$ in the third equation of
\eqref{eq:SGN_hyperbolic_flat_prim}. Assembling   the  terms resulting
in the energy equation we get
\begin{equation}
\begin{aligned}
    &\quad
    \frac{1}{6} h_x u w^2
    + \frac{1}{6} h u_x w^2
    + \delta \frac{1}{3} h u w w_x
    + (1 - \delta) \frac{1}{3} (h u w)_x w
    \\
    &\quad
    - (1 - \delta) \epsilon \frac{1}{3} (h u)_x w^2
    - (1 - \delta) (1 - \epsilon) \frac{1}{3} h_x u w^2
    - (1 - \delta) (1 - \epsilon) \frac{1}{3} h u_x w^2
    \\
    &=
    \frac{1}{6} \left( 1 - 2 (1 - \delta) (1 - \epsilon) \right) h_x u w^2
    + \frac{1}{6}\left( 1 - 2 (1 - \delta) (1 - \epsilon) \right) h u_x w^2
    \\
    &\quad
    + \delta \frac{1}{3} h u w w_x
    + (1 - \delta) \frac{1}{3} (h u w)_x w
    - (1 - \delta) \epsilon \frac{1}{3} (h u)_x w^2.
\end{aligned}
\end{equation}
To get only boundary terms from integration by parts, we need to solve the
system
\begin{equation}
\begin{aligned}
    & h_x u w^2 \colon& \quad 1 - 2 (1 - \delta) (1 - \epsilon) &= 0, \\
    & h u_x w^2 \colon& \quad 1 - 2 (1 - \delta) (1 - \epsilon) &= 0, \\
    & h u w w_x \colon& \quad \delta - (1 - \delta) &= 0, \\
    & (h u)_x w^2 \colon& \quad (1 - \delta) \epsilon &= 0,
\end{aligned}
\end{equation}
with unique   solution $    (\delta,\epsilon) =( 1 / 2, 0)$.
Assembling  all the results,    we obtain the following split form
for   \eqref{eq:SGN_hyperbolic_flat_prim}:
\begin{equation}
\label{eq:SGN_hyperbolic_flat_prim_split0}
\begin{aligned}
    &h_t
        + h_x u + h u_x
        = 0,
    \\
    &h u_t
        + g (h^2)_x - g h h_x
        + \frac{1}{2} h (u^2)_x - \frac{1}{2} h_x u^2
        + \frac{1}{2} (h u)_x u - \frac{1}{2} h u u_x
        \\
        &\qquad
        + \frac{\lambda}{6} \frac{\eta^2}{h^2} h_x
        + \frac{\lambda}{6} (1 - 2 \eta / h) \eta_x
        + \frac{\lambda}{6} (\eta (1 - \eta / h))_x
        = 0,
    \\
    &h w_t
        + \frac{1}{2} (h u w)_x
        + \frac{1}{2} h u w_x
        - \frac{1}{2} h_x u w
        - \frac{1}{2} h u_x w
        = \lambda (1 - \eta / h),
    \\
    &\eta_t
        + \eta_x u
        = w.
\end{aligned}
\end{equation}
By grouping terms that appear multiple times, we obtain
\begin{equation}
\label{eq:SGN_hyperbolic_flat_prim_split}
\begin{aligned}
     &h_t
        + h_x u + h u_x
        = 0,\\
    &h u_t
        + g (h^2)_x - g h h_x
        + \frac{1}{2} h (u^2)_x - \frac{1}{2} h_x u^2
        + \frac{1}{2} (h u)_x u - \frac{1}{2} h u u_x
        \\
        &\qquad
        + \frac{\lambda}{6} \frac{\eta^2}{h^2} h_x
        + \frac{\lambda}{3} \eta_x
        - \frac{\lambda}{3} \frac{\eta}{h} \eta_x
        - \frac{\lambda}{6} \Bigl( \frac{\eta^2}{h} \Bigr)_x = 0, \\
    &h w_t
        + \frac{1}{2} (h u w)_x
        + \frac{1}{2} h u w_x
        - \frac{1}{2} h_x u w
        - \frac{1}{2} h u_x w
        = \lambda - \lambda \frac{\eta}{h},\\
    &\eta_t
        + \eta_x u
        = w.
\end{aligned}
\end{equation}

\subsection{Semidiscretization}

We now consider the semidiscrete form of the non-conservative system
\eqref{eq:SGN_hyperbolic_flat_prim_split}, which reads
\begin{equation}
\label{eq:SGN_hyperbolic_flat_prim_SBP}
\begin{split}
    & \partial_t \pmb{h}
        + \pmb{u} D \pmb{h}
        + \pmb{h} D \pmb{u}
        = \pmb{0},\\
    & \pmb{h} \partial_t \pmb{u}
        + g D \pmb{h}^2
        - g \pmb{h} D \pmb{h}
        + \frac{1}{2} \pmb{h} D \pmb{u}^2
        - \frac{1}{2} \pmb{u}^2 D \pmb{h}
        + \frac{1}{2} \pmb{u} D \pmb{h} \pmb{u}
        - \frac{1}{2} \pmb{h} \pmb{u} D \pmb{u}
        \\
        &\qquad
        + \frac{\lambda}{6} \frac{\pmb{\eta}^2}{\pmb{h}^2} D \pmb{h}
        + \frac{\lambda}{3} D \pmb{\eta}
        - \frac{\lambda}{3} \frac{\pmb{\eta}}{\pmb{h}} D \pmb{\eta}
        - \frac{\lambda}{6} D \frac{\pmb{\eta}^2}{\pmb{h}}
        = \pmb{0}, \\
    & \pmb{h} \partial_t \pmb{w}
        + \frac{1}{2} D \pmb{h} \pmb{u} \pmb{w}
        + \frac{1}{2} \pmb{h} \pmb{u} D \pmb{w}
        - \frac{1}{2} \pmb{u} \pmb{w} D \pmb{h}
        - \frac{1}{2} \pmb{h} \pmb{w} D \pmb{u}
        = \lambda - \lambda \frac{\pmb{\eta}}{\pmb{h}},\\
    & \partial_t \pmb{\eta}
        + \pmb{u} D \pmb{\eta}
        = \pmb{w}.
\end{split}
\end{equation}
Here, the discretized functions are again denoted in boldface. The operator
$D$ is the discrete differentiation operator. Multiplication and division
of vectors is defined pointwise.
The discrete total energy for
\eqref{eq:SGN_hyperbolic_flat_prim_SBP} is $\pmb{1}^T M \pmb{E}$, where
$\pmb{1}^T = (1, \dots, 1)$ is the vector of ones, $M$ is the mass matrix,
and
\begin{equation}
\begin{aligned}
    \pmb{E}
    &=
    \frac{1}{2} g \pmb{h}^2
    + \frac{1}{2} \pmb{h} \pmb{u}^2
    + \frac{1}{6} \pmb{h} \pmb{w}^2
    + \frac{\lambda}{6} \pmb{h}
    - \frac{\lambda}{3} \pmb{\eta}
    + \frac{\lambda}{6} \frac{\pmb{\eta}^2}{\pmb{h}}
\end{aligned}
\end{equation}
is the discrete equivalent of the energy $E$ in
\eqref{eq:SGN_hyperbolic_flat_energy}.

\begin{theorem}
\label{thm:SGN_hyperbolic_flat_prim_SBP}
    Consider the semidiscretization \eqref{eq:SGN_hyperbolic_flat_prim_SBP}
    of the hyperbolic approximation of the Serre-Green-Naghdi
    equations \eqref{eq:SGN_hyperbolic_flat_cons}
    with a periodic first-derivative SBP operator $D$ with
    diagonal mass/norm matrix.
    \begin{enumerate}
        \item The total water mass $\pmb{1}^T M \pmb{h}$ is conserved.
        \item The total energy $\pmb{1}^T M \pmb{E}$ is conserved.
    \end{enumerate}
\end{theorem}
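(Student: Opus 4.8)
The approach is to reproduce, at the semidiscrete level, the continuous energy balance \eqref{eq:calculate_energy_SGN_hyperbolic_flat_prim} that the split form \eqref{eq:SGN_hyperbolic_flat_prim_split} was built to satisfy, with the SBP property $MD + D^T M = 0$ playing the role of integration by parts over the periodic domain. The two algebraic facts I will use repeatedly are: (i) since $M$ is diagonal, pointwise multiplication commutes with the induced quadratic form, $\pmb{1}^T M (\pmb{a}\pmb{b}) = \pmb{a}^T M \pmb{b}$; and (ii) consequently every ``product-rule pair'' is annihilated by $\pmb{1}^T M$,
\begin{equation*}
    \pmb{1}^T M\bigl( (D\pmb{a})\pmb{b} + \pmb{a}(D\pmb{b}) \bigr)
    = \pmb{b}^T M D \pmb{a} + \pmb{a}^T M D \pmb{b}
    = \pmb{a}^T (D^T M + M D) \pmb{b}
    = 0,
\end{equation*}
which is the discrete analogue of $\int\bigl((a_x) b + a (b_x)\bigr)\dif x = \int (ab)_x\dif x = 0$. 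For part~1, multiplying the first equation of \eqref{eq:SGN_hyperbolic_flat_prim_SBP} by $\pmb{1}^T M$ leaves $\frac{\dif}{\dif t}\pmb{1}^T M \pmb{h} = -\pmb{1}^T M\bigl( \pmb{u} D\pmb{h} + \pmb{h} D\pmb{u} \bigr) = 0$ by (ii) with $\pmb{a}=\pmb{h}$, $\pmb{b}=\pmb{u}$.

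For part~2, write $\frac{\dif}{\dif t}\pmb{1}^T M\pmb{E} = \pmb{1}^T M\,\partial_t\pmb{E}$ and expand $\partial_t\pmb{E}$ by the ordinary chain rule, which is legitimate because $\pmb{E}$ is a nodewise function of $\pmb{h},\pmb{u},\pmb{w},\pmb{\eta}$ (no discrete chain rule for $D$ is invoked). A direct computation shows
\begin{equation*}
    \partial_t\pmb{E}
    = \pmb{V}^*_1\,\partial_t\pmb{h}
    + \pmb{h}\pmb{V}^*_2\,\partial_t\pmb{u}
    + \pmb{h}\pmb{V}^*_3\,\partial_t\pmb{w}
    + \pmb{V}^*_4\,\partial_t\pmb{\eta},
\end{equation*}
where $\pmb{V}^* = \bigl(g\pmb{h} + \tfrac12\pmb{u}^2 + \tfrac16\pmb{w}^2 + \tfrac{\lambda}{6}(1-\pmb{\eta}^2/\pmb{h}^2),\; \pmb{u},\; \tfrac13\pmb{w},\; -\tfrac{\lambda}{3}(1-\pmb{\eta}/\pmb{h})\bigr)$ is the nodewise evaluation of the entropy variables $\mathbf{V}^*$ of \eqref{de2}; this is exactly the discrete form of $\dif E = (\mathbf{V}^*)^T P_h\,\dif\mathbf{V}$ with $P_h = \mathrm{diag}(1,\pmb{h},\pmb{h},1)$.

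Now substitute the four right-hand sides of \eqref{eq:SGN_hyperbolic_flat_prim_SBP} for $\partial_t\pmb{h}$, $\pmb{h}\partial_t\pmb{u}$, $\pmb{h}\partial_t\pmb{w}$, $\partial_t\pmb{\eta}$, and apply $\pmb{1}^T M$. Using fact (i), the resulting scalar is term-for-term the image, under $\partial_x \mapsto D$ and $\int\,\dif x \mapsto \pmb{1}^T M$, of the continuous computation \eqref{eq:calculate_energy_SGN_hyperbolic_flat_prim} specialized to the split form \eqref{eq:SGN_hyperbolic_flat_prim_split}. That computation uses only integration by parts on product-rule pairs, together with purely algebraic (nodewise) cancellations — this is precisely why the split-form parameters were fixed to $\alpha=0$, $\beta=1/2$, $\gamma=1$ for the non-hydrostatic pressure block and to $\delta=1/2$, $\epsilon=0$ for the $\pmb{h}\pmb{u}\pmb{w}^2$ block, and why the shallow-water block \eqref{eq:SWE_var_prim_split} was chosen. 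Applying fact (ii) to each pair and collecting the algebraic cancellations (the $\pmb{w}$-equation source $\lambda(1-\pmb{\eta}/\pmb{h})$ cancelling against the term coming from $\pmb{V}^*_4\,\partial_t\pmb{\eta}$, etc.) shows that all contributions vanish, so $\frac{\dif}{\dif t}\pmb{1}^T M\pmb{E} = 0$.

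The one laborious point is the bookkeeping in this last step: after inserting the semidiscrete right-hand sides, one has to verify that the $\lambda$-dependent terms — those involving $\tfrac{\pmb{\eta}^2}{\pmb{h}^2}D\pmb{h}$, $D\pmb{\eta}$, $\tfrac{\pmb{\eta}}{\pmb{h}}D\pmb{\eta}$ and $D\bigl(\tfrac{\pmb{\eta}^2}{\pmb{h}}\bigr)$ in the momentum equation, together with the $\pmb{w}$-equation and the $\pmb{\eta}$-equation — really do regroup into the same product-rule pairs and cancellations as at the continuous level. This is guaranteed by the monomial-matching system solved when deriving \eqref{eq:SGN_hyperbolic_flat_prim_split}, so no genuinely new identity is needed; the diagonality of $M$ is what makes the translation from the continuous proof faithful, since it lets pointwise factors pass through $\pmb{1}^T M$ unchanged.
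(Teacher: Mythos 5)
Your proposal is correct and follows essentially the same route as the paper's proof: nodewise chain rule in time with the entropy variables $\mathbf{V}^*$ as multipliers, diagonality of $M$ to move pointwise factors across the quadratic form, and the SBP identity $MD+D^TM=0$ to annihilate each product-rule pair, with the source terms of the $\pmb{w}$- and $\pmb{\eta}$-equations cancelling algebraically. The paper simply carries out the term-by-term bookkeeping explicitly that you defer to the split-form derivation.
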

\begin{proof}
    The discrete
    total water mass is $\pmb{1}^T M \pmb{h}$, and its rate of change is
    \begin{equation}
        \partial_t (\pmb{1}^T M \pmb{h})
        = \pmb{1}^T M \partial_t \pmb{h}
        = -\pmb{1}^T M \pmb{u} D \pmb{h} -\pmb{1}^T M \pmb{h} D \pmb{u}
        = -\pmb{u}^T M D \pmb{h} -\pmb{h}^T M D \pmb{u}
        = 0,
    \end{equation}
    hence the result.
    Here, we have used again that $M$ is diagonal and applied the
    periodic SBP property \eqref{eq:SBP_periodic}.

    We now evaluate the rate of change of energy. First we note that
    \begin{equation}
    \begin{aligned}
        \partial_{\pmb{h}} \pmb{E}
        &=
        g \pmb{h}
        + \frac{1}{2} \pmb{u}^2
        + \frac{1}{6} \pmb{w}^2
        + \frac{\lambda}{6} \pmb{1}
        - \frac{\lambda}{6} \frac{\pmb{\eta}^2}{\pmb{h}^2},
        \\
        \partial_{\pmb{u}} \pmb{E}
        &=
        \pmb{h} \pmb{u},
        \\
        \partial_{\pmb{w}} \pmb{E}
        &=
        \frac{1}{3} \pmb{h} \pmb{w},
        \\
        \partial_{\pmb{\eta}} \pmb{E}
        &=
        - \frac{\lambda}{3} \pmb{1}
        + \frac{\lambda}{3} \frac{\pmb{\eta}}{\pmb{h}}.
    \end{aligned}
    \end{equation}
So the semidiscrete rate of change of the energy satisfies
    \begin{equation}
    \begin{aligned}
        -\partial_t (\pmb{1}^T M \pmb{E})
        &=
        -\left(
            \pmb{1}^T M \partial_{\pmb{h}} \pmb{E} \cdot \partial_t \pmb{h}
            + \pmb{1}^T M \partial_{\pmb{u}} \pmb{E} \cdot \partial_t \pmb{u}
            + \pmb{1}^T M \partial_{\pmb{w}} \pmb{E} \cdot \partial_t \pmb{w}
            + \pmb{1}^T M \partial_{\pmb{\eta}} \pmb{E} \cdot \partial_t \pmb{\eta}
        \right)
        \\
        &=
        % h_t
        g (\pmb{h} \pmb{u})^T M D \pmb{h}
        + g (\pmb{h}^2)^T M D \pmb{u}
        + \frac{1}{2} (\pmb{u}^3)^T M D \pmb{h}
        + \frac{1}{2} (\pmb{h} \pmb{u}^2)^T M D \pmb{u}
        \\
        &\quad
        + \frac{1}{6} (\pmb{u} \pmb{w}^2)^T M D \pmb{h}
        + \frac{1}{6} (\pmb{h} \pmb{w}^2)^T M D \pmb{u}
        + \frac{\lambda}{6} (\pmb{u})^T M D \pmb{h}
        + \frac{\lambda}{6} (\pmb{h})^T M D \pmb{u}
        \\
        &\quad
        - \frac{\lambda}{6} \Bigl(\frac{\pmb{\eta}^2}{\pmb{h}^2} \pmb{u}\Bigr)^T M D \pmb{h}
        - \frac{\lambda}{6} \Bigl(\frac{\pmb{\eta}^2}{\pmb{h}}\Bigr)^T M D \pmb{u}
        \\
        &\quad
        % h u_t
        + g \pmb{u}^T M D \pmb{h}^2
        - g (\pmb{h} \pmb{u})^T M D \pmb{h}
        + \frac{1}{2} (\pmb{h} \pmb{u})^T M D \pmb{u}^2
        - \frac{1}{2} (\pmb{u}^3)^T M D \pmb{h}
        + \frac{1}{2} (\pmb{u}^2)^T M D \pmb{h} \pmb{u}
        \\
        &\quad
        - \frac{1}{2} (\pmb{h} \pmb{u}^2)^T M D \pmb{u}
        + \frac{\lambda}{6} \frac{\pmb{\eta}^2}{\pmb{h}^2} \pmb{u} D \pmb{h}
        + \frac{\lambda}{3} \pmb{u} D \pmb{\eta}
        - \frac{\lambda}{3} \frac{\pmb{\eta}}{\pmb{h}} \pmb{u} D \pmb{\eta}
        - \frac{\lambda}{6} \pmb{u} D \frac{\pmb{\eta}^2}{\pmb{h}}
        \\
        &\quad
        % h w_t
        + \frac{1}{6} \pmb{w}^T M D \pmb{h} \pmb{u} \pmb{w}
        + \frac{1}{6} (\pmb{h} \pmb{u} \pmb{w})^T M D \pmb{w}
        - \frac{1}{6} (\pmb{u} \pmb{w}^2)^T M D \pmb{h}
        - \frac{1}{6} (\pmb{h} \pmb{w}^2)^T M D \pmb{u}
        \\
        &\quad
        - \frac{\lambda}{3} \pmb{w}^T M \pmb{1}
        + \frac{\lambda}{3} \Bigl(\frac{\pmb{\eta}}{\pmb{h}}\Bigr)^T M \pmb{w}
        % \eta_t
        - \frac{\lambda}{3} \pmb{u}^T M D \pmb{\eta}
        + \frac{\lambda}{3} \pmb{1}^T M \pmb{w}
        + \frac{\lambda}{3} \Bigl(\frac{\pmb{\eta}}{\pmb{h}} \pmb{u}\Bigr)^T M D \pmb{\eta}
        - \frac{\lambda}{3} \Bigl(\frac{\pmb{\eta}}{\pmb{h}}\Bigr)^T M \pmb{w}.
    \end{aligned}
    \end{equation}
    Here, we have used that the mass matrix $M$ is diagonal to simplify
    terms such as
    \begin{equation}
        g \pmb{1}^T M \pmb{h} \pmb{u} D \pmb{h}
        =
        g (\pmb{h} \pmb{u})^T M D \pmb{h}.
    \end{equation}
    Canceling and grouping terms, we obtain
    \begin{equation}
    \begin{aligned}
        -\partial_t E
        &=
        \left(
            g \pmb{u}^T M D \pmb{h}^2
            + g (\pmb{h}^2)^T M D \pmb{u}
        \right)
        + \frac{1}{2} \left(
            (\pmb{u}^2)^T M D \pmb{h} \pmb{u}
            + (\pmb{h} \pmb{u})^T M D \pmb{u}^2
        \right)+ \frac{\lambda}{6} \left(
            \pmb{u}^T M D \pmb{h}
            + \pmb{h}^T M D \pmb{u}
        \right)
        \\
        &%\quad
        - \frac{\lambda}{6} \left(
            \Biggl(\frac{\pmb{\eta}^2}{\pmb{h}}\Biggr)^T M D \pmb{u}
            + \pmb{u}^T M D \frac{\pmb{\eta}^2}{\pmb{h}}
        \right)
        %\\
        %&\quad
        + \frac{1}{6} \left(
            \pmb{w}^T M D \pmb{h} \pmb{u} \pmb{w}
            + (\pmb{h} \pmb{u} \pmb{w})^T M D \pmb{w}
        \right).
    \end{aligned}
    \end{equation}
    All terms on the right-hand side are of the general form
    \begin{equation}
        \pmb{a}^T M D \pmb{b} + \pmb{b}^T M D \pmb{a},
    \end{equation}
and  cancel for periodic first-derivative SBP operators $D$,    due to \eqref{eq:SBP_periodic}.
    Hence,  the conservation of total energy.
\end{proof}

\begin{remark}
    The total momentum is not conserved in general by the
    semidiscretization \eqref{eq:SGN_hyperbolic_flat_prim_SBP} of the
    hyperbolic approximation of the Serre-Green-Naghdi equations.
    This is due to the split form of the non-hydrostatic pressure term ---
    the other terms conserve the total momentum since they are the same
    as for the classical shallow water equations, see
    Section~\ref{sec:split-forms-SWE} and \cite{ranocha2017shallow}.
    We have so far not been able to derive a split form of the
    non-hydrostatic pressure term that conserves the total momentum
    (using only integration by parts). For the same reason, there is no
    advantage in considering a split form  using conservative variables.
\end{remark}
%
%\begin{remark}
%    One can start with the split form \eqref{eq:SGN_hyperbolic_flat_prim_split}
%    to derive a corresponding split form using conservative variables that
%    conserves the total water mass and the energy. However, this formulation
%    does also not conserve the total momentum in general due to the split
%    form of the non-hydrostatic pressure terms, and more particularly due to the
%    derivation of the rational terms $\eta/h$ which involve inherently non-conservative and non-linear products. Thus, we do not discuss such
%    a split form and corresponding semidiscretization here.
%\end{remark}

\section{Original Serre-Green-Naghdi equation with flat bathymetry}
\label{sec:SGN_original_flat}

In this section, we use the results from the previous
Section~\ref{sec:SGN_hyperbolic_flat} to derive an energy-conserving
split form of the original Serre-Green-Naghdi equations with flat bathymetry.
Afterwards, we will present corresponding structure-preserving
semidiscretizations.

\subsection{Deriving a split form from the hyperbolic approximation}

As discussed in Section~\ref{sec:SGN_hyperbolic_flat} we are going to apply the results for the split form \eqref{eq:SGN_hyperbolic_flat_prim_split}
and the connections between the hyperbolic and classical model to devise a split form for the latter.
In practice we can start from  the first three in \eqref{eq:SGN_hyperbolic_flat_prim_split0},
we replace the $1-\eta/h$ terms using   the definition  of $\pi$  \eqref{eq:pi_def}, and replace $\eta$ by $h$ in the remaining ones.
The resulting split form reads
\eqref{eq:SGN_original_flat_prim}:
\begin{equation}
\begin{aligned}
    & h_t
        + h_x u
        + h u_x
        = 0,\\
    & h u_t
        + g (h^2)_x
        - g h h_x
        + \frac{1}{2} h (u^2)_x
        - \frac{1}{2} h_x u^2
        + \frac{1}{2} (h u)_x u
        - \frac{1}{2} h u u_x
        + (h \pi)_x
        = 0,\\
    & h w_t
        + \frac{1}{2} (h u w)_x
        + \frac{1}{2} h u w_x
        - \frac{1}{2} h_x u w
        - \frac{1}{2} h u_x w
        = 3 \pi,\\
    & w
        + h u_x
        =0.
\end{aligned}
\end{equation}
One can easily check that  the conservation of energy \eqref{eq:SGN_original_flat_energy}  can be obtained   only using summation by parts, upon integration with  the
dual variables  \eqref{eq:wstar_flat_noncons}, as discussed in
Section~\ref{sec:from_hyperbolic_to_classical}.\\

To clarify the structure of the elliptic operator, we compute the
part of the non-hydrostatic pressure $h \pi$ containing a time derivative
of $u$ and obtain
\begin{equation}
    h^2 w_t
    =
    -h^3 u_{tx} - h^2 h_t u_x
    =
    -h^3 u_{tx} + h^2 h_x u u_x + h^3 u_x^2.
\end{equation}
Moreover, we   have
\begin{equation}
    h (h u w)_x
    + h^2 u w_x
    - h^2_x u w
    - h^2 u_x w
    =
    - h (h^2 u u_x)_x
    - h^2 u (h u_x)_x
    + h^2 h_x u u_x
    + h^3 u_x^2,
\end{equation}
using which we can recast  the split form of the original Serre-Green-Naghdi equations as
\begin{equation}
\label{eq:SGN_original_flat_prim_split}
\begin{aligned}
    & h_t
        + h_x u
        + h u_x
        = 0,\\
    & h u_t - \frac{1}{3} (h^3 u_{tx})_x
        + g (h^2)_x
        - g h h_x
        + \frac{1}{2} h (u^2)_x
        - \frac{1}{2} h_x u^2
        + \frac{1}{2} (h u)_x u
        - \frac{1}{2} h u u_x
        + p_x
        = 0,\\
    & p
      = \frac{1}{2} h^3 (u_x)^2
        + \frac{1}{2} h^2 h_x u u_x
        - \frac{1}{6} h (h^2 u u_x)_x
        - \frac{1}{6} h^2 u (h u_x)_x.
\end{aligned}
\end{equation}

\begin{remark}
    The split form \eqref{eq:SGN_original_flat_prim_split} of the original
    Serre-Green-Naghdi equations conserves not only the total water mass and
    the energy but also the total momentum, since the
    non-hydrostatic pressure is written in a conservative form and the
    remaining terms are the same as in the split form
    \eqref{eq:SWE_flat_prim_split} of the classical shallow water equations.
\end{remark}

\subsection{More general split forms of the non-hydrostatic pressure}

As described in the previous subsection, the split form
\eqref{eq:SGN_hyperbolic_flat_prim_split} of the hyerbolic approximation
induces an energy-conservative split form of the original Serre-Green-Naghdi
equations \eqref{eq:SGN_original_flat_prim}. However, such energy-conserving
split form is not unique.  Indeed, we can derive other  splittings  of the
non-hydrostatic pressure. We  can start with
\begin{equation}
\begin{aligned}
    & h_t
        + h_x u
        + h u_x
        = 0,\\
    & h u_t - \frac{1}{3} (h^3 u_{tx})_x
        + g (h^2)_x
        - g h h_x
        + \frac{1}{2} h (u^2)_x
        - \frac{1}{2} h_x u^2
        + \frac{1}{2} (h u)_x u
        - \frac{1}{2} h u u_x
        + p_x
        = 0,\\
\end{aligned}
\end{equation}
where $p$ is a general non-hydrostatic pressure term consistent with
\begin{equation}
\begin{aligned}
    p
    &=
    \frac{1}{2} h^3 (u_x)^2
    + \frac{1}{2} h^2 h_x u u_x
    - \frac{1}{6} h (h^2 u u_x)_x
    - \frac{1}{6} h^2 u (h u_x)_x
    %\\
    %&
    =
    \frac{1}{3} h^3 (u_x)^2
    - \frac{1}{3} h^3 u u_{xx}.
\end{aligned}
\end{equation}
Note again that the  energy \eqref{eq:SGN_original_flat_energy}
is the shallow water energy plus the  term $h (h u_x)^2 / 6$. Its
rate of change  is
\begin{equation}
    E_t
    =
    \left( g h + \frac{1}{2} u^2 h_t + \frac{1}{2} h^2 u_x^2 \right) h_t
    + u \, h u_t
    + \frac{1}{3} h^3 u_x u_{tx}.
\end{equation}
Thus, we multiply the first equation of the split form by
$( g h + u^2 / 2 + h^2 u_x^2 / 2)$,
the second equation by $u$, add them and integrate over the periodic domain
to obtain
\begin{equation}
\begin{aligned}
    0
    &=
    \int E_t \dif x
    + \left[ g h^2 u + \frac{1}{2} h u^3 - \frac{1}{3} h^3 u u_{tx} + p u \right]
    + \int \left(
        \frac{1}{2} h^3 u_x^3
        + \frac{1}{2} h^2 h_x u u_x^2
        - p u_x
    \right) \dif x,
\end{aligned}
\end{equation}
where the term $[\cdot]$ is a boundary flux term vanishing for
periodic domains. Thus, we want to choose $p$ such we can use
integration by parts on the last volume term to obtain the
missing boundary flux term, i.e.,
\begin{equation}
    \int \left(
        \frac{1}{2} h^3 u_x^3
        + \frac{1}{2} h^2 h_x u u_x^2
        - p u_x
    \right) \dif x
    =
    \left[
        \frac{1}{6} h^3 u u_x^2
    \right]
    =
    \left[
        \frac{1}{6} h (\dot h)^2 u
    \right],
\end{equation}
where the material derivative of the water height is
$\dot h = h_t + u h_x = -(u h_x + h u_x) + u h_x = - h u_x$.
This is satisfied by the general pressure term
\begin{equation}
    p
    =
    \frac{1}{2} h^3 u_x^2
    + \frac{1}{2} h^2 h_x u u_x
    - \frac{1}{6} \mathfrak{a} (\mathfrak{b} u_x)_x
    - \frac{1}{6} \mathfrak{b} (\mathfrak{a} u_x)_x,
\end{equation}
where
\begin{equation}
    \mathfrak{a} \mathfrak{b} = h^3 u.
\end{equation}
All of these choices yield a consistent pressure term since
\begin{equation}
\begin{aligned}
    p
    &=
    \frac{1}{2} h^3 u_x^2
    + \frac{1}{2} h^2 h_x u u_x
    - \frac{1}{6} \mathfrak{a} (\mathfrak{b} u_x)_x
    - \frac{1}{6} \mathfrak{b} (\mathfrak{a} u_x)_x
    %\\
    %&
    =
    \frac{1}{2} h^3 u_x^2
    + \frac{1}{2} h^2 h_x u u_x
    - \frac{1}{3} \mathfrak{a} \mathfrak{b} u_{xx}
    - \frac{1}{6} \mathfrak{a} \mathfrak{b}_x u_x
    - \frac{1}{6} \mathfrak{a}_x \mathfrak{b} u_x
    \\
    &=
    \frac{1}{2} h^3 u_x^2
    + \frac{1}{2} h^2 h_x u u_x
    - \frac{1}{3} h^3 u u_{xx}
    - \frac{1}{6} (h^3 u)_x u_x
    \\
    &=
    \frac{1}{2} h^3 u_x^2
    + \frac{1}{2} h^2 h_x u u_x
    - \frac{1}{3} h^3 u u_{xx}
    - \frac{1}{2} h^2 h_x u u_x
    - \frac{1}{6} h^3 u_x^2
    %\\
    %&
    =
    \frac{1}{3} h^3 u_x^2
    - \frac{1}{3} h^3 u u_{xx}.
\end{aligned}
\end{equation}

\begin{remark}
\label{rem:future_work_split_form_pressure}
Many choices are possible, here. We take any in $(\mathfrak{a}, \mathfrak{b})  =( h^mu^n, h^{3-m}u^{1-n})$, with  non-negative integers $ m \le 3,\,  n \le 1 $.
The split form \eqref{eq:SGN_original_flat_prim_split} uses  $(\mathfrak{a}, \mathfrak{b})=(h, h^2u)$. Preliminary tests do not show any significant differences when making other
choices.  Investigating this further is left for future work.
\end{remark}

\subsection{Spatial semidiscretizations}

Replacing  continuous   derivatives  in the split form
\eqref{eq:SGN_original_flat_prim_split} by periodic SBP operators
results in the semidiscretization
\begin{equation}
\label{eq:SGN_original_flat_prim_SBP}
\begin{split}
    & \partial_t \pmb{h}
        + \pmb{u} D \pmb{h}
        + \pmb{h} D \pmb{u}
        = \pmb{0},
    \\
    & \pmb{h} \partial_t \pmb{u}
        - \frac{1}{3} D \pmb{h}^3 D \partial_t \pmb{u}
        + g D \pmb{h}^2
        - g \pmb{h} D \pmb{h}
        + \frac{1}{2} \pmb{h} D \pmb{u}^2
        - \frac{1}{2} \pmb{u}^2 D \pmb{h}
        + \frac{1}{2} \pmb{u} D \pmb{h} \pmb{u}
        - \frac{1}{2} \pmb{h} \pmb{u} D \pmb{u}
        + D \pmb{p}
        = \pmb{0},
    \\
    & \pmb{p}
      = \frac{1}{2} \pmb{h}^3 (D \pmb{u})^2
        + \frac{1}{2} \pmb{h}^2 (D \pmb{h}) \pmb{u} D \pmb{u}
        - \frac{1}{6} \pmb{h} D \pmb{h}^2 \pmb{u} D \pmb{u}
        - \frac{1}{6} \pmb{h}^2 \pmb{u} D \pmb{h} D \pmb{u}.
\end{split}
\end{equation}
The discrete total energy for \eqref{eq:SGN_original_flat_prim_SBP}
is $\pmb{1}^T M \pmb{E}$ with
\begin{equation}
    \pmb{E}
    =
    \frac{1}{2} g \pmb{h}^2 + \frac{1}{2} \pmb{h} \pmb{u}^2 + \frac{1}{6} \pmb{h}^3 (D \pmb{u})^2.
\end{equation}

\begin{theorem}
\label{thm:SGN_original_flat_prim_SBP}
    Consider the semidiscretization \eqref{eq:SGN_original_flat_prim_SBP}
    of the original Serre-Green-Naghdi equations
    \eqref{eq:SGN_original_flat_cons}
    with a periodic first-derivative SBP operator $D$ with
    diagonal mass/norm matrix.
    \begin{enumerate}
            \item The total water mass $\pmb{1}^T M \pmb{h}$ is conserved.
        \item The total momentum $\pmb{1}^T M \pmb{h} \pmb{u}$ is conserved.
        \item The total energy $\pmb{1}^T M \pmb{E}$ is conserved.
    \end{enumerate}
\end{theorem}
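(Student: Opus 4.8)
The plan is to mirror the three-part structure of Theorem~\ref{thm:SGN_hyperbolic_flat_prim_SBP}, but exploiting the correspondence established in Section~\ref{sec:from_hyperbolic_to_classical}, namely that the semidiscretization \eqref{eq:SGN_original_flat_prim_SBP} is exactly the $\eta \to h$ limit (with $w = -h\,Du$ and $3\pi = $ right-hand side of the $w$-equation) of \eqref{eq:SGN_hyperbolic_flat_prim_SBP}, written so that the elliptic operator appears on the left. Since the split form of the original equations differs from the classical shallow water split form \eqref{eq:SWE_flat_prim_split} only by the addition of the non-hydrostatic pressure term $D\pmb{p}$ in the momentum equation (and $\pmb{p}$ is in \emph{conservative} form, i.e., it enters as $D\pmb{p}$), I expect all three conservation statements to reduce to the shallow-water argument already carried out in Section~\ref{sec:split-forms-SWE} plus a handful of extra cancellations for the pressure terms.

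First I would prove water mass conservation: this is verbatim the argument from the proof of Theorem~\ref{thm:SGN_hyperbolic_flat_prim_SBP}, since the first (continuity) equation of \eqref{eq:SGN_original_flat_prim_SBP} is identical; one gets $\partial_t(\pmb{1}^T M \pmb{h}) = -\pmb{u}^T M D\pmb{h} - \pmb{h}^T M D\pmb{u} = 0$ using diagonality of $M$ and the SBP property \eqref{eq:SBP_periodic}. Next I would prove momentum conservation: compute $\partial_t(\pmb{1}^T M \pmb{h}\pmb{u}) = \pmb{1}^T M (\partial_t\pmb{h})\pmb{u} + \pmb{1}^T M \pmb{h}\,\partial_t\pmb{u}$, i.e. multiply the continuity equation by $\pmb{u}$ and add it to $\pmb{1}^T M$ times the momentum equation. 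The shallow-water terms cancel exactly as in Section~\ref{sec:split-forms-SWE}. The new contributions are $-\tfrac{1}{3}\pmb{1}^T M D \pmb{h}^3 D\partial_t\pmb{u}$ and $\pmb{1}^T M D\pmb{p}$; both are of the form $\pmb{1}^T M D(\,\cdot\,) = (D^T M \pmb{1})^T(\cdot) = 0$ since $D^T M \pmb{1} = -M D \pmb{1} = \pmb{0}$ by consistency and the SBP relation. Hence momentum is conserved.

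The main work is energy conservation, and this is where I would lean hardest on the hyperbolic result. The cleanest route: introduce the auxiliary discrete quantities $\pmb{w} := -\pmb{h} D\pmb{u}$ and $3\pmb{\pi}$ defined as the left-hand side of the $w$-equation of \eqref{eq:SGN_original_flat_prim_SBP} minus its time-derivative-free part, so that $\pmb{E} = \tfrac12 g\pmb{h}^2 + \tfrac12 \pmb{h}\pmb{u}^2 + \tfrac16 \pmb{h}\pmb{w}^2$ coincides with the stated discrete energy (using $\pmb{h}\pmb{w}^2 = \pmb{h}^3 (D\pmb{u})^2$). Then I would differentiate $\pmb{1}^T M \pmb{E}$ in time using the discrete chain rule on the quadratic/cubic monomials (valid because $M$ is diagonal and all operations are pointwise), substitute the semidiscrete equations, and pair up terms. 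Every shallow-water pairing cancels by \eqref{eq:SBP_periodic} exactly as in the proof of Theorem~\ref{thm:SGN_hyperbolic_flat_prim_SBP}. The genuinely new terms are those coming from $\tfrac16 \pmb{h}^3 (D\pmb{u})^2$ under $\partial_t$, from the elliptic term $-\tfrac13 D\pmb{h}^3 D\partial_t\pmb{u}$ contracted against $\pmb{u}$, and from $D\pmb{p}$ contracted against $\pmb{u}$; I expect these to regroup — using $(\pmb{a}^T M D\pmb{b}) = -(\pmb{b}^T M D\pmb{a})$ and the product structure $\mathfrak{a}\mathfrak{b} = \pmb{h}^3\pmb{u}$ chosen in \eqref{eq:SGN_original_flat_prim_split} — into pairs of the form $\pmb{a}^T M D\pmb{b} + \pmb{b}^T M D\pmb{a}$ that vanish, exactly paralleling the continuous computation in Section~\ref{sec:from_hyperbolic_to_classical} with the dual variable $\mathbf{W}^*$. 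The one subtlety to check carefully is the time-derivative term arising from $-\tfrac13 D\pmb{h}^3 D\partial_t\pmb{u}$: when contracted against $\pmb{u}$ it produces $\tfrac13 (D\pmb{u})^T \pmb{h}^3 D\partial_t\pmb{u} = \tfrac16 \partial_t\big((D\pmb{u})^T \pmb{h}^3 D\pmb{u}\big) - \tfrac16 (D\pmb{u})^2 \partial_t(\pmb{h}^3)$, and the first piece is precisely $\partial_t$ of the non-shallow-water part of $\pmb{1}^T M \pmb{E}$ while the second must cancel against the $\partial_t\pmb{h}$-contribution of $\tfrac16\pmb{h}^3(D\pmb{u})^2$ together with pressure terms. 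That bookkeeping — tracking the $\partial_t\pmb{h}$-generated terms through the dispersive energy — is the step I expect to be the main obstacle; everything else is a transcription of arguments already in the paper.
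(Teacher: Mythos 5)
Your proposal is correct and takes essentially the same route as the paper, which proves this theorem simply as the special case $D_+ = D_- = D$ of Theorem~\ref{thm:SGN_original_flat_prim_SBP_upwind}: mass from the continuity equation, momentum from $\pmb{1}^T M D = \pmb{0}$ applied to the conservative elliptic and pressure terms, and energy by contracting the equations with the dual variables so that the elliptic term reproduces exactly the $\partial_t \pmb{u}$-part of the dispersive energy while the remaining spatial terms cancel in $\pmb{a}^T M D \pmb{b} + \pmb{b}^T M D \pmb{a}$ pairs. The final bookkeeping you flag as the main obstacle does close as you anticipate: $\pmb{u}^T M D \pmb{p} = -\pmb{p}^T M D \pmb{u}$, and the four pieces of $\pmb{p}$ cancel the two leftover terms $-\tfrac{1}{2}\bigl(\pmb{h}^2 \pmb{u} (D\pmb{u})^2\bigr)^T M D \pmb{h} - \tfrac{1}{2}\bigl(\pmb{h}^3 (D\pmb{u})^2\bigr)^T M D \pmb{u}$ and each other via one more SBP pairing.
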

\begin{proof}
    This is a specialization of the more general result
    Theorem~\ref{thm:SGN_original_flat_prim_SBP_upwind}
    below with $D = D_- = D_+$.
\end{proof}

If the semidiscretization \eqref{eq:SGN_original_flat_prim_SBP}
is discretized in time with an explicit method (like an explicit
Runge-Kutta method), it requires the solution of discretized
elliptic problems of the form
\begin{equation}
    \left( \pmb{h} - \frac{1}{3} D \pmb{h}^3 D \right) \partial_t \pmb{u} = \pmb{y},
\end{equation}
where
\begin{equation}
    \pmb{y}
    =
    - g D \pmb{h}^2
    + g \pmb{h} D \pmb{h}
    - \frac{1}{2} \pmb{h} D \pmb{u}^2
    + \frac{1}{2} \pmb{u}^2 D \pmb{h}
    - \frac{1}{2} \pmb{u} D \pmb{h} \pmb{u}
    + \frac{1}{2} \pmb{h} \pmb{u} D \pmb{u}
    - D \pmb{p}.
\end{equation}

\begin{lemma}
    If if the water height is positive, the discrete operator
    \begin{equation}
        \pmb{h} - \frac{1}{3} D \pmb{h}^3 D
    \end{equation}
    of \eqref{eq:SGN_original_flat_prim_SBP}
    is symmetric and positive definite with respect to the
    diagonal mass matrix $M$.
\end{lemma}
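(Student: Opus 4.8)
The plan is to read the two assertions through the bilinear form $\langle \pmb{x}, \pmb{y}\rangle_M := \pmb{x}^T M \pmb{y}$, which is a genuine inner product since $M$ is symmetric positive definite. Writing $A := \pmb{h} - \frac{1}{3} D \pmb{h}^3 D$ and, with a slight abuse of notation, also using $\pmb{h}$ (resp.\ $\pmb{h}^3$) for the diagonal matrix with entries $\pmb{h}_i$ (resp.\ $\pmb{h}_i^3$), ``symmetric with respect to $M$'' means $M$-self-adjointness, $M A = A^T M$, and ``positive definite with respect to $M$'' means $\pmb{x}^T M A \pmb{x} > 0$ for all $\pmb{x} \neq \pmb{0}$. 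The only ingredients I would use are: $M$ is diagonal with strictly positive entries $M_{ii}$; the periodic SBP property $M D = - D^T M$ from \eqref{eq:SBP_periodic}; and the commutativity $M \pmb{h}^3 = \pmb{h}^3 M$, which holds because both matrices are diagonal --- this is exactly where the diagonal-norm hypothesis enters.

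For the symmetry I would compute $M A = M \pmb{h} - \frac{1}{3} M D \pmb{h}^3 D$. The first term $M \pmb{h}$ is a product of diagonal matrices, hence symmetric. For the second term I substitute $M D = - D^T M$ and commute $M$ past $\pmb{h}^3$ to obtain
\begin{equation*}
    - \frac{1}{3} M D \pmb{h}^3 D
    = \frac{1}{3} D^T M \pmb{h}^3 D
    = \frac{1}{3} D^T (M \pmb{h}^3) D,
\end{equation*}
which is of the form $D^T S D$ with $S = M \pmb{h}^3$ symmetric, hence symmetric. Therefore $M A$ is symmetric, i.e.\ $M A = (M A)^T = A^T M$, which is the claimed $M$-symmetry.

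For positive definiteness I would use the same rewriting to get, for any $\pmb{x} \neq \pmb{0}$,
\begin{equation*}
    \pmb{x}^T M A \pmb{x}
    = \pmb{x}^T M \pmb{h} \, \pmb{x}
    + \frac{1}{3} (D \pmb{x})^T M \pmb{h}^3 (D \pmb{x})
    = \sum_i M_{ii} \pmb{h}_i \pmb{x}_i^2
    + \frac{1}{3} \sum_i M_{ii} \pmb{h}_i^3 (D \pmb{x})_i^2 .
\end{equation*}
Under the positivity hypothesis $\pmb{h}_i > 0$, together with $M_{ii} > 0$, both sums are nonnegative, and the first is strictly positive because $\pmb{x} \neq \pmb{0}$; hence $A$ is $M$-positive definite (and in particular invertible, which is what makes the associated elliptic solve well posed).

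The computation is entirely mechanical; the one point requiring care --- and essentially the only idea in the proof --- is the combination of the SBP relation $M D = - D^T M$ with the commutativity $M \pmb{h}^3 = \pmb{h}^3 M$ to convert the second-derivative block $-\frac{1}{3} M D \pmb{h}^3 D$ into the manifestly symmetric positive-semidefinite form $\frac{1}{3} D^T (M \pmb{h}^3) D$. Everything else then follows, and the strict positivity is already supplied by the zeroth-order term $M \pmb{h}$ alone, so the second-derivative part only needs to be positive \emph{semi}definite.
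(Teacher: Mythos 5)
Your proposal is correct and follows exactly the paper's argument: the paper's proof consists of the single identity $M(\pmb{h} - \tfrac{1}{3} D \pmb{h}^3 D) = \pmb{h} M + \tfrac{1}{3} D^T M \pmb{h}^3 D$, obtained from the periodic SBP property and the commutativity of the diagonal matrices $M$ and $\pmb{h}^3$, from which symmetry and positive definiteness are read off just as you do. You merely spell out the quadratic-form evaluation that the paper leaves implicit.
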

\begin{proof}
    We have
    \begin{equation}
        M \left( \pmb{h} - \frac{1}{3} D \pmb{h}^3 D \right)
        =
        \pmb{h} M + \frac{1}{3} D^T M \pmb{h}^3 D
    \end{equation}
    due to the SBP property \eqref{eq:SBP_periodic} for diagonal $M$.
\end{proof}

Thus, the discrete elliptic problem can be solved uniquely.
However, the second derivative is discretized using a wide-stencil
operator (with variable coefficients). This can lead to stability issues (for under-resolved)
grids and a loss of efficiency. Thus, it would be better to use narrow-stencil
second-derivative operators \cite{mattsson2012summation} or upwind SBP operators
\cite{mattsson2017diagonal}. Here, we choose the second option with upwind SBP
operators $D_\pm$ and their corresponding central operator $D = (D_+ + D_-) / 2$,
leading to the semidiscretization
\begin{equation}
\label{eq:SGN_original_flat_prim_SBP_upwind}
\begin{aligned}
     &\partial_t \pmb{h}
        + \pmb{u} D \pmb{h}
        + \pmb{h} D \pmb{u}
        = \pmb{0},
    \\
    &\pmb{h} \partial_t \pmb{u}
        - \frac{1}{3} D_+ \pmb{h}^3 D_- \partial_t \pmb{u}
        + g D \pmb{h}^2
        - g \pmb{h} D \pmb{h}
        + \frac{1}{2} \pmb{h} D \pmb{u}^2
        - \frac{1}{2} \pmb{u}^2 D \pmb{h}
        + \frac{1}{2} \pmb{u} D \pmb{h} \pmb{u}
        - \frac{1}{2} \pmb{h} \pmb{u} D \pmb{u}
        %\\
        %&\qquad
        + D_+ \pmb{p}_+
        + D \pmb{p}_0
        = \pmb{0},
    \\
    & \pmb{p}_+
      = \frac{1}{2} \pmb{h}^3 (D \pmb{u}) D_- \pmb{u}
        + \frac{1}{2} \pmb{h}^2 (D \pmb{h}) \pmb{u} D_- \pmb{u},
    \\
    & \pmb{p}_0
       = - \frac{1}{6} \pmb{h} D \pmb{h}^2 \pmb{u} D \pmb{u}
       - \frac{1}{6} \pmb{h}^2 \pmb{u} D \pmb{h} D \pmb{u}.
\end{aligned}
\end{equation}

\begin{lemma}
  If if the water height is positive, the discrete operator
    \begin{equation}
        \pmb{h} - \frac{1}{3} D_+ \pmb{h}^3 D_-
    \end{equation}
    of \eqref{eq:SGN_original_flat_prim_SBP_upwind}
    is symmetric and positive definite with respect to the
    diagonal mass matrix $M$.
\end{lemma}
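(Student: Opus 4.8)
The plan is to mimic the proof of the previous lemma for the wide-stencil operator $\pmb{h} - \frac13 D\pmb{h}^3 D$, replacing the central SBP relation $MD + D^T M = 0$ by the upwind relation $M D_+ + D_-^T M = 0$ from \eqref{eq:SBP_upwind_periodic}. Concretely, I would first left-multiply by the diagonal mass matrix $M$ and compute
$$
M\left( \pmb{h} - \frac13 D_+ \pmb{h}^3 D_- \right) = \pmb{h} M - \frac13 (M D_+) \pmb{h}^3 D_- = \pmb{h} M + \frac13 D_-^T M \pmb{h}^3 D_-,
$$
using in the first step that $M$ commutes with the diagonal matrix $\pmb{h}$, and in the second step the first identity of \eqref{eq:SBP_upwind_periodic}. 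Note that, in contrast to what one might expect, the negative-semidefiniteness clause of \eqref{eq:SBP_upwind_periodic} is not needed here; the SBP identity $M D_+ = - D_-^T M$ alone does the job.

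Next I would check symmetry: $\pmb{h} M$ is a product of two diagonal matrices, hence symmetric, and $D_-^T M \pmb{h}^3 D_-$ is symmetric because $M \pmb{h}^3 = \pmb{h}^3 M$ is diagonal, so $(D_-^T M \pmb{h}^3 D_-)^T = D_-^T \pmb{h}^3 M D_- = D_-^T M \pmb{h}^3 D_-$. Thus $M\bigl( \pmb{h} - \tfrac13 D_+ \pmb{h}^3 D_- \bigr)$ is symmetric, which is precisely symmetry of the operator with respect to $M$. For definiteness, I would evaluate the quadratic form on an arbitrary vector $\pmb{x}$,
$$
\pmb{x}^T M\left( \pmb{h} - \frac13 D_+ \pmb{h}^3 D_- \right) \pmb{x} = \sum_i M_{ii}\, h_i\, x_i^2 + \frac13 (D_- \pmb{x})^T M \pmb{h}^3 (D_- \pmb{x}),
$$
and observe that, since $M$ is diagonal positive definite, $M_{ii} > 0$, and since the water height is positive, $h_i > 0$ and $h_i^3 > 0$. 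Hence both sums are nonnegative, and the first one vanishes only if $\pmb{x} = \pmb{0}$, giving positive definiteness.

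I do not expect any genuine obstacle: the argument requires no new ideas beyond the central-operator case. The only thing to be careful about is the bookkeeping of which upwind operator appears where — $D_+$ on the left, so that it pairs via the SBP relation with $D_-^T$, and $D_-$ on the right, which is exactly the combination appearing in \eqref{eq:SGN_original_flat_prim_SBP_upwind} — and the (trivial) remark that the diagonal matrices $M$ and $\pmb{h}^3$ commute, so the middle matrix in $D_-^T M \pmb{h}^3 D_-$ is genuinely symmetric positive semidefinite.
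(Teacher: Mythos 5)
Your proof is correct and follows exactly the paper's argument: left-multiply by the diagonal mass matrix $M$, apply the upwind SBP identity $M D_+ = -D_-^T M$ to obtain $\pmb{h} M + \tfrac{1}{3} D_-^T M \pmb{h}^3 D_-$, and read off symmetry and positive definiteness. The paper leaves the final symmetry and definiteness checks implicit; you merely spell them out.
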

\begin{proof}
    We have
    \begin{equation}
        M \left( \pmb{h} - \frac{1}{3} D_+ \pmb{h}^3 D_- \right)
        =
        \pmb{h} M + \frac{1}{3} D_-^T M \pmb{h}^3 D_-
    \end{equation}
    due to the upwind SBP property \eqref{eq:SBP_upwind_periodic}
    for diagonal $M$.
\end{proof}
The corresponding discrete total energy for
\eqref{eq:SGN_original_flat_prim_SBP_upwind} is
\begin{equation}
    \frac{1}{2} g \pmb{1}^T M \pmb{h}^2
    + \frac{1}{2} \pmb{1}^T M \pmb{h} \pmb{u}^2
    + \frac{1}{6} \pmb{u}^T D_-^T M \pmb{h}^3 D_- \pmb{u}
    =
    \pmb{1}^T M \left(
        \frac{1}{2} g \pmb{h}^2
        + \frac{1}{2} \pmb{h} \pmb{u}^2
        + \frac{1}{6} \pmb{h}^3 (D_- \pmb{u})^2
    \right)
    =
    \pmb{1}^T M \pmb{E}.
\end{equation}

There could be more options to use upwind SBP operators in
\eqref{eq:SGN_original_flat_prim_SBP_upwind}. Here, we have chosen the
simplest version avoiding the wide-stencil second-derivative
operator for the elliptic problem.

\begin{theorem}
\label{thm:SGN_original_flat_prim_SBP_upwind}
    Consider the semidiscretization \eqref{eq:SGN_original_flat_prim_SBP_upwind}
    of the original Serre-Green-Naghdi equations
    \eqref{eq:SGN_original_flat_cons}
    with periodic first-derivative upwind SBP operators $D_\pm$
    inducing the central operator $D = (D_+ + D_-) / 2$ with
    diagonal mass/norm matrix.
    \begin{enumerate}
     \item The total water mass $\pmb{1}^T M \pmb{h}$ is conserved.
        \item The total momentum $\pmb{1}^T M \pmb{h} \pmb{u}$ is conserved.
        \item The total energy $\pmb{1}^T M \pmb{E}$ is conserved.
    \end{enumerate}
\end{theorem}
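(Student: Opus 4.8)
The plan is to treat the three statements in increasing order of difficulty, recycling the algebra from the proof of Theorem~\ref{thm:SGN_hyperbolic_flat_prim_SBP} and from the semidiscrete shallow-water energy identity behind \eqref{eq:SWE_flat_prim_split}. Throughout I will use that $M$ is diagonal, so that $\pmb{1}^T M (\pmb{a}\pmb{b}) = \pmb{a}^T M \pmb{b}$, together with the SBP relations $MD = -D^T M$ and $MD_+ = -D_-^T M$ (hence also $MD_- = -D_+^T M$), and the consistency relations $D\pmb{1} = D_\pm\pmb{1} = \pmb{0}$. A first immediate consequence is that $\pmb{1}^T M D\pmb{v} = \pmb{1}^T M D_\pm\pmb{v} = 0$ for every $\pmb{v}$. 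Water mass conservation is then the verbatim repeat of the hyperbolic argument: $\partial_t(\pmb{1}^T M \pmb{h}) = -\pmb{u}^T M D\pmb{h} - \pmb{h}^T M D\pmb{u} = -\pmb{u}^T (MD + D^T M)\pmb{h} = 0$.

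For momentum, differentiate $\pmb{1}^T M \pmb{h}\pmb{u}$ using the pointwise product rule in time, substitute $\partial_t\pmb{h}$ from the first equation and $\pmb{h}\partial_t\pmb{u} - \frac13 D_+\pmb{h}^3 D_-\partial_t\pmb{u}$ from the second. The key point is that applying $\pmb{1}^T M$ annihilates every term under a $D$ or a $D_\pm$: the implicit elliptic contribution $\frac13 D_+\pmb{h}^3 D_-\partial_t\pmb{u}$, both non-hydrostatic pressure contributions $D_+\pmb{p}_+$ and $D\pmb{p}_0$, and the conservative term $g D\pmb{h}^2$ all drop out. What survives are exactly the primitive-variable shallow-water terms, which cancel as in the semidiscretization of \eqref{eq:SWE_flat_prim_split}: the term $g\pmb{h}^T M D\pmb{h}$ vanishes by antisymmetry of $MD$, and the remaining quadratic and cubic terms combine into pairs $\pmb{a}^T M D\pmb{b} + \pmb{b}^T M D\pmb{a} = 0$.

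Energy is the core. I would differentiate $\pmb{1}^T M \pmb{E}$ with $\pmb{E} = \frac12 g\pmb{h}^2 + \frac12\pmb{h}\pmb{u}^2 + \frac16\pmb{h}^3(D_-\pmb{u})^2$, writing the dispersive term as the quadratic form $\frac16\pmb{u}^T D_-^T M \pmb{h}^3 D_-\pmb{u}$. Differentiating pointwise in time produces a ``$\partial_t\pmb{h}$-part'' with coefficient $g\pmb{h} + \frac12\pmb{u}^2 + \frac12\pmb{h}^2(D_-\pmb{u})^2$ and a ``$\partial_t\pmb{u}$-part'' equal to $(\pmb{h}\pmb{u})^T M\partial_t\pmb{u} + \frac13(D_-\pmb{u})^T M\pmb{h}^3 D_-\partial_t\pmb{u}$. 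Here is the one structural step: using $D_-^T M = -MD_+$, the second summand equals $-\frac13\pmb{u}^T M D_+\pmb{h}^3 D_-\partial_t\pmb{u}$, so the $\partial_t\pmb{u}$-part is precisely $\pmb{u}^T M\bigl(\pmb{h} - \frac13 D_+\pmb{h}^3 D_-\bigr)\partial_t\pmb{u}$, i.e.\ $\pmb{u}^T M$ times the (sign-flipped) spatial residual of the momentum equation. Substituting that residual and $\partial_t\pmb{h} = -(\pmb{u}D\pmb{h} + \pmb{h}D\pmb{u})$, one then checks three things: (i) the $\pmb{p}_0$ contribution, namely $-\pmb{u}^T M D\pmb{p}_0 = (D\pmb{u})^T M\pmb{p}_0$, vanishes on its own since with $\mathfrak{a} = \pmb{h}$, $\mathfrak{b} = \pmb{h}^2\pmb{u}$ it is $-\frac16(\pmb{x}^T M D\pmb{y} + \pmb{y}^T M D\pmb{x})$ with $\pmb{x} = \pmb{h}D\pmb{u}$, $\pmb{y} = \pmb{h}^2\pmb{u}D\pmb{u}$; (ii) the $\pmb{p}_+$ contribution $-\pmb{u}^T M D_+\pmb{p}_+ = (D_-\pmb{u})^T M\pmb{p}_+$ cancels exactly the $\frac12\pmb{h}^2(D_-\pmb{u})^2$ piece of the $\partial_t\pmb{h}$-coefficient; and (iii) the residual $g\pmb{h} + \frac12\pmb{u}^2$ piece together with the shallow-water spatial terms is the semidiscrete shallow-water energy identity, which closes into pairs $\pmb{a}^T M D\pmb{b} + \pmb{b}^T M D\pmb{a} = 0$.

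The main obstacle is the sheer bookkeeping in step (iii) and keeping the $D_\pm$ sign conventions consistent. The only genuine subtlety — and the place where the specific form of \eqref{eq:SGN_original_flat_prim_SBP_upwind} is essential — is the observation in the energy argument that the upwind pieces pair up through the adjoint relation $D_-^T M = -MD_+$ rather than producing a surplus $M(D_+ - D_-)$-dissipation, which would destroy \emph{exact} conservation. This forces the discrete elliptic operator to be built as $D_+(\cdot)D_-$, the discrete energy to use $(D_-\pmb{u})^2$, and the pressure split as $D_+\pmb{p}_+ + D\pmb{p}_0$ (so that $\pmb{p}_+$ pairs with $D_-$ and $\pmb{p}_0$ with the central $D$). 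Setting $D_+ = D_- = D$ collapses the whole computation to the proof of Theorem~\ref{thm:SGN_original_flat_prim_SBP}.
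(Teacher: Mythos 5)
Your proposal is correct and follows essentially the same route as the paper's proof: mass by SBP antisymmetry, momentum via the conservative form of the pressure terms plus the shallow-water cancellations, and energy by pairing the $\frac{1}{3}(D_-\pmb{u})^T M\pmb{h}^3 D_-\partial_t\pmb{u}$ term with the elliptic operator through $D_-^T M = -MD_+$, cancelling $\pmb{p}_+$ against the $\frac12\pmb{h}^2(D_-\pmb{u})^2$-weighted mass equation, and letting $\pmb{p}_0$ vanish on its own as an SBP pair. All the individual cancellations you identify are exactly those carried out in the paper.
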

\begin{proof}
    Conservation of the total water mass follows from the first equation
    of \eqref{eq:SGN_original_flat_prim_SBP_upwind} as in the proof of
    Theorem~\ref{thm:SGN_hyperbolic_flat_prim_SBP}.
    Conservation of the total meomentum follows since the split form of
    the shallow water part is the same as in Section~\ref{sec:split-forms-SWE}
    and the non-hydrostatic pressure term is discretized in a conservative
    way, i.e.,
    \begin{equation}
        \pmb{1}^T M D_+ \pmb{p}_+ + \pmb{1}^T M D \pmb{p}_0
        =
        - \pmb{1}^T D_-^T M \pmb{p}_+ - \pmb{1}^T D^T M \pmb{p}_0
        =
        0
    \end{equation}
    for consistent derivative operators $D_\pm$ and $D$.
Concerning energy, its rate of change  is
    \begin{equation}
    \begin{aligned}
        &\quad
        \partial_t \left(
            \frac{1}{2} g \pmb{1}^T M \pmb{h}^2
            + \frac{1}{2} \pmb{1}^T M \pmb{h} \pmb{u}^2
            + \frac{1}{6} \pmb{u}^T D_-^T M \pmb{h}^3 D_- \pmb{u}
        \right)
        \\
        &=
        g \pmb{h}^T M \partial_t \pmb{h}
        + \frac{1}{2} (\pmb{u}^2)^T M \partial_t \pmb{h}
        + \pmb{u}^T M \pmb{h} \partial_t \pmb{u}
        + \frac{1}{2} \left( \pmb{h}^2 (D_- \pmb{u})^2 \right)^T M \partial_t \pmb{h}
        + \frac{1}{3} \pmb{u}^T D_-^T M \pmb{h}^3 D_- \partial_t \pmb{u}.
    \end{aligned}
    \end{equation}
To evaluate it  we thus  multiply the first  of \eqref{eq:SGN_original_flat_prim_SBP_upwind}
    by $g \pmb{h}^T M + (1/2) (\pmb{u}^2)^T M + (1/2) \bigl( \pmb{h}^2 (D_- \pmb{u})^2 \bigr)^T M$,
     the second  by $\pmb{u}^T M$, and add the two equations to get
    \begin{equation}
    \begin{aligned}
        &\quad
        g \pmb{h}^T M \partial_t \pmb{h}
        + \frac{1}{2} (\pmb{u}^2)^T M \partial_t \pmb{h}
        + \frac{1}{2} \left( \pmb{h}^2 (D_- \pmb{u})^2 \right)^T M \partial_t \pmb{h}
        + \pmb{u}^T M \pmb{h} \partial_t \pmb{u}
        + \frac{1}{3} \pmb{u}^T D_-^T M \pmb{h}^3 D_- \partial_t \pmb{u}
        \\
        &=
        % h_t
        - g \pmb{h}^T M \pmb{u} D \pmb{h}
        - g \pmb{h}^T M \pmb{h} D \pmb{u}
        - \frac{1}{2} (\pmb{u}^2)^T M \pmb{u} D \pmb{h}
        - \frac{1}{2} (\pmb{u}^2)^T M \pmb{h} D \pmb{u}
        \\
        &\quad
        - \frac{1}{2} \left( \pmb{h}^2 (D_- \pmb{u})^2 \right)^T M \pmb{u} D \pmb{h}
        - \frac{1}{2} \left( \pmb{h}^2 (D_- \pmb{u})^2 \right)^T M \pmb{h} D \pmb{u}
        % h u_t
        - \pmb{u}^T M g D \pmb{h}^2
        + \pmb{u}^T M g \pmb{h} D \pmb{h}
        \\
        &\quad
        - \frac{1}{2} (\pmb{h} \pmb{u})^T M D \pmb{u}^2
        + \frac{1}{2} (\pmb{u}^3)^T M D \pmb{h}
        - \frac{1}{2} (\pmb{u}^2)^T M D \pmb{h} \pmb{u}
        + \frac{1}{2} (\pmb{h} \pmb{u}^2)^T M D \pmb{u}
       % \\
       % &\quad
        - \pmb{u}^T M D_+ \pmb{p}_+
        - \pmb{u}^T M D \pmb{p}_0
        \\
        &=
        % h_t
        - g (\pmb{h} \pmb{u})^T M D \pmb{h}
        - g (\pmb{h}^2)^T M D \pmb{u}
        - \frac{1}{2} (\pmb{u}^3)^T M D \pmb{h}
        - \frac{1}{2} (\pmb{h} \pmb{u}^2)^T M D \pmb{u}
        \\
        &\quad
        - \frac{1}{2} \left( \pmb{h}^2 \pmb{u} (D_- \pmb{u})^2 \right)^T M D \pmb{h}
        - \frac{1}{2} \left( \pmb{h}^3 (D_- \pmb{u})^2 \right)^T M D \pmb{u}
        % h u_t
        - g \pmb{u}^T M D \pmb{h}^2
        + g (\pmb{h} \pmb{u})^T M D \pmb{h}
        \\
        &\quad
        - \frac{1}{2} (\pmb{h} \pmb{u})^T M D \pmb{u}^2
        + \frac{1}{2} (\pmb{u}^3)^T M D \pmb{h}
        - \frac{1}{2} (\pmb{u}^2)^T M D \pmb{h} \pmb{u}
        + \frac{1}{2} (\pmb{h} \pmb{u}^2)^T M D \pmb{u}
    %    \\
      %  &\quad
        - \pmb{u}^T M D_+ \pmb{p}_+
        - \pmb{u}^T M D \pmb{p}_0
        \\
        &=
        - \frac{1}{2} \left( \pmb{h}^2 \pmb{u} (D_- \pmb{u})^2 \right)^T M D \pmb{h}
        - \frac{1}{2} \left( \pmb{h}^3 (D_- \pmb{u})^2 \right)^T M D \pmb{u}
        + \pmb{p}_+^T M D_- \pmb{u}
        + \pmb{p}_0^T M D \pmb{u}.
    \end{aligned}
    \end{equation}
    Inserting the pressure terms $\pmb{p}_+$ and $\pmb{p}_0$ yields
    \begin{equation*}
    \begin{aligned}
\partial_t(     \pmb{1}^T M \pmb{E}) = &
%        - \frac{1}{2} \left( \pmb{h}^2 \pmb{u} (D_- \pmb{u})^2 \right)^T M D \pmb{h}
%        - \frac{1}{2} \left( \pmb{h}^3 (D_- \pmb{u})^2 \right)^T M D \pmb{u}
%        + \pmb{p}_+^T M D_- \pmb{u}
%        + \pmb{p}_0^T M D \pmb{u}
%        \\
%        &
        - \frac{1}{2} \left( \pmb{h}^2 \pmb{u} (D_- \pmb{u})^2 \right)^T M D \pmb{h}
        - \frac{1}{2} \left( \pmb{h}^3 (D_- \pmb{u})^2 \right)^T M D \pmb{u}
        %\\
        %&\quad
        + \frac{1}{2} \left( \pmb{h}^3 (D \pmb{u}) D_- \pmb{u} \right)^T M D_- \pmb{u}
                \\
        &
        + \frac{1}{2} \left( \pmb{h}^2 (D \pmb{h}) \pmb{u} D_- \pmb{u} \right)^T M D_- \pmb{u}
        %\\
       % &\quad
        - \frac{1}{6} \left( \pmb{h} D \pmb{h}^2 \pmb{u} D \pmb{u} \right)^T M D \pmb{u}
        - \frac{1}{6} \left( \pmb{h}^2 \pmb{u} D \pmb{h} D \pmb{u} \right)^T M D \pmb{u}
        \\
        &=
        - \frac{1}{6} \left( \pmb{h}^2 \pmb{u} D \pmb{u} \right)^T D^T M \left( \pmb{h} D \pmb{u} \right)
        - \frac{1}{6} \left( \pmb{h} D \pmb{u} \right)^T D^T M \left( \pmb{h}^2 \pmb{u} D \pmb{u} \right) =0,
%        \\
%        &=
%        0.
    \end{aligned}
    \end{equation*}
    since  after using the SBP property \eqref{eq:SBP_periodic} the first and the fourth as well as the second and the third terms
    in the first right hand side cancel for a diagonal    mass matrix $M$, and similarly  the
    last two terms      in the last step.
\end{proof}

There are even more possibilities to discretize the non-hydrostatic
pressure terms with upwind operators. Consider for example the
central pressure term
\begin{equation}
    D \pmb{p}_0
    =
    - \frac{1}{6} D \pmb{h} D \pmb{h}^2 \pmb{u} D \pmb{u}
    - \frac{1}{6} D \pmb{h}^2 \pmb{u} D \pmb{h} D \pmb{u}
\end{equation}
of \eqref{eq:SGN_original_flat_prim_SBP_upwind}. The energy contribution
of this terms vanishes since
\begin{equation*}
\begin{aligned}
    -6 \pmb{u}^T M D \pmb{p}_0
    &=
    \pmb{u}^T M D \pmb{h} D \pmb{h}^2 \pmb{u} D \pmb{u}
    + \pmb{u}^T M D \pmb{h}^2 \pmb{u} D \pmb{h} D \pmb{u}
    %\\
    %&
    =
    (\pmb{h} D \pmb{u})^T M D (\pmb{h}^2 \pmb{u} D \pmb{u})
    + (\pmb{h}^2 \pmb{u} D \pmb{u})^T M D (\pmb{h} D \pmb{u})
    =
    0.
\end{aligned}
\end{equation*}
Let us now consider a more general form
\begin{equation}
    D_\alpha \pmb{h} D_\beta \pmb{h}^2 \pmb{u} D_\gamma \pmb{u}
    + D_\delta \pmb{h}^2 \pmb{u} D_\epsilon \pmb{h} D_\zeta \pmb{u},
\end{equation}
where $\alpha, \beta, \gamma, \delta, \epsilon, \zeta \in \{+, -, 0\}$,
and $D_0 = D$ is the central operator. The energy contribution of this
term is
\begin{equation}
\begin{aligned}
%    &\quad
    \pmb{u}^T M D_\alpha \pmb{h} D_\beta \pmb{h}^2 \pmb{u} D_\gamma \pmb{u}
    + \pmb{u}^T M D_\delta \pmb{h}^2 \pmb{u} D_\epsilon \pmb{h} D_\zeta \pmb{u}
    %\\
   % &
    =
    (\pmb{h} D_{\alpha^*} \pmb{u})^T M D_\beta (\pmb{h}^2 \pmb{u} D_\gamma \pmb{u})
    + (\pmb{h}^2 \pmb{u} D_{\delta^*} \pmb{u})^T M D_\epsilon (\pmb{h} D_\zeta \pmb{u}),
\end{aligned}
\end{equation}
where $\pm^* = \mp$ and $0^* = 0$. This term vanishes if
\begin{equation}
    \zeta = \alpha^*, \quad \epsilon = \beta^*, \quad \gamma = \delta^*.
\end{equation}
Even if we do not want to introduce a clear directional bias by choosing
the same number of $+$ and $-$ signs in $(\alpha, \beta, \gamma)$, there
are many possibilities different from $\alpha = \beta = \gamma = 0$
as in \eqref{eq:SGN_original_flat_prim_SBP_upwind}, e.g.,
$\alpha = +$, $\beta = 0$, and $\gamma = -$.\\

%\begin{remark}
%\label{rem:future_work_upwind_form_pressure1}
%    Similar to Remark~\ref{rem:future_work_split_form_pressure},
    Preliminary tests do not show any significant differences between
    the different choices of applying (upwind) derivative operators
    to the non-hydrostatic pressure term. Investigating this further
    is left for future work.
%\end{remark}

\section{Hyperbolic approximation with variable bathymetry}
\label{sec:SGN_hyperbolic_variable}

Following the notation of \cite{busto21}, we introduce the bottom topography
$b = b(x)$ and write the generalization of the hyperbolic system
\eqref{eq:SGN_hyperbolic_flat_prim} as
\begin{equation}
\label{eq:SGN_hyperbolic_variable_prim}
\begin{split}
    & h_t
        + (h u)_x = 0,\\
    & h u_t
        + g h h_x
        + h u u_x
        + (h \Pi)_x
        + \left( g h + \dfrac{3}{2} \dfrac{h}{\eta} \Pi \right) b_x
        = 0, \\
    & h w_t
        + h u w_x
        = \lambda (1 - \eta / h),\\
    & \eta_t
        + \eta_x u
        + \dfrac{3}{2} b_x u
        = w,\\
    & \Pi = \dfrac{\lambda}{3} \dfrac{\eta}{h} \left(1 - \dfrac{\eta}{h}\right).
\end{split}
\end{equation}
This system satisfies the energy conservation law
\begin{multline}
\label{eq:SGN_hyperbolic_variable_energy}
    \biggl( \overbrace{ \frac{1}{2} g (h + b)^2 + \frac{1}{2} h u^2 + \frac{1}{6} h w^2 + \frac{\lambda}{6} h (1 - \eta / h)^2 }^{= E} \biggr)_t
    \\
    + \biggl( \underbrace{ g h (h + b) u +\frac{1}{2} h u^3 + \frac{1}{6} h u w^2 + \frac{\lambda}{6} h (1 - \eta / h)^2 u + h \Pi u }_{= F} \biggr)_x = 0.
\end{multline}
We will derive an energy-conservative split form of this generalization of
\eqref{eq:SGN_hyperbolic_flat_prim} in the following.

\subsection{Energy equation}
\label{sec:SGN_hyperbolic_variable_energy}

The energy conservation law \eqref{eq:SGN_hyperbolic_variable_energy} can
be derived by multiplying the time derivatives of the primitive variables
$\mathbf{V} = (h, u, w, \eta)^T$ by
\begin{equation}
    \mathbf{V}^*
    =
    \frac{\partial E}{\partial \mathbf{V}}
    =
    \begin{pmatrix}
        g (h + b) + \frac{1}{2} u^2 + \frac{1}{6} w^2 + \frac{\lambda}{6} (1 - \eta^2 / h^2) \\
        h u \\
        h w \\
        - \frac{\lambda}{3} (1 - \eta / h)
    \end{pmatrix}.
\end{equation}
Indeed, the computation is very similar to the one from
Section~\ref{sec:SGN_hyperbolic_flat_energy}; the additional terms are
\begin{equation}
\begin{aligned}
    &\quad
    % h_t
    g (h u)_x b
    % h u_t
    + \left( g h + \dfrac{3}{2} \dfrac{h}{\eta} \Pi \right) b_x u
    % \eta_t
    - \frac{\lambda}{3} (1 - \eta / h) \frac{3}{2} b_x u
    \\
    &=
    \left( g h b u \right)_x
    + \frac{\lambda}{2} (1 - \eta / h) b_x u
    - \frac{\lambda}{2} (1 - \eta / h) b_x u
    =
    \left( g h b u \right)_x.
\end{aligned}
\end{equation}

\subsection{Split form}

The analysis shows that the bathymetry terms cancel identically
when evaluating the energy balance. Hence, the split form
\eqref{eq:SGN_hyperbolic_flat_prim_split} generalizes readily
to this case if the bathymetry is included appropriately in the
hydrostatic pressure term.
In particular, the non-conservative split form becomes
\begin{equation}
\label{eq:SGN_hyperbolic_variable_prim_split}
\begin{split}
    & h_t
        + h_x u + h u_x
        = 0,\\
    & h u_t
        + g \bigl( h (h + b) \bigr)_x - g (h + b) h_x
        + \frac{1}{2} h (u^2)_x - \frac{1}{2} h_x u^2
        + \frac{1}{2} (h u)_x u - \frac{1}{2} h u u_x
        \\
        &\qquad
        + \frac{\lambda}{6} \frac{\eta^2}{h^2} h_x
        + \frac{\lambda}{3} \eta_x
        - \frac{\lambda}{3} \frac{\eta}{h} \eta_x
        - \frac{\lambda}{6} \Bigl( \frac{\eta^2}{h} \Bigr)_x
        + \frac{\lambda}{2} \left( 1 - \frac{\eta}{h} \right) b_x
        = 0, \\
    & h w_t
        + \frac{1}{2} (h u w)_x
        + \frac{1}{2} h u w_x
        - \frac{1}{2} h_x u w
        - \frac{1}{2} h u_x w
        = \lambda - \lambda \frac{\eta}{h},\\
    & \eta_t
        + \eta_x u
        + \frac{3}{2} b_x u
        = w.
\end{split}
\end{equation}

\subsection{Semidiscretization}

The split form \eqref{eq:SGN_hyperbolic_variable_prim_split}
induces the semidiscretization
\begin{equation}
\label{eq:SGN_hyperbolic_variable_prim_SBP}
\begin{split}
    & \partial_t \pmb{h} + \pmb{u} D \pmb{h} + \pmb{h} D \pmb{u} = \pmb{0},\\
    & \pmb{h} \partial_t \pmb{u}
        + g D \pmb{h} (\pmb{h} + \pmb{b})
        - g (\pmb{h} + \pmb{b}) D \pmb{h}
        - \frac{1}{2} \pmb{h} D \pmb{u}^2
        + \frac{1}{2} \pmb{u}^2 D \pmb{h}
        - \frac{1}{2} \pmb{u} D \pmb{h} \pmb{u}
        + \frac{1}{2} \pmb{h} \pmb{u} D \pmb{u}
        \\
        &\qquad
        + \frac{\lambda}{6} \frac{\pmb{\eta}^2}{\pmb{h}^2} D \pmb{h}
        + \frac{\lambda}{3} D \pmb{\eta}
        - \frac{\lambda}{3} \frac{\pmb{\eta}}{\pmb{h}} D \pmb{\eta}
        - \frac{\lambda}{6} D \frac{\pmb{\eta}^2}{\pmb{h}}
        + \frac{\lambda}{2} D \pmb{b}
        - \frac{\lambda}{2} \frac{\pmb{\eta}}{\pmb{h}} D \pmb{b} = \pmb{0}, \\
    & \pmb{h} \partial_t \pmb{w}
        + \frac{1}{2} D \pmb{h} \pmb{u} \pmb{w}
        + \frac{1}{2} \pmb{h} \pmb{u} D \pmb{w}
        - \frac{1}{2} \pmb{u} \pmb{w} D \pmb{h}
        - \frac{1}{2} \pmb{h} \pmb{w} D \pmb{u} = \lambda - \lambda \frac{\pmb{\eta}}{\pmb{h}},\\
    & \partial_t \pmb{\eta}
        + \pmb{u} D \pmb{\eta}
         + \frac{3}{2} \pmb{u} D \pmb{b} = \pmb{w}.
\end{split}
\end{equation}
The corresponding discrete total energy is $\pmb{1}^T M \pmb{E}$, where
\begin{equation}
\begin{aligned}
    \pmb{E}
    &=
    \frac{1}{2} g (\pmb{h} + \pmb{b})^2
    + \frac{1}{2} \pmb{h} \pmb{u}^2
    + \frac{1}{6} \pmb{h} \pmb{w}^2
    + \frac{\lambda}{6} \pmb{h}
    - \frac{\lambda}{3} \pmb{\eta}
    + \frac{\lambda}{6} \frac{\pmb{\eta}^2}{\pmb{h}}.
\end{aligned}
\end{equation}

\begin{theorem}
\label{thm:SGN_hyperbolic_variable_prim_SBP}
    Consider the semidiscretization \eqref{eq:SGN_hyperbolic_variable_prim_SBP}
    of the hyperbolic approximation of the Serre-Green-Naghdi
    equations with varying bathymetry \eqref{eq:SGN_hyperbolic_variable_prim}
    using a periodic first-derivative SBP operator $D$ with
    diagonal mass/norm matrix. The following properties are true.
    \begin{enumerate}
        \item The total water mass $\pmb{1}^T M \pmb{h}$ is conserved.
                \item The total energy $\pmb{1}^T M \pmb{E}$ is conserved.
        \item The semidiscretization is well-balanced w.r.t.\
              the steady state $h + b \equiv \mathrm{const}$, $u \equiv 0$,
              $w \equiv 0$, $\eta \equiv h$.
    \end{enumerate}
\end{theorem}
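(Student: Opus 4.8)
The plan is to follow the template of Theorem~\ref{thm:SGN_hyperbolic_flat_prim_SBP} and of the continuous computation in Section~\ref{sec:SGN_hyperbolic_variable_energy}, isolating at each stage the terms that are genuinely new because of the bathymetry. Conservation of the total water mass is immediate: the first equation of \eqref{eq:SGN_hyperbolic_variable_prim_SBP} is identical to the flat case, so $\partial_t(\pmb{1}^T M \pmb{h}) = -\pmb{u}^T M D \pmb{h} - \pmb{h}^T M D \pmb{u} = 0$ by diagonality of $M$ and the periodic SBP property \eqref{eq:SBP_periodic}.

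For the energy, I would first compute the discrete gradients $\partial_{\pmb{h}}\pmb{E}, \partial_{\pmb{u}}\pmb{E}, \partial_{\pmb{w}}\pmb{E}, \partial_{\pmb{\eta}}\pmb{E}$; the only change with respect to the flat case of Theorem~\ref{thm:SGN_hyperbolic_flat_prim_SBP} is the extra summand $g\pmb{b}$ in $\partial_{\pmb{h}}\pmb{E}$. Writing $\partial_t(\pmb{1}^T M \pmb{E})$ as the sum of the four contributions $(\partial_{\pmb{v}}\pmb{E})^T M\,\partial_t \pmb{v}$ for $\pmb{v}\in\{\pmb{h},\pmb{u},\pmb{w},\pmb{\eta}\}$, and substituting \eqref{eq:SGN_hyperbolic_variable_prim_SBP}, I would split the result into the terms already present for flat bathymetry --- which cancel verbatim as in the proof of Theorem~\ref{thm:SGN_hyperbolic_flat_prim_SBP}, via the SBP identity $\pmb{a}^T M D \pmb{b} + \pmb{b}^T M D \pmb{a} = 0$ --- and the new bathymetry terms. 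The new terms are: the $g\pmb{b}$-part of $\partial_{\pmb{h}}\pmb{E}$ acting on the mass flux $-\pmb{u}D\pmb{h} - \pmb{h}D\pmb{u}$; the extra momentum terms $g D(\pmb{h}\pmb{b}) - g \pmb{b} D\pmb{h} + \frac{\lambda}{2} D\pmb{b} - \frac{\lambda}{2}\frac{\pmb{\eta}}{\pmb{h}} D\pmb{b}$ multiplied by $\pmb{u}^T M$; and the extra $\eta$-term $\frac{3}{2}\pmb{u}D\pmb{b}$ multiplied by $(\partial_{\pmb{\eta}}\pmb{E})^T M = \bigl(-\frac{\lambda}{3}\pmb{1} + \frac{\lambda}{3}\frac{\pmb{\eta}}{\pmb{h}}\bigr)^T M$. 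Using that $M$ is diagonal, so that pointwise factors may be moved freely, the two $g\pmb{b}D\pmb{h}$-type contributions cancel algebraically, the pair $g\pmb{u}^T M D(\pmb{h}\pmb{b}) + g(\pmb{h}\pmb{b})^T M D\pmb{u}$ cancels by \eqref{eq:SBP_periodic}, and the remaining $\lambda$-terms (those proportional to $\pmb{u}^T M D\pmb{b}$ and to $\pmb{u}^T M \frac{\pmb{\eta}}{\pmb{h}} D\pmb{b}$) cancel in pairs. This is precisely the discrete counterpart of the collapse of the additional continuous terms to $(g h b u)_x$ observed in Section~\ref{sec:SGN_hyperbolic_variable_energy}.

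For well-balancedness I would insert the lake-at-rest state $\pmb{h} + \pmb{b} = \mathrm{const}\cdot\pmb{1}$, $\pmb{u} = \pmb{0}$, $\pmb{w} = \pmb{0}$, $\pmb{\eta} = \pmb{h}$ (so that $\pmb{\eta}/\pmb{h} = \pmb{1}$, $\pmb{\eta}^2/\pmb{h}^2 = \pmb{1}$, $\pmb{\eta}^2/\pmb{h} = \pmb{h}$) directly into \eqref{eq:SGN_hyperbolic_variable_prim_SBP}. The first and fourth equations immediately give $\partial_t\pmb{h} = \pmb{0}$ and $\partial_t\pmb{\eta} = \pmb{0}$; the third reduces to $\pmb{h}\,\partial_t\pmb{w} = \lambda - \lambda\pmb{1} = \pmb{0}$, hence $\partial_t\pmb{w} = \pmb{0}$; in the second equation every velocity-dependent term vanishes, the hydrostatic part becomes $g D(\mathrm{const}\cdot\pmb{h}) - g\,\mathrm{const}\cdot D\pmb{h} = \pmb{0}$ by linearity of $D$, and the dispersive $\lambda$-block telescopes, $\frac{\lambda}{6}D\pmb{h} + \frac{\lambda}{3}D\pmb{h} - \frac{\lambda}{3}D\pmb{h} - \frac{\lambda}{6}D\pmb{h} + \frac{\lambda}{2}D\pmb{b} - \frac{\lambda}{2}D\pmb{b} = \pmb{0}$, so $\pmb{h}\,\partial_t\pmb{u} = \pmb{0}$ and therefore $\partial_t\pmb{u} = \pmb{0}$ since the water height is positive.

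The only delicate step, and the one I would write out most carefully, is the bookkeeping inside the energy estimate: tracking which bathymetry terms come from the $g\pmb{b}$ piece of $\partial_{\pmb{h}}\pmb{E}$, from the momentum equation, and from the $\frac{3}{2}\pmb{u}D\pmb{b}$ term of the $\eta$-equation, and checking that they group into SBP-cancellable pairs exactly as the corresponding continuous terms group into $(g h b u)_x$. Everything else --- mass conservation, the flat-bathymetry part of the energy balance, and the well-balanced property --- is a direct transcription of the arguments already given for the flat case and for the continuous variable-bathymetry system.
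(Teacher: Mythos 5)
Your proposal is correct and follows essentially the same route as the paper's proof: mass conservation exactly as in the flat case, the energy balance reduced to the three additional bathymetry contributions (from $g\pmb{b}$ in $\partial_{\pmb{h}}\pmb{E}$, from the momentum equation, and from the $\tfrac{3}{2}\pmb{u}D\pmb{b}$ term in the $\eta$-equation) which cancel pairwise using diagonality of $M$ and the SBP identity, and well-balancedness by direct substitution of the lake-at-rest state. The only cosmetic difference is that you telescope the $\lambda$-block in the momentum equation after substituting $\pmb{\eta}=\pmb{h}$, whereas the paper groups the same terms into three cancelling pairs; the content is identical.
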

\begin{proof}
    Conservation of the total water mass follows as in the proof of
    Theorem~\ref{thm:SGN_hyperbolic_flat_prim_SBP}.
Concerning energy, we compute
    \begin{equation}
    \begin{aligned}
        \partial_{\pmb{h}} \pmb{E}
        &=
        g (\pmb{h} + \pmb{b})
        + \frac{1}{2} \pmb{u}^2
        + \frac{1}{6} \pmb{w}^2
        + \frac{\lambda}{6}
        - \frac{\lambda}{6} \frac{\pmb{\eta}^2}{\pmb{h}^2}.
    \end{aligned}
    \end{equation}
    The other derivatives are the same as in the proof of
    Theorem~\ref{thm:SGN_hyperbolic_flat_prim_SBP}. Thus, the differences in
     the semidiscrete rate of change of the total energy
    compared to the case of constant bathymetry in
    Theorem~\ref{thm:SGN_hyperbolic_flat_prim_SBP} are as follows:
    \begin{itemize}
        \item additional term $g \pmb{b}^T M$ multiplied to $\partial_t \pmb{h}$
        \item additional term $g D \pmb{h} \pmb{b} - g \pmb{b} D \pmb{h} + \frac{\lambda}{2} D \pmb{b} - \frac{\lambda}{2} \frac{\pmb{\eta}}{\pmb{h}} D \pmb{b}$ added to $\pmb{h} \partial_t \pmb{u}$
        \item additional term $\frac{3}{2} \pmb{u} D \pmb{b}$ added to $\partial_t \pmb{\eta}$
    \end{itemize}
    These additional terms assembled   give
    \begin{equation*}
    \begin{aligned}
        &\quad
        g \pmb{b}^T M \pmb{u} D \pmb{h}
        + g \pmb{b}^T M \pmb{h} D \pmb{u}
        + g \pmb{u}^T M D \pmb{h} \pmb{b}
        - g \pmb{u}^T M \pmb{b} D \pmb{h}
        \\
        &\quad
        + \frac{\lambda}{2} \pmb{u}^T M D \pmb{b}
        - \frac{\lambda}{2} \pmb{u}^T M \frac{\pmb{\eta}}{\pmb{h}} D \pmb{b}
        - \frac{\lambda}{2} \pmb{1}^T M \pmb{u} D \pmb{b}
        + \frac{\lambda}{2} \left(\frac{\pmb{\eta}}{\pmb{h}}\right)^T M \pmb{u} D \pmb{b}
        \\
        &=
        g (\pmb{b} \pmb{u})^T M D \pmb{h}
        + g (\pmb{h} \pmb{b})^T M D \pmb{u}
        + g \pmb{u}^T M D \pmb{h} \pmb{b}
        - g (\pmb{b} \pmb{u})^T M D \pmb{h}
        \\
       &\quad
        + \frac{\lambda}{2} \pmb{u}^T M D \pmb{b}
        - \frac{\lambda}{2} \left(\frac{\pmb{\eta}}{\pmb{h}} \pmb{u}\right)^T M D \pmb{b}
        - \frac{\lambda}{2} \pmb{u}^T M D \pmb{b}
        + \frac{\lambda}{2} \left(\frac{\pmb{\eta}}{\pmb{h}} \pmb{u}\right)^T M D \pmb{b}=0,
        %\\
        %&=
        %0,
    \end{aligned}
    \end{equation*}
    where we have used that the mass matrix $M$ is diagonal in the first
    step and applied the SBP property \eqref{eq:SBP_periodic}
    in the second step. This plus the elements in the proof of     Theorem~\ref{thm:SGN_hyperbolic_flat_prim_SBP}
    show   that   total energy is conserved.

    Finally, we observe that the method is well-balanced since  for
%     for the steady
%    state $h + b \equiv \mathrm{const}$, $u \equiv 0$, $w \equiv 0$,
%    $\eta \equiv h$. First, $\pmb{u} = \pmb{0}$ yields
$\pmb{u} = \pmb{0} = \pmb{w}$, we have both
    $\partial_t \pmb{h} = \pmb{0}$ and  $\partial_t \pmb{\eta} = \pmb{0}$.
Moreover since also     $\pmb{\eta} = \pmb{h}$, we have  $\partial_t \pmb{w} = \pmb{0}$.
    Finally,
    \begin{equation}
    \begin{aligned}
        -\pmb{h} \partial_t \pmb{u}
        &=
        \underbrace{g D \pmb{h} (\pmb{h} + \pmb{b})
        - g (\pmb{h} + \pmb{b}) D \pmb{h}}
        + \underbrace{\frac{\lambda}{6} \frac{\pmb{\eta}^2}{\pmb{h}^2} D \pmb{h}
        - \frac{\lambda}{6} D \frac{\pmb{\eta}^2}{\pmb{h}}}
        + \underbrace{\frac{\lambda}{3} D \pmb{\eta}
        - \frac{\lambda}{3} \frac{\pmb{\eta}}{\pmb{h}} D \pmb{\eta}}
        + \underbrace{\frac{\lambda}{2} D \pmb{b}
        - \frac{\lambda}{2} \frac{\pmb{\eta}}{\pmb{h}} D \pmb{b}}.
    \end{aligned}
    \end{equation}
    The last three terms grouped in braces cancel due to $\pmb{\eta} = \pmb{h}$.
    The first due to  the fact that $\pmb{h} + \pmb{b}$ is constant.
    \end{proof}

\section{Original Serre-Green-Naghdi equations with variable bathymetry:
         mild-slope approximation}
\label{sec:SGN_original_mild}

We proceed as before and note that we can pass from the hyperbolic formulation
\eqref{eq:SGN_hyperbolic_variable_prim}
to the standard one using
$w = \dot \eta + \dfrac{3}{2} \dot b$ and
$\Pi = \dfrac{\eta}{h} \dfrac{1}{3} h \dot w$,
where $\dot a = a_t + u a_x$ is again the material derivative.
Then, we can easily recast the momentum in \eqref{eq:SGN_hyperbolic_variable_prim} as
\begin{equation}
\begin{aligned}
%    0
%    &=
%    h u_t
%    + g h h_x
%    + h u u_x
%    + (h \Pi)_x
%    + \left( g h + \dfrac{3}{2} \dfrac{h}{\eta} \Pi \right) b_x
%    \\
%    &=
    h u_t
    + g h h_x
    + h u u_x
    + \left(\frac{h \eta}{3} \dot w \right)_x
    + \left( g h + \frac{1}{2} h \dot w \right) b_x=0.
\end{aligned}
\end{equation}
Taking now the relaxed limit $\eta = h$, and simplifying the last in  \eqref{eq:SGN_hyperbolic_variable_prim}
using the mass equation we obtain
\begin{equation}
\label{eq:SGN_original_mild_prim}
\begin{split}
    & h_t
        + (h u)_x = 0,\\
    & h u_t
        + g h h_x + h u u_x + (h \pi)_x
        + \left( g h + \dfrac{3}{2} \pi \right) b_x = 0,\\
    & h w_t
        + h u w_x
        = 3 \pi,\\
    & w + h u_x - \dfrac{3}{2} b_x u = 0,
\end{split}
\end{equation}
where as before the last two equations are merely the definitions of
$\pi$ and $w$.
Note that the above system does not match exactly
the SGN equations reported in, e.g., \cite{guermond2022hyperbolic,GS24}.
The form obtained is essentially a mild-slope version of the SGN model
in which a quadratic term $b_x^2$ has been removed,
following to the classical mild-slope approximation
\cite{doi:10.1098/rsta.1998.0309}.
This approximation has been made in \cite{busto21} to simplify the Lagrangian
used to derive the hyperbolic formulation. We can nevertheless show that
the system obtained admits an energy conservation law
\begin{equation}
\label{eq:SGN_original_mild_energy}
    \biggl( \underbrace{\frac{1}{2} g (h + b)^2 + \frac{1}{2} h u^2 + \frac{1}{6} h w^2}_{= E} \biggr)_t
    + \biggl( \underbrace{g h (h + b) u + \frac{1}{2} h u^3 + \frac{1}{6} h u w^2 + h \pi u}_{= F} \biggr)_x = 0.
\end{equation}
Expanding the energy terms, we get
\begin{equation}
\begin{aligned}
    E
    % &=
    % \frac{1}{2} g (h + b)^2
    % + \frac{1}{2} h u^2
    % + \frac{1}{6} h w^2
    % \\
    =
    \frac{1}{2} g (h + b)^2
    + \frac{1}{2} h u^2
    + \frac{1}{6} h \left( - h u_x + \frac{3}{2} b_x u \right)^2
%    \\
%    &=
=    \frac{1}{2} g (h + b)^2
    + \frac{1}{2} h u^2
    + \frac{1}{6} h^3 u_x^2
    - \frac{1}{2} h^2 b_x u u_x
    + \frac{3}{8} h b_x^2 u^2.
\end{aligned}
\end{equation}
We can check easily that a $h \dot b^2 / 8 = h (u b_x)^2 / 8$ term is missing
 compared, e.g., to \cite{guermond2022hyperbolic,GS24}. This
 is precisely the quadratic term neglected in \cite{busto21}.
We can check that the following relations hold for the differentials:
\begin{equation}
\label{eq:dual_mild}
\begin{split}
    \dif E
    =&
    \left( g (h + b) + \frac{1}{2} u^2 + \frac{1}{6} w^2 \right) \dif h
    + hu \, \dif u
    + h \dfrac{w}{3} \dif w,
    \\
    \dif F
    =&
    \left( g (h + b) + \frac{1}{2} u^2 + \frac{1}{6} w^2 \right) \dif\, (hu)
    + u \left(hu\, \dif u + gh\, \dif h + \dif (h\pi) + (g h + \dfrac{3}{2}  \pi ) \dif b\right)
    \\ &\quad +\dfrac{w}{3}( h u \dif w -  3 \pi )
    + \pi ( w +   h \, \dif u -   \dfrac{3}{2}u \,\dif b).
\end{split}
\end{equation}

\subsection{Deriving a split form from the hyperbolic approximation}

With the same approach as in Section~\ref{sec:SGN_original_flat},
we find that a split form induced by
\eqref{eq:SGN_hyperbolic_variable_prim_split}
allowing to recover the energy balance using only integration by parts is
\begin{equation}
\begin{aligned}
    & h_t
        + h_x u
        + h u_x
        = 0,\\
    & h u_t
        + g(h (h + b))_x
        - g (h + b) h_x
%        \\
%        &\qquad
        + \frac{1}{2} h (u^2)_x
        - \frac{1}{2} h_x u^2
        + \frac{1}{2} (h u)_x u
        - \frac{1}{2} h u u_x
        + (h \pi)_x
        + \dfrac{3}{2} \pi b_x
        = 0,\\
    & h w_t
        + \dfrac{1}{2} (h u w)_x
        + \dfrac{1}{2} h u w_x
        - \dfrac{1}{2} h_x u w
        - \dfrac{1}{2} h u_x w
        = 3 \pi,\\
    & w + h u_x - \dfrac{3}{2} b_x u = 0.
\end{aligned}
\end{equation}

One can easily check (details omitted for brevity) that the SBP property holds now by multiplying the above system
by the transpose of the dual variable  $(\mathbf{W}^*)^T$ where from \eqref{eq:dual_mild} we have
$$
(\mathbf{W}^*)^T :=
\left( g (h + b) +\dfrac{1}{2} u^2+ \dfrac{1}{6} w^2,\; u,\;  w / 3,\; \pi \right).
$$

To clarify the structure of the elliptic operator that we need to invert to
obtain $u_t$, we compute the part of the non-hydrostatic pressure $h \pi$
containing a time derivative of $u$ and obtain
\begin{equation}
\begin{aligned}
    h^2 w_t
    &=
    h^2 \left( -h u_x + \frac{3}{2} b_x u \right)_t
    =
    - h^3 u_{tx}
    + \frac{3}{2} h^2 b_x u_t
    + h^2 h_x u u_x
    + h^3 u_x^2.
\end{aligned}
\end{equation}
Moreover, as in the flat bathymetry case we exploit the relation
\begin{equation}
\begin{aligned}
%    &\quad
    h (h u w)_x
    + h^2 u w_x
    - h h_x u w
    - h^2 u_x w
%    \\
%    &=
%    h \left( -h^2 u u_x + \frac{3}{2} h b_x u^2 \right)_x
%    + h^2 u \left( -h u_x + \frac{3}{2} b_x u \right)_x
%    \\
%    &\quad
%    - h h_x u \left( -h u_x + \frac{3}{2} b_x u \right)
%    - h^2 u_x \left( -h u_x + \frac{3}{2} b_x u \right)
%    \\
%    &=
%    - h (h^2 u u_x)_x
%    + \frac{3}{2} h (h b_x u^2)_x
%    - h^2 u (h u_x)_x
%    + \frac{3}{2} h^2 u (b_x u)_x
%    \\
%    &\quad
%    + h^2 h_x u u_x
%    - \frac{3}{2} h h_x b_x u^2
%    + h^3 u_x^2
%    - \frac{3}{2} h^2 b_x u u_x
%    \\
%    &
=&
    - h (h^2 u u_x)_x
    - h^2 u (h u_x)_x
    + h^2 h_x u u_x
    + h^3 u_x^2
    \\
    &
    + \frac{3}{2} h (h b_x u^2)_x
    + \frac{3}{2} h^2 u (b_x u)_x
    - \frac{3}{2} h h_x b_x u^2
    - \frac{3}{2} h^2 b_x u u_x,
\end{aligned}
\end{equation}
obtained with some manipulations of the definition of $w$. This leads to
the split form
\begin{equation}
\label{eq:SGN_original_mild_prim_split}
\begin{aligned}
    & h_t
        + h_x u
        + h u_x
        = 0,\\
    & h u_t
        - \frac{1}{3} (h^3 u_{tx})_x
        + \frac{1}{2} (h^2 b_x u_t)_x
        - \frac{1}{2} h^2 b_x u_{tx}
        + \frac{3}{4} h b_x^2 u_t
        + g(h (h + b))_x
        - g (h + b) h_x
        \\
        &\qquad
        + \frac{1}{2} h (u^2)_x
        - \frac{1}{2} h_x u^2
        + \frac{1}{2} (h u)_x u
        - \frac{1}{2} h u u_x
        + p_x
        + \frac{3}{2} \frac{p}{h} b_x
        = 0,\\
    & p = \frac{1}{2} h^3 u_x^2
        + \frac{1}{2} h^2 h_x u u_x
        - \frac{1}{6} h (h^2 u u_x)_x
        - \frac{1}{6} h^2 u (h u_x)_x
        \\
        &\qquad
        + \frac{1}{4} h (h b_x u^2)_x
        + \frac{1}{4} h^2 u (b_x u)_x
        - \frac{1}{4} h h_x b_x u^2
        - \frac{1}{4} h^2 b_x u u_x.
\end{aligned}
\end{equation}
As described in Remark~\ref{rem:future_work_split_form_pressure},
there are still other options of split forms of the non-hydrostatic
pressure terms that could be considered. We will not pursue this
further here.

% \subsection{Energy equation}
% \label{sec:SGN_original_mild_energy}

% The energy conservation law \eqref{eq:SGN_original_mild_energy} can
% be derived as usual.
% The energy rate is
% \begin{equation}
% \begin{aligned}
%     E_t
%     &=
%     \left(
%         g (h + b)
%         + \frac{1}{2} u^2
%         + \frac{1}{2} h^2 u_x^2
%         - h b_x u u_x
%         + \frac{3}{8} b_x^2 u^2
%     \right) h_t
%     \\
%     &\quad
%     + u \, h u_t
%     + \frac{1}{3} h^3 u_x u_{tx}
%     - \frac{1}{2} h^2 b_x u_x u_t
%     - \frac{1}{2} h^2 b_x u u_{tx}
%     + \frac{3}{4} h b_x^2 u u_t.
% \end{aligned}
% \end{equation}
% Thus, we multiply the first equation of \eqref{eq:SGN_original_mild_prim}
% by $h$ and the second equation by $h u$ to obtain

\subsection{Semidiscretization}

The split form \eqref{eq:SGN_original_mild_prim_split} leads to the
semidiscretization
\begin{equation}
\label{eq:SGN_original_mild_prim_SBP}
\begin{aligned}
    &
    \partial_t \pmb{h}
    + \pmb{u} D \pmb{h}
    + \pmb{h} D \pmb{u}
    = \pmb{0},
    \\
    &
    \left(
        \pmb{h}
        - \frac{1}{3} D \pmb{h}^3 D
        + \frac{1}{2} D \pmb{h}^2 (D \pmb{b})
        - \frac{1}{2} \pmb{h}^2 (D \pmb{b}) D
        + \frac{3}{4} \pmb{h} (D \pmb{b})^2
    \right) \partial_t \pmb{u}
    + g D \pmb{h} (\pmb{h} + \pmb{b})
    - g (\pmb{h} + \pmb{b}) D \pmb{h}
    \\
    &\qquad
    + \frac{1}{2} \pmb{h} D \pmb{u}^2
    - \frac{1}{2} \pmb{u}^2 D \pmb{h}
    + \frac{1}{2} \pmb{u} D \pmb{h} \pmb{u}
    - \frac{1}{2} \pmb{h} \pmb{u} D \pmb{u}
    + D \pmb{p}
    + \frac{3}{2} \frac{\pmb{p}}{\pmb{h}} D \pmb{b}
    = \pmb{0},
    \\
    &
    \pmb{p}
    =
    \frac{1}{2} \pmb{h}^3 (D \pmb{u})^2
    + \frac{1}{2} \pmb{h}^2 (D \pmb{h}) \pmb{u} D \pmb{u}
    - \frac{1}{6} \pmb{h} D (\pmb{h}^2 \pmb{u} D \pmb{u})
    - \frac{1}{6} \pmb{h}^2 \pmb{u} D \pmb{h} D \pmb{u}
    \\
    &\qquad
    + \frac{1}{4} \pmb{h} D \bigl( \pmb{h} (D \pmb{b}) \pmb{u}^2 \bigr)
    + \frac{1}{4} \pmb{h}^2 \pmb{u} D \bigl( (D \pmb{b}) \pmb{u} \bigr)
    - \frac{1}{4} \pmb{h} (D \pmb{h}) (D \pmb{b}) \pmb{u}^2
    - \frac{1}{4} \pmb{h}^2 (D \pmb{b}) \pmb{u} D \pmb{u}.
\end{aligned}
\end{equation}
The discrete total energy for \eqref{eq:SGN_original_mild_prim_SBP}
is $\pmb{1}^T M \pmb{E}$, where
\begin{equation}
\begin{aligned}
    \pmb{E}
    &=
    \frac{1}{2} g (\pmb{h} + \pmb{b})^2
    + \frac{1}{2} \pmb{h} \pmb{u}^2
    + \frac{1}{6} \pmb{h} \left( -\pmb{h} D \pmb{u} + \frac{3}{2} \pmb{u} D \pmb{b} \right)^2
    \\
    &=
    \frac{1}{2} g (\pmb{h} + \pmb{b})^2
    + \frac{1}{2} \pmb{h} \pmb{u}^2
    + \frac{1}{6} \pmb{h}^3 (D \pmb{u})^2
    - \frac{1}{2} \pmb{h}^2 (D \pmb{b}) (D \pmb{u}) \pmb{u}
    + \frac{3}{8} \pmb{h} (D \pmb{b})^2 \pmb{u}^2.
\end{aligned}
\end{equation}

\begin{theorem}
\label{thm:SGN_original_mild_prim_SBP}
    Consider the semidiscretization \eqref{eq:SGN_original_mild_prim_SBP}
    of the original Serre-Green-Naghdi equations with
    mild-slope approximation      \eqref{eq:SGN_original_mild_prim}
    using a periodic  SBP operator $D$ with  diagonal mass  matrix. The following holds.
    \begin{enumerate}
     \item The total water mass $\pmb{1}^T M \pmb{h}$ is conserved.
        \item The total momentum $\pmb{1}^T M \pmb{h} \pmb{u}$ is conserved if the bathymetry is constant.
        \item The total energy $\pmb{1}^T M \pmb{E}$ is conserved.
        \item The semidiscretization is well-balanced, i.e., it preserves
              the steady state $h + b \equiv \mathrm{const}$, $u \equiv 0$.
    \end{enumerate}
\end{theorem}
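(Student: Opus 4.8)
The plan is to dispose of the water-mass and momentum claims quickly using the earlier theorems, and then to devote the real work to the energy balance, with well-balancedness as a short postlude.

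For the total water mass, the first equation of \eqref{eq:SGN_original_mild_prim_SBP} coincides with the one in Theorem~\ref{thm:SGN_hyperbolic_flat_prim_SBP}, so $\partial_t(\pmb{1}^T M \pmb{h}) = -\pmb{u}^T M D \pmb{h} - \pmb{h}^T M D \pmb{u} = 0$ by \eqref{eq:SBP_periodic} and diagonality of $M$. For the momentum with constant bathymetry, consistency of $D$ gives $D\pmb{b} = \pmb{0}$; then the elliptic operator collapses to $\pmb{h} - \frac{1}{3} D \pmb{h}^3 D$, all $D\pmb{b}$-terms disappear from $\pmb{p}$, the term $\frac{3}{2}(\pmb{p}/\pmb{h}) D \pmb{b}$ vanishes, and $g D(\pmb{h}(\pmb{h}+\pmb{b})) - g(\pmb{h}+\pmb{b}) D \pmb{h}$ reduces to $g D \pmb{h}^2 - g \pmb{h} D \pmb{h}$, so \eqref{eq:SGN_original_mild_prim_SBP} becomes the flat semidiscretization \eqref{eq:SGN_original_flat_prim_SBP} and Theorem~\ref{thm:SGN_original_flat_prim_SBP} applies. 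Alternatively one argues directly that $\pmb{1}^T M$ annihilates every conservatively written pressure contribution because $D^T M \pmb{1} = \pmb{0}$, and the remaining split shallow-water terms as in Section~\ref{sec:split-forms-SWE}.

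For the energy I would mimic the proof of Theorem~\ref{thm:SGN_original_flat_prim_SBP_upwind}. Write $\pmb{w} := -\pmb{h} D \pmb{u} + \frac{3}{2} \pmb{u} D \pmb{b}$, so that $\pmb{E} = \frac{1}{2} g (\pmb{h}+\pmb{b})^2 + \frac{1}{2} \pmb{h} \pmb{u}^2 + \frac{1}{6} \pmb{h} \pmb{w}^2$, and differentiate $\pmb{1}^T M \pmb{E}$ in time by the ordinary nodal chain rule (legitimate because $M$ is diagonal, $\pmb{E}$ is a pointwise function of $\pmb{h}, \pmb{u}, D\pmb{u}, D\pmb{b}$, and $\partial_t(D\pmb{u}) = D \partial_t \pmb{u}$). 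This produces a $\partial_t \pmb{h}$-weighted part, a $\partial_t \pmb{u}$-weighted part, and a $D\partial_t\pmb{u}$-weighted part; I substitute the mass equation into the first and the momentum equation — after left-multiplication by $\pmb{u}^T M$ — into the other two. The terms coming from the elliptic operator $\pmb{h} - \frac{1}{3} D \pmb{h}^3 D + \frac{1}{2} D \pmb{h}^2 (D\pmb{b}) - \frac{1}{2} \pmb{h}^2 (D\pmb{b}) D + \frac{3}{4} \pmb{h} (D\pmb{b})^2$ acting on $\partial_t \pmb{u}$ recombine, through \eqref{eq:SBP_periodic}, with the time-derivative of $\frac{1}{6} \pmb{h} \pmb{w}^2$; this is the discrete image of the continuous identity for $h^2 w_t$ used to pass to \eqref{eq:SGN_original_mild_prim_split}. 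The pure shallow-water-with-bathymetry contributions then cancel exactly as in Section~\ref{sec:split-forms-SWE_var}, and the flat non-hydrostatic contributions as in Theorem~\ref{thm:SGN_original_flat_prim_SBP}. The genuinely new terms are the four $\frac{1}{4}$-terms of $\pmb{p}$, the term $\frac{3}{2}(\pmb{p}/\pmb{h}) D \pmb{b}$, and the $D\pmb{b}$-pieces of the elliptic operator; I expect these to assemble into a sum of expressions of the form $\pmb{a}^T M D \pmb{c} + \pmb{c}^T M D \pmb{a}$ together with the conservative-pressure cancellation $\pmb{u}^T M D \pmb{p} = -(D\pmb{u})^T M \pmb{p}$, all vanishing by \eqref{eq:SBP_periodic}. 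Checking this last cancellation is the main obstacle: with eight pressure terms and the extra $D\pmb{b}$-factors the bookkeeping is heavy, but the split form \eqref{eq:SGN_original_mild_prim_split} was engineered so that the continuous argument closes using only integration by parts, and every step there has an SBP counterpart.

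Finally, for well-balancedness set $\pmb{u} \equiv \pmb{0}$ and $\pmb{h} + \pmb{b} \equiv \mathrm{const}$. The mass equation gives $\partial_t \pmb{h} = -\pmb{u} D \pmb{h} - \pmb{h} D \pmb{u} = \pmb{0}$. Every term of $\pmb{p}$ carries a factor $\pmb{u}$ or $D\pmb{u} = \pmb{0}$, so $\pmb{p} = \pmb{0}$ and both $D \pmb{p}$ and $\frac{3}{2}(\pmb{p}/\pmb{h}) D \pmb{b}$ vanish; the hydrostatic terms $g D(\pmb{h}(\pmb{h}+\pmb{b})) - g(\pmb{h}+\pmb{b}) D \pmb{h}$ vanish because $\pmb{h}+\pmb{b}$ is constant, and the convective terms vanish because they carry factors $\pmb{u}$ or $D\pmb{u}$. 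Hence the momentum equation reduces to $\mathcal{E}\, \partial_t \pmb{u} = \pmb{0}$ with $\mathcal{E} = \pmb{h} - \frac{1}{3} D \pmb{h}^3 D + \frac{1}{2} D \pmb{h}^2 (D\pmb{b}) - \frac{1}{2} \pmb{h}^2 (D\pmb{b}) D + \frac{3}{4} \pmb{h} (D\pmb{b})^2$, and it remains to record — a short lemma in the style of the flat-bathymetry ones — that $\mathcal{E}$ is invertible for positive water height: $M \mathcal{E}$ is symmetric because $\frac{1}{2} M D \pmb{h}^2 (D\pmb{b})$ and $-\frac{1}{2} M \pmb{h}^2 (D\pmb{b}) D$ are mutual transposes under \eqref{eq:SBP_periodic}, and the associated quadratic form, expressed in the variables $\pmb{h}^{1/2}\pmb{v}$ and $\pmb{h}^{3/2} D\pmb{v}$, has positive leading coefficient and determinant $\frac{1}{3} > 0$, hence is positive definite. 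Therefore $\partial_t \pmb{u} = \pmb{0}$ and the lake-at-rest state is preserved.
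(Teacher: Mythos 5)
Your proposal follows essentially the same route as the paper: mass conservation from the unchanged continuity equation; momentum conservation by observing that constant bathymetry gives $D\pmb{b}=\pmb{0}$ (consistency of $D$) so that \eqref{eq:SGN_original_mild_prim_SBP} collapses to the flat scheme \eqref{eq:SGN_original_flat_prim_SBP}; energy conservation by pairing the elliptic-operator terms with the time derivative of $\frac{1}{6}\pmb{1}^T M\pmb{h}\pmb{w}^2$, $\pmb{w}=-\pmb{h}D\pmb{u}+\frac{3}{2}\pmb{u}D\pmb{b}$, and cancelling the rest via \eqref{eq:SBP_periodic}; and well-balancedness from the vanishing of all spatial terms together with invertibility of the elliptic operator. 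The only structural difference is that the paper obtains all of this as the special case $D_+=D_-=D$ of Theorem~\ref{thm:SGN_original_mild_prim_SBP_upwind} and Lemma~\ref{lem:SGN_original_mild_prim_SBP_upwind}, whereas you argue directly for central operators. Your positive-definiteness argument is in fact neater than the paper's: at each node the quadratic form in $\bigl(v_i,\,h_i(D\pmb{v})_i\bigr)$ has matrix $\bigl(\begin{smallmatrix}1+\frac{3}{4}(D\pmb{b})_i^2 & -\frac{1}{2}(D\pmb{b})_i\\ -\frac{1}{2}(D\pmb{b})_i & \frac{1}{3}\end{smallmatrix}\bigr)$ with determinant exactly $\frac{1}{3}$, which yields definiteness without the Cauchy--Schwarz/Young step of \eqref{eq:SGN_original_mild_prim_SBP_upwind_inequality}. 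The one place you stop short is part 3: you correctly identify that the $D\pmb{b}$-pieces of the elliptic operator reproduce, after summation by parts, the $\partial_t\pmb{u}$- and $D\partial_t\pmb{u}$-weighted contributions of the energy, but you only assert --- invoking the design of the continuous split form --- that the remaining bathymetry-dependent terms (the four $\frac{1}{4}$-terms of $\pmb{p}$, the $\frac{3}{2}(\pmb{p}/\pmb{h})D\pmb{b}$ term, the $g\pmb{b}$ contributions, and the extra weights $-\bigl(\pmb{h}(D\pmb{b})(D\pmb{u})\pmb{u}\bigr)^T M$ and $\frac{3}{8}\bigl((D\pmb{b})^2\pmb{u}^2\bigr)^T M$ coming from $\partial_{\pmb{h}}\pmb{E}$) assemble into pairs of the form $\pmb{a}^T M D\pmb{c}+\pmb{c}^T M D\pmb{a}$. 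That cancellation does hold, and the explicit list of these terms with their groupings is displayed in the proof of Theorem~\ref{thm:SGN_original_mild_prim_SBP_upwind}; to make your proof of energy conservation complete you would need to write out those terms and exhibit the pairings rather than appeal to the continuous derivation.
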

\begin{proof}
    This is a special case of the more general
    Theorem~\ref{thm:SGN_original_mild_prim_SBP_upwind} below
    with $D = D_- = D_+$.
\end{proof}

\begin{lemma}
\label{lem:SGN_original_mild_prim_SBP}
If   the water height is positive, then
    the discrete operator
    \begin{equation}
    \label{eq:SGN_original_mild_prim_SBP_operator}
        \pmb{h}
        - \frac{1}{3} D \pmb{h}^3 D
        + \frac{1}{2} D \pmb{h}^2 (D \pmb{b})
        - \frac{1}{2} \pmb{h}^2 (D \pmb{b}) D
        + \frac{3}{4} \pmb{h} (D \pmb{b})^2
    \end{equation}
    of \eqref{eq:SGN_original_mild_prim_SBP}
    is symmetric and positive definite with respect to the
    diagonal mass matrix $M$.
\end{lemma}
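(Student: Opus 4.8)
The plan is to exhibit a factorization of $M$ times the operator \eqref{eq:SGN_original_mild_prim_SBP_operator} that makes both symmetry and positive definiteness manifest, in the same spirit as the proofs of the two preceding lemmas for the flat-bathymetry operators $\pmb{h} - \frac{1}{3} D \pmb{h}^3 D$ and $\pmb{h} - \frac{1}{3} D_+ \pmb{h}^3 D_-$. Denote the operator of \eqref{eq:SGN_original_mild_prim_SBP_operator} by $A$ and introduce the discrete first-order operator
\begin{equation*}
    G := -\pmb{h} D + \tfrac{3}{2} \operatorname{diag}(D \pmb{b}),
\end{equation*}
so that $\pmb{w} = G \pmb{u} = -\pmb{h} D \pmb{u} + \tfrac{3}{2} \pmb{u}\, D\pmb{b}$ is exactly the discrete analogue of $w = -h u_x + \tfrac32 b_x u$ entering the discrete energy $\pmb{E}$ of \eqref{eq:SGN_original_mild_prim_SBP}. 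Since the contribution of $w$ to the discrete total energy $\pmb{1}^T M \pmb{E}$ equals $\tfrac16 (G\pmb{u})^T M \pmb{h} (G\pmb{u})$, the natural guess is the identity
\begin{equation*}
    M A = \pmb{h} M + \tfrac{1}{3} G^T (M \pmb{h}) G.
\end{equation*}

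First I would verify this identity by direct expansion of the right-hand side. The only tools needed are that $M$, $\pmb{h}$ (and its powers), and $\operatorname{diag}(D\pmb{b})$ are diagonal, hence commute pairwise, together with the periodic SBP relation $M D = -D^T M$ of \eqref{eq:SBP_periodic}. Expanding $G^T(M\pmb{h})G$ produces four terms: $D^T M \pmb{h}^3 D$, the two cross terms $-\tfrac32 D^T M \pmb{h}^2 (D\pmb{b})$ and $\tfrac32 \pmb{h}^2 (D\pmb{b}) D^T M$ (the second obtained from $-\tfrac32 \operatorname{diag}(D\pmb{b}) M \pmb{h}^2 D$ after one application of $MD = -D^T M$), and $\tfrac94 \pmb{h} (D\pmb{b})^2 M$. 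Dividing by $3$ and adding $\pmb{h} M$ matches, summand by summand, what one gets by multiplying $M$ onto the five terms of $A$ and using $MD = -D^T M$ wherever a $D$ sits on the right.

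Symmetry is then immediate: $\pmb{h} M$ is diagonal and $G^T(M\pmb{h})G$ is symmetric because $M\pmb{h}$ is a symmetric (diagonal) matrix. For positive definiteness, let $\pmb{v} \neq \pmb{0}$; then
\begin{equation*}
    \pmb{v}^T M A \pmb{v} = \pmb{v}^T \pmb{h} M \pmb{v} + \tfrac{1}{3} (G\pmb{v})^T (M\pmb{h}) (G\pmb{v}) \ge \pmb{v}^T \pmb{h} M \pmb{v} > 0,
\end{equation*}
since the water height $\pmb{h}$ is assumed positive and $M$ is symmetric positive definite, so $\pmb{h} M$ is symmetric positive definite and $M\pmb{h}$ is positive semidefinite. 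As a consistency check, for $b \equiv \mathrm{const}$ one has $D\pmb{b} = \pmb{0}$, $G = -\pmb{h} D$, and $G^T(M\pmb{h})G = D^T M \pmb{h}^3 D$, recovering the flat-bathymetry lemma.

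The only real work is discovering and checking the factorization; I expect the main obstacle to be purely notational, namely keeping track of which diagonal factors commute and where the single sign flip from $MD = -D^T M$ is applied, so that the two cross terms coming from $\tfrac12 D \pmb{h}^2 (D\pmb{b})$ and $-\tfrac12 \pmb{h}^2 (D\pmb{b}) D$ in $A$ assemble into the symmetric pair sitting inside $\tfrac13 G^T(M\pmb{h})G$. No estimate or genuinely new argument beyond the SBP property and positivity of $\pmb{h}$ is required.
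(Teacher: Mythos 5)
Your proof is correct. The factorization $M A = \pmb{h} M + \tfrac{1}{3} G^T (M\pmb{h}) G$ with $G = -\pmb{h} D + \tfrac{3}{2}\operatorname{diag}(D\pmb{b})$ does hold: expanding $G^T(M\pmb{h})G$ and using $MD = -D^T M$ together with commutativity of the diagonal factors reproduces exactly the five terms of $M$ times \eqref{eq:SGN_original_mild_prim_SBP_operator}, and symmetry and positive definiteness then follow at once. The paper instead proves the central-operator lemma as the special case $D=D_-=D_+$ of Lemma~\ref{lem:SGN_original_mild_prim_SBP_upwind}, whose proof establishes symmetry by a separate transpose computation on the cross terms and positivity by a Cauchy--Schwarz/Young chain with $\epsilon = 3/2$. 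The two arguments are essentially the same computation in disguise --- Young's inequality with $\epsilon=3/2$ is precisely the completing-the-square step, and indeed the paper's lower bound $\|\pmb{v}\|_{\pmb{h}M}^2$ is attained exactly because $\tfrac13 a^2 - ab + \tfrac34 b^2 = \tfrac13(a-\tfrac32 b)^2$ --- but your version packages symmetry and positivity into a single manifest identity, makes the connection to the $w$-part of the discrete energy $\tfrac16(G\pmb{u})^T M\pmb{h}\,G\pmb{u}$ explicit, and degenerates correctly to the flat-bathymetry lemma. It is worth noting that your factorization also carries over verbatim to the upwind operator of Lemma~\ref{lem:SGN_original_mild_prim_SBP_upwind} with $G_- = -\pmb{h}D_- + \tfrac32\operatorname{diag}(D\pmb{b})$ and $MD_+ = -D_-^T M$, so it could replace the paper's inequality-based argument there as well.
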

\begin{proof}
    This is a special case of the more general
    Lemma~\ref{lem:SGN_original_mild_prim_SBP_upwind} below
    with $D = D_- = D_+$.
\end{proof}

As before, the discrete operator
\eqref{eq:SGN_original_mild_prim_SBP_operator}
% \begin{equation}
%     \pmb{h}
%     - \frac{1}{3} D \pmb{h}^3 D
%     + \frac{1}{2} D \pmb{h}^2 (D \pmb{b})
%     - \frac{1}{2} \pmb{h}^2 (D \pmb{b}) D
%     + \frac{3}{4} \pmb{h} (D \pmb{b})^2
% \end{equation}
of \eqref{eq:SGN_original_mild_prim_SBP} includes a wide-stencil
approximation of the second derivative. Thus, we also use an
upwind version.

\begin{lemma}
\label{lem:SGN_original_mild_prim_SBP_upwind}
If   the water height is positive, then
the discrete operator
    \begin{equation}
        \pmb{h}
        - \frac{1}{3} D_+ \pmb{h}^3 D_-
        + \frac{1}{2} D_+ \pmb{h}^2 (D \pmb{b})
        - \frac{1}{2} \pmb{h}^2 (D \pmb{b}) D_-
        + \frac{3}{4} \pmb{h} (D \pmb{b})^2
    \end{equation}
    is symmetric and positive definite with respect to the
    diagonal mass matrix $M$.
\end{lemma}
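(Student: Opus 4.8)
The plan is to follow the pattern of the earlier symmetry/positivity lemmas: left-multiply the operator by the diagonal norm matrix $M$, use the upwind SBP relation to bring $M$ times the operator into a manifestly symmetric form, and then evaluate the associated quadratic form and bound it below by completing a square that reproduces the dispersive part of the discrete energy $\pmb{E}$.

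Write $\mathcal{A}$ for the operator in the statement, with $\pmb{h}$, $\pmb{h}^2$, $\pmb{h}^3$, $(D\pmb{b})$ and $(D\pmb{b})^2$ understood as the diagonal matrices of the corresponding pointwise values. The first step uses only the relation $M D_+ = -D_-^T M$ from \eqref{eq:SBP_upwind_periodic} together with the fact that $M$ commutes with every diagonal matrix, to rewrite
\[
M\mathcal{A} = \pmb{h} M + \frac{1}{3} D_-^T M \pmb{h}^3 D_- - \frac{1}{2} D_-^T M \pmb{h}^2 (D\pmb{b}) - \frac{1}{2} M \pmb{h}^2 (D\pmb{b}) D_- + \frac{3}{4} \pmb{h} (D\pmb{b})^2 M .
\]
The first, second and last summands are symmetric (the second because $M\pmb{h}^3$ is diagonal), and the two cross terms are mutual transposes since $M\pmb{h}^2(D\pmb{b})$ is diagonal. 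Hence $M\mathcal{A}$ is symmetric, i.e.\ $\mathcal{A}$ is self-adjoint with respect to the $M$-inner product.

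For positive definiteness, take $\pmb{v}\neq\pmb{0}$. The two cross terms of the quadratic form combine into $-(D_-\pmb{v})^T M\pmb{h}^2(D\pmb{b})\pmb{v}$, so, writing $m_i>0$ for the diagonal entries of $M$ and factoring out $m_i h_i$ at each node,
\[
\pmb{v}^T M\mathcal{A}\pmb{v} = \sum_i m_i h_i \left( v_i^2 + \frac{1}{3} h_i^2 (D_-\pmb{v})_i^2 - h_i (D\pmb{b})_i (D_-\pmb{v})_i v_i + \frac{3}{4} (D\pmb{b})_i^2 v_i^2 \right) .
\]
Completing the square turns the last three terms of the bracket into $\frac{1}{3}\bigl( h_i (D_-\pmb{v})_i - \frac{3}{2} (D\pmb{b})_i v_i \bigr)^2$, which is exactly the square appearing in the dispersive part of $\pmb{E}$. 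Therefore
\[
\pmb{v}^T M\mathcal{A}\pmb{v} = \pmb{v}^T M\pmb{h}\pmb{v} + \frac{1}{3}\left( \pmb{h} D_-\pmb{v} - \frac{3}{2} (D\pmb{b})\pmb{v} \right)^T M\pmb{h}\left( \pmb{h} D_-\pmb{v} - \frac{3}{2} (D\pmb{b})\pmb{v} \right) \ge \pmb{v}^T M\pmb{h}\pmb{v} > 0 ,
\]
where the strict inequality uses that $M\pmb{h}$ is diagonal with strictly positive entries, because $M$ is a positive-definite diagonal norm matrix and the water height is positive. This proves the claim; the special cases $D=D_+=D_-$ (Lemma~\ref{lem:SGN_original_mild_prim_SBP}) and $D\pmb{b}=\pmb{0}$ (flat bathymetry) follow at once.

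I do not expect a genuine obstacle. The only point requiring care is the bookkeeping of the upwind directions: one must keep $D_+\pmb{h}^3 D_-$, and the pairing of $D_+\pmb{h}^2(D\pmb{b})$ with $\pmb{h}^2(D\pmb{b})D_-$, exactly as written, so that after multiplying by $M$ a single operator $D_-$ appears on both sides of the dispersive block and the two cross terms are honest transposes of one another — which is precisely why the wide elliptic operator is discretized in this mixed $D_+\cdots D_-$ form.
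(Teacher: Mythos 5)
Your proof is correct and follows essentially the same route as the paper: multiply by $M$, use the upwind SBP relation to symmetrize, and bound the quadratic form below by $\|\pmb{v}\|_{\pmb{h}M}^2$. The only (cosmetic) difference is that you complete the square exactly, obtaining the identity $\pmb{v}^T M\mathcal{A}\pmb{v} = \|\pmb{v}\|_{\pmb{h}M}^2 + \tfrac{1}{3}\|\pmb{h}D_-\pmb{v} - \tfrac{3}{2}(D\pmb{b})\pmb{v}\|_{\pmb{h}M}^2$, whereas the paper reaches the same lower bound via Cauchy--Schwarz and Young's inequality with $\epsilon = 3/2$ --- the two computations are equivalent, yours being marginally sharper and making the link to the dispersive part of $\pmb{E}$ explicit.
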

\begin{proof}
    We have
    \begin{multline}
        M \left(
            \pmb{h}
            - \frac{1}{3} D_+ \pmb{h}^3 D_-
            + \frac{1}{2} D_+ \pmb{h}^2 (D \pmb{b})
            - \frac{1}{2} \pmb{h}^2 (D \pmb{b}) D_-
            + \frac{3}{4} \pmb{h} (D \pmb{b})^2
        \right)
        \\
        =
        \pmb{h} M
        + \frac{1}{3} D^T_- M \pmb{h}^3 D_-
        + \frac{1}{2} M D_+ \pmb{h}^2 (D \pmb{b})
        - \frac{1}{2} \pmb{h}^2 (D \pmb{b}) M D_-
        + \frac{3}{4} \pmb{h} (D \pmb{b})^2 M
    \end{multline}
    and
    \begin{equation*}
    \begin{aligned}
        \left(
            M D_+ \bigl(\pmb{h}^2 (D \pmb{b})\bigr)
            - \pmb{h}^2 (D \pmb{b}) M D_-
        \right)^T=
%        &=
        \bigl(\pmb{h}^2 (D \pmb{b})\bigr) D_+^T M
        - D_-^T M \bigl(\pmb{h}^2 (D \pmb{b})\bigr)=
%        \\
%        &=
        - \pmb{h}^2 (D \pmb{b}) M D_-
        + M D_+ \bigl(\pmb{h}^2 (D \pmb{b})\bigr).
    \end{aligned}
    \end{equation*}
    Thus, the operator is symmetric with respect to $M$. Now for any given  vector $\pmb{v}$, we have
    \begin{multline}
        \pmb{v}^T M \left(
            \pmb{h}
            - \frac{1}{3} D_+ \pmb{h}^3 D_-
            + \frac{1}{2} D_+ \pmb{h}^2 (D \pmb{b})
            - \frac{1}{2} \pmb{h}^2 (D \pmb{b}) D_-
            + \frac{3}{4} \pmb{h} (D \pmb{b})^2
        \right) \pmb{v}
        \\
        =
        \| \pmb{v} \|_{\pmb{h} M}^2
        + \frac{1}{3} \| \pmb{h} D_- \pmb{v} \|_{\pmb{h} M}^2
        - \langle \pmb{h} D_- \pmb{v}, (D \pmb{b}) \pmb{v} \rangle_{\pmb{h} M}
        + \frac{3}{4} \| (D \pmb{b}) \pmb{v} \|_{\pmb{h} M}^2.
    \end{multline}
    By the Cauchy-Schwarz and Young inequalities, we have
    \begin{equation}
    \label{eq:SGN_original_mild_prim_SBP_upwind_inequality}
    \begin{aligned}
        &\quad
        \| \pmb{v} \|_{\pmb{h} M}^2
        + \frac{1}{3} \| \pmb{h} D_- \pmb{v} \|_{\pmb{h} M}^2
        + \frac{3}{4} \| (D \pmb{b}) \pmb{v} \|_{\pmb{h} M}^2
        - \langle \pmb{h} D_- \pmb{v}, (D \pmb{b}) \pmb{v} \rangle_{\pmb{h} M}
        \\
        &\ge
        \| \pmb{v} \|_{\pmb{h} M}^2
        + \frac{1}{3} \| \pmb{h} D_- \pmb{v} \|_{\pmb{h} M}^2
        + \frac{3}{4} \| (D \pmb{b}) \pmb{v} \|_{\pmb{h} M}^2
        - \| \pmb{h} D_- \pmb{v} \|_{\pmb{h} M} \| (D \pmb{b}) \pmb{v} \|_{\pmb{h} M}
        \\
        &\ge
        \| \pmb{v} \|_{\pmb{h} M}^2
        + \frac{1}{3} \| \pmb{h} D_- \pmb{v} \|_{\pmb{h} M}^2
        + \frac{3}{4} \| (D \pmb{b}) \pmb{v} \|_{\pmb{h} M}^2
        - \frac{1}{2 \epsilon} \| \pmb{h} D_- \pmb{v} \|_{\pmb{h} M}^2
        - \frac{\epsilon}{2} \| (D \pmb{b}) \pmb{v} \|_{\pmb{h} M}^2
        %\\
        %&
        =
        \| \pmb{v} \|_{\pmb{h} M}^2
    \end{aligned}
    \end{equation}
    for $\epsilon = 3 / 2$.
\end{proof}

The use of the above operators    leads to the semidiscretization
\begin{equation}
\label{eq:SGN_original_mild_prim_SBP_upwind}
\begin{aligned}
    &
    \partial_t \pmb{h}
    + \pmb{u} D \pmb{h}
    + \pmb{h} D \pmb{u}
    = \pmb{0},
    \\
    &
    \left(
        \pmb{h}
        - \frac{1}{3} D_+ \pmb{h}^3 D_-
        + \frac{1}{2} D_+ \pmb{h}^2 (D \pmb{b})
        - \frac{1}{2} \pmb{h}^2 (D \pmb{b}) D_-
        + \frac{3}{4} \pmb{h} (D \pmb{b})^2
    \right) \partial_t \pmb{u}
    + g D \pmb{h} (\pmb{h} + \pmb{b})
    - g (\pmb{h} + \pmb{b}) D \pmb{h}
    \\
    &\qquad
    + \frac{1}{2} \pmb{h} D \pmb{u}^2
    - \frac{1}{2} \pmb{u}^2 D \pmb{h}
    + \frac{1}{2} \pmb{u} D \pmb{h} \pmb{u}
    - \frac{1}{2} \pmb{h} \pmb{u} D \pmb{u}
    + D_+ \pmb{p}_+
    + D \pmb{p}_0
    + \frac{3}{2} \frac{\pmb{p}_+ + \pmb{p}_0}{\pmb{h}} D \pmb{b}
    = \pmb{0},
    \\
    &
    \pmb{p}_+
    =
    \frac{1}{2} \pmb{h}^3 (D \pmb{u}) D_- \pmb{u}
    + \frac{1}{2} \pmb{h}^2 (D \pmb{h}) \pmb{u} D_- \pmb{u}
    - \frac{1}{4} \pmb{h}^2 (D \pmb{b}) \pmb{u} D \pmb{u}
    - \frac{1}{4} \pmb{h} (D \pmb{h}) (D \pmb{b}) \pmb{u}^2,
    \\
    &
    \pmb{p}_0
    =
    - \frac{1}{6} \pmb{h} D (\pmb{h}^2 \pmb{u} D \pmb{u})
    - \frac{1}{6} \pmb{h}^2 \pmb{u} D (\pmb{h} D \pmb{u})
    + \frac{1}{4} \pmb{h} D (\pmb{h} (D \pmb{b}) \pmb{u}^2)
    + \frac{1}{4} \pmb{h}^2 \pmb{u} D \bigl((D \pmb{b}) \pmb{u} \bigr).
\end{aligned}
\end{equation}
The discrete total energy for \eqref{eq:SGN_original_mild_prim_SBP_upwind}
is $\pmb{1}^T M \pmb{E}$, where
\begin{equation}
\begin{aligned}
    \pmb{E}
    &=
    \frac{1}{2} g (\pmb{h} + \pmb{b})^2
    + \frac{1}{2} \pmb{h} \pmb{u}^2
    + \frac{1}{6} \pmb{h} \left( -\pmb{h} D_- \pmb{u} + \frac{3}{2} \pmb{u} D \pmb{b} \right)^2
    \\
    &=
    \frac{1}{2} g (\pmb{h} + \pmb{b})^2
    + \frac{1}{2} \pmb{h} \pmb{u}^2
    + \frac{1}{6} \pmb{h}^3 (D_- \pmb{u})^2
    - \frac{1}{2} \pmb{h}^2 (D \pmb{b}) (D_- \pmb{u}) \pmb{u}
    + \frac{3}{8} \pmb{h} (D \pmb{b})^2 \pmb{u}^2.
\end{aligned}
\end{equation}

\begin{theorem}
\label{thm:SGN_original_mild_prim_SBP_upwind}
    Consider the semidiscretization \eqref{eq:SGN_original_mild_prim_SBP_upwind}
    of the original Serre-Green-Naghdi equations with
    mild-slope approximation for varying bathymetry
    \eqref{eq:SGN_original_mild_prim}
    with periodic first-derivative upwind SBP operators $D_\pm$
    inducing the central operator $D = (D_+ + D_-) / 2$ with
    diagonal mass/norm matrix.
    \begin{enumerate}
       \item The total water mass $\pmb{1}^T M \pmb{h}$ is conserved
        \item The total momentum $\pmb{1}^T M \pmb{h} \pmb{u}$ is conserved
              for constant bathymetry.
        \item The total energy $\pmb{1}^T M \pmb{E}$ is conserved.
        \item The semidiscretization is well-balanced, i.e., it preserves
              the steady state $h + b \equiv \mathrm{const}$, $u \equiv 0$.
    \end{enumerate}
\end{theorem}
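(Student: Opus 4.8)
The plan is to reuse the template of the proofs of Theorem~\ref{thm:SGN_original_flat_prim_SBP_upwind} and Theorem~\ref{thm:SGN_hyperbolic_variable_prim_SBP}, superposing the upwind non-hydrostatic-pressure argument of the former onto the variable-bathymetry bookkeeping of the latter. Conservation of the total water mass is immediate: the first equation of \eqref{eq:SGN_original_mild_prim_SBP_upwind} is unchanged, so $\partial_t(\pmb{1}^T M \pmb{h}) = -\pmb{u}^T M D \pmb{h} - \pmb{h}^T M D \pmb{u} = 0$ by the SBP property \eqref{eq:SBP_periodic} and the diagonality of $M$, exactly as in Theorem~\ref{thm:SGN_hyperbolic_flat_prim_SBP}. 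For the total momentum with constant bathymetry, $D \pmb{b} = \pmb{0}$, so every term carrying a factor $D \pmb{b}$ drops and \eqref{eq:SGN_original_mild_prim_SBP_upwind} collapses onto the flat scheme \eqref{eq:SGN_original_flat_prim_SBP_upwind} (in particular $g D \pmb{h}(\pmb{h} + \pmb{b}) - g(\pmb{h} + \pmb{b}) D \pmb{h}$ reduces to $g D \pmb{h}^2 - g \pmb{h} D \pmb{h}$, the remaining $g \pmb{b}\, D \pmb{h}$ contributions cancelling pointwise); momentum conservation then follows verbatim from Theorem~\ref{thm:SGN_original_flat_prim_SBP_upwind}.

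For energy conservation I would, as in the proof of Theorem~\ref{thm:SGN_original_flat_prim_SBP_upwind}, introduce the discrete field $\pmb{w} := -\pmb{h} D_- \pmb{u} + \frac{3}{2} \pmb{u} D \pmb{b}$, so that $\pmb{1}^T M \pmb{E}$ equals the discrete shallow-water-with-bathymetry energy plus $\frac{1}{6} \pmb{w}^T M \pmb{h} \pmb{w}$, and then multiply the mass equation of \eqref{eq:SGN_original_mild_prim_SBP_upwind} by $(\partial_{\pmb{h}} \pmb{E})^T M$ and the momentum equation by $\pmb{u}^T M$ and add. The key algebraic fact, an immediate consequence of the SBP identities $M D_+ = -D_-^T M$ and $M D = -D^T M$ (the identities behind Lemma~\ref{lem:SGN_original_mild_prim_SBP_upwind}), is that $M\bigl(-\frac{1}{3} D_+ \pmb{h}^3 D_- + \frac{1}{2} D_+ \pmb{h}^2 (D \pmb{b}) - \frac{1}{2} \pmb{h}^2 (D \pmb{b}) D_- + \frac{3}{4} \pmb{h}(D \pmb{b})^2\bigr) = \frac{1}{3} B^T M \pmb{h} B$ with $B := -\pmb{h} D_- + \frac{3}{2} D \pmb{b}$; hence the elliptic-operator term contracted with $\pmb{u}^T M$ reproduces exactly the $\partial_t \pmb{u}$-contribution of $\partial_t(\frac{1}{6} \pmb{w}^T M \pmb{h} \pmb{w})$, so that the sum reassembles into $\partial_t(\pmb{1}^T M \pmb{E})$ and what remains is a collection of spatial terms that must vanish. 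These split into two blocks: the shallow-water-plus-bathymetry terms, which cancel exactly as in Section~\ref{sec:split-forms-SWE_var} and in the proof of Theorem~\ref{thm:SGN_hyperbolic_variable_prim_SBP}; and the non-hydrostatic pressure terms $\pmb{u}^T M\bigl(D_+ \pmb{p}_+ + D \pmb{p}_0 + \frac{3}{2} \frac{\pmb{p}_+ + \pmb{p}_0}{\pmb{h}} D \pmb{b}\bigr)$, which after moving $D_+$ and $D$ onto $\pmb{u}$ become $\pmb{p}_+^T M D_- \pmb{u} + \pmb{p}_0^T M D \pmb{u} + \frac{3}{2}\bigl(\frac{\pmb{p}_+ + \pmb{p}_0}{\pmb{h}} D \pmb{b}\bigr)^T M \pmb{u}$; inserting the definitions of $\pmb{p}_+$ and $\pmb{p}_0$ from \eqref{eq:SGN_original_mild_prim_SBP_upwind}, every leftover term pairs with exactly one other into one of the forms $\pmb{a}^T M D_+ \pmb{b} + \pmb{b}^T M D_- \pmb{a}$ or $\pmb{a}^T M D \pmb{b} + \pmb{b}^T M D \pmb{a}$, which vanish for diagonal $M$ by \eqref{eq:SBP_periodic}--\eqref{eq:SBP_upwind_periodic}. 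This is exactly what the split form \eqref{eq:SGN_original_mild_prim_split} and the dual variables \eqref{eq:dual_mild} were engineered to guarantee.

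For well-balancedness, set $\pmb{u} \equiv \pmb{0}$ and $\pmb{h} + \pmb{b} = c \pmb{1}$ for a constant $c$. Then $D \pmb{u} = \pmb{0}$, so the first equation gives $\partial_t \pmb{h} = \pmb{0}$. In the momentum equation every convective and dispersive term carries a factor $\pmb{u}$, $D \pmb{u}$, or $D_- \pmb{u}$ and hence vanishes; in particular $\pmb{p}_+ = \pmb{p}_0 = \pmb{0}$, so the bathymetry-slope coupling $\frac{3}{2} \frac{\pmb{p}_+ + \pmb{p}_0}{\pmb{h}} D \pmb{b}$ vanishes too. The hydrostatic contribution $g D\bigl(\pmb{h}(\pmb{h} + \pmb{b})\bigr) - g(\pmb{h} + \pmb{b}) D \pmb{h}$ also vanishes, since $\pmb{h}(\pmb{h} + \pmb{b}) = c \pmb{h}$ gives $D(c \pmb{h}) = c\, D \pmb{h}$ while $(\pmb{h} + \pmb{b}) D \pmb{h} = c\, D \pmb{h}$ pointwise. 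Hence the elliptic operator of Lemma~\ref{lem:SGN_original_mild_prim_SBP_upwind} applied to $\partial_t \pmb{u}$ equals $\pmb{0}$, and since that operator is symmetric positive definite it is invertible, so $\partial_t \pmb{u} = \pmb{0}$ and the lake-at-rest state is preserved.

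The step I expect to be the main obstacle is the energy computation: one must track a long chain of cancellations that simultaneously involves the bathymetry-slope coupling $\frac{3}{2} \frac{p}{h} b_x$ in the momentum balance, the extra bathymetry term hidden inside the discrete $\pmb{w}$, and the careful pairing of the upwind operators $D_\pm$ with the central operator $D$ in both the elliptic operator and the pressure. Conceptually, however, nothing new is needed beyond the two arguments already carried out for Theorem~\ref{thm:SGN_original_flat_prim_SBP_upwind} (upwind non-hydrostatic pressure) and Theorem~\ref{thm:SGN_hyperbolic_variable_prim_SBP} (variable bathymetry), since the split form in Section~\ref{sec:SGN_original_mild} was constructed so that the continuous energy identity \eqref{eq:dual_mild} transfers term by term to the semidiscrete level.
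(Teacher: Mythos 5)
Your proposal is correct and follows essentially the same route as the paper's proof: mass conservation from the unchanged continuity equation, momentum conservation by reduction to the flat-bathymetry scheme when $D\pmb{b}=\pmb{0}$, energy conservation by contracting with the dual variables and pairing the leftover spatial terms via the SBP identities, and well-balancedness by term-by-term vanishing plus invertibility of the elliptic operator. Your factorization of the elliptic block as $\tfrac{1}{3}B^T M \pmb{h} B$ with $B=-\pmb{h}D_-+\tfrac{3}{2}(D\pmb{b})$ is a slightly tidier packaging of what the paper writes out termwise (and also explains Lemma~\ref{lem:SGN_original_mild_prim_SBP_upwind} at a glance), but it is not a different argument.
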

\begin{proof}
    Conservation of the total water mass follows as in the proof of
    Theorem~\ref{thm:SGN_hyperbolic_flat_prim_SBP} since the equation
    for $\pmb{h}$ is the same.    Conservation of the total momentum for constant bathymetry follows
    from Theorem~\ref{thm:SGN_original_flat_prim_SBP_upwind} since
    \eqref{eq:SGN_original_mild_prim_SBP_upwind} reduces to
    \eqref{eq:SGN_original_flat_prim_SBP_upwind} in this case.
We now evaluate  the time derivative of the  total energy:
    \begin{equation}
    \begin{aligned}
        &\quad
        \partial_t (\pmb{1}^T M \pmb{E})
        \\
        &=
        % h_t terms
        g (\pmb{h} + \pmb{b})^T M \partial_t \pmb{h}
        + \frac{1}{2} (\pmb{u}^2)^T M \partial_t \pmb{h}
        + \frac{1}{2} (\pmb{h}^2 (D_- \pmb{u})^2)^T M \partial_t \pmb{h}
        - \left( \pmb{h} (D \pmb{b}) (D_- \pmb{u}) \pmb{u} \right)^T M \partial_t \pmb{h}
        \\
        &\quad
        + \frac{3}{8} \left( (D \pmb{b})^2 \pmb{u}^2 \right)^T M \partial_t \pmb{h}
        % u_t terms
        + \pmb{u}^T M \pmb{h} \partial_t \pmb{u}
        + \frac{1}{3} \left( \pmb{h}^3 (D_- \pmb{u}) \right)^T M D_- \partial_t \pmb{u}
        - \frac{1}{2} \left( \pmb{h}^2 (D \pmb{b}) (D_- \pmb{u}) \right)^T M \partial_t \pmb{u}
        \\
        &\quad
        - \frac{1}{2} \left( \pmb{h}^2 (D \pmb{b}) \pmb{u} \right)^T M D_- \partial_t \pmb{u}
        + \frac{3}{4} \left( \pmb{h} (D \pmb{b})^2 \pmb{u} \right)^T M \partial_t \pmb{u}.
    \end{aligned}
    \end{equation}
    Thus, we multiply the first equation of
    \eqref{eq:SGN_original_mild_prim_SBP_upwind} by
    \begin{equation}
        g (\pmb{h} + \pmb{b})^T M
        + \frac{1}{2} (\pmb{u}^2)^T M
        + \frac{1}{2} (\pmb{h}^2 (D_- \pmb{u})^2)^T M
        - \left( \pmb{h} (D \pmb{b}) (D_- \pmb{u}) \pmb{u} \right)^T M
        + \frac{3}{8} \left( (D \pmb{b})^2 \pmb{u}^2 \right)^T M,
    \end{equation}
    the second equation by $\pmb{u}^T M$, and add them.
    Compared to the case of flat bathymetry in
    Theorem~\ref{thm:SGN_original_flat_prim_SBP_upwind},
    the additional spatial derivative terms are
    \begin{equation}
    \begin{aligned}
        &\quad
        % h_t terms
        g \pmb{b}^T M \left( \pmb{u} D \pmb{h} + \pmb{h} D \pmb{u} \right)
        - \left( \pmb{h} (D \pmb{b}) (D_- \pmb{u}) \pmb{u} \right)^T M \left( \pmb{u} D \pmb{h} + \pmb{h} D \pmb{u} \right)
        + \frac{3}{8} \left( (D \pmb{b})^2 \pmb{u}^2 \right)^T M \left( \pmb{u} D \pmb{h} + \pmb{h} D \pmb{u} \right)
        \\
        &\quad
        % h u_t terms
        + g \pmb{u}^T M D \pmb{h} \pmb{b}
        - g (\pmb{b} \pmb{u})^T M D \pmb{h}
        % h u_t terms: D p
        + \frac{1}{4} \pmb{u}^T M D \pmb{h} D \bigl( \pmb{h} (D \pmb{b}) \pmb{u}^2 \bigr)
        + \frac{1}{4} \pmb{u}^T M D \pmb{h}^2 \pmb{u} D \bigl( (D \pmb{b}) \pmb{u} \bigr)
        \\
        &\quad
        - \frac{1}{4} \pmb{u}^T M D_+ \pmb{h} (D \pmb{h}) (D \pmb{b}) \pmb{u}^2
        - \frac{1}{4} \pmb{u}^T M D_+ \pmb{h}^2 (D \pmb{b}) \pmb{u} (D \pmb{u})
        % % h u_t terms: 3/2 p/h D b
        + \frac{3}{4} \pmb{1}^T M \pmb{h}^2 (D \pmb{b}) \pmb{u} (D_- \pmb{u}) (D \pmb{u})
        \\
        &\quad
        + \frac{3}{4} \pmb{1}^T M \pmb{h} (D \pmb{h}) (D \pmb{b}) \pmb{u}^2 (D_- \pmb{u})
        - \frac{1}{4} \bigl( (D \pmb{b}) \pmb{u} \bigr)^T M D (\pmb{h}^2 \pmb{u} D \pmb{u})
        - \frac{1}{4} (\pmb{h} (D \pmb{b}) \pmb{u}^2)^T M D (\pmb{h} D \pmb{u})
        \\
        &\quad
        + \frac{3}{8} \bigl( (D \pmb{b}) \pmb{u} \bigr)^T M D \bigl( \pmb{h} (D \pmb{b}) \pmb{u}^2 \bigr)
        + \frac{3}{8} \bigl( \pmb{h} (D \pmb{b}) \pmb{u}^2 \bigr)^T M D \bigl( (D \pmb{b}) \pmb{u} \bigr)
        - \frac{3}{8} \pmb{1}^T M (D \pmb{h}) (D \pmb{b})^2 \pmb{u}^3
        \\
        &\quad
        - \frac{3}{8} \pmb{1}^T M \pmb{h} (D \pmb{b})^2 \pmb{u}^2 (D \pmb{u})
    %    \\
        %&
        =
        % \left( -1 + \frac{1}{4} + \frac{3}{4} \right) \pmb{1}^T M \pmb{h} (D \pmb{h}) (D \pmb{b}) \pmb{u}^2 (D \pmb{u})
        % + \left( -1 + \frac{1}{4} + \frac{3}{4} \right) \pmb{1}^T M \pmb{h}^2 (D \pmb{b}) \pmb{u} (D \pmb{u})^2
        % + \left( \frac{3}{8} - \frac{3}{8} \right) \pmb{1}^T M (D \pmb{h}) (D \pmb{b})^2 \pmb{u}^3
        % + \left( \frac{3}{8} - \frac{3}{8} \right) \pmb{1}^T M \pmb{h} (D \pmb{b})^2 \pmb{u}^2 (D \pmb{u})
        % - \frac{1}{4} (\pmb{h} D \pmb{u})^T M D \bigl( \pmb{h} (D \pmb{b}) \pmb{u}^2 \bigr)
        % - \frac{1}{4} (\pmb{h} (D \pmb{b}) \pmb{u}^2)^T M D (\pmb{h} D \pmb{u})
        % - \frac{1}{4} (\pmb{h}^2 \pmb{u} D \pmb{u})^T M D \bigl( (D \pmb{b}) \pmb{u} \bigr)
        % - \frac{1}{4} \bigl( (D \pmb{b}) \pmb{u} \bigr)^T M D (\pmb{h}^2 \pmb{u} D \pmb{u})
        % + \frac{3}{8} \bigl( (D \pmb{b}) \pmb{u} \bigr)^T M D \bigl( \pmb{h} (D \pmb{b}) \pmb{u}^2 \bigr)
        % + \frac{3}{8} \bigl( \pmb{h} (D \pmb{b}) \pmb{u}^2 \bigr)^T M D \bigl( (D \pmb{b}) \pmb{u} \bigr)
        0,
    \end{aligned}
    \end{equation}
%    Thus, the total energy is conserved.
proving  energy conservation.      For well-balanced,  if  $h + b \equiv \mathrm{const}$ and $u \equiv 0$ we have
 $\partial_t \pmb{h} = \pmb{0}$, and      $\pmb{p} = \pmb{0}$ so
%    $\partial_t \pmb{u}$ becomes
    \begin{equation}
        \left(
            \pmb{h}
            - \frac{1}{3} D \pmb{h}^3 D
            + \frac{1}{2} D \pmb{h}^2 (D \pmb{b})
            - \frac{1}{2} \pmb{h}^2 (D \pmb{b}) D
            + \frac{3}{4} \pmb{h} (D \pmb{b})^2
        \right) \partial_t \pmb{u}
        % + \frac{1}{2} D \pmb{h} \pmb{u}^2
        % - \frac{1}{2} \pmb{u}^2 D \pmb{h}
        + \underbrace{%
            g D \pmb{h} (\pmb{h} + \pmb{b})
            - g (\pmb{h} + \pmb{b}) D \pmb{h}
        }_{= \pmb{0}}
        % + D \pmb{p}
        % + \frac{3}{2} \frac{\pmb{p}}{\pmb{h}} D \pmb{b}
        = \pmb{0}.
    \end{equation}
%    Thus, the late-at-rest state is preserved discretely.
\end{proof}

\begin{remark}
\label{rem:future_work_upwind_form_pressure2}
    One could also choose other non-hydrostatic pressure terms.
    In addition to the split form versions mentioned in
    Remark~\ref{rem:future_work_split_form_pressure}, one could
    distribute the upwind terms differently. For example,
    one could use
    \begin{equation}
        D \left( -\frac{1}{4} \pmb{h}^2 (D \pmb{b}) \pmb{u} D_- \pmb{u} \right)
    \end{equation}
    in $D \pmb{p}_0$ instead of
    \begin{equation}
        D_+ \left( -\frac{1}{4} \pmb{h}^2 (D \pmb{b}) \pmb{u} D \pmb{u} \right)
    \end{equation}
    in $D_+ \pmb{p}_+$ since the contribution to the energy rate of both choices
    is the same (when multiplied by $\pmb{u}^T M$).
    Investigating this further is left for future work.
\end{remark}

\section{Original Serre-Green-Naghdi equations with variable bathymetry:
         full system without mild-slope approximation}
\label{sec:SGN_original_full}

As an extension of the previous section, we propose here   a split form of the classical SGN system with full
bathymetric variations,  which we write for the moment as
\begin{equation}
\label{eq:SGN_original_full}
\begin{split}
    & h_t
        + (h u)_x = 0,\\
    & h u_t
        + g h h_x
        + h u u_x
        + (h \pi)_x
        + \left( g h + \dfrac{3}{2} \pi \right) b_x
        + \phi b_x
        = 0,\\
    & h w_t
        + h u w_x = 3 \pi,\\
    & w
        + hu_x - \dfrac{3}{2} \omega = 0,\\
    & h \omega_t
        + h u \omega_x = 4 \phi,\\
    & \omega - u b_x = 0,
\end{split}
\end{equation}
with the last four relations defining
$\pi$, $w$, $\phi$, and $\omega$.
We can now check that the energy conservation law
\begin{equation}
\label{eq:SGN_original_full_energy}
    \biggl( \underbrace{\frac{1}{2} g (h + b)^2 + \frac{1}{2} h u^2 + \frac{1}{6} h w^2 + \frac{1}{8} h \omega^2}_{= E} \biggr)_t
    + \biggl( \underbrace{g h (h + b) u + \frac{1}{2} h u^3 + \frac{1}{6} h u w^2 + \frac{1}{8} h u \omega^2 + h \pi u}_{= F} \biggr)_x = 0
\end{equation}
holds.
We can show that the last two definitions allow to write
\begin{equation}
    E_t
    =
    \left( g (h + b) + \frac{1}{2} u^2 + \frac{1}{6} w^2 + \frac{1}{8} \omega^2 \right) h_t
    + h u u_t
    + \frac{1}{3} h w w_t
    + \frac{1}{4} h \omega \omega_t.
\end{equation}
As before, the flux variation requires exploiting all the auxiliary variables
\begin{equation}
\begin{aligned}
    F_x
    &=
    \left( g (h + b) + \frac{1}{2} u^2 + \frac{1}{6} w^2 + \frac{1}{8} \omega^2 \right) (h u)_x
    + u \left( g h h_x + h u u_x + (h \pi)_x + (g h + 3 \pi / 2 ) b_x + \phi b_x \right)
    \\
    &\quad
    + \dfrac{w}{3} ( h u w_x - 3 \pi )
    + \pi (w + h u_x - 3 \omega / 2)
    + \dfrac{\omega}{4} ( h u \omega_x - 4 \phi ) + \phi (\omega - b_x u).
\end{aligned}
\end{equation}

To mimic these relations, using the results obtained so far,
 we consider  the following split form:
\begin{equation}
\begin{split}
    & h_t
        + h_x u
        + h u_x
        = 0,\\
    & h u_t
        + g(h (h + b))_x
        - g (h + b) h_x
        \\
        &\qquad
        + \frac{1}{2} h (u^2)_x
        - \frac{1}{2} h_x u^2
        + \frac{1}{2} (h u)_x u
        - \frac{1}{2} h u u_x
        + (h \pi)_x
        + \dfrac{3}{2} \pi b_x
        + \phi b_x
        = 0,\\
    & h w_t
        + \dfrac{1}{2} (h u w)_x
        + \dfrac{1}{2} h u w_x
        - \dfrac{1}{2} h_x u w
        - \dfrac{1}{2} h u_x w
        = 3 \pi,\\
    & w + hu_x - \frac{3}{2} \omega = 0,\\
    & h \omega_t
        + \dfrac{1}{2} (h u \omega)_x
        + \dfrac{1}{2} h u \omega_x
        - \dfrac{1}{2} h_x u \omega
        - \dfrac{1}{2} h u_x \omega
        = 4 \phi,\\
    & \omega - b_x u = 0.
\end{split}
\end{equation}
One can easily check that this formulation  is compatible, up to integration by parts,
with  mass and energy conservation, as well as
 well-balanced wrt   states  at rest and   constant $h + b$.
Compared to the mild-slope approximation \eqref{eq:SGN_original_mild_prim},
we only have the additional term $+ \phi b_x$ in the momentum equation,
where
\begin{equation}
\begin{aligned}
    \phi
    &=
    \frac{1}{4} h b_x u_t
    + \frac{1}{8} (h b_x u^2)_x
    + \frac{1}{8} h u (b_x u)_x
    - \frac{1}{8} h_x b_x u^2
    - \frac{1}{8} h b_x u u_x.
\end{aligned}
\end{equation}
Following a similar procedure as done in the flat  and mild slope cases,  we can write
\begin{equation}
\label{eq:SGN_original_full_prim_split}
\begin{aligned}
    & h_t
        + h_x u
        + h u_x
        = 0,\\
    & h u_t
        - \frac{1}{3} (h^3 u_{tx})_x
        + \frac{1}{2} (h^2 b_x u_t)_x
        - \frac{1}{2} h^2 b_x u_{tx}
        + h b_x^2 u_t
        + g(h (h + b))_x
        - g (h + b) h_x
        \\
        &\qquad
        + \frac{1}{2} h (u^2)_x
        - \frac{1}{2} h_x u^2
        + \frac{1}{2} (h u)_x u
        - \frac{1}{2} h u u_x
        + p_x
        + \frac{3}{2} \frac{p}{h} b_x
        + \psi b_x
        = 0,\\
    & p = \frac{1}{2} h^3 u_x^2
        + \frac{1}{2} h^2 h_x u u_x
        - \frac{1}{6} h (h^2 u u_x)_x
        - \frac{1}{6} h^2 u (h u_x)_x
        \\
        &\qquad
        + \frac{1}{4} h (h b_x u^2)_x
        + \frac{1}{4} h^2 u (b_x u)_x
        - \frac{1}{4} h h_x b_x u^2
        - \frac{1}{4} h^2 b_x u u_x,\\
    & \psi =
          \frac{1}{8} (h b_x u^2)_x
        + \frac{1}{8} h u (b_x u)_x
        - \frac{1}{8} h_x b_x u^2
        - \frac{1}{8} h b_x u u_x.
\end{aligned}
\end{equation}
Compared to the split form \eqref{eq:SGN_original_mild_prim_split} of the
mild-slope approximation, we have the following differences:
\begin{itemize}
    \item the term $h b_x^2 u_t$ has a factor of unity instead of $3 / 4$
    \item the terms $(h b_x u^2)_x + h u (b_x u)_x - h_x b_x u^2 - h b_x u u_x$
          appearing in $3 p b_x / (2 h)$
          have the factor $1 / 2$ instead of $3 / 8$
          due to the additional term $\psi b_x$
\end{itemize}

\subsection{Semidiscretization}

The split form \eqref{eq:SGN_original_full_prim_split} leads to the
semidiscretization
\begin{equation}
\label{eq:SGN_original_full_prim_SBP}
\begin{aligned}
    &
    \partial_t \pmb{h}
    + \pmb{u} D \pmb{h}
    + \pmb{h} D \pmb{u}
    = \pmb{0},
    \\
    &
    \left(
        \pmb{h}
        - \frac{1}{3} D \pmb{h}^3 D
        + \frac{1}{2} D \pmb{h}^2 (D \pmb{b})
        - \frac{1}{2} \pmb{h}^2 (D \pmb{b}) D
        + \pmb{h} (D \pmb{b})^2
    \right) \partial_t \pmb{u}
    + g D \pmb{h} (\pmb{h} + \pmb{b})
    - g (\pmb{h} + \pmb{b}) D \pmb{h}
    \\
    &\qquad
    + \frac{1}{2} \pmb{h} D \pmb{u}^2
    - \frac{1}{2} \pmb{u}^2 D \pmb{h}
    + \frac{1}{2} \pmb{u} D \pmb{h} \pmb{u}
    - \frac{1}{2} \pmb{h} \pmb{u} D \pmb{u}
    + D \pmb{p}
    + \frac{3}{2} \frac{\pmb{p}}{\pmb{h}} D \pmb{b}
    + \pmb{\psi} D \pmb{b}
    = \pmb{0},
    \\
    &
    \pmb{p}
    =
    \frac{1}{2} \pmb{h}^3 (D \pmb{u})^2
    + \frac{1}{2} \pmb{h}^2 (D \pmb{h}) \pmb{u} D \pmb{u}
    - \frac{1}{6} \pmb{h} D (\pmb{h}^2 \pmb{u} D \pmb{u})
    - \frac{1}{6} \pmb{h}^2 \pmb{u} D \pmb{h} D \pmb{u}
    \\
    &\qquad
    + \frac{1}{4} \pmb{h} D \bigl( \pmb{h} (D \pmb{b}) \pmb{u}^2 \bigr)
    + \frac{1}{4} \pmb{h}^2 \pmb{u} D \bigl( (D \pmb{b}) \pmb{u} \bigr)
    - \frac{1}{4} \pmb{h} (D \pmb{h}) (D \pmb{b}) \pmb{u}^2
    - \frac{1}{4} \pmb{h}^2 (D \pmb{b}) \pmb{u} D \pmb{u},
    \\
    &
    \pmb{\psi}
    =
      \frac{1}{8} D \bigl( \pmb{h} (D \pmb{b}) \pmb{u}^2 \bigr)
    + \frac{1}{8} \pmb{h} \pmb{u} D \bigl( (D \pmb{b}) \pmb{u} \bigr)
    - \frac{1}{8} (D \pmb{h}) (D \pmb{b}) \pmb{u}^2
    - \frac{1}{8} \pmb{h} (D \pmb{b}) \pmb{u} D \pmb{u}.
\end{aligned}
\end{equation}

\begin{lemma}
\label{lem:SGN_original_full_prim_SBP}
If the water height is positive, then the discrete operator
    \begin{equation}
        \pmb{h}
        - \frac{1}{3} D \pmb{h}^3 D
        + \frac{1}{2} D \pmb{h}^2 (D \pmb{b})
        - \frac{1}{2} \pmb{h}^2 (D \pmb{b}) D
        + \pmb{h} (D \pmb{b})^2
    \end{equation}
    is symmetric and positive definite with respect to the
    diagonal mass matrix $M$.
\end{lemma}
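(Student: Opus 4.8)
The plan is to mirror the proof of Lemma~\ref{lem:SGN_original_mild_prim_SBP_upwind} almost verbatim. The operator here differs from the one in \eqref{eq:SGN_original_mild_prim_SBP_operator} in only two harmless ways: all derivative operators are the single central operator $D$ (rather than the upwind pair $D_\pm$), and the coefficient of the zeroth-order term $\pmb{h}(D\pmb{b})^2$ is $1$ instead of $3/4$. The larger coefficient only gives more room in the final inequality, so nothing breaks.

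First I would establish symmetry with respect to $M$. Multiplying on the left by $M$ and using the periodic SBP property \eqref{eq:SBP_periodic} in the form $MD = -D^T M$ gives $M\bigl(\pmb{h} - \frac13 D\pmb{h}^3 D + \frac12 D\pmb{h}^2(D\pmb{b}) - \frac12 \pmb{h}^2(D\pmb{b})D + \pmb{h}(D\pmb{b})^2\bigr) = \pmb{h}M + \frac13 D^T M \pmb{h}^3 D + \frac12 MD\pmb{h}^2(D\pmb{b}) - \frac12 \pmb{h}^2(D\pmb{b})MD + \pmb{h}(D\pmb{b})^2 M$. The pieces $\pmb{h}M$ and $\pmb{h}(D\pmb{b})^2 M$ are symmetric because $M$ and $\pmb{h}$ are diagonal, $\frac13 D^T M \pmb{h}^3 D$ is manifestly symmetric, and the two cross terms are exchanged by transposition (using $D^T M = -MD$ again), so their sum is symmetric.

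For positive definiteness I would, for an arbitrary $\pmb{v}$, expand the quadratic form into $\|\pmb{v}\|_{\pmb{h}M}^2 + \frac13 \|\pmb{h}D\pmb{v}\|_{\pmb{h}M}^2 - \langle \pmb{h}D\pmb{v}, (D\pmb{b})\pmb{v}\rangle_{\pmb{h}M} + \|(D\pmb{b})\pmb{v}\|_{\pmb{h}M}^2$, exactly as in \eqref{eq:SGN_original_mild_prim_SBP_upwind_inequality} but with the last coefficient equal to $1$. Then Cauchy--Schwarz bounds the cross term by $-\|\pmb{h}D\pmb{v}\|_{\pmb{h}M}\|(D\pmb{b})\pmb{v}\|_{\pmb{h}M}$, and Young's inequality with parameter $\epsilon$ absorbs this into $-\frac{1}{2\epsilon}\|\pmb{h}D\pmb{v}\|_{\pmb{h}M}^2 - \frac{\epsilon}{2}\|(D\pmb{b})\pmb{v}\|_{\pmb{h}M}^2$. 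Choosing $\epsilon = 3/2$ leaves $\pmb{v}^T M(\cdots)\pmb{v} \ge \|\pmb{v}\|_{\pmb{h}M}^2 + \frac14\|(D\pmb{b})\pmb{v}\|_{\pmb{h}M}^2 \ge \|\pmb{v}\|_{\pmb{h}M}^2 > 0$ for $\pmb{v}\neq 0$, since the water height is positive and $M$ is symmetric positive definite.

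There is essentially no obstacle; the only point to check is that the Young parameter stays in the admissible window $[3/2,2]$ dictated by $\frac13 - \frac{1}{2\epsilon}\ge 0$ and $1 - \frac{\epsilon}{2}\ge 0$, and $\epsilon=3/2$ does so while reproducing the clean bound. If one later wanted an upwind variant of \eqref{eq:SGN_original_full_prim_SBP} (as for the mild-slope case), the same argument applies after replacing $-\frac13 D\pmb{h}^3 D$ by $-\frac13 D_+\pmb{h}^3 D_-$ and the inner $D$'s in the cross terms by $D_\pm$, with $D_-\pmb{v}$ playing the role of $D\pmb{v}$ throughout.
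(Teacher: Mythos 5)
Your proof is correct and follows essentially the same route as the paper: the paper disposes of this lemma by observing that the operator is the mild-slope operator of Lemma~\ref{lem:SGN_original_mild_prim_SBP} plus the positive semidefinite term $\pmb{h}(D\pmb{b})^2/4$, and that earlier lemma is proved by exactly the symmetry argument and Cauchy--Schwarz/Young estimate (with $\epsilon = 3/2$) that you carry out here with the full coefficient $1$. Your unfolded version is equally valid and even yields the marginally sharper lower bound $\|\pmb{v}\|_{\pmb{h}M}^2 + \tfrac{1}{4}\|(D\pmb{b})\pmb{v}\|_{\pmb{h}M}^2$.
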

\begin{proof}
    Compared to Lemma~\ref{lem:SGN_original_mild_prim_SBP}, we have an
    additional term
    $\pmb{h} (D \pmb{b})^2 / 4$.
\end{proof}

The discrete total energy for \eqref{eq:SGN_original_full_prim_SBP}
is $\pmb{1}^T M \pmb{E}$, where
\begin{equation}
\begin{aligned}
    \pmb{E}
    &=
    \frac{1}{2} g (\pmb{h} + \pmb{b})^2
    + \frac{1}{2} \pmb{h} \pmb{u}^2
    + \frac{1}{6} \pmb{h} \left( -\pmb{h} D \pmb{u} + \frac{3}{2} \pmb{u} D \pmb{b} \right)^2
    + \frac{1}{8} \pmb{h} (\pmb{u} D \pmb{b})^2
    \\
    &=
    \frac{1}{2} g (\pmb{h} + \pmb{b})^2
    + \frac{1}{2} \pmb{h} \pmb{u}^2
    + \frac{1}{6} \pmb{h}^3 (D \pmb{u})^2
    - \frac{1}{2} \pmb{h}^2 (D \pmb{b}) (D \pmb{u}) \pmb{u}
    + \frac{1}{2} \pmb{h} (D \pmb{b})^2 \pmb{u}^2.
\end{aligned}
\end{equation}

\begin{theorem}
\label{thm:SGN_original_full_prim_SBP}
    Consider the semidiscretization \eqref{eq:SGN_original_full_prim_SBP}
    of the original Serre-Green-Naghdi equations without
    mild-slope approximation for varying bathymetry
    \eqref{eq:SGN_original_full}
    using a periodic first-derivative SBP operator $D$ with
    diagonal mass/norm matrix.
    \begin{enumerate}
     \item The total water mass $\pmb{1}^T M \pmb{h}$ is conserved.
        \item The total momentum $\pmb{1}^T M \pmb{h} \pmb{u}$ is conserved if the bathymetry is constant.
        \item The total energy $\pmb{1}^T M \pmb{E}$ is conserved.
        \item The semidiscretization is well-balanced, i.e., it preserves
              the steady state $h + b \equiv \mathrm{const}$, $u \equiv 0$.
    \end{enumerate}
\end{theorem}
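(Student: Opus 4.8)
The plan is to follow the template of the proofs of Theorems~\ref{thm:SGN_original_flat_prim_SBP} and \ref{thm:SGN_original_mild_prim_SBP}, treating the semidiscretization \eqref{eq:SGN_original_full_prim_SBP} as a perturbation of the mild-slope one \eqref{eq:SGN_original_mild_prim_SBP}. Conservation of the total water mass is immediate: the $\pmb{h}$-equation is unchanged, so the computation in the proof of Theorem~\ref{thm:SGN_hyperbolic_flat_prim_SBP} applies verbatim. For the total momentum with constant bathymetry, I would observe that $D \pmb{b} = \pmb{0}$ makes $\pmb{\psi} = \pmb{0}$, collapses $\pmb{p}$ to the flat-bathymetry non-hydrostatic pressure of \eqref{eq:SGN_original_flat_prim_SBP}, reduces the operator acting on $\partial_t \pmb{u}$ to $\pmb{h} - \tfrac13 D \pmb{h}^3 D$, and turns $g D \pmb{h}(\pmb{h}+\pmb{b}) - g(\pmb{h}+\pmb{b}) D \pmb{h}$ into $g D \pmb{h}^2 - g \pmb{h} D \pmb{h}$; hence \eqref{eq:SGN_original_full_prim_SBP} reduces exactly to \eqref{eq:SGN_original_flat_prim_SBP} and Theorem~\ref{thm:SGN_original_flat_prim_SBP} applies.

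For the total energy I would compute $\partial_t (\pmb{1}^T M \pmb{E})$ directly, following the structure of the proof of Theorem~\ref{thm:SGN_original_mild_prim_SBP_upwind} specialized to $D_+ = D_- = D$. One multiplies the $\pmb{h}$-equation by the row vector collecting the explicit $\pmb{h}$-derivatives of the energy density (holding $\pmb{u}$ and $D\pmb{u}$ fixed), multiplies the $\pmb{u}$-equation by $\pmb{u}^T M$, adds the two, and uses the symmetry asserted in Lemma~\ref{lem:SGN_original_full_prim_SBP} to identify the contribution of the implicit operator acting on $\partial_t \pmb{u}$ with the remaining part of $\partial_t(\pmb{1}^T M \pmb{E})$; this is precisely why the symmetry statement in that lemma is needed. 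Relative to the mild-slope computation, the only changes are (i) the coefficient $1$ in place of $3/4$ multiplying $\pmb{h}(D\pmb{b})^2$ in the operator, (ii) the coefficient $1/2$ in place of $3/8$ multiplying $\pmb{h}(D\pmb{b})^2\pmb{u}^2$ in $\pmb{E}$, and (iii) the new term $\pmb{\psi} D\pmb{b}$ in the momentum equation. I would collect exactly these new contributions; by construction they form the discrete image of the continuous identity recorded just above \eqref{eq:SGN_original_full_prim_split}, built from $\omega = b_x u$, $h\omega_t + h u \omega_x = 4\phi$, the flux piece $\tfrac18 h u \omega^2$, and the relation $\phi(\omega - b_x u) = 0$. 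Each resulting piece has the form $\pmb{a}^T M D \pmb{c} + \pmb{c}^T M D \pmb{a}$ and therefore cancels by the SBP property \eqref{eq:SBP_periodic} for a diagonal norm matrix, exactly as the $w$-terms did in the earlier proofs.

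Well-balancedness follows as in Theorem~\ref{thm:SGN_original_mild_prim_SBP}: for $u \equiv 0$ and $h + b \equiv \mathrm{const}$ we get $\partial_t \pmb{h} = \pmb{0}$, while $\pmb{p} = \pmb{0}$ and $\pmb{\psi} = \pmb{0}$ because every term in $\pmb{p}$ and $\pmb{\psi}$ carries a factor $\pmb{u}$ or $D\pmb{u}$; the hydrostatic terms satisfy $g D \pmb{h}(\pmb{h}+\pmb{b}) - g(\pmb{h}+\pmb{b}) D \pmb{h} = \pmb{0}$ since $D(\pmb{h}+\pmb{b}) = \pmb{0}$, so the operator of Lemma~\ref{lem:SGN_original_full_prim_SBP} applied to $\partial_t \pmb{u}$ equals $\pmb{0}$; being symmetric and positive definite it is invertible, hence $\partial_t \pmb{u} = \pmb{0}$.

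The main obstacle I anticipate is the energy bookkeeping: although no genuinely new cancellation mechanism appears, one must track carefully how the three coefficient changes (i)--(iii) interact so that all $D\pmb{b}$-weighted terms reassemble into $\pmb{a}^T M D \pmb{c} + \pmb{c}^T M D \pmb{a}$ pairs, and in particular check that the weight $1$ (rather than $3/4$) on $\pmb{h}(D\pmb{b})^2$ is the one consistent with the modified energy density. This is the analogue of the Young-inequality balance (with $\epsilon = 3/2$) used in \eqref{eq:SGN_original_mild_prim_SBP_upwind_inequality}, and its well-definedness is already guaranteed by Lemma~\ref{lem:SGN_original_full_prim_SBP}.
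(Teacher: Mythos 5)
Your proposal is correct and follows essentially the same route as the paper: the paper proves this theorem by specializing the upwind result (Theorem~\ref{thm:SGN_original_full_prim_SBP_upwind}) with $D = D_- = D_+$, whose proof does exactly what you describe — mass, momentum, and well-balancedness are inherited from the mild-slope case, and the energy balance is verified by collecting the extra $(D\pmb{b})$-weighted contributions (the $\tfrac18\pmb{h}(D\pmb{b})^2\pmb{u}^2$ energy increment, the modified operator coefficient, and $\pmb{\psi}D\pmb{b}$) and cancelling them via the SBP pairing. No gap.
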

\begin{proof}
    This is a special case of the more general result
    Theorem~\ref{thm:SGN_original_full_prim_SBP_upwind} below.
\end{proof}

Analogously, we can derive the upwind version of the split form as
\begin{equation}
\label{eq:SGN_original_full_prim_SBP_upwind}
\begin{aligned}
    &
    \partial_t \pmb{h}
    + \pmb{u} D \pmb{h}
    + \pmb{h} D \pmb{u}
    = \pmb{0},
    \\
    &
    \left(
        \pmb{h}
        - \frac{1}{3} D_+ \pmb{h}^3 D_-
        + \frac{1}{2} D_+ \pmb{h}^2 (D \pmb{b})
        - \frac{1}{2} \pmb{h}^2 (D \pmb{b}) D_-
        + \pmb{h} (D \pmb{b})^2
    \right) \partial_t \pmb{u}
    + g D \pmb{h} (\pmb{h} + \pmb{b})
    - g (\pmb{h} + \pmb{b}) D \pmb{h}
    \\
    &\qquad
    + \frac{1}{2} \pmb{h} D \pmb{u}^2
    - \frac{1}{2} \pmb{u}^2 D \pmb{h}
    + \frac{1}{2} \pmb{u} D \pmb{h} \pmb{u}
    - \frac{1}{2} \pmb{h} \pmb{u} D \pmb{u}
    + D_+ \pmb{p}_+
    + D \pmb{p}_0
    + \frac{3}{2} \frac{\pmb{p}_+ + \pmb{p}_0}{\pmb{h}} D \pmb{b}
    + \pmb{\psi} D \pmb{b}
    = \pmb{0},
    \\
    &
    \pmb{p}_+
    =
    \frac{1}{2} \pmb{h}^3 (D \pmb{u}) D_- \pmb{u}
    + \frac{1}{2} \pmb{h}^2 (D \pmb{h}) \pmb{u} D_- \pmb{u}
    - \frac{1}{4} \pmb{h}^2 (D \pmb{b}) \pmb{u} D \pmb{u}
    - \frac{1}{4} \pmb{h} (D \pmb{h}) (D \pmb{b}) \pmb{u}^2,
    \\
    &
    \pmb{p}_0
    =
    - \frac{1}{6} \pmb{h} D (\pmb{h}^2 \pmb{u} D \pmb{u})
    - \frac{1}{6} \pmb{h}^2 \pmb{u} D (\pmb{h} D \pmb{u})
    + \frac{1}{4} \pmb{h} D (\pmb{h} (D \pmb{b}) \pmb{u}^2)
    + \frac{1}{4} \pmb{h}^2 \pmb{u} D \bigl((D \pmb{b}) \pmb{u} \bigr),
    \\
    &
    \pmb{\psi}
    =
      \frac{1}{8} D \bigl( \pmb{h} (D \pmb{b}) \pmb{u}^2 \bigr)
    + \frac{1}{8} \pmb{h} \pmb{u} D \bigl( (D \pmb{b}) \pmb{u} \bigr)
    - \frac{1}{8} \pmb{h} (D \pmb{b}) \pmb{u} D \pmb{u}
    - \frac{1}{8} (D \pmb{h}) (D \pmb{b}) \pmb{u}^2.
\end{aligned}
\end{equation}
\begin{lemma}
\label{lem:SGN_original_full_prim_SBP_upwind}
If the water height is positive, then the discrete operator
    \begin{equation}
        \pmb{h}
        - \frac{1}{3} D_+ \pmb{h}^3 D_-
        + \frac{1}{2} D_+ \pmb{h}^2 (D \pmb{b})
        - \frac{1}{2} \pmb{h}^2 (D \pmb{b}) D_-
        + \pmb{h} (D \pmb{b})^2
    \end{equation}
    is symmetric and positive definite with respect to the
    diagonal mass matrix $M$.
\end{lemma}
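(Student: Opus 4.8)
The plan is to follow the proof of Lemma~\ref{lem:SGN_original_mild_prim_SBP_upwind} almost verbatim, since the operator here, which we denote by $\mathcal{A}$, differs from the mild-slope one only by having the coefficient $1$ in front of $\pmb{h}(D\pmb{b})^2$ instead of $3/4$. First I would establish symmetry of $\mathcal{A}$ with respect to $M$. Multiplying by $M$ and invoking the upwind SBP relation \eqref{eq:SBP_upwind_periodic} in the form $M D_+ = -D_-^T M$, the term $-\frac{1}{3} M D_+ \pmb{h}^3 D_-$ turns into $\frac{1}{3} D_-^T M \pmb{h}^3 D_-$, which is manifestly symmetric; the two diagonal contributions $\pmb{h} M$ and $\pmb{h}(D\pmb{b})^2 M$ are symmetric because $M$, $\pmb{h}$ and $D\pmb{b}$ are diagonal; and the pair $\frac{1}{2} M D_+ \pmb{h}^2(D\pmb{b}) - \frac{1}{2}\pmb{h}^2(D\pmb{b}) M D_-$ is symmetric, since transposing it and using the SBP relation interchanges the two summands. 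This is exactly the computation already displayed in the proof of Lemma~\ref{lem:SGN_original_mild_prim_SBP_upwind}.

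Next I would prove positive definiteness. For an arbitrary vector $\pmb{v}$ I evaluate the quadratic form $\pmb{v}^T M \mathcal{A} \pmb{v}$; using the SBP relation and the commutativity of the diagonal matrices $M$, $\pmb{h}$, $\pmb{h}^3$ and $D\pmb{b}$, the cross terms collapse to a single inner product and one obtains
\begin{equation*}
    \pmb{v}^T M \mathcal{A} \pmb{v}
    =
    \| \pmb{v} \|_{\pmb{h} M}^2
    + \frac{1}{3} \| \pmb{h} D_- \pmb{v} \|_{\pmb{h} M}^2
    - \langle \pmb{h} D_- \pmb{v}, (D \pmb{b}) \pmb{v} \rangle_{\pmb{h} M}
    + \| (D \pmb{b}) \pmb{v} \|_{\pmb{h} M}^2 ,
\end{equation*}
which differs from the identity underlying \eqref{eq:SGN_original_mild_prim_SBP_upwind_inequality} only in the coefficient of the last term. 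Bounding the indefinite inner product by Cauchy--Schwarz and Young's inequality,
\begin{equation*}
    - \langle \pmb{h} D_- \pmb{v}, (D \pmb{b}) \pmb{v} \rangle_{\pmb{h} M}
    \ge
    - \frac{1}{2\epsilon} \| \pmb{h} D_- \pmb{v} \|_{\pmb{h} M}^2
    - \frac{\epsilon}{2} \| (D \pmb{b}) \pmb{v} \|_{\pmb{h} M}^2 ,
\end{equation*}
gives $\pmb{v}^T M \mathcal{A} \pmb{v} \ge \| \pmb{v} \|_{\pmb{h} M}^2 + \left( \frac{1}{3} - \frac{1}{2\epsilon} \right) \| \pmb{h} D_- \pmb{v} \|_{\pmb{h} M}^2 + \left( 1 - \frac{\epsilon}{2} \right) \| (D \pmb{b}) \pmb{v} \|_{\pmb{h} M}^2$. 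Any $\epsilon \in [3/2, 2]$ makes both bracketed coefficients nonnegative, so $\pmb{v}^T M \mathcal{A} \pmb{v} \ge \| \pmb{v} \|_{\pmb{h} M}^2 > 0$ whenever $\pmb{v} \ne \pmb{0}$ (using $\pmb{h} > 0$ and positive definiteness of $M$), which together with the symmetry established above proves the claim.

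I do not anticipate a real obstacle: the argument is structurally identical to the one for Lemma~\ref{lem:SGN_original_mild_prim_SBP_upwind}. The only point worth stressing is that the larger coefficient $1$ (versus $3/4$) in front of $\pmb{h}(D\pmb{b})^2$ actually \emph{widens} the admissible range of the Young parameter from the single value $\epsilon = 3/2$ to the whole interval $[3/2,2]$, so the estimate survives with room to spare; and, as in the mild-slope case, one must repeatedly use that $M$, $\pmb{h}$, $\pmb{h}^2$, $\pmb{h}^3$ and $D\pmb{b}$ all commute when rearranging the bilinear forms.
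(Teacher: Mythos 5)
Your proposal is correct and takes essentially the same route as the paper: the paper's proof simply notes that, relative to Lemma~\ref{lem:SGN_original_mild_prim_SBP_upwind}, the operator gains an extra $+\pmb{h}(D\pmb{b})^2/4$, which contributes an additional nonnegative term $+\|(D\pmb{b})\pmb{v}\|^2_{\pmb{h}M}/4$ to the quadratic-form estimate \eqref{eq:SGN_original_mild_prim_SBP_upwind_inequality}, so symmetry and positive definiteness carry over unchanged. Your more explicit rerun of the Young inequality with $\epsilon\in[3/2,2]$ is just a spelled-out version of the same observation.
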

\begin{proof}
    Compared to Lemma~\ref{lem:SGN_original_mild_prim_SBP_upwind},
    we have an additional term
    $+ \pmb{h} (D \pmb{b})^2 / 4$ in the operator,
    leading to an additional term
    $+ \| (D \pmb{b}) \pmb{v} \|^2_{\pmb{h} M} / 4 \ge 0$ in
    \eqref{eq:SGN_original_mild_prim_SBP_upwind_inequality}.
\end{proof}
The discrete total energy of the semidiscretization is $\pmb{1}^T M \pmb{E}$, where
\begin{equation}
\begin{aligned}
    \pmb{E}
    &=
    \frac{1}{2} g (\pmb{h} + \pmb{b})^2
    + \frac{1}{2} \pmb{h} \pmb{u}^2
    + \frac{1}{6} \pmb{h} \left( -\pmb{h} D_- \pmb{u} + \frac{3}{2} \pmb{u} D \pmb{b} \right)^2
    + \frac{1}{8} \pmb{h} \left( \pmb{u} D \pmb{b} \right)^2
    \\
    &=
    \frac{1}{2} g (\pmb{h} + \pmb{b})^2
    + \frac{1}{2} \pmb{h} \pmb{u}^2
    + \frac{1}{6} \pmb{h}^3 (D_- \pmb{u})^2
    - \frac{1}{2} \pmb{h}^2 (D \pmb{b}) (D_- \pmb{u}) \pmb{u}
    + \frac{1}{2} \pmb{h} (D \pmb{b})^2 \pmb{u}^2.
\end{aligned}
\end{equation}

\begin{theorem}
\label{thm:SGN_original_full_prim_SBP_upwind}
    Consider the semidiscretization \eqref{eq:SGN_original_full_prim_SBP_upwind}
    of the original Serre-Green-Naghdi equations without
    mild-slope approximation for varying bathymetry \eqref{eq:SGN_original_full}
    with periodic first-derivative upwind SBP operators $D_\pm$
    inducing the central operator $D = (D_+ + D_-) / 2$ with
    diagonal mass/norm matrix.
    \begin{enumerate}
      \item The total water mass $\pmb{1}^T M \pmb{h}$ is conserved.
        \item The total momentum $\pmb{1}^T M \pmb{h} \pmb{u}$ is conserved
              if the bathymetry is constant.
        \item The total energy $\pmb{1}^T M \pmb{E}$ is conserved.
        \item The semidiscretization is well-balanced, i.e., it preserves
              the steady state $h + b \equiv \mathrm{const}$, $u \equiv 0$.
    \end{enumerate}
\end{theorem}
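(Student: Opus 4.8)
The plan is to follow the proof of Theorem~\ref{thm:SGN_original_mild_prim_SBP_upwind} almost verbatim, viewing \eqref{eq:SGN_original_full_prim_SBP_upwind} as a perturbation of the mild-slope semidiscretization \eqref{eq:SGN_original_mild_prim_SBP_upwind} and tracking only the new contributions. The equation for $\pmb{h}$ is unchanged, so $\partial_t(\pmb{1}^T M \pmb{h}) = -\pmb{u}^T M D\pmb{h} - \pmb{h}^T M D\pmb{u} = 0$ by the SBP property \eqref{eq:SBP_periodic} for diagonal $M$, exactly as in the proof of Theorem~\ref{thm:SGN_hyperbolic_flat_prim_SBP}. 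If the bathymetry is constant then $D\pmb{b} = \pmb{0}$, the extra term $\pmb{\psi} D\pmb{b}$ and the term $\tfrac32 (\pmb{p}_+ + \pmb{p}_0)\,\pmb{h}^{-1}\, D\pmb{b}$ drop out, $\pmb{p}_\pm$ reduce to their flat-bathymetry form, and the operator acting on $\partial_t\pmb{u}$ becomes $\pmb{h} - \tfrac13 D_+ \pmb{h}^3 D_-$; thus \eqref{eq:SGN_original_full_prim_SBP_upwind} collapses to \eqref{eq:SGN_original_flat_prim_SBP_upwind} and momentum conservation follows from Theorem~\ref{thm:SGN_original_flat_prim_SBP_upwind}.

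For energy, I would use the expanded form
\begin{equation*}
\pmb{E} = \tfrac12 g(\pmb{h}+\pmb{b})^2 + \tfrac12 \pmb{h}\pmb{u}^2 + \tfrac16 \pmb{h}^3 (D_-\pmb{u})^2 - \tfrac12 \pmb{h}^2 (D\pmb{b})(D_-\pmb{u})\pmb{u} + \tfrac12 \pmb{h}(D\pmb{b})^2\pmb{u}^2,
\end{equation*}
differentiate $\pmb{1}^T M \pmb{E}$, multiply the $\pmb{h}$-equation by the transpose of $M\,\partial_{\pmb{h}}\pmb{E}$, multiply the $\pmb{u}$-equation by $\pmb{u}^T M$, and add, as in the proof of Theorem~\ref{thm:SGN_original_mild_prim_SBP_upwind}. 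All terms common to the two computations cancel there verbatim. The genuinely new pieces come from exactly two coefficient changes relative to the mild-slope case — the coefficient of $\pmb{h}(D\pmb{b})^2\pmb{u}^2$ in $\pmb{E}$ is $\tfrac12$ instead of $\tfrac38$, and the coefficient of $\pmb{h}(D\pmb{b})^2$ in the operator is $1$ instead of $\tfrac34$ — together with the new momentum term $\pmb{\psi} D\pmb{b}$. The extra $\tfrac14 \pmb{h}(D\pmb{b})^2 \partial_t\pmb{u}$ in the operator, paired with $\pmb{u}^T M$, exactly accounts for the $\partial_t\pmb{u}$-part of the change in $\partial_t(\tfrac12 \pmb{1}^T M \pmb{h}(D\pmb{b})^2\pmb{u}^2)$; what remains to cancel is the corresponding $\partial_t\pmb{h}$-part, namely $\tfrac18 \big((D\pmb{b})^2\pmb{u}^2\big)^T M \partial_t\pmb{h}$, against $-\pmb{u}^T M \pmb{\psi} D\pmb{b}$. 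Substituting $\partial_t\pmb{h} = -\pmb{u}\,D\pmb{h} - \pmb{h}\,D\pmb{u}$ and the definition of $\pmb{\psi}$ from \eqref{eq:SGN_original_full_prim_SBP_upwind}, these regroup into a finite sum of terms of the form $\pmb{a}^T M D_+ \pmb{c} + \pmb{c}^T M D_- \pmb{a}$ (or with the central $D$), which vanish by \eqref{eq:SBP_periodic}/\eqref{eq:SBP_upwind_periodic} for diagonal $M$ — the same mechanism used for the $\tfrac18$-type terms in Theorem~\ref{thm:SGN_original_mild_prim_SBP_upwind}. Hence $\partial_t(\pmb{1}^T M \pmb{E}) = 0$.

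For well-balancedness, take $\pmb{u} = \pmb{0}$ and $\pmb{h}+\pmb{b}$ constant. Then $\partial_t\pmb{h} = \pmb{0}$ from the first equation, and every term of $\pmb{p}_+$, $\pmb{p}_0$, and $\pmb{\psi}$ contains a factor $\pmb{u}$ or $D\pmb{u}$ and so vanishes; the hydrostatic contribution reduces to $g D\big(\pmb{h}(\pmb{h}+\pmb{b})\big) - g(\pmb{h}+\pmb{b})D\pmb{h} = \pmb{0}$ since $\pmb{h}+\pmb{b}$ is a constant scalar. The momentum equation then reads $\big(\pmb{h} - \tfrac13 D_+\pmb{h}^3 D_- + \tfrac12 D_+ \pmb{h}^2 (D\pmb{b}) - \tfrac12 \pmb{h}^2 (D\pmb{b}) D_- + \pmb{h}(D\pmb{b})^2\big)\partial_t\pmb{u} = \pmb{0}$; by Lemma~\ref{lem:SGN_original_full_prim_SBP_upwind} this operator is symmetric positive definite with respect to $M$, so $\partial_t\pmb{u} = \pmb{0}$ and the lake-at-rest state is preserved.

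The only real work is the bookkeeping in the energy step: one must check that the few new $(D\pmb{b})^2$- and $\pmb{\psi}$-dependent terms genuinely regroup into SBP-cancelling pairs. No new idea is required beyond the mild-slope argument; the main obstacle is simply keeping track of the two coefficient shifts $\tfrac38 \to \tfrac12$ and $\tfrac34 \to 1$ together with the new term $\pmb{\psi} D\pmb{b}$ and confirming their mutual cancellation.
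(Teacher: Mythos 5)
Your proposal is correct and follows essentially the same route as the paper: both treat \eqref{eq:SGN_original_full_prim_SBP_upwind} as a perturbation of the mild-slope case, reduce mass/momentum/well-balancedness to the earlier theorems, note that the extra $\tfrac14 \pmb{h}(D\pmb{b})^2$ in the elliptic operator absorbs the $\partial_t\pmb{u}$-part of the new energy term, and cancel the remaining $\tfrac18\bigl((D\pmb{b})^2\pmb{u}^2\bigr)^T M \partial_t\pmb{h}$ piece against the $\pmb{\psi}\,D\pmb{b}$ contribution via the SBP property. Your sign bookkeeping in that last cancellation is in fact stated more carefully than in the paper's displayed computation, and the regrouping you assert does check out.
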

\begin{proof}
    Conservation of the total water mass and momentum as well as
    preservation of the steady
    state follow as in Theorem~\ref{thm:SGN_original_mild_prim_SBP_upwind}.
    Thus, we just check the rate of change of the total energy.
    Compared to the mild-slope case in
    Theorem~\ref{thm:SGN_original_mild_prim_SBP_upwind}, we get the following
    additional terms
    \begin{itemize}
        \item $+ (1/8) \bigl( (D \pmb{b})^2 \pmb{u}^2 \bigr)^T M \partial_t \pmb{h}$
              from the additional energy term and the rate of change of $\pmb{h}$
        \item $+ (1/4) \pmb{u}^T M \pmb{h} (D \pmb{b}) \partial_t \pmb{u}$
              from the additional energy term and the rate of change of $\pmb{u}$
        \item $+ \pmb{u}^T M \pmb{\psi} D \pmb{b} = \bigl( (D \pmb{b}) \pmb{u} \bigr)^T M \pmb{\psi}$
              from the additional term involving the bottom topography
    \end{itemize}
    The additional term involving the time derivative of $\pmb{u}$ is included
    in the linear operator that we need to invert. Adding the remaining terms
    yields
    \begin{equation*}
    \begin{aligned}
%        &\quad
        \bigl( (D \pmb{b})^2 \pmb{u}^2 \bigr)^T M \partial_t \pmb{h}
        + 8 \pmb{u}^T M \pmb{\psi} D \pmb{b}
%        \\
       &
        =
        \bigl( (D \pmb{b})^2 \pmb{u}^2 \bigr)^T M \pmb{h} D \pmb{u}
        + \bigl( (D \pmb{b})^2 \pmb{u}^2 \bigr)^T M \pmb{u} D \pmb{h}
        + \bigl( (D \pmb{b}) \pmb{u} \bigr)^T M D \bigl( \pmb{h} (D \pmb{b}) \pmb{u}^2 \bigr)
        \\
%        &\quad
        &+ \bigl( (D \pmb{b}) \pmb{u} \bigr)^T M \pmb{h} \pmb{u} D \bigl( \pmb{u} (D \pmb{b}) \bigr)
        - \bigl( (D \pmb{b}) \pmb{u} \bigr)^T M \pmb{h} (D \pmb{b}) \pmb{u} D \pmb{u}
        - \bigl( (D \pmb{b}) \pmb{u} \bigr)^T M (D \pmb{h}) (D \pmb{b}) \pmb{u}^2
        \\
        &=
        \pmb{1}^T M \pmb{h} (D \pmb{b})^2 \pmb{u}^2 (D \pmb{u})
        + \pmb{1}^T M (D \pmb{h}) (D \pmb{b})^2 \pmb{u}^3
        + \bigl( (D \pmb{b}) \pmb{u} \bigr)^T M D \bigl( \pmb{h} (D \pmb{b}) \pmb{u}^2 \bigr)
        \\
        &
        + \bigl( \pmb{h} (D \pmb{b}) \pmb{u}^2 \bigr)^T M D \bigl( \pmb{u} (D \pmb{b}) \bigr)
        - \pmb{1}^T M \pmb{h} (D \pmb{b})^2 \pmb{u}^2 (D \pmb{u})
        - \pmb{1}^T M (D \pmb{h}) (D \pmb{b})^2 \pmb{u}^3
        %\\&
        =
        0.
    \end{aligned}
    \end{equation*}
    Thus, the total energy is conserved.
\end{proof}

\section{Artificial viscosity stabilization}
\label{sec:AV}

When considering structure-preserving methods, and in particular entropy/energy-conservative methods,
it is quite natural to compare them to methods embedding some form of numerical dissipation.
As discussed in the introduction,  it is unclear that the notion of a dissipative weak solution should also  apply to
dispersive equations such as those considered here. However, when working on coarse meshes, as   often  in operational practice,
one must be careful in controlling spurious modes related to under-resolution, and some degree of dissipation may be justified.
This is also the motivation to introduce the upwind SBP operators of Section~\ref{sec:basic-discretizations}.
So inspired by classical and more recent works on spectral and high-order approximations with vanishing viscosity  \cite{tadmor89,maday93,GUERMOND20114248,pasquetti17}
we  consider the use of artificial viscosity (AV) as a  stabilization method.  In particular, for all models studied we consider adding to the momentum equation
a viscous term using a classical formulation reading   \cite{Gerbeau01,bresch02}
\begin{equation}\label{eq:av1}
    h u_t + \dots = (\mu hu_x)_x.
\end{equation}
The viscosity definition is set having in mind the preservation  of the consistency of the underlying operators,
and in particular for a method of accuracy order $p$ we set
\begin{equation}\label{eq:av2}
    \mu = C \dfrac{ \Delta x^p}{p},
\end{equation}
where for simplicity the (dimensional) constant $C$,  has been set to 1 in all experiments.

% \pmb{h} \partial_t \pmb{u}
\subsection{Discretization with  SBP operators}

In general, we will add to the right hand side of our discretization a term $\pmb{F}_{\mu}$:
\begin{equation}\label{eq:av3}
\pmb{h} \partial_t \pmb{u} + \dots =  \pmb{F}_{\mu}.
\end{equation}
When using periodic central SBP operators, the artificial diffusion term is approximated as
\begin{equation}\label{eq:av4}
\pmb{F}_{\mu}=  D(\mu \pmb{h} D \pmb{u}).
\end{equation}
When using periodic upwind SBP operators, the artificial diffusion term is approximated as
\begin{equation}\label{eq:av5}
\pmb{F}_{\mu} =  D_+(\mu \pmb{h} D_- \pmb{u}).
\end{equation}
For these additional terms we can prove the following simple result.
 \begin{theorem}
The discrete artificial  viscosity terms \eqref{eq:av4} and \eqref{eq:av5}  verify the following two properties
    \begin{enumerate}
        \item They do not alter momentum conservation:  total energy $\pmb{1}^T M \pmb{F}_{\mu}=0$.
        \item They provide a dissipative contribution to the energy balance: $\pmb{u}^T M \pmb{F}_{\mu} \ge 0$.
    \end{enumerate}
\end{theorem}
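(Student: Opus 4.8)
The plan is to verify both properties directly from the periodic SBP calculus, using only the SBP identities \eqref{eq:SBP_periodic} and \eqref{eq:SBP_upwind_periodic}, the consistency relations $D\pmb{1}=\pmb{0}$ and $D_\pm\pmb{1}=\pmb{0}$, and the fact that $M$ is a positive diagonal matrix while the weight $\mu\pmb{h}$ is a nonnegative diagonal scaling (recall $\mu\ge 0$ from \eqref{eq:av2} and that the water height is assumed positive). The momentum statement follows because the outer derivative annihilates the constant vector $\pmb{1}$ after it is moved by transposition; the energy statement follows because transferring the outer derivative onto $\pmb{u}$ collapses the two-operator expression \eqref{eq:av4}/\eqref{eq:av5} into a single sign-definite quadratic form in the discrete gradient of $\pmb{u}$.

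First I would treat property~1 (no change to momentum conservation). For the central form \eqref{eq:av4} I compute $\pmb{1}^T M \pmb{F}_\mu=\pmb{1}^T M D(\mu\pmb{h}D\pmb{u})$ and apply \eqref{eq:SBP_periodic} in the form $MD=-D^T M$ to obtain $-(D\pmb{1})^T M(\mu\pmb{h}D\pmb{u})=0$, since $D\pmb{1}=\pmb{0}$. For the upwind form \eqref{eq:av5} the identical argument uses $MD_+=-D_-^T M$ from \eqref{eq:SBP_upwind_periodic} together with $D_-\pmb{1}=\pmb{0}$, again yielding zero. Hence $\pmb{1}^T M\pmb{F}_\mu=0$ in both cases, so the added viscous term leaves the total momentum $\pmb{1}^T M \pmb{h}\pmb{u}$ unchanged.

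Next I would treat property~2 (dissipative contribution to the energy balance). For the central form I write $\pmb{u}^T M\pmb{F}_\mu=\pmb{u}^T M D(\mu\pmb{h}D\pmb{u})$ and use $MD=-D^T M$ to transfer the outer operator onto $\pmb{u}$, collapsing the two-operator product into the single weighted sum of squares $\sum_i M_{ii}\,(\mu\pmb{h})_i\,(D\pmb{u})_i^2$, with the overall sign fixed by the SBP identity; because $M$ is positive diagonal and $(\mu\pmb{h})_i\ge 0$, this quadratic form is manifestly sign-definite, which is exactly the dissipative contribution asserted in property~2. For the upwind form the same reduction uses the upwind pairing $MD_+=-D_-^T M$, so that $\pmb{u}^T M D_+(\mu\pmb{h}D_-\pmb{u})$ collapses to $\sum_i M_{ii}\,(\mu\pmb{h})_i\,(D_-\pmb{u})_i^2$, again a weighted sum of squares in the one-sided gradient $D_-\pmb{u}$. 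I expect the main obstacle to be purely bookkeeping: one must invoke the correct SBP pairing ($D$ transferred against $D$ in the central case, but $D_+$ transferred against $D_-$ in the upwind case) and carry through the single sign produced by \eqref{eq:SBP_periodic}/\eqref{eq:SBP_upwind_periodic}, after which the nonnegativity of the diagonal weight $\mu\pmb{h}M$ delivers the definite sign with \emph{no} discrete chain-rule error, since the quadratic form is assembled directly from the SBP identity rather than through a product rule.
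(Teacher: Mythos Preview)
Your proposal is correct and follows exactly the same approach as the paper's (one-line) proof: apply the SBP identities \eqref{eq:SBP_periodic} and \eqref{eq:SBP_upwind_periodic} to move the outer derivative onto $\pmb{1}$ (giving zero by consistency) and onto $\pmb{u}$ (giving a weighted sum of squares in $D\pmb{u}$ or $D_-\pmb{u}$). One remark: your careful phrasing ``sign-definite'' is prudent, since the SBP transfer produces a \emph{minus} sign, i.e.\ $\pmb{u}^T M \pmb{F}_\mu = -(D\pmb{u})^T M(\mu\pmb{h})(D\pmb{u})\le 0$ (and analogously with $D_-$), which is the correct inequality for dissipation; the ``$\ge 0$'' in the stated theorem is a typo.
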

\begin{proof}
Result obtained combining \eqref{eq:av4} with \eqref{eq:SBP_periodic},
and \eqref{eq:av5} with  \eqref{eq:SBP_upwind_periodic}  respectively.
\end{proof}
The above proves  that energy preserving schemes become energy stable when including the AV term.
%In the literature on numerical methods for Boussinesq-like model, some authors have  put forward the notion
%of stability in the sense of energy decreasing methods, by analogy with
%similar techniques used for hyperbolic conservation laws (see e.g.  \cite{cpr2,guermond2022hyperbolic,svard2023novel}).
%In the latter case, this notion is naturally associated to the fact that entoipy/energy dissipation is inherent to the admissibility
%of  discontinuous solutions of hyperbolic conservation laws.
%
%
%
%
%For dispersive Boussinesq-type models, the situation is more delicate. Works on solving the Riemann problem and on constructing  genuinely discontinuous solutions
%for such models, as e.g.  \cite{gnst20,El05-chaos,hoefer14}, do not rely on the notion of dissipation.
%Admissibility conditions for these problems are formulated based on geometrical considerations in phase space, and relate to the celerities the solution fronts.
%There is no notion on the sign of the energy evolution in such conditions.  It is  thus unclear that one should  use of numerical dissipation when solving  non-dissipative dispersive equations
%as a means of stabilization. Moreover numerical evidence suggests that even in presence of  physical dissipation, non-dissipative numerics may provide
%better description of the waves on coarse meshes, certainly   in terms of amplitudes \cite{jouy_etal24}.

\section{Numerical experiments}
\label{sec:numerical_experiments}

The methods proposed in this work have been implemented in Julia \cite{bezanson2017julia}, using
the packages SummationByPartsOperators.jl \cite{ranocha2021sbp} for
the spatial discretizations and OrdinaryDiffEq.jl
\cite{rackauckas2017differentialequations} for time integration.
The Fourier pseudospectral methods use FFTW wrapped in FFTW.jl
\cite{frigo2005design} in Julia.
The sparse linear systems are solved using a direct solver of SuiteSparse
\cite{chen2008cholmod,amestoy2004amd,davis2004colamd} available in Julia.
We use the ITP method \cite{oliveira2020enhancement}
implemented in SimpleNonlinearSolve.jl
\cite{pal2024nonlinearsolve} to compute the relaxation parameter $\gamma$
for energy-conservative time integration methods.
We use Plots.jl \cite{christ2023plots} to visualize the results.
All source code to reproduce our numerical results is available online
\cite{ranocha2024structureRepro}.

Time integration is performed using explicit Runge-Kutta methods with error-based step size control
%since these methods are efficient for many transport-dominated problems
\cite{berzins1995temporal,ketcheson2020more,ranocha2021optimized,ranocha2023error}.
If not described otherwise, we choose relative and absolute tolerances
$10^{-5}$. For the original Serre-Green-Naghdi equations, we use the
fifth-order Runge-Kutta method of \cite{tsitouras2011runge}. For the
hyperbolic approximation, we use the third-order Runge-Kutta method of
\cite{ranocha2021optimized} that was optimized for discretizations
of hyperbolic conservation laws when the time step size is constrained by
stability instead of accuracy.

Throughout this article, we use SI units for all quantities. The   gravitational constant
is set to $g = \SI{9.81}{m/s^2}$. We apply periodic boundary conditions for all experiments since we have not
analyzed the energy for other boundary conditions. When  necessary larger domain sizes are used
avoid  the effects of  inconsistent values on the left/right domain boundaries.
We initialize the hyperbolic approximation with the water height and
velocity of the Serre-Green-Naghdi equations, and the auxiliary variables
\begin{equation}
    \pmb{\eta} = \pmb{h}, \quad \pmb{w} = -\pmb{h} D \pmb{u}.
\end{equation}
The main steps of the simulations are summarized in the flowcharts in
Figure~\ref{fig:flowcharts}. The main difference of the computational complexity
comes from the necessity to solve an elliptic system for the original Serre-Green-Naghdi
equations versus having to deal with more variables and stiffer systems (if $\lambda \gg 1$)
for the hyperbolic approximation.

\begin{figure}[htbp]
    \centering
    \begin{tikzpicture}[
        node distance=0mm and 0mm,
        >={Triangle[angle=45:2pt 3]},
        line width=.5pt,
        font=\footnotesize,
        box/.style   ={rectangle, draw, rounded corners, align=center,
                        minimum width=2cm, minimum height=8mm},
        startstop/.style = {box, fill=gray!20},
        process/.style   = {box, fill=blue!5},
        decision/.style  = {diamond, draw, fill=red!5, aspect=2, align=center},
        ]

        \begin{scope}[xshift=-1cm]
            % Title (optional)
            \node at (2.5,1) {\large \bf Original SGN Simulation};

            % Initialization
            \node[process] (init) {
                Initialize:\\
                $\pmb{h} = \pmb{h}^0$, $\pmb{u} = \pmb{u}^0$;\\
                $\pmb{b}$, $t=0$, $T_{\mathrm{end}}$
            };

            % Decision: time < T_end?
            \node[decision, below=of init, yshift=-5mm] (looptime)
                {$t < T_{\mathrm{end}}?$};

            % If "no", end
            \node[startstop, below=30mm of looptime] (stop) {Stop};

            % Steps: compute dt
            \node[process, right=of init, xshift=15mm] (dt) {
                Compute $\Delta t$
            };

            % RK step
            \node[process, below=of dt, yshift=-5mm, align=left] (rk) {
            \begin{minipage}{3.5cm}
                \textbf{Runge-Kutta stages:}
                \begin{itemize}
                \item Compute derivatives ($D \pmb{h}$, $D \pmb{u}$, \dots)
                \item Form ODE RHS for $\partial_t \pmb{h}$
                \item Solve the linear system for $\partial_t \pmb{u}$
                \item Update stage solutions
                \end{itemize}
            \end{minipage}
            };

            % Relaxation (optional)
            \node[process, below=of rk, yshift=-5mm] (relax) {
                Relaxation (optional)\\
                (for energy conservation)
            };

            % Update time
            \node[process, below=of relax, yshift=-5mm] (update) {
                Update time $t$ and\\numerical solution $\pmb{h}, \pmb{u}$
            };

            %%%%%%%%%%%%%%%%%%%%%%%%%%%%%%%%%%%%%%%%
            % Draw lines
            %%%%%%%%%%%%%%%%%%%%%%%%%%%%%%%%%%%%%%%%
            \draw[->] (init) -- (looptime);
            \draw[->] (looptime) -- node[above=2pt] {Yes} (dt);
            \draw[->] (looptime) -- node[left=2pt] {No} (stop);
            \draw[->] (dt) -- (rk);
            \draw[->] (rk) -- (relax);
            \draw[->] (relax) -- (update);
            \draw[->] (update.west) to (looptime.south east);
        \end{scope}

        \begin{scope}[xshift=7cm]
            % Title (optional)
            \node at (2.5,1) {\large \bf Hyperbolic SGN Simulation};

            % Initialization
            \node[process] (init) {
                Initialize:\\
                $\pmb{h} = \pmb{h}^0$, $\pmb{u} = \pmb{u}^0$,\\
                $\pmb{\eta} = \pmb{h}^0$, $\pmb{w} = -\pmb{h} D \pmb{u}$;\\
                $\pmb{b}$, $t=0$, $T_{\mathrm{end}}$
            };

            % Decision: time < T_end?
            \node[decision, below=of init, yshift=-5mm] (looptime)
                {$t < T_{\mathrm{end}}?$};

            % If "no", end
            \node[startstop, below=30mm of looptime] (stop) {Stop};

            % Steps: compute dt
            \node[process, right=of init, xshift=15mm] (dt) {
                Compute $\Delta t$
            };

            % RK step
            \node[process, below=of dt, yshift=-5mm, align=left] (rk) {
            \begin{minipage}{3.5cm}
                \textbf{Runge-Kutta stages:}
                \begin{itemize}
                \item Compute derivatives
                        ($D \pmb{h}, D \pmb{u}, \dots$)
                \item Form ODE RHS for $\partial_t \pmb{h}$, $\partial_t \pmb{u}$,
                        $\partial_t \pmb{\eta}$, and $\partial_t \pmb{w}$
                \item Update stage solutions
                \end{itemize}
            \end{minipage}
            };

            % Relaxation (optional)
            \node[process, below=of rk, yshift=-5mm] (relax) {
                Relaxation (optional)\\
                (for energy conservation)
            };

            % Update time
            \node[process, below=of relax, yshift=-5mm] (update) {
                Update time $t$ and\\numerical solution $\pmb{h}, \pmb{u}, \pmb{\eta}, \pmb{w}$
            };

            %%%%%%%%%%%%%%%%%%%%%%%%%%%%%%%%%%%%%%%%
            % Draw lines
            %%%%%%%%%%%%%%%%%%%%%%%%%%%%%%%%%%%%%%%%
            \draw[->] (init) -- (looptime);
            \draw[->] (looptime) -- node[above=2pt] {Yes} (dt);
            \draw[->] (looptime) -- node[left=2pt] {No} (stop);
            \draw[->] (dt) -- (rk);
            \draw[->] (rk) -- (relax);
            \draw[->] (relax) -- (update);
            \draw[->] (update.west) to (looptime.south east);
        \end{scope}
    \end{tikzpicture}
    \caption{Flowcharts summarizing the main algorithmic steps of the simulations
             based on the original Serre-Green-Naghdi equations (left) and the hyperbolic
             approximation (right).}
    \label{fig:flowcharts}
\end{figure}

\subsection{Convergence studies}

Since we are interested in the spatial error of the methods, we use the
fifth-order Runge-Kutta method of \cite{tsitouras2011runge} with stricter
tolerances $10^{-9}$ for the convergence experiments reported in this section.
To compute the experimental order of convergence (EOC), we use the formula
\begin{equation}
    \text{EOC} = -\frac{\log(e_2 / e_1)}{\log(N_2 / N_1)},
\end{equation}
where $e_i$ and $N_i$ are measures of the error and the discretization size
for two consecutive grid refinements. For finite difference methods,
we use the number of nodes as discretization size $N$.

\subsubsection{Solitary waves of the Serre-Green-Naghdi equations}
\label{sec:convergence_soliton}

The exact solitary wave for the Serre-Green-Naghdi equations has
depth and depth-averaged velocities given by
\begin{equation}\begin{split}
    h_e = h_{\infty} \left( 1 + \epsilon \operatorname{sech}^2\bigl( \kappa(x-Ct) \bigr) \right), \quad
    u_e = C \left( 1 - \dfrac{h_{\infty}}{h} \right),
\end{split}\end{equation}
where $\epsilon = A/h_{\infty}$ with $A$ the soliton amplitude, and where
\begin{equation}
\kappa^2 =\dfrac{3\epsilon}{4h_{\infty}^2(1+\epsilon)}\;,\quad
C^2 = gh_{\infty}(1+\epsilon).
\end{equation}
This is a solution of the classical Serre-Green-Naghdi equations.
When using it for the hyperbolic system, a possible choice to initialize
the auxiliary variables, at least for $\lambda$ large enough, is
\begin{equation}
\eta(t=0,x)=h_e(0)\;,\quad
w(t=0,x)=-h_e(0)u_e'(0).
\end{equation}
If not stated otherwise, we use the following parameters for the solitary wave:
\begin{equation}
    h_{\infty} =  1, \quad A = 0.2 h_{\infty}.
\end{equation}
For the convergence experiments in this section, we choose the domain
$[-50, 50]$ with periodic boundary conditions and a time interval
such that the wave travels through the domain once.

\begin{figure}[htb]
\centering
  \begin{subfigure}{0.49\textwidth}
    \centering
    \includegraphics[width=0.8\textwidth]{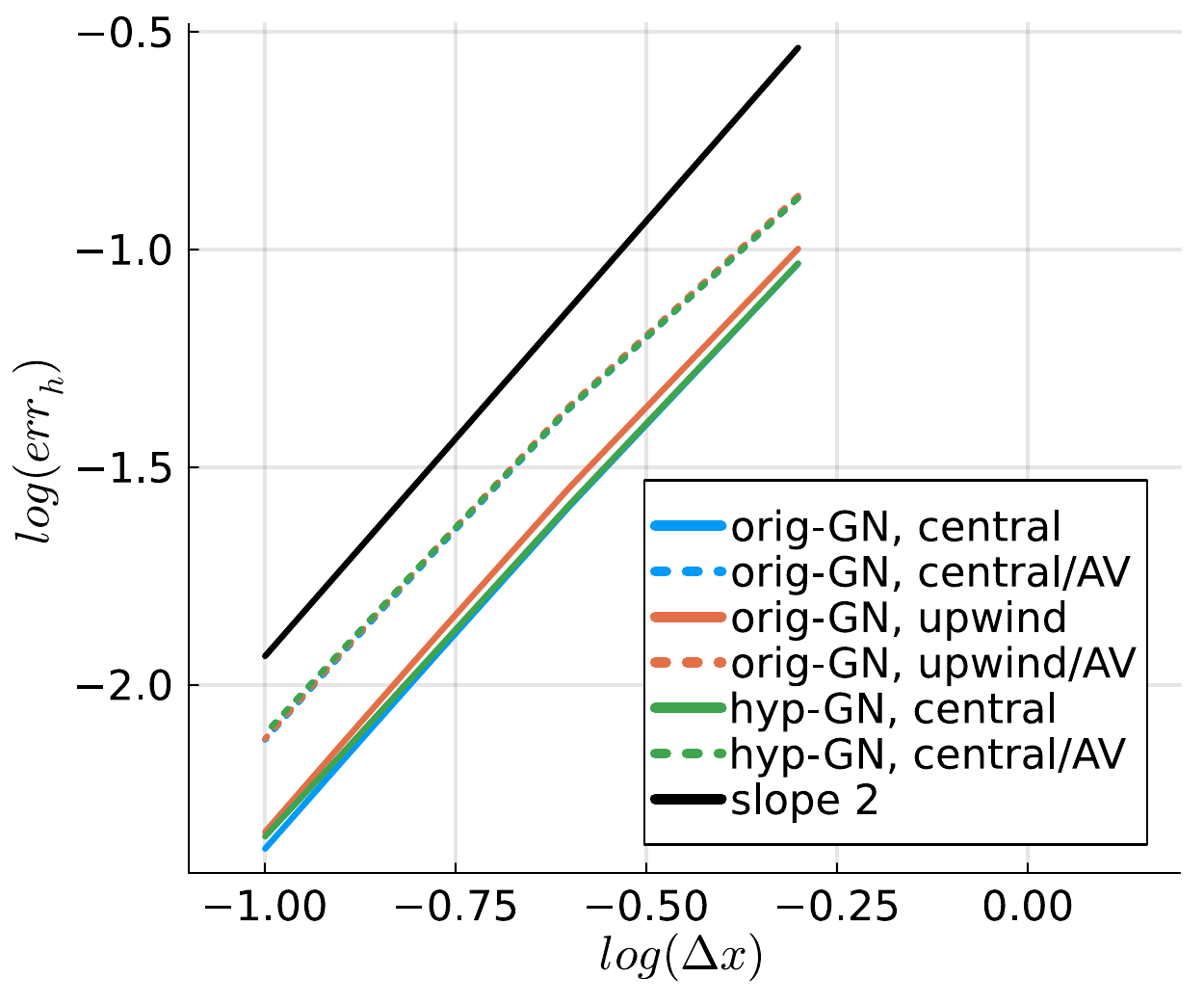}
    \caption{Depth convergence, SBP operators of order  2.}
  \end{subfigure}%
  \hspace{\fill}
  \begin{subfigure}{0.49\textwidth}
    \centering
      \includegraphics[width=0.8\textwidth]{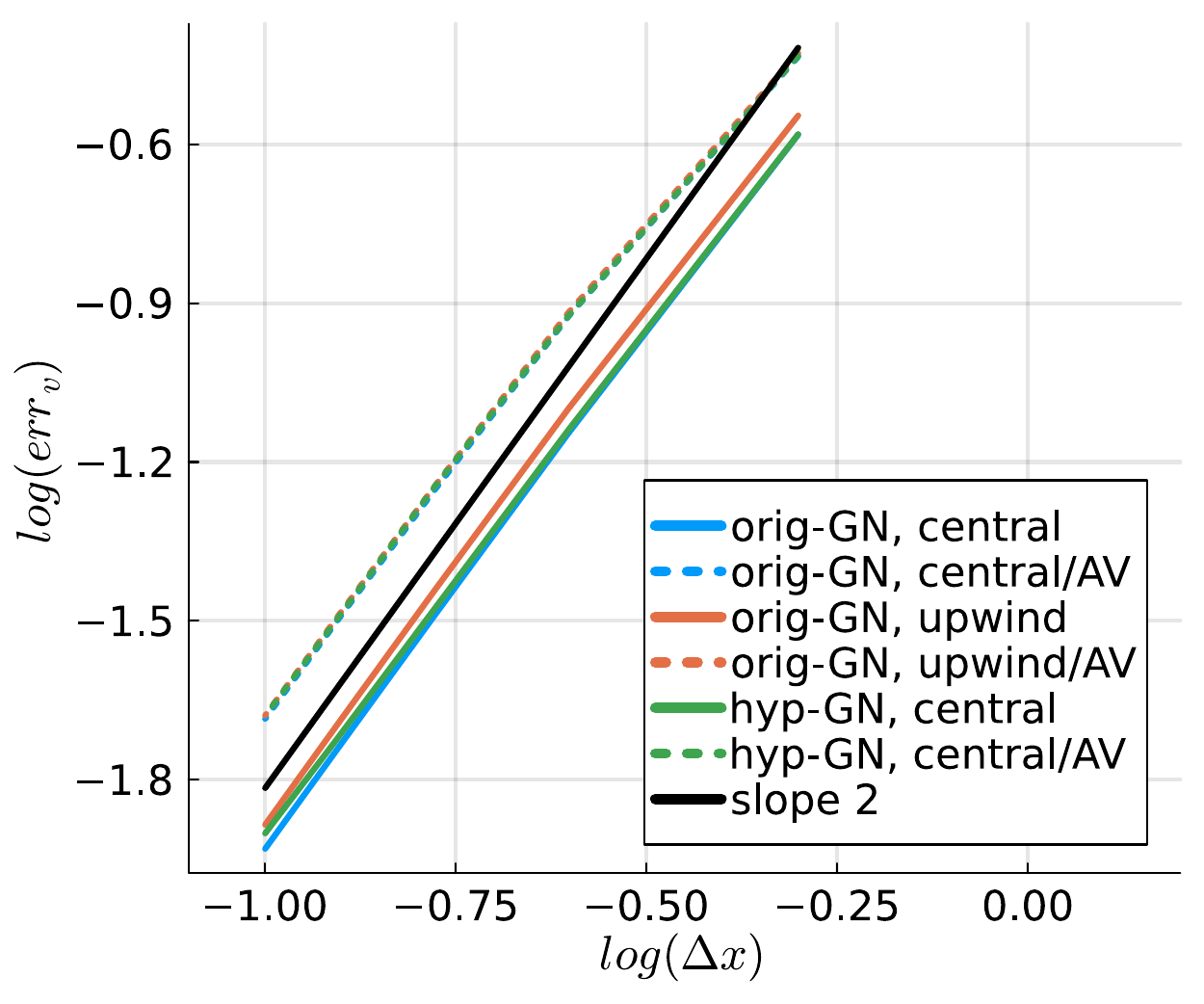}
    \caption{Velocity convergence, SBP operators of order  2.}
  \end{subfigure}
  \\
  \medskip
  \begin{subfigure}{0.49\textwidth}
    \centering
       \includegraphics[width=0.8\textwidth]{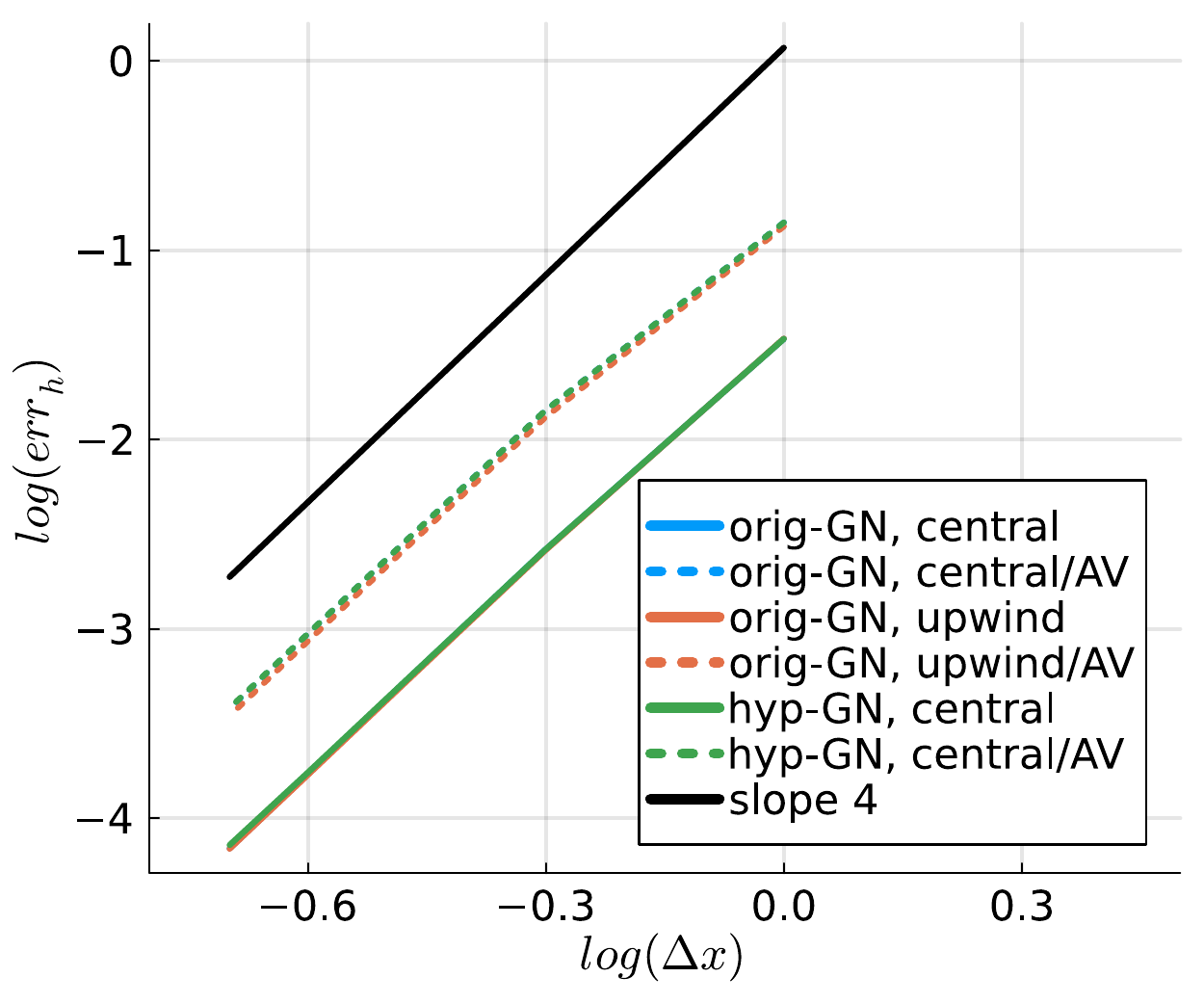}
    \caption{Depth convergence, SBP operators of order  4.}
  \end{subfigure}%
  \hspace{\fill}
  \begin{subfigure}{0.49\textwidth}
    \centering
       \includegraphics[width=0.8\textwidth]{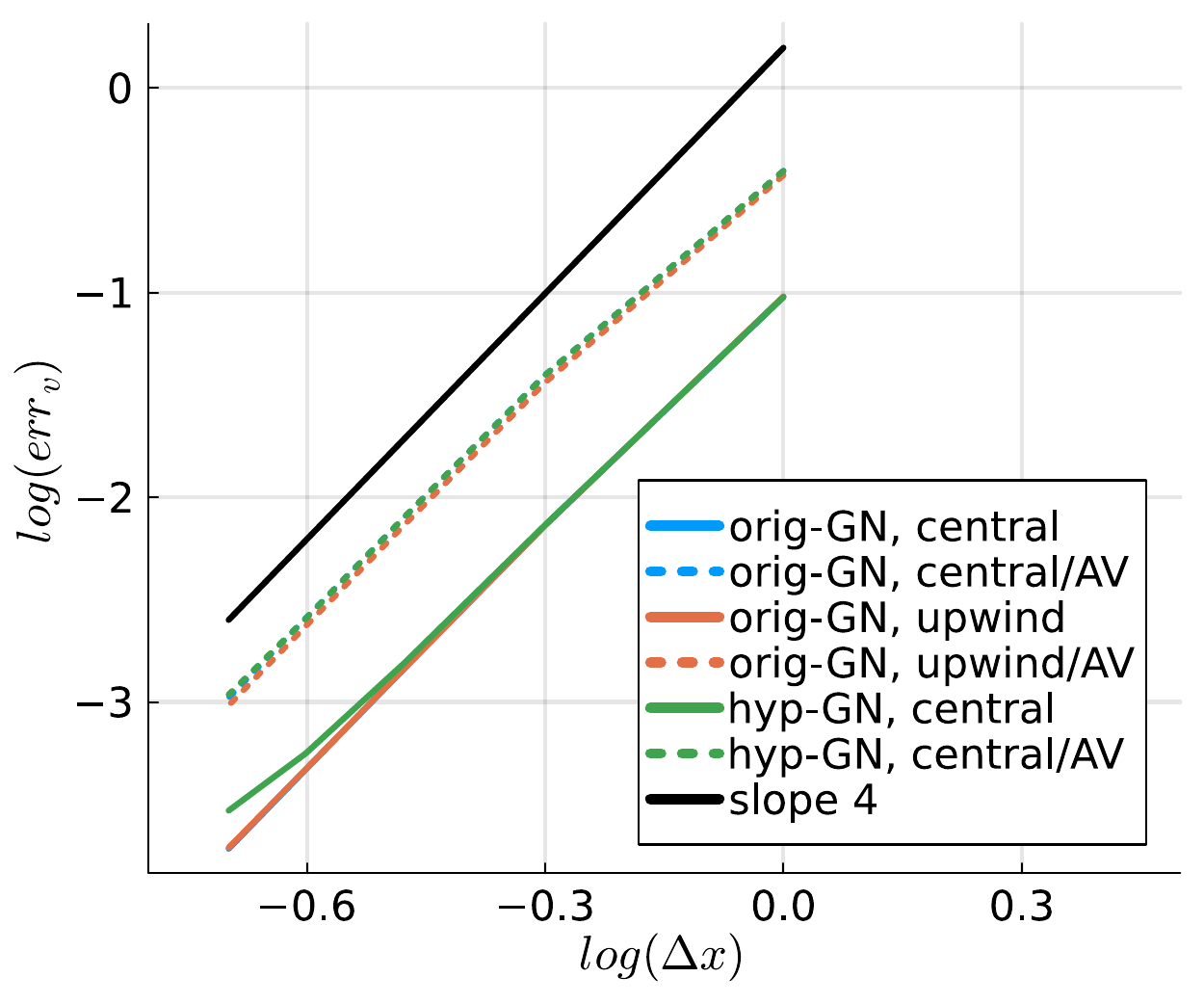}
    \caption{Velocity convergence, SBP operators of order  4.}
  \end{subfigure}
  \caption{Convergence results using finite difference semidiscretizations
           with $N$ nodes
           applied to the solitary wave of the Serre-Green-Naghdi equations.}
  \label{fig:convergence_soliton_serre_green_naghdi}
\end{figure}

Figure~\ref{fig:convergence_soliton_serre_green_naghdi} shows the
errors of the numerical solutions
measured with respect to the soliton
solution of the Serre-Green-Naghdi equations for finite difference
methods with different orders of accuracy, including methods with (high order) artificial viscosity.
The EOC matches the expected order of accuracy.
For this case   artificial viscosity leads to an  increase in the absolute value of the error by a factor in between $\approx$ 2--5.

\begin{table}[htbp]
\centering
  \caption{Convergence results when increasing the parameter $\lambda$
           of the hyperbolic approximation
           using eighth-order finite difference semidiscretizations with
           $N = 500$ nodes
           applied to the solitary wave of the Serre-Green-Naghdi equations.}
  \label{tab:convergence_hyperbolic_approximation}
  \begin{subfigure}{0.49\textwidth}
    \centering
    \caption{Central operators}
    \begin{tabular}{r rr rr}
        \toprule
        $\lambda$ & $L^2$ err. $h$ & EOC $h$ & $L^2$ err. $v$ & EOC $v$ \\
        \midrule
        $10^2$ & 2.85e-02 &  & 8.86e-02 &  \\
        $10^3$ & 2.89e-03 & 0.99 & 8.60e-03 & 1.01 \\
        $10^4$ & 2.91e-04 & 1.00 & 1.02e-03 & 0.92 \\
        $10^5$ & 2.93e-05 & 1.00 & 2.45e-04 & 0.62 \\
        $10^6$ & 2.92e-06 & 1.00 & 1.33e-04 & 0.26 \\
        \bottomrule
    \end{tabular}
\end{subfigure}%
\begin{subfigure}{0.49\textwidth}
    \centering
    \caption{Central operators and artificial dissipation}
    \begin{tabular}{r rr rr}
    \toprule
        $\lambda$ & $L^2$ err. $h$ & EOC $h$ & $L^2$ err. $v$ & EOC $v$ \\
        \midrule
        $10^2$ & 2.85e-02 &  & 8.86e-02 &  \\
        $10^3$ & 2.89e-03 & 0.99 & 8.60e-03 & 1.01 \\
        $10^4$ & 2.91e-04 & 1.00 & 1.02e-03 & 0.92 \\
        $10^5$ & 2.94e-05 & 1.00 & 2.45e-04 & 0.62 \\
        $10^6$ & 3.10e-06 & 0.98 & 1.41e-04 & 0.24 \\
        \bottomrule
    \end{tabular}
    \end{subfigure}%
\end{table}

For the hyperbolic approximation, the EOC matches
the expected order of accuracy for the second-order method, when using $\lambda=10^4$. For the
fourth-order method $\lambda=10^6$, is necessary to obtain the proper rates  for the water height, but even
 with this value the convergence rates are   a bit smaller for the velocity. A larger value of $\lambda$ may allow to correct this.
For completeness we also study the convergence  when increasing the value of the parameter $\lambda$.
The results are summarized in Table~\ref{tab:convergence_hyperbolic_approximation}.
The EOC of the water height is  unity. The velocity converges as well but
with a reduced EOC, especially for large $\lambda$. The  differences due to the artificial diffusion are very small here.

\subsubsection{Manufactured solution for the hyperbolic system}

We use the method of manufactured solutions to check the implementation.
We choose the solution
\begin{equation}
\begin{aligned}
    h(t, x) &= 7 + 2 \cos(2 \pi x) + \cos(2 \pi x - 4 \pi t),
    \\
    u(t, x) &= \sin(2 \pi x - \pi t),
    \\
    \eta(t, x) &= h(t, x),
    \\
    w(t, x) &= -h(t, x) \partial_x u(t, x),
    \\
    b &= -5 - 2 \cos(2 \pi x),
\end{aligned}
\end{equation}
and add source terms to the equations so that the equations are satisfied
for the manufactured solutions.
We consider the grid convergence of the error at the final time $t=1$ on a domain of width $1$.
The results shown in Figure~\ref{fig:convergence_manufactured_hyperbolic}
confirm the expected order of accuracy for the finite difference
semidiscretizations. For this test, the   difference brought by adding artificial dissipation is
orders of magnitude smaller than the errors itself, and the convergence curves are essentially superposed.

\begin{figure}[htbp]
\centering
  \begin{subfigure}{0.49\textwidth}
    \centering
    \includegraphics[width=0.8\textwidth]{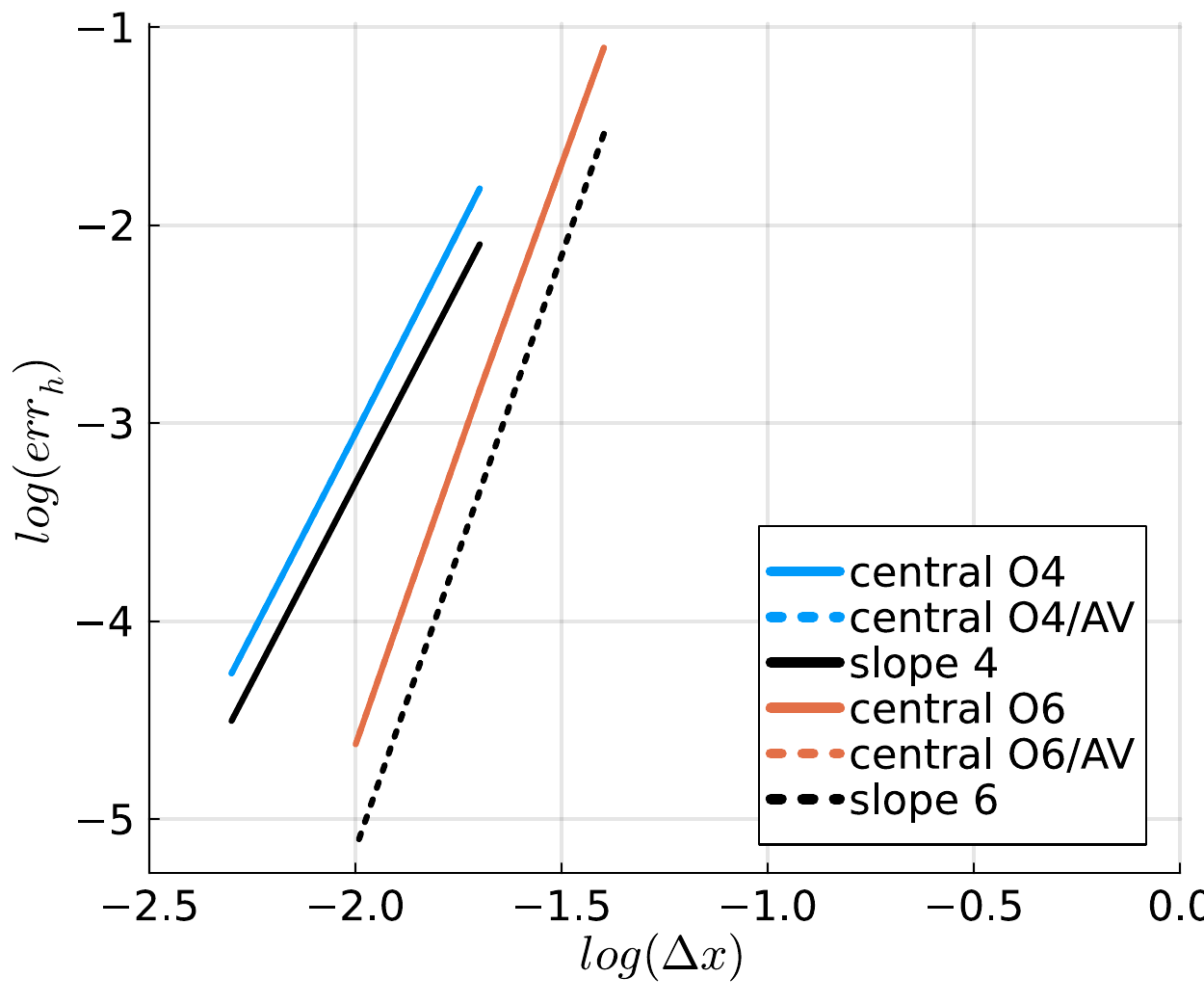}
    \caption{Depth error convergence.}
  \end{subfigure}%
  \hspace{\fill}
  \begin{subfigure}{0.49\textwidth}
    \centering
      \includegraphics[width=0.8\textwidth]{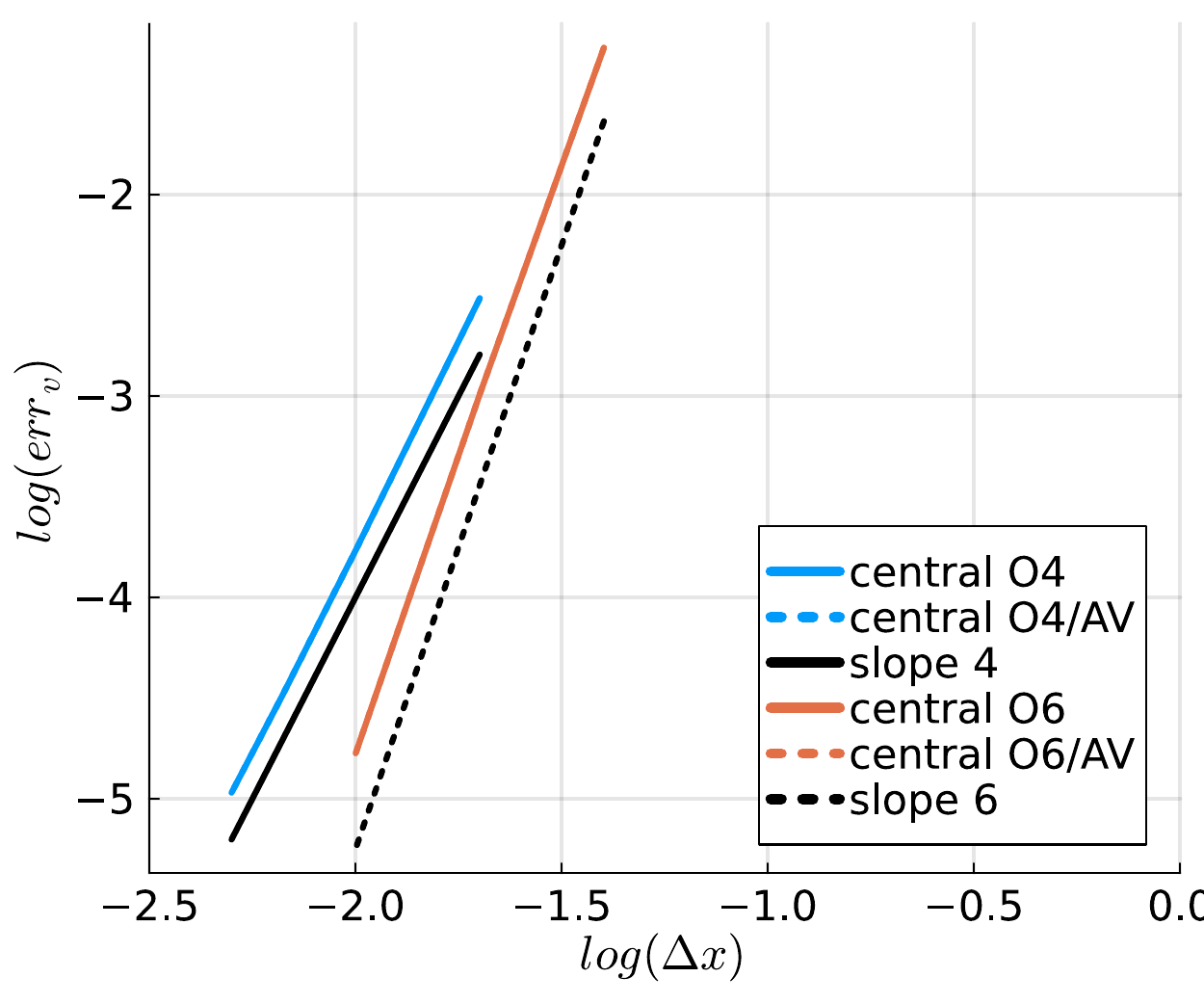}
    \caption{Velocity error convergence.}
  \end{subfigure}
  \caption{Grid convergence to the manufactured solution for the hyperbolic system results using finite difference semidiscretizations
  of different orders and  $\lambda = 500$. Left: depth. Right: velocity. }
    \label{fig:convergence_manufactured_hyperbolic}
\end{figure}

%
%\begin{table}[htbp]
%\centering
%  \caption{Convergence results using finite difference semidiscretizations
%           with $N$ nodes
%           applied to the manufactured solution of the hyperbolic system.}
%  \label{tab:convergence_manufactured_hyperbolic}
%  \begin{subfigure}{0.49\textwidth}
%    \centering
%    \caption{Order of accuracy 4.}
%    \begin{tabular}{r rr rr}
%        \toprule
%        $N$ & $L^2$ err. $h$ & EOC $h$ & $L^2$ err. $v$ & EOC $v$ \\
%        \midrule
%        50 & 1.53e-02 &  & 3.04e-03 &  \\
%        100 & 8.92e-04 & 4.10 & 1.71e-04 & 4.16 \\
%        150 & 1.73e-04 & 4.04 & 3.29e-05 & 4.06 \\
%        200 & 5.45e-05 & 4.02 & 1.07e-05 & 3.91 \\
%        \bottomrule
%    \end{tabular}
%  \end{subfigure}%
%  \hspace{\fill}
%  \begin{subfigure}{0.49\textwidth}
%    \centering
%    \caption{Order of accuracy 6.}
%    \begin{tabular}{r rr rr}
%        \toprule
%        $N$ & $L^2$ err. $h$ & EOC $h$ & $L^2$ err. $v$ & EOC $v$ \\
%        \midrule
%        25 & 7.89e-02 &  & 5.34e-02 &  \\
%        50 & 1.47e-03 & 5.74 & 1.02e-03 & 5.72 \\
%        75 & 1.33e-04 & 5.92 & 9.23e-05 & 5.91 \\
%        100 & 2.39e-05 & 5.98 & 1.67e-05 & 5.93 \\
%        \bottomrule
%    \end{tabular}
%  \end{subfigure}
%\end{table}

\subsection{Qualitative comparison of upwind and central methods}
\label{sec:upwind_vs_central}

To point out the different behavior of upwind and central finite difference
methods for solving the elliptic equations, we compare numerical results
for a Gaussian initial condition.

\subsubsection{Qualitative comparison: flat bathymetry}
\label{sec:upwind_vs_central_flat}

First, we consider the initial condition
\begin{equation}
\label{eq:initial_condition_gaussian}
    h(x, 0) = 1 + \exp(-x^2), \quad u(x, u) = 10^{-2}, \quad b(x) = 0,
\end{equation}
and discretize the domain $[-150, 150]$ with second-order finite differences.
We use the fifth-order Runge-Kutta method of \cite{tsitouras2011runge} with
tolerances $10^{-5}$ for the time interval $[0, 35]$.
Reference solutions  are shown in
Figure~\ref{fig:conservation_solution}. All the methods considered in this section
are  visually fully converged with the same  number of nodes $N = 3000$.

\begin{figure}[htbp]
\centering
    \begin{subfigure}{0.32\textwidth}
    \centering
        \includegraphics[width=\textwidth]{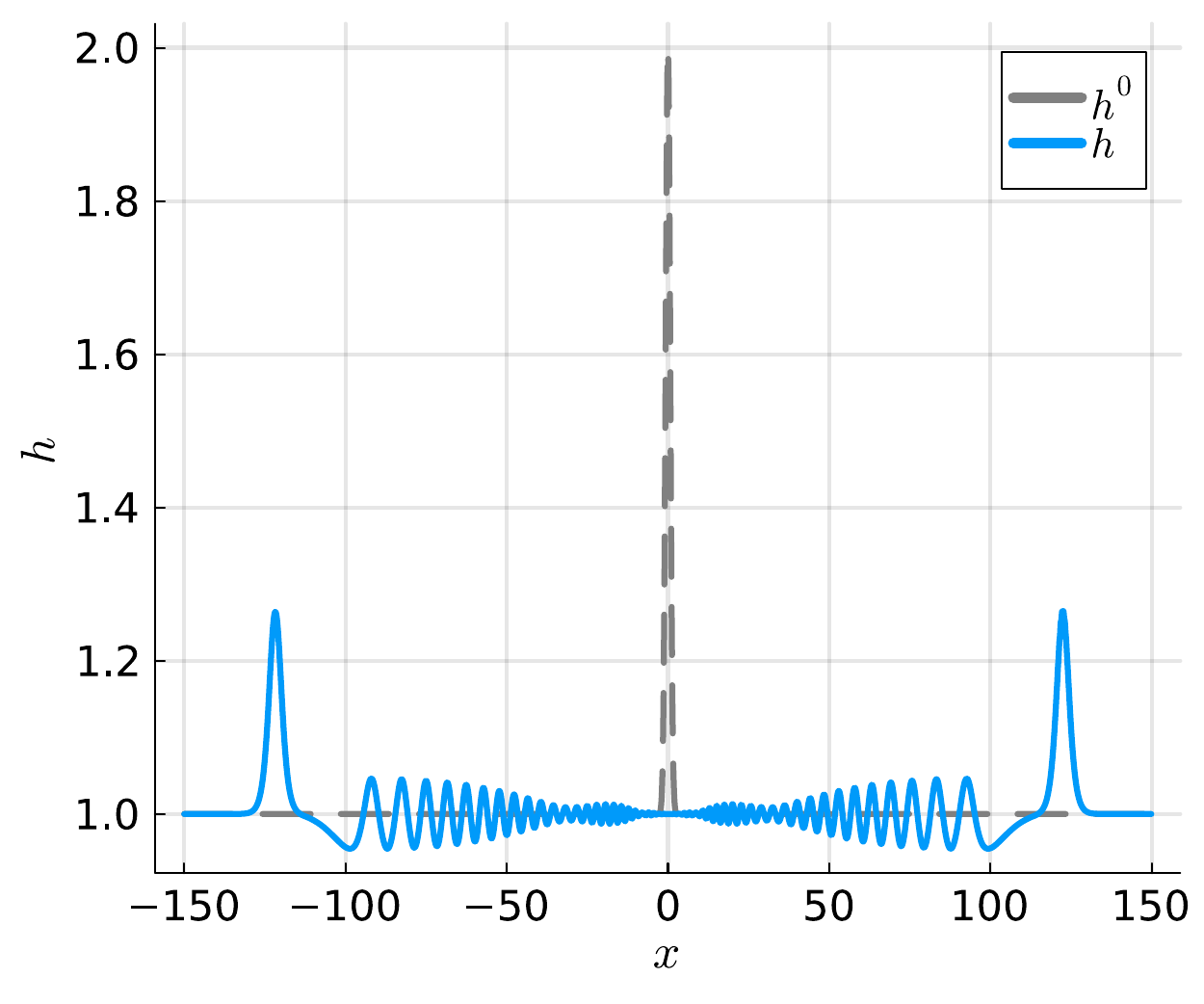}
        \caption{Hyperbolic approximation with $\lambda = 500$.}
    \end{subfigure}%
    \hspace{\fill}
    \begin{subfigure}{0.32\textwidth}
    \centering
        \includegraphics[width=\textwidth]{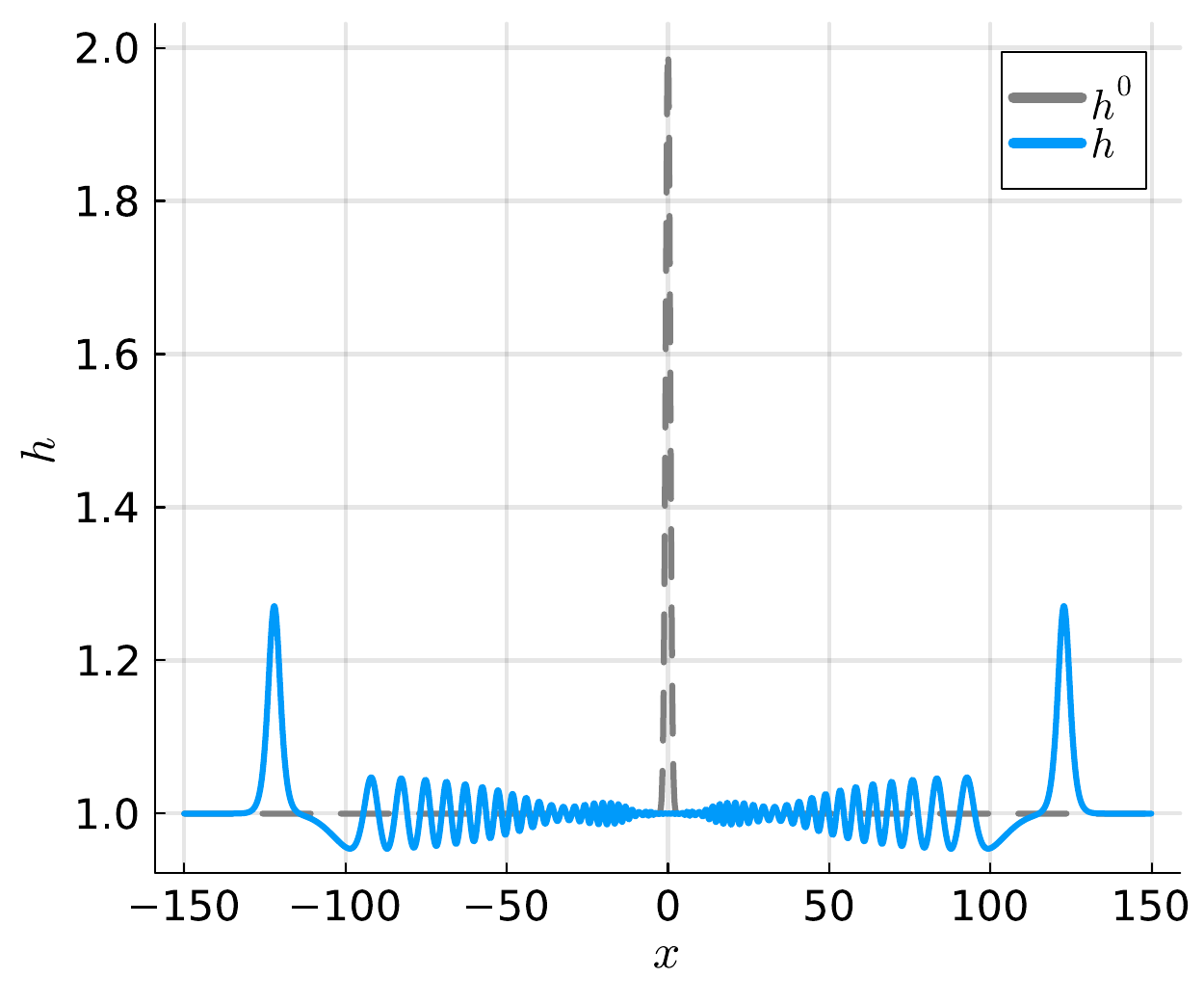}
        \caption{Original Serre-Green-Naghdi equations.}
    \end{subfigure}%
        \hspace{\fill}
        \begin{subfigure}{0.32\textwidth}
    \centering
        \includegraphics[width=\textwidth]{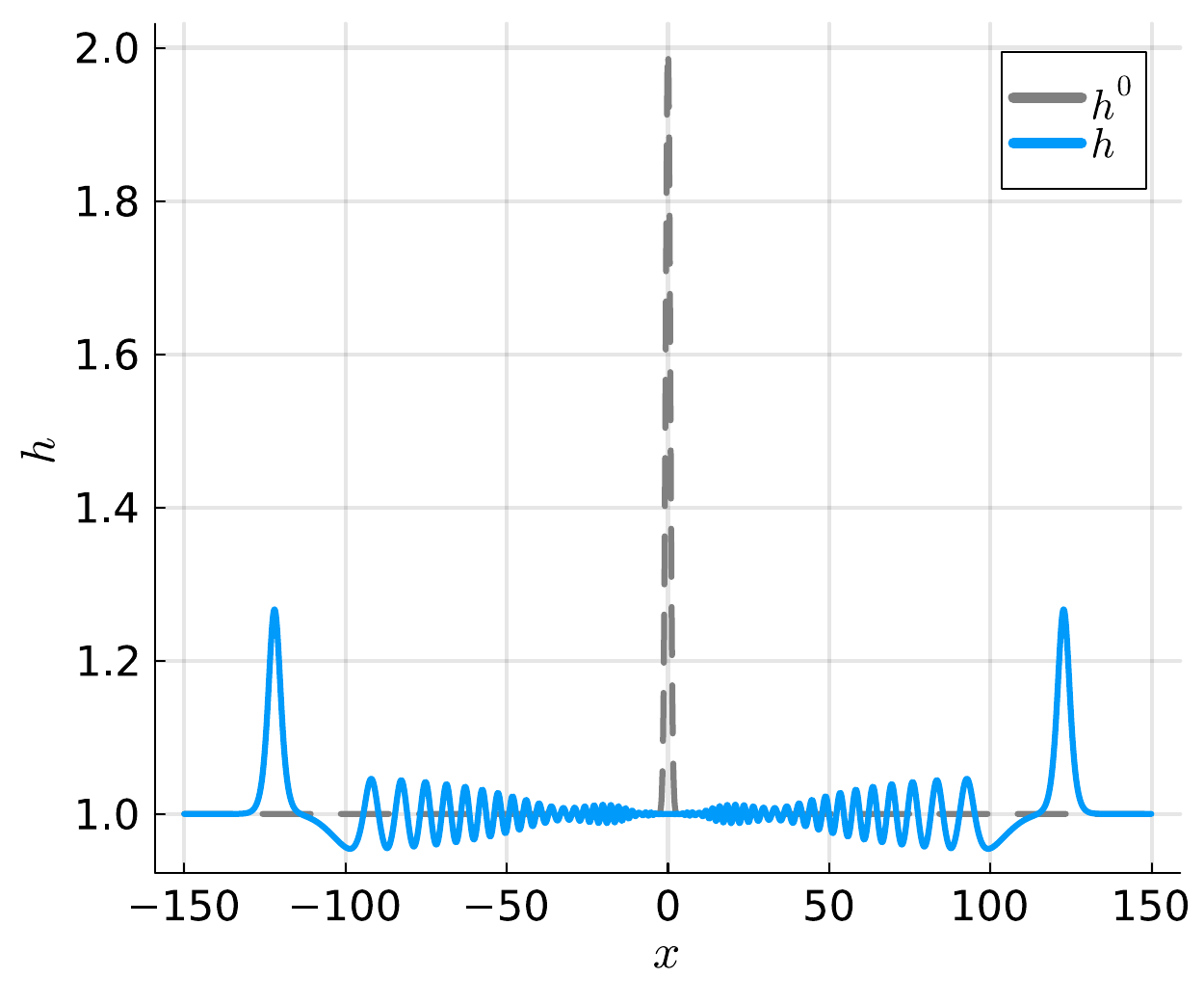}
        \caption{Serre-Green-Naghdi with artificial diffusion.}
    \end{subfigure}%
    \caption{Numerical solutions for the initial condition
             \eqref{eq:initial_condition_gaussian}. These reference solutions
             are all visually converged on the same  grid with $N = 3000$ nodes using
             second-order central finite difference methods.}
    \label{fig:conservation_solution}
\end{figure}

\begin{figure}[htbp]
\centering
    \begin{subfigure}{0.32\textwidth}
    \centering
        \includegraphics[width=\textwidth]{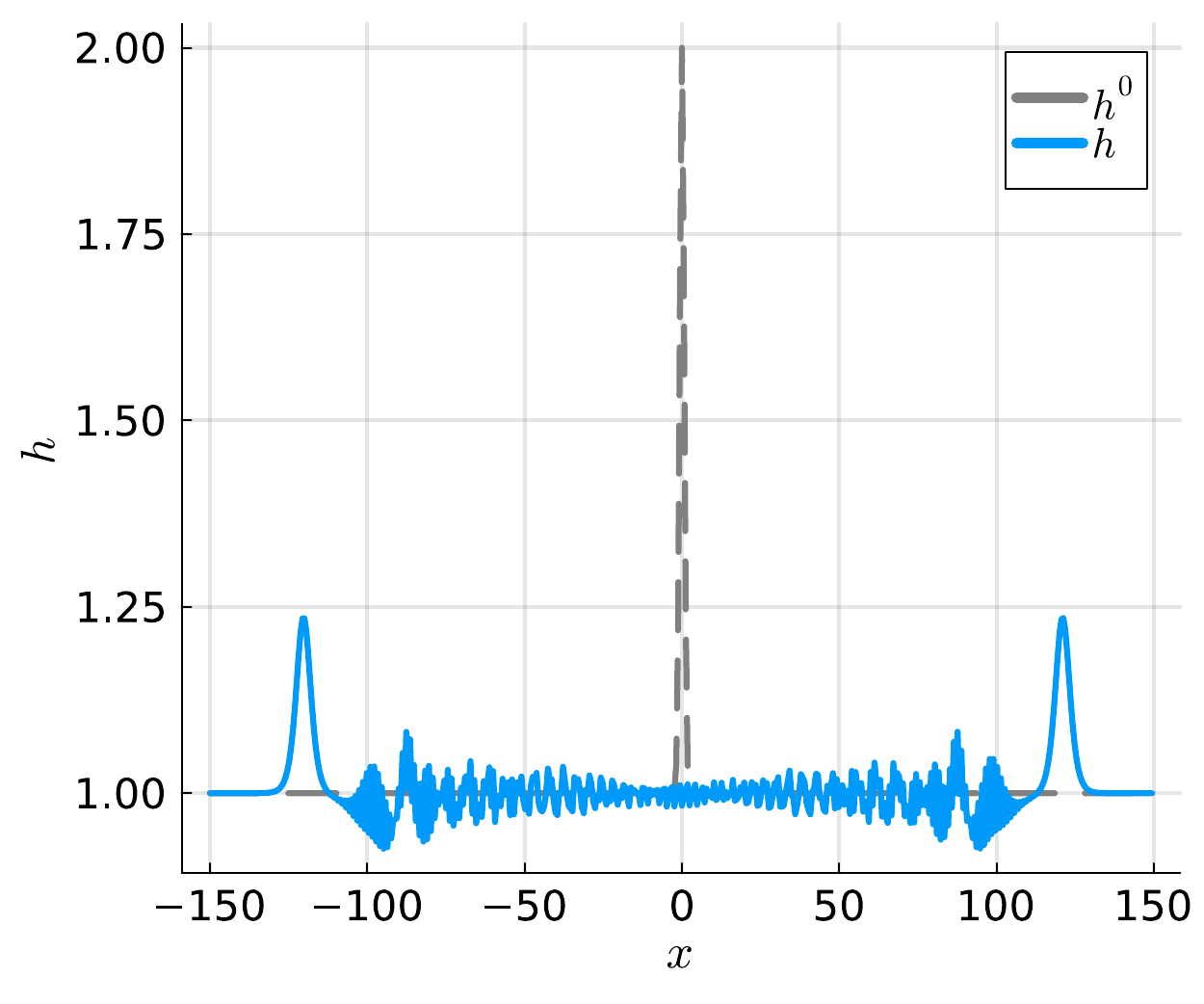}
        \caption{Central operators.}
    \end{subfigure}%
    \hspace{\fill}
    \begin{subfigure}{0.32\textwidth}
    \centering
        \includegraphics[width=\textwidth]{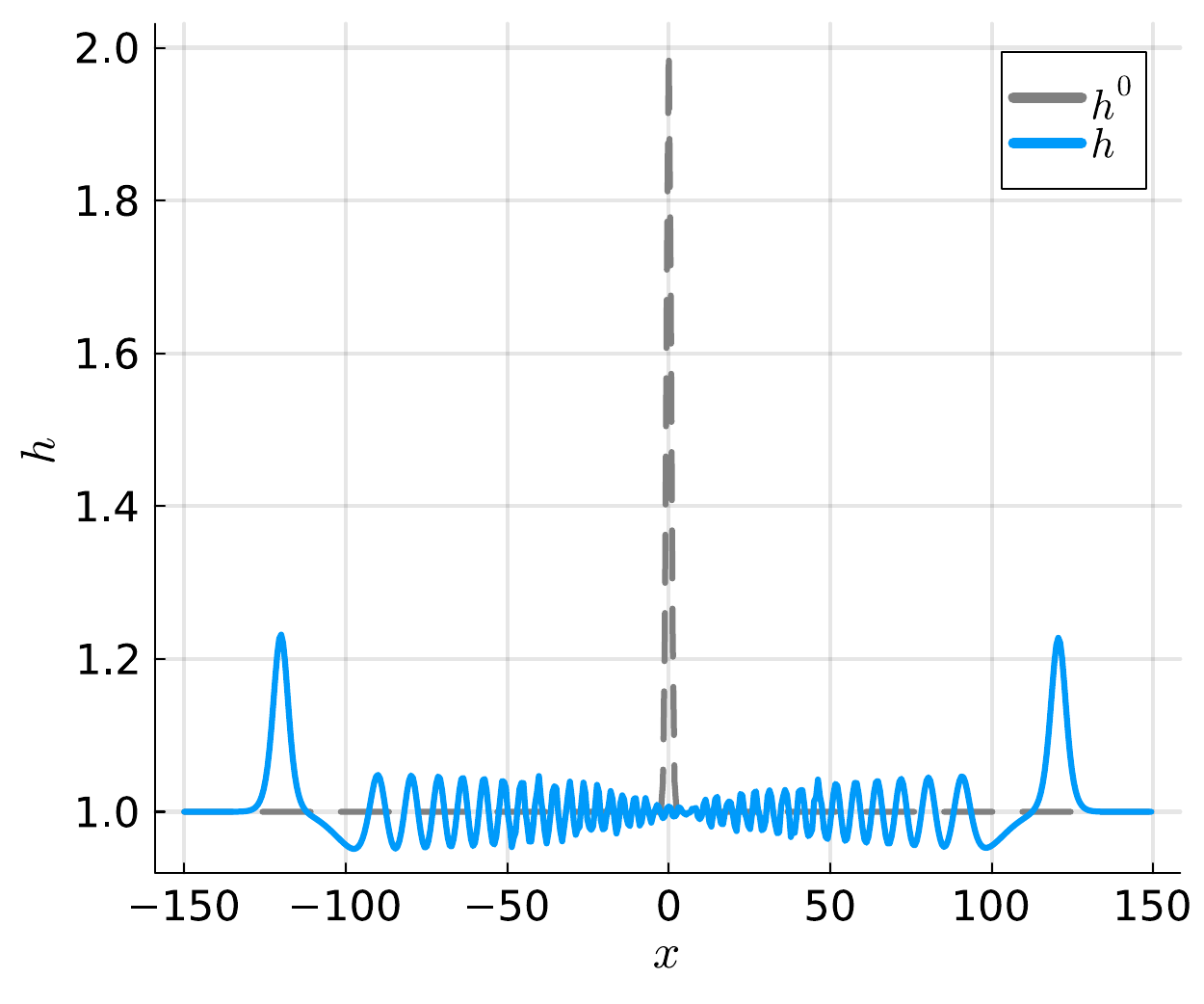}
        \caption{Upwind operators.}
    \end{subfigure}%
    \hspace{\fill}
    \begin{subfigure}{0.32\textwidth}
    \centering
        \includegraphics[width=\textwidth]{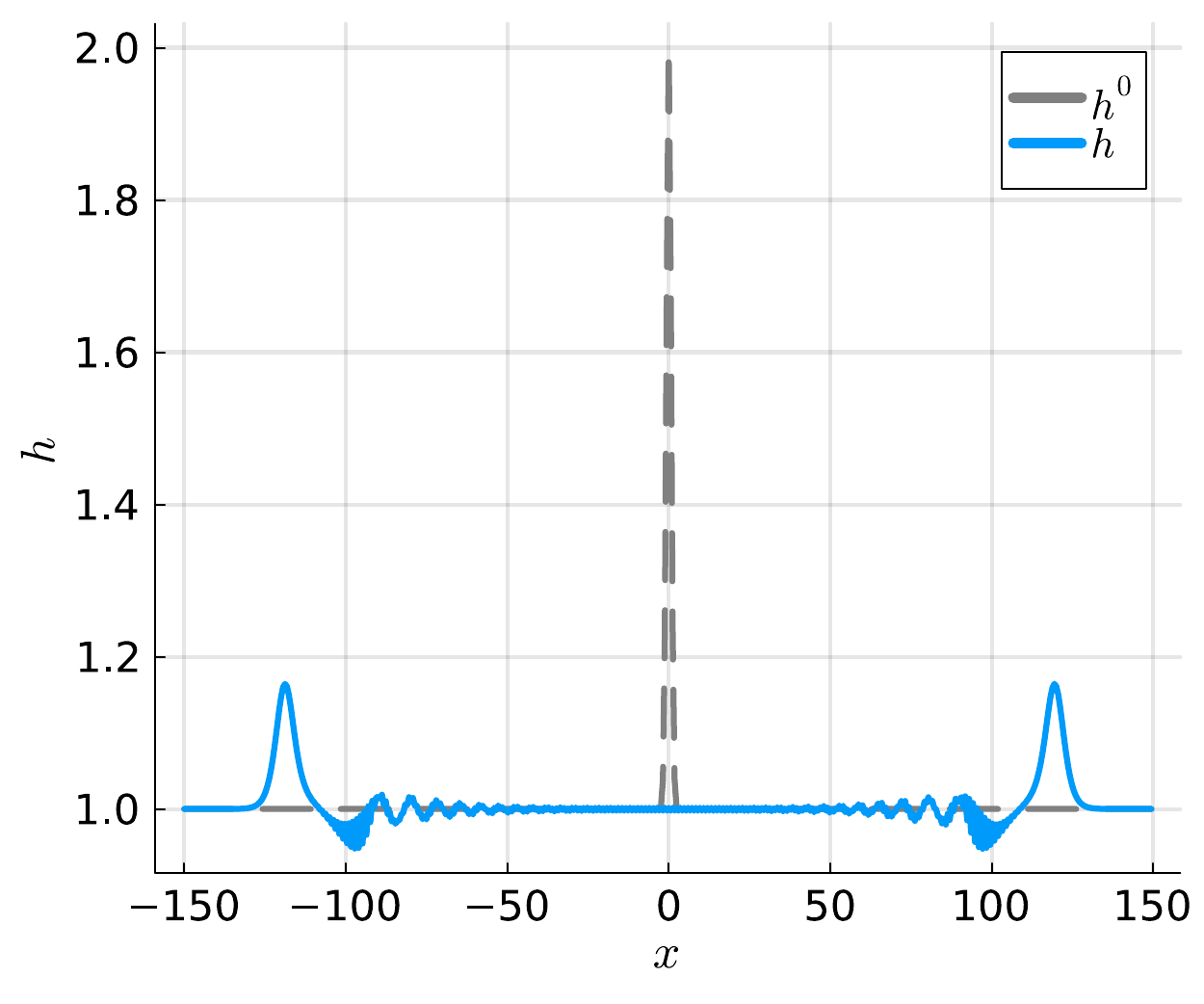}
        \caption{Central operators plus AV.}
    \end{subfigure}%
    \caption{Numerical solutions obtained with second-order finite difference
             methods with $N = 500$ nodes for the original Serre-Green-Naghdi
             equations for the same setup as in
             Figure~\ref{fig:conservation_solution}.}
    \label{fig:upwind_vs_central}
\end{figure}

Next, we compare  three approaches for the original
Serre-Green-Naghdi equations with only $N = 500$ nodes:
central SBP operators, upwind SBP operators, and
central SBP  plus artificial viscosity.
The results are shown in Figure~\ref{fig:upwind_vs_central}.
The central discretization of the Laplacian leads to spurious oscillations   due to under-resolution.
These oscillations, absent in the mesh resolved solutions,  are completely removed by the  upwind structure-preserving discretization.
Artificial viscosity allows to remove some of the oscillations, but also damps the solution everywhere, as visible from the lower
water heights obtained.
This qualitative difference is in accordance with the behavior of
central (wide-stencil) and upwind (narrow-stencil) discretizations of
several time-dependent problems
\cite{mattsson2012summation,mattsson2017diagonal}
including other systems of dispersive wave equations \cite{lampert2024structure}. For the effects of numerical dissipation
one can instead refer to \cite{jouy_etal24}.

\subsubsection{Qualitative comparison: variable bathymetry}

Next, we use a variable bathymetry and
\begin{equation}
\label{eq:initial_condition_gaussian_variable}
    h(x, 0) = 1 + \exp(-x^2) - b(x), \quad u(x, u) = 10^{-2}, \quad b(x) = \frac{\cos(\pi x / 75)}{4}.
\end{equation}
Reference solutions  are shown in
Figure~\ref{fig:conservation_solution_variable}. All methods are visually converged at the same  level
 $N = 3000$ nodes.

\begin{figure}[htbp]
\centering
    \begin{subfigure}{0.49\textwidth}
    \centering
        \includegraphics[width=0.9\textwidth]{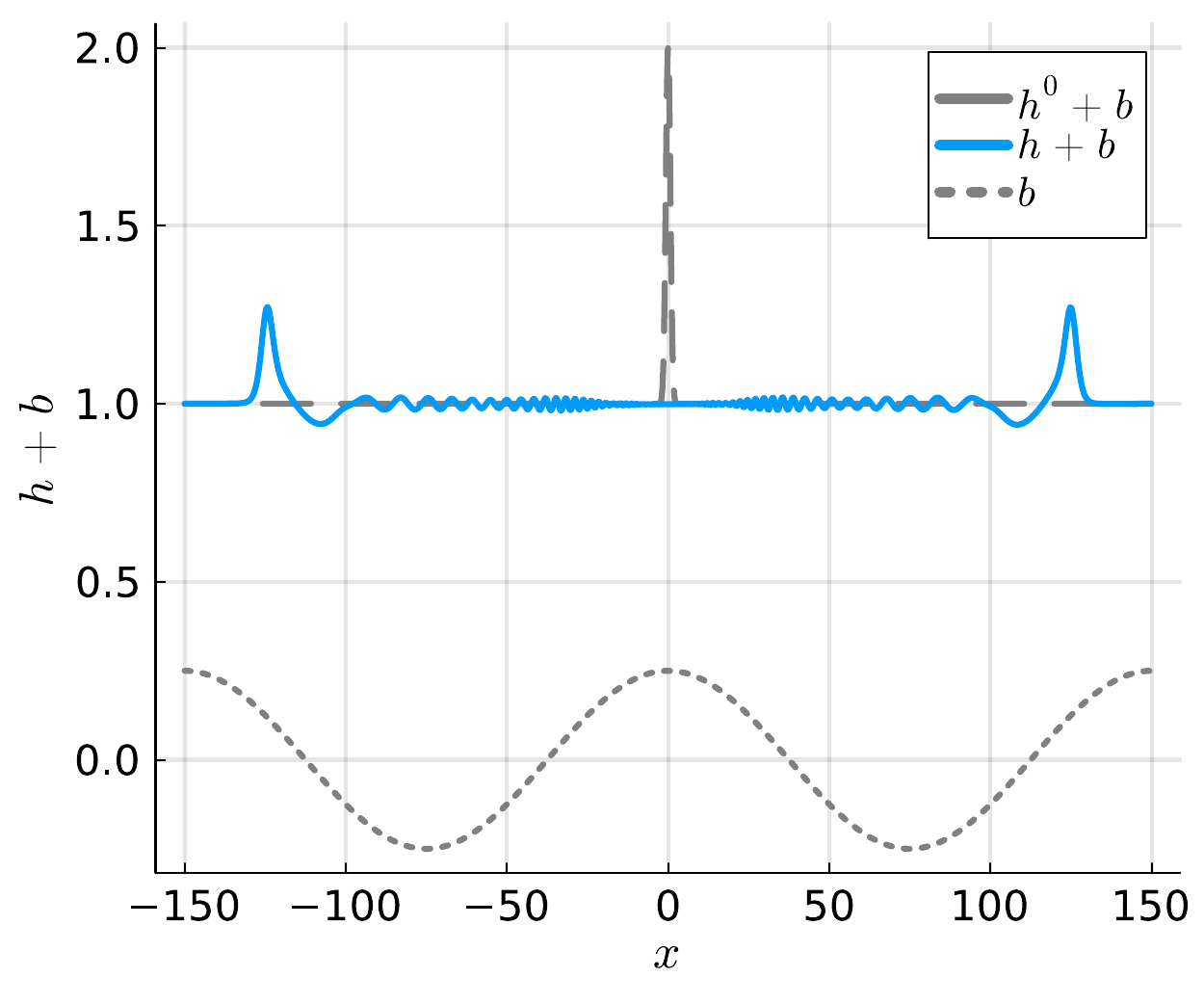}
        \caption{Hyperbolic approximation with $\lambda = 500$.}
    \end{subfigure}%
    \hspace{\fill}
    \begin{subfigure}{0.49\textwidth}
    \centering
        \includegraphics[width=\textwidth]{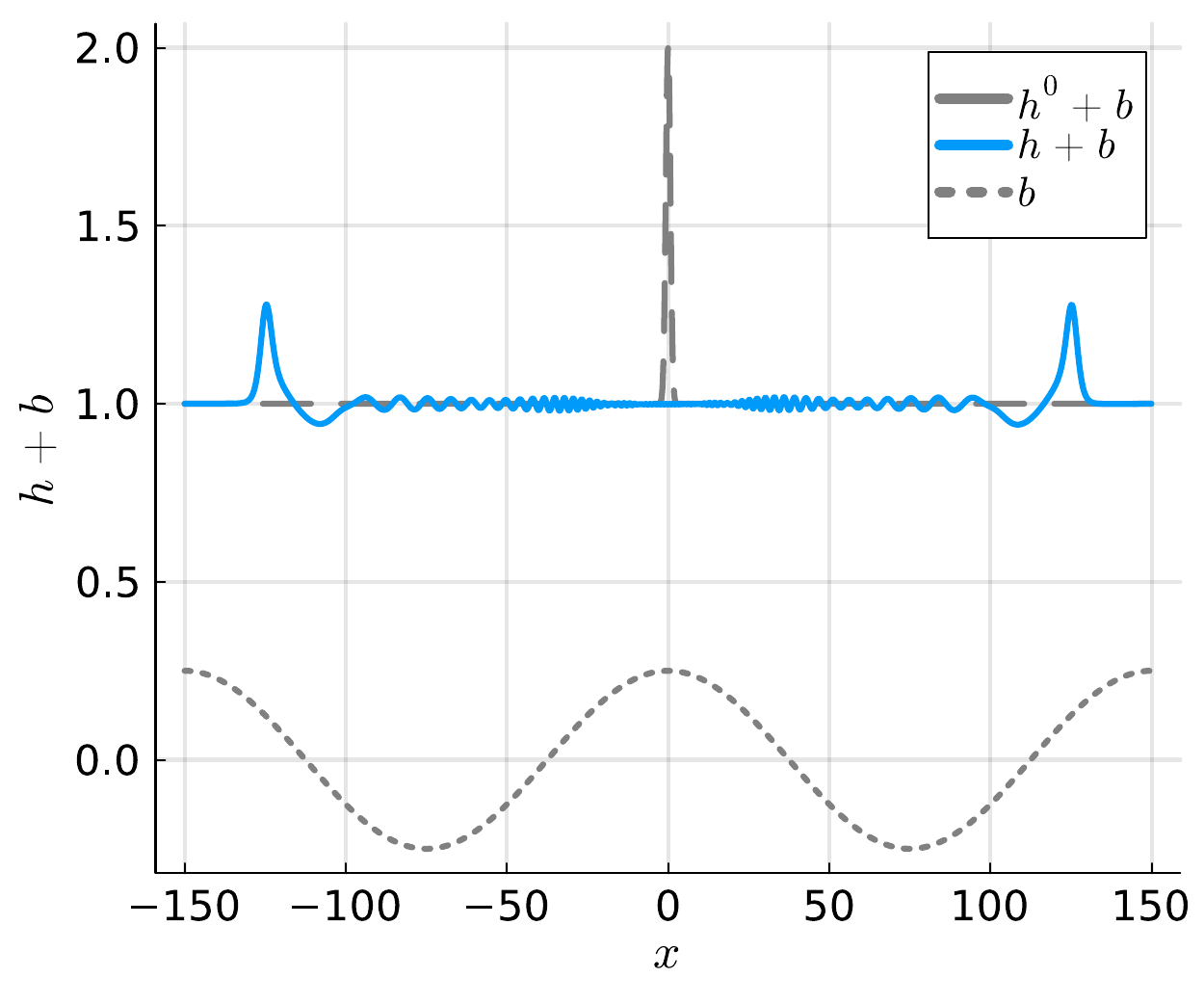}
        \caption{Serre-Green-Naghdi equations (mild slope).}
    \end{subfigure}%
    \\
    \begin{subfigure}{0.49\textwidth}
    \centering
        \includegraphics[width=0.9\textwidth]{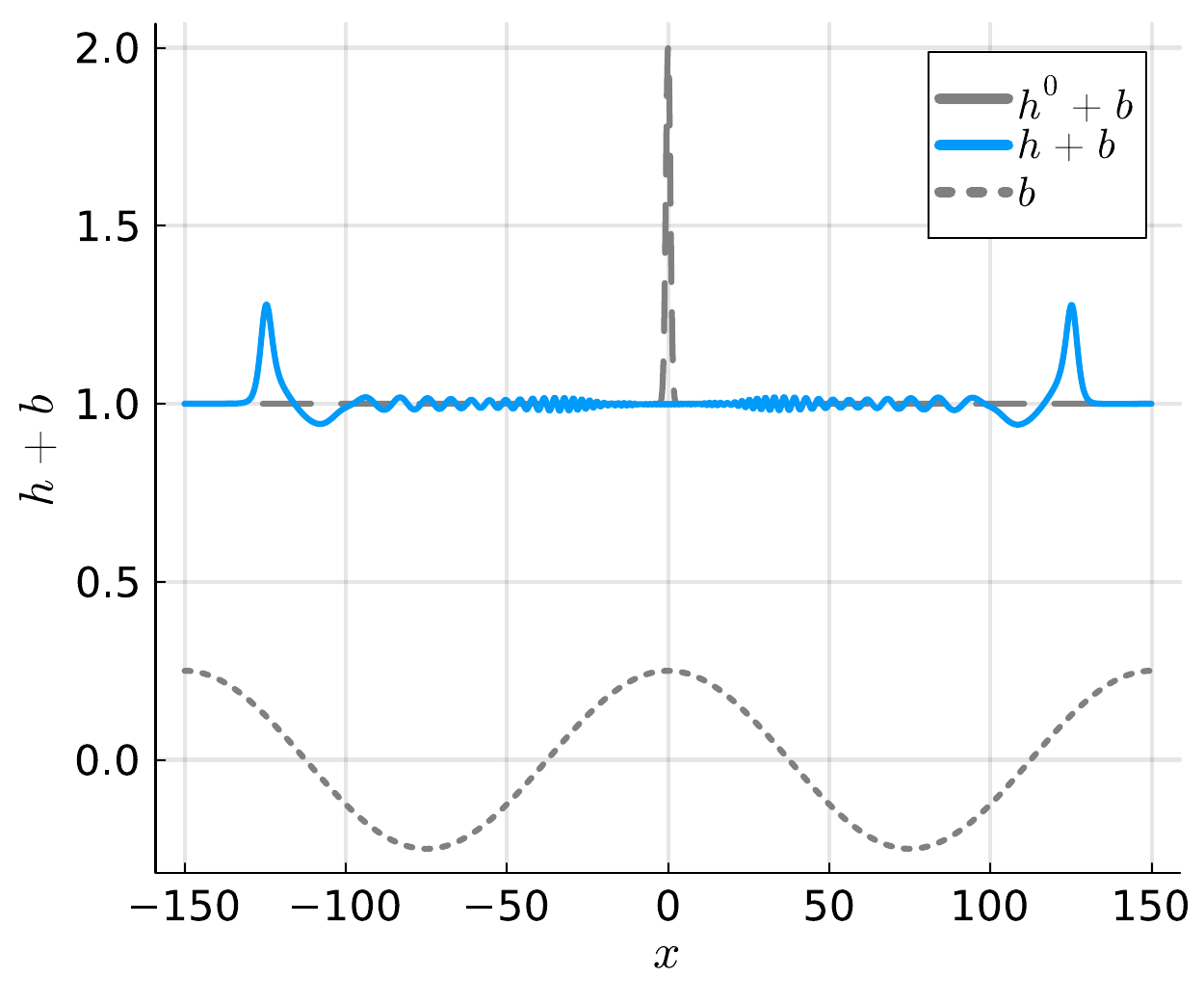}
        \caption{Full Serre-Green-Naghdi equations.}
    \end{subfigure}%
      \hspace{\fill}
    \begin{subfigure}{0.49\textwidth}
    \centering
        \includegraphics[width=0.9\textwidth]{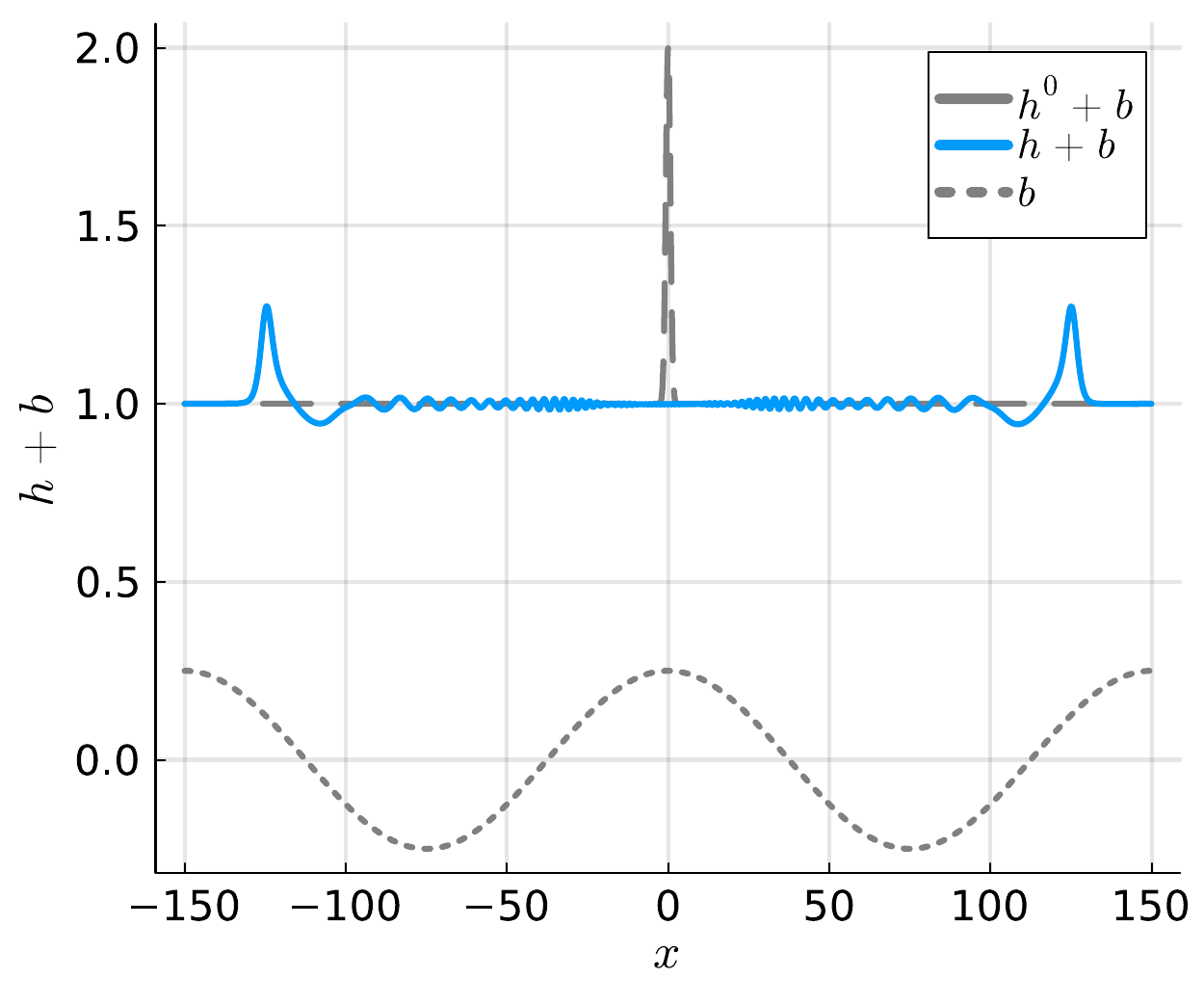}
        \caption{Full Serre-Green-Naghdi plus artificial dissipation.}
    \end{subfigure}
    \caption{Numerical solutions for the initial condition
             \eqref{eq:initial_condition_gaussian_variable}.
             These reference solutions
             are all visually converged on the same grid with $N = 3000$ nodes using
             second-order central finite difference methods.}
    \label{fig:conservation_solution_variable}
\end{figure}

Numerical solutions obtained with second-order finite difference methods
with $N = 500$ nodes for the original Serre-Green-Naghdi equations are shown
in Figure~\ref{fig:upwind_vs_central_variable_mild}. As before, the
central discretization of the Laplacian leads to spurious oscillations.
The oscillations are  in this case removed in both the  upwind SBP method, which is structure-preserving,
and using artificial viscosity. However, as expected, on coarse meshes the latter has dramatic impact on the wave heights obtained,
and on the resolution of secondary waves.

\begin{figure}[htbp]
\centering
    \begin{subfigure}{0.49\textwidth}
    \centering
        \includegraphics[width=0.653\textwidth]{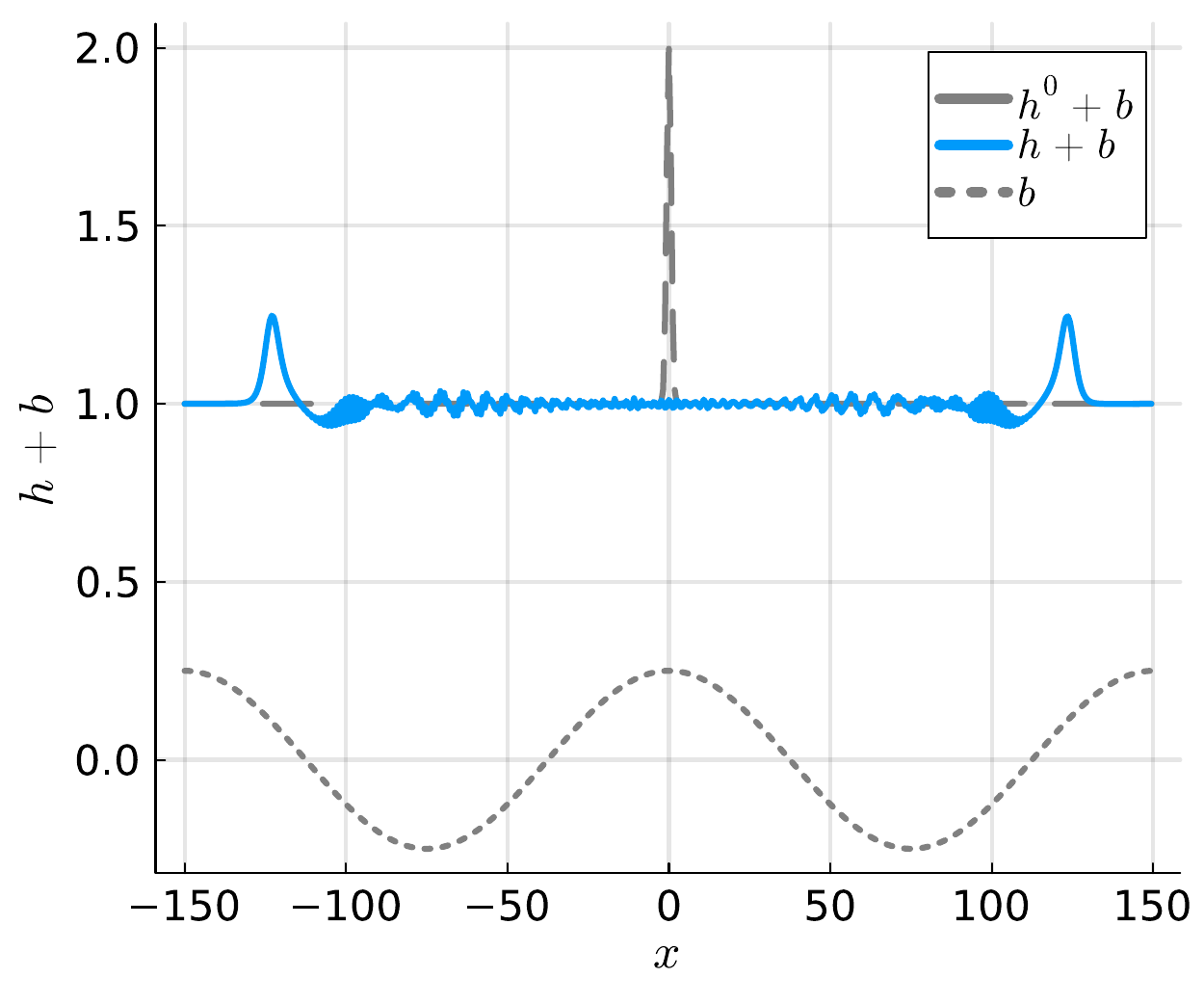}
        \caption{Central operators, mild slope.}
            \end{subfigure}%
    \hspace{\fill}
     \begin{subfigure}{0.49\textwidth}
    \centering
        \includegraphics[width=0.653\textwidth]{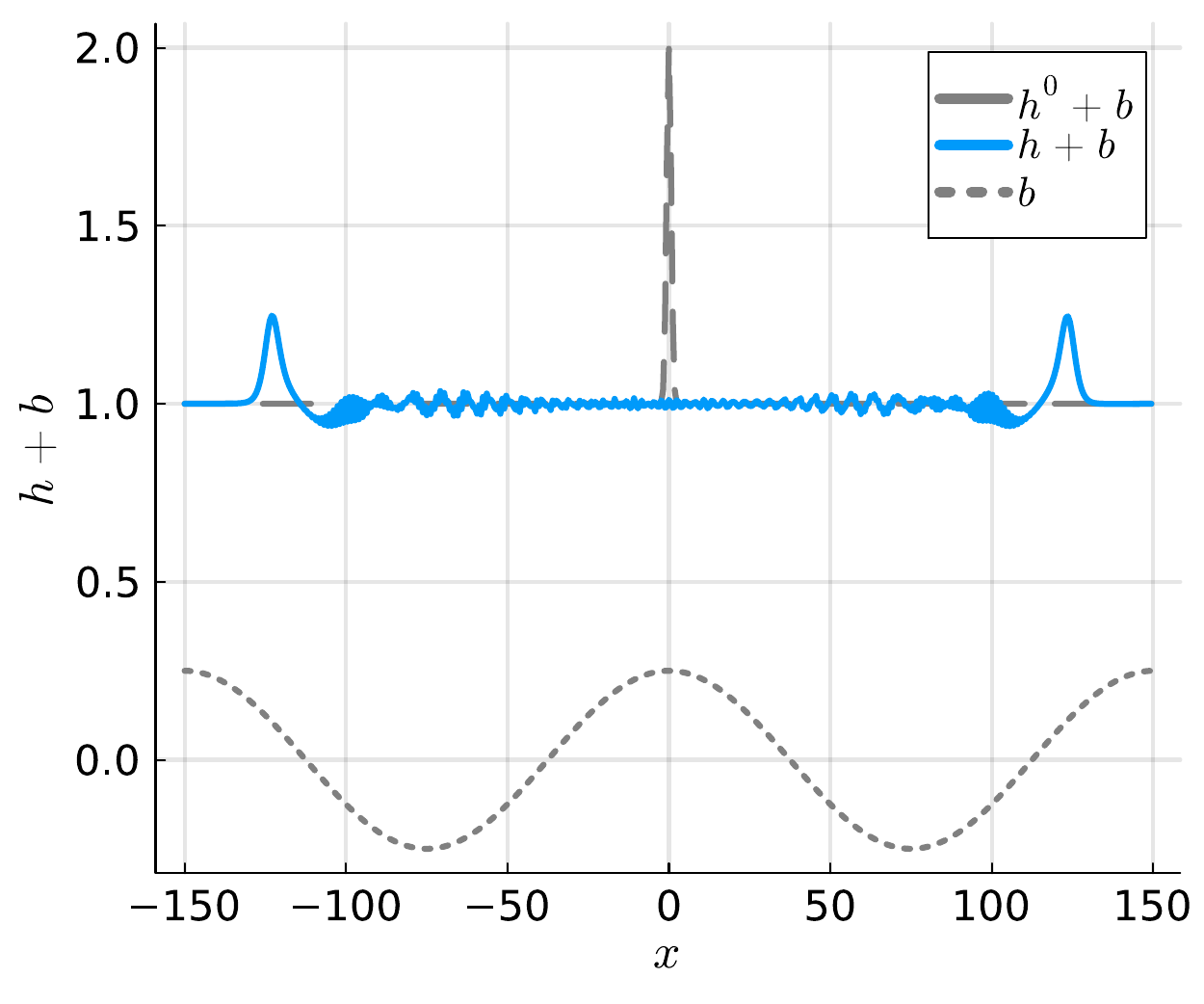}
        \caption{Central operators, full system.}
            \end{subfigure}%
              \\
        \begin{subfigure}{0.32\textwidth}
    \centering
        \includegraphics[width=\textwidth]{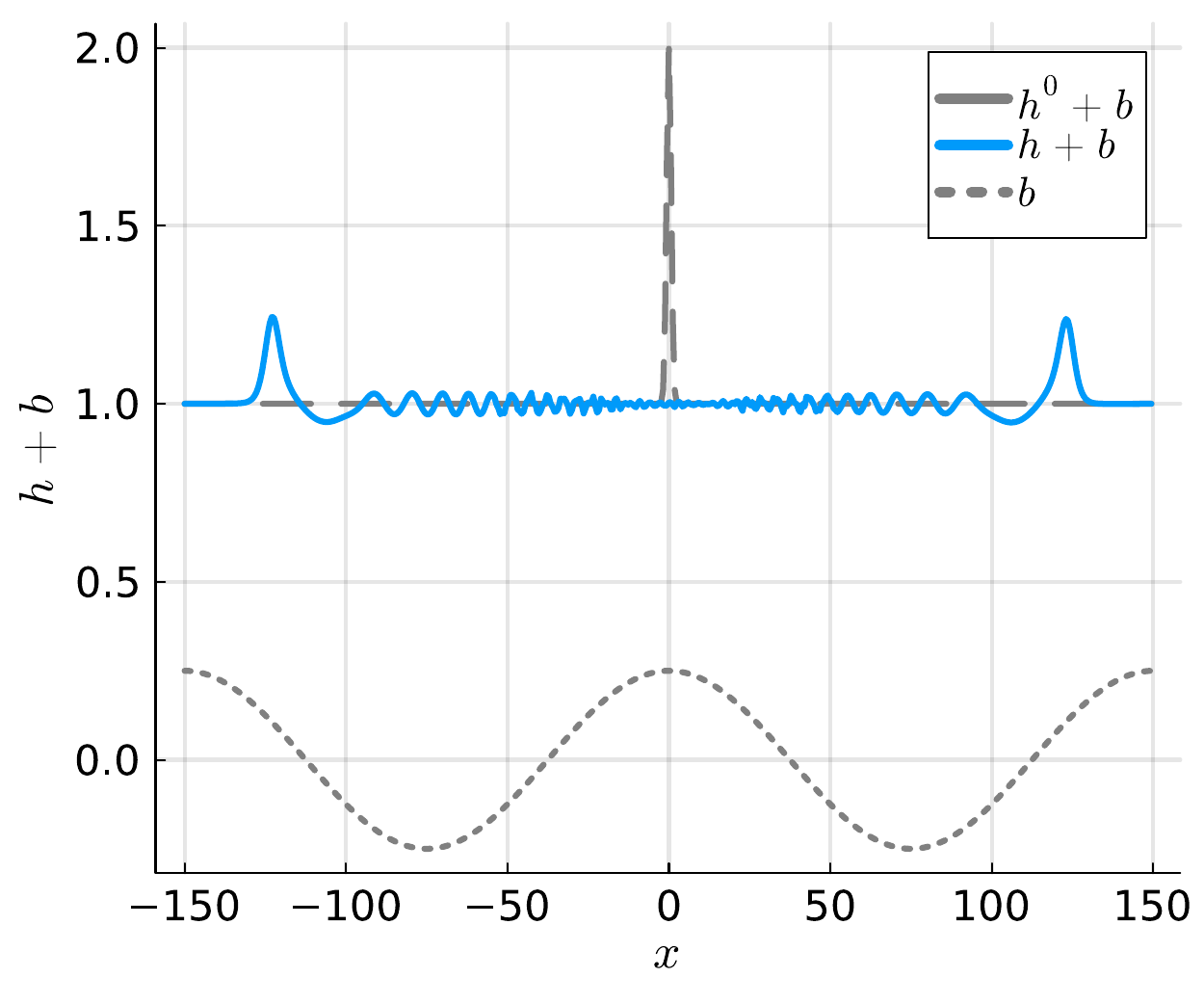}
        \caption{Upwind operators, mild slope.}
    \end{subfigure}%
    \hspace{\fill}
    \begin{subfigure}{0.32\textwidth}
    \centering
        \includegraphics[width=\textwidth]{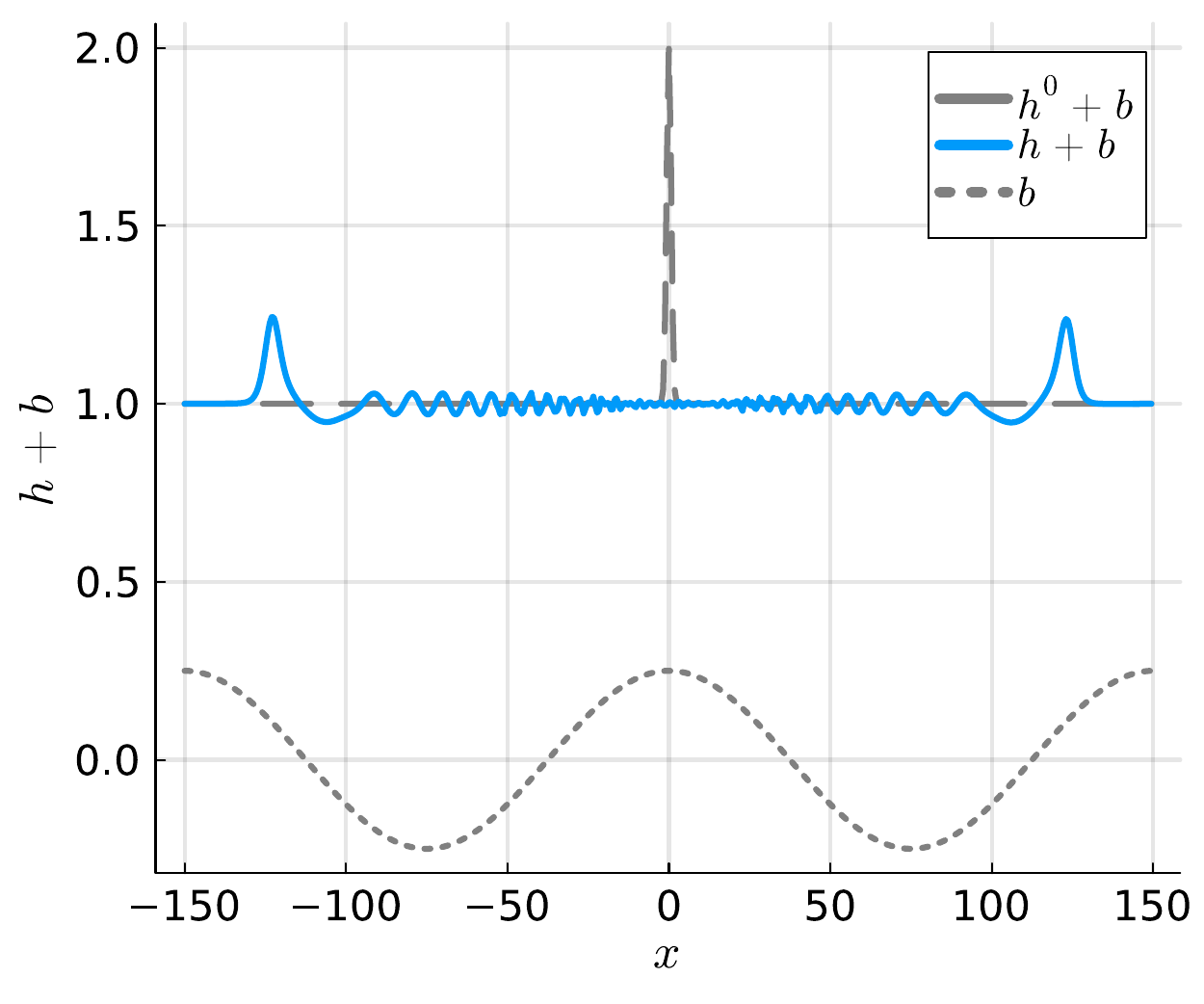}
        \caption{Upwind operators, full system.}
    \end{subfigure}%
\hspace{\fill}
    \begin{subfigure}{0.32\textwidth}
    \centering
        \includegraphics[width=\textwidth]{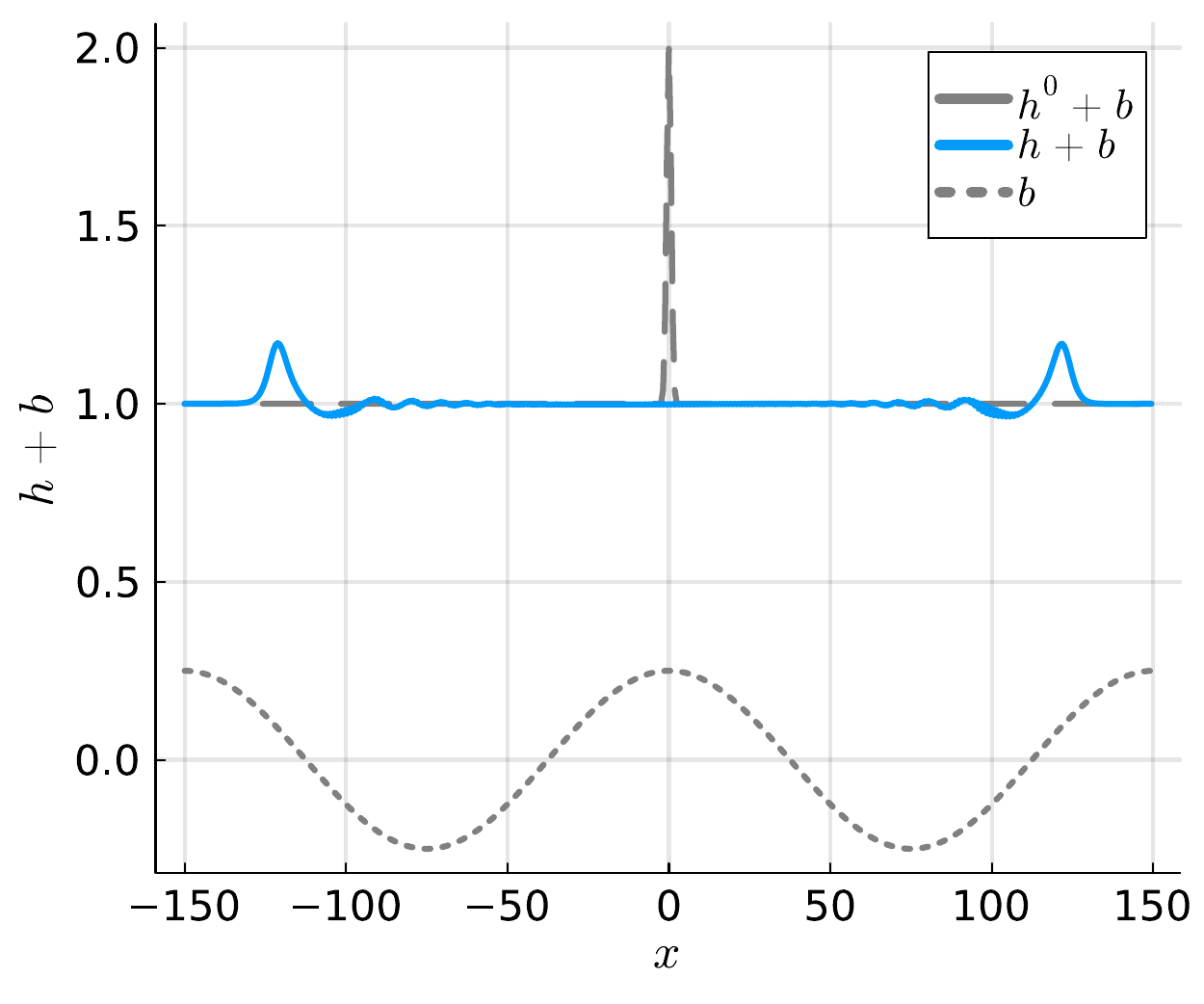}
        \caption{Central  + AV, full system.}
    \end{subfigure}%
    \caption{Numerical solutions obtained with second-order finite difference
             methods with $N = 500$ nodes for the original Serre-Green-Naghdi
             equations for the same setup as in
             Figure~\ref{fig:conservation_solution_variable}.}
    \label{fig:upwind_vs_central_variable_mild}
\end{figure}

\subsection{Conservation of invariants}
\label{sec:conservation_tests}

In this section, we check the conservation of the invariants,
i.e., the total water mass and the total energy. We use again
the initial conditions and setups from Section~\ref{sec:upwind_vs_central}
for constant and variable bathymetry.

\subsubsection{Conservation of invariants: flat bathymetry}

\begin{table}[htbp]
\centering
  \caption{Conservation of invariants
           (error ``Er'' of water mass $\smallint h$,
           momentum $\smallint hu$,
           and energy $\smallint E$)
           for constant bathymetry.}
  \label{tab:conservation_constant_bathymetry}
  \begin{subfigure}{0.49\textwidth}
    \centering
    \caption{Hyperbolic approximation  $\lambda = 500$.\\\textcolor{white}{text here}}
    \begin{small}
    \begin{tabular}{c c c cc}
        \toprule
         $\Delta t$ & Er$\,\smallint\! h$ & Er$\,\smallint\! hu$   & Er$\,\smallint\! E$ & EOC \\
        \midrule
        0.0100 & 2.3e-13 & 1.7e-07 & 6.4e-06 &  \\
        0.0050 & 2.3e-13 & 1.7e-07 & 3.3e-07 & 4.28 \\
        0.0020 & 2.8e-13 & 1.7e-07 & 3.8e-09 & 4.88 \\
        0.0010 & 2.8e-13 & 1.7e-07 & 1.2e-10 & 4.96 \\
        0.0005 & 4.5e-13 & 1.7e-07 & 7.3e-12 & 4.05 \\
        \bottomrule
    \end{tabular}
    \end{small}
  \end{subfigure}
    \hspace{\fill}
  \begin{subfigure}{0.49\textwidth}
    \centering
    \caption{Hyperbolic approximation with $\lambda = 500$ and AV (error of energy with orders 2 and 4).}
    \begin{small}
    \begin{tabular}{c c c cc}
        \toprule
        $\Delta t$ & Er$\,\smallint\! h$ & Er$\,\smallint\! hu$   & \multicolumn{2}{c}{Er$\,\smallint\! E$} \\
        & & & O2 & O4 \\
        \midrule
        0.0100 & 2.3e-13 & 1.5e-07 & 1.4 & 8.9e-02 \\
        0.0050 & 2.3e-13 & 1.5e-07 & 1.4 & 8.9e-02 \\
        0.0020 & 2.8e-13 & 1.5e-07 & 1.4 & 8.9e-02 \\
        0.0010 & 2.8e-13 & 1.5e-07 & 1.4 & 8.9e-02 \\
        0.0005 & 4.0e-13 & 1.5e-07 & 1.4 & 8.9e-02 \\
        \bottomrule
    \end{tabular}
    \end{small}
  \end{subfigure}%
  \\
  \medskip
  \begin{subfigure}{0.49\textwidth}
    \centering
    \caption{Original Serre-Green-Naghdi equations with central operators.}
    \begin{small}
    \begin{tabular}{c c cc cc}
        \toprule
        $\Delta t$ & Er$\,\smallint\! h$ & Er$\,\smallint\! hu$ & EOC & Er$\,\smallint\! E$ & EOC \\
        \midrule
        0.150 & 1.1e-13 & 1.5e-06 &  & 2.8e-04 &  \\
        0.050 & 2.3e-13 & 4.1e-09 & 5.35 & 1.1e-06 & 5.06 \\
        0.020 & 2.8e-13 & 5.4e-11 & 4.73 & 1.4e-08 & 4.76 \\
        0.010 & 4.5e-13 & 1.8e-12 & 4.95 & 4.4e-10 & 4.94 \\
        0.005 & 7.4e-13 & 3.5e-14 & 5.65 & 2.0e-11 & 4.43 \\
        \bottomrule
    \end{tabular}
    \end{small}
  \end{subfigure}%
  \hspace{\fill}
  \begin{subfigure}{0.49\textwidth}
    \centering
   \caption{Original Serre-Green-Naghdi equations with central operators and AV (error of energy with orders 2 and 4).}
    \begin{small}
    \begin{tabular}{c c cc cc}
        \toprule
        $\Delta t$ & Er$\,\smallint\! h$ & Er$\,\smallint\! hu$ & EOC & \multicolumn{2}{c}{Er$\,\smallint\! E$} \\
        & & & & O2 & O4 \\
        \midrule
        0.150 & 1.1e-13 & 8.2e-07 &  & 1.4 & 8.7e-02 \\
        0.050 & 1.7e-13 & 2.7e-09 & 5.19 & 1.4 & 8.8e-02 \\
        0.020 & 2.8e-13 & 3.4e-11 & 4.77 & 1.4 & 8.8e-02 \\
        0.010 & 4.5e-13 & 1.1e-12 & 4.96 & 1.4 & 8.8e-02 \\
        0.005 & 7.4e-13 & 4.1e-14 & 4.76 & 1.4 & 8.8e-02 \\
        \bottomrule
    \end{tabular}
    \end{small}
  \end{subfigure}%
    \\
  \medskip
  \begin{subfigure}{0.49\textwidth}
    \centering
    \caption{Original Serre-Green-Naghdi equations with upwind operators.}
    \begin{small}
    \begin{tabular}{c c cc cc}
        \toprule
        $\Delta t$ & Er$\,\smallint\! h$ & Er$\,\smallint\! hu$ & EOC & Er$\,\smallint\! E$ & EOC \\
        \midrule
        0.150 & 1.1e-13 & 1.7e-05 &  & 2.1e-04 &  \\
        0.050 & 2.3e-13 & 1.2e-08 & 6.57 & 9.7e-07 & 4.88 \\
        0.020 & 2.8e-13 & 9.8e-11 & 5.27 & 1.2e-08 & 4.78 \\
        0.010 & 4.0e-13 & 4.4e-12 & 4.46 & 4.0e-10 & 4.95 \\
        0.005 & 7.4e-13 & 1.7e-13 & 4.69 & 1.9e-11 & 4.39 \\
        \bottomrule
    \end{tabular}
    \end{small}
  \end{subfigure}%
  \hspace{\fill}
  \begin{subfigure}{0.49\textwidth}
    \centering
    \caption{Original Serre-Green-Naghdi equations with upwind operators and AV (error of energy with orders 2 and 4).}
    \begin{small}
    \begin{tabular}{c c cc cc}
        \toprule
        $\Delta t$ & Er$\,\smallint\! h$ & Er$\,\smallint\! hu$ & EOC & \multicolumn{2}{c}{Er$\,\smallint\! E$} \\
        & & & & O2 & O4 \\
        \midrule
        0.150 & 2.3e-13 & 1.4e-05 &   & 1.3 & 8.1e-02 \\
        0.050 & 2.3e-13 & 1.1e-08 & 6.50 & 1.3 & 8.1e-02 \\
        0.020 & 3.4e-13 & 4.4e-11 & 6.03 & 1.3 & 8.1e-02 \\
        0.010 & 4.0e-13 & 2.5e-12 & 4.12 & 1.3 & 8.1e-02 \\
        0.005 & 6.8e-13 & 9.5e-14 & 4.73 & 1.3 & 8.1e-02 \\
        \bottomrule
    \end{tabular}
    \end{small}
  \end{subfigure}%
\end{table}

The conservation of mass, momentum, and energy is  measured on the test of Section~\ref{sec:upwind_vs_central}.
We use  second-order finite differences
with $N = 1000$ nodes, in the spatial domain  $[-150, 150]$, on the time interval $t\in [0,35]$.
The results   are shown in Table~\ref{tab:conservation_constant_bathymetry}.
For all systems, the linear invariant (total water mass) is conserved up to
machine accuracy. For the structure-preserving semidiscretizations, the  total energy error
decreases with the time step size, and the EOC matches the order of accuracy
of the time integration method.  This is expected since,  being a nonlinear invariant, total energy   is
not conserved exactly by the time integration method.   For the original systems  we obtain the same result for the momentum,
which is a nonlinear invariant due to the formulation using  $h u_t$. As already said, this could be avoided  working with $(hu)_t$ but with
considerable overheads in the solution of the elliptic problem. In any case, the momentum EOC matches the order of accuracy
of the time discretization. Conversely, the momentum error for the hyperbolic approximation is the same for all $\Delta t$s
since in this case the semidiscretization in space is non momentum conserving.
The results including AV  show a finite defect in the energy integral, independent of the time step. This error
is  given by $\smallint  \mu(\Delta x,p) h(u_x)^2$ at the final time, and
 reduces when passing from order 2 to order 4 as shown in the tables.

\subsubsection{Conservation of invariants: variable bathymetry}

Next, we use a variable bathymetry as in
\eqref{eq:initial_condition_gaussian_variable}.
The other parameters are still the same as before. The results shown in
Table~\ref{tab:conservation_variable_bathymetry} show similar behavior to
 the constant bathymetry case. To save space we do note report here the errors
when including artificial dissipation, which also behave very similarly as in the previous sub-section.
We do report a table with the energy errors at final time $t=35$ for second- and fourth-order schemes showing some small dependence
of the error on the formulation, but again mostly on the order (and mesh).

\begin{table}[htbp]
\centering
  \caption{Conservation of invariants
           (errors ``Er'' of water mass $\smallint h$
           and energy $\smallint E$)
           for variable bathymetry.
           The spatial discretizations use second-order finite differences
           with $N = 1000$ nodes in the interval $[-150, 150]$.}
  \label{tab:conservation_variable_bathymetry}
    \begin{subfigure}{0.49\textwidth}
    \centering
    \caption{Conservation of  energy for second- and fourth-order finite difference schemes with AV.}
   \begin{tabular}{ccc}
        \toprule
        Formulation  & \multicolumn{2}{c}{Er$\,\smallint\! E$} \\
        & O2 & O4\\
        \midrule
        Hyp. $\lambda = 500$     & 1.20 & 7.49e-02 \\
        Orig. mild slope central & 1.20 & 7.27e-02 \\
        Orig. mild slope upwind  & 1.19 & 7.27e-02 \\
        Orig. full central       & 1.20 & 7.27e-02 \\
        Orig. full upwind        & 1.19 & 7.27e-02 \\
        \bottomrule
    \end{tabular}
   \end{subfigure}%
     \hspace{\fill}
  \begin{subfigure}{0.49\textwidth}
    \centering
    \caption{Hyperbolic approximation with $\lambda = 500$.\\{\color{white}text here}}
    \begin{tabular}{c c cc}
        \toprule
        $\Delta t$ & Er\ $\smallint h$ & Er\ $\smallint E$ & EOC \\
        \midrule
        0.0100 & 2.3e-13 & 6.5e-06 &  \\
        0.0050 & 2.3e-13 & 3.8e-07 & 4.12 \\
        0.0020 & 2.3e-13 & 4.4e-09 & 4.87 \\
        0.0010 & 3.4e-13 & 1.4e-10 & 4.97 \\
        0.0005 & 2.8e-13 & 5.5e-12 & 4.68 \\
        \bottomrule
    \end{tabular}
  \end{subfigure}%
  \\
  \medskip
  \begin{subfigure}{0.49\textwidth}
    \centering
    \caption{Serre-Green-Naghdi equations (mild slope)
             with central operators.}
    \begin{tabular}{c c cc}
        \toprule
        $\Delta t$ & Er\ $\smallint h$ & Er\ $\smallint E$ & EOC \\
        \midrule
        0.150 & 2.3e-13 & 8.9e-04 &  \\
        0.050 & 2.3e-13 & 1.4e-06 & 5.84 \\
        0.020 & 2.3e-13 & 2.1e-08 & 4.62 \\
        0.010 & 2.3e-13 & 6.9e-10 & 4.93 \\
        0.005 & 2.8e-13 & 2.2e-11 & 4.97 \\
        \bottomrule
    \end{tabular}
  \end{subfigure}%
  \hspace{\fill}
  \begin{subfigure}{0.49\textwidth}
    \centering
    \caption{Serre-Green-Naghdi equations (mild slope)
             with upwind operators.}
    \begin{tabular}{c c cc}
        \toprule
        $\Delta t$ & Er\ $\smallint h$ & Er\ $\smallint E$ & EOC \\
        \midrule
        0.150 & 1.7e-13 & 6.5e-04 &  \\
        0.050 & 2.3e-13 & 1.2e-06 & 5.70 \\
        0.020 & 2.3e-13 & 1.7e-08 & 4.65 \\
        0.010 & 2.3e-13 & 5.7e-10 & 4.94 \\
        0.005 & 2.3e-13 & 1.9e-11 & 4.94 \\
        \bottomrule
    \end{tabular}
  \end{subfigure}%
  \\
  \medskip
  \begin{subfigure}{0.49\textwidth}
    \centering
    \caption{Full Serre-Green-Naghdi equations
             with central operators.}
    \begin{tabular}{c c cc}
        \toprule
        $\Delta t$ & Er\ $\smallint h$ & Er\ $\smallint E$ & EOC \\
        \midrule
        0.150 & 2.3e-13 & 8.9e-04 &  \\
        0.050 & 2.3e-13 & 1.4e-06 & 5.84 \\
        0.020 & 2.3e-13 & 2.1e-08 & 4.62 \\
        0.010 & 2.3e-13 & 6.9e-10 & 4.94 \\
        0.005 & 3.4e-13 & 2.2e-11 & 4.94 \\
        \bottomrule
    \end{tabular}
  \end{subfigure}%
  \hspace{\fill}
  \begin{subfigure}{0.49\textwidth}
    \centering
    \caption{Full Serre-Green-Naghdi equations
             with upwind operators.}
    \begin{tabular}{c c cc}
        \toprule
        $\Delta t$ & Er\ $\smallint h$ & Er\ $\smallint E$ & EOC \\
        \midrule
        0.150 & 1.7e-13 & 6.5e-04 &  \\
        0.050 & 2.3e-13 & 1.2e-06 & 5.70 \\
        0.020 & 2.3e-13 & 1.7e-08 & 4.65 \\
        0.010 & 2.3e-13 & 5.7e-10 & 4.93 \\
        0.005 & 2.8e-13 & 1.9e-11 & 4.89 \\
        \bottomrule
    \end{tabular}
  \end{subfigure}%
\end{table}

\subsection{Well-balancedness}

We also check the well-balancedness of the methods. For this, we use the
initial condition
\begin{equation}
    h(x, 0) = 1 - b(x), \quad u(x, u) = 0, \quad b(x) = \frac{\cos(\pi x / 75)}{4},
\end{equation}
in the interval $[-150, 150]$ with $N = 1000$ nodes.
We report in Table~\ref{tab:well_balancedness} the results obtained without artificial viscosity.
The ones obtained with this term added are identical down to machine accuracy and are omitted to save space.
The results shown  confirm the well-balancedness of the semidiscretizations.

\begin{table}[htbp]
\centering
  \caption{Discrete $L^2$ norm of the ODE RHS for the well-balancedness
           test case discretized with central finite difference methods
           of different orders of accuracy $p$.}
  \label{tab:well_balancedness}
  \begin{subfigure}{0.49\textwidth}
    \centering
    \caption{Hyperbolic approximation with $\lambda = 500$.}
    \begin{tabular}{c cccc}
        \toprule
        Order $p$ & $h$ & $v$ & $w$ & $\eta$ \\
        \midrule
        2 & 0.0e+00 & 1.9e-14 & 0.0e+00 & 0.0e+00 \\
        4 & 0.0e+00 & 2.6e-14 & 0.0e+00 & 0.0e+00 \\
        6 & 0.0e+00 & 3.0e-14 & 0.0e+00 & 0.0e+00 \\
        \bottomrule
    \end{tabular}
  \end{subfigure}%
  \hspace{\fill}
  \begin{subfigure}{0.49\textwidth}
    \centering
    \caption{Hyperbolic approximation with $\lambda = 5000$.}
    \begin{tabular}{c cccc}
        \toprule
        Order $p$ & $h$ & $v$ & $w$ & $\eta$ \\
        \midrule
        2 & 0.0e+00 & 1.9e-14 & 0.0e+00 & 0.0e+00 \\
        4 & 0.0e+00 & 2.6e-14 & 0.0e+00 & 0.0e+00 \\
        6 & 0.0e+00 & 3.0e-14 & 0.0e+00 & 0.0e+00 \\
        \bottomrule
    \end{tabular}
  \end{subfigure}%
  \\
  \medskip
  \begin{subfigure}{0.24\textwidth}
    \centering
    \caption{Mild slope Serre-Green-Naghdi,
              central operators}
    \begin{tabular}{c cc}
        \toprule
        $p$ & $h$ & $v$ \\
        \midrule
        2 & 0.0e+00 & 5.2e-15 \\
        4 & 0.0e+00 & 5.5e-15 \\
        6 & 0.0e+00 & 5.9e-15 \\
        \bottomrule
    \end{tabular}
  \end{subfigure}%
  \hspace{\fill}
  \begin{subfigure}{0.24\textwidth}
    \centering
    \caption{Mild slope Serre-Green-Naghdi,
              upwind operators}
    \begin{tabular}{c cc}
        \toprule
        $p$ & $h$ & $v$ \\
        \midrule
        2 & 0.0e+00 & 4.2e-15 \\
        4 & 0.0e+00 & 5.0e-15 \\
        6 & 0.0e+00 & 5.2e-15 \\
        \bottomrule
    \end{tabular}
  \end{subfigure}%
  \hspace{\fill}
  \begin{subfigure}{0.24\textwidth}
    \centering
    \caption{Full Serre-Green-Naghdi,
              central operators}
    \begin{tabular}{c cc}
        \toprule
        $p$ & $h$ & $v$ \\
        \midrule
        2 & 0.0e+00 & 5.2e-15 \\
        4 & 0.0e+00 & 5.5e-15 \\
        6 & 0.0e+00 & 5.9e-15 \\
        \bottomrule
    \end{tabular}
  \end{subfigure}%
  \hspace{\fill}
  \begin{subfigure}{0.24\textwidth}
    \centering
    \caption{Full Serre-Green-Naghdi,
              upwind operators}
    \begin{tabular}{c cc}
        \toprule
        $p$ & $h$ & $v$ \\
        \midrule
        2 & 0.0e+00 & 4.2e-15 \\
        4 & 0.0e+00 & 5.0e-15 \\
        6 & 0.0e+00 & 5.2e-15 \\
        \bottomrule
    \end{tabular}
  \end{subfigure}%
\end{table}

\subsection{Error growth of solitons of the Serre-Green-Naghdi equations}
\label{sec:error_growth}

We study the error growth in long-time simulations of solitary waves.
We use the   setup of Section~\ref{sec:convergence_soliton}, and  apply Fourier pseudospectral
methods in space with $N = 2^7$ nodes. We choose the final time such that
the soliton has traveled through the domain 20 times.
We solve the nonlinear scalar equation for relaxation to conserve the energy
using the ITP method \cite{oliveira2020enhancement}. The results are shown in Figure~\ref{fig:error_growth}.
Energy is only conserved exactly with relaxation, and in this case
it is conserved up to the accuracy of the nonlinear scalar solver.
In this case we also see a linear growth in time of the error, while
the error of the baseline structure-preserving method grows quadratically.
This behavior  has also been observed for other nonlinear
dispersive systems
\cite{frutos1997accuracy,duran2000numerical,ranocha2021broad}.
It can be explained using the theory of relative equilibrium solutions
\cite{duran1998numerical}: the SGN equations can be
expressed as Hamiltonian system \cite{li2002hamiltonian} with the total
energy as Hamiltonian $\mathcal{H}$. However, there is another invariant
$\mathcal{Q}$ of the SGN equations and  solitary wave
solutions are critical points of the functional $\mathcal{H} - c \mathcal{Q}$
\cite{li2002hamiltonian}. Thus, the basic structure of relative equilibrium
solutions of \cite{duran1998numerical} is satisfied and we can expect a
quadratic error growth for general time integration methods and a linear error
growth for methods conserving the total energy.

\begin{figure}[htbp]
\centering
    \begin{subfigure}{0.49\textwidth}
    \centering
        \includegraphics[width=0.9\textwidth]{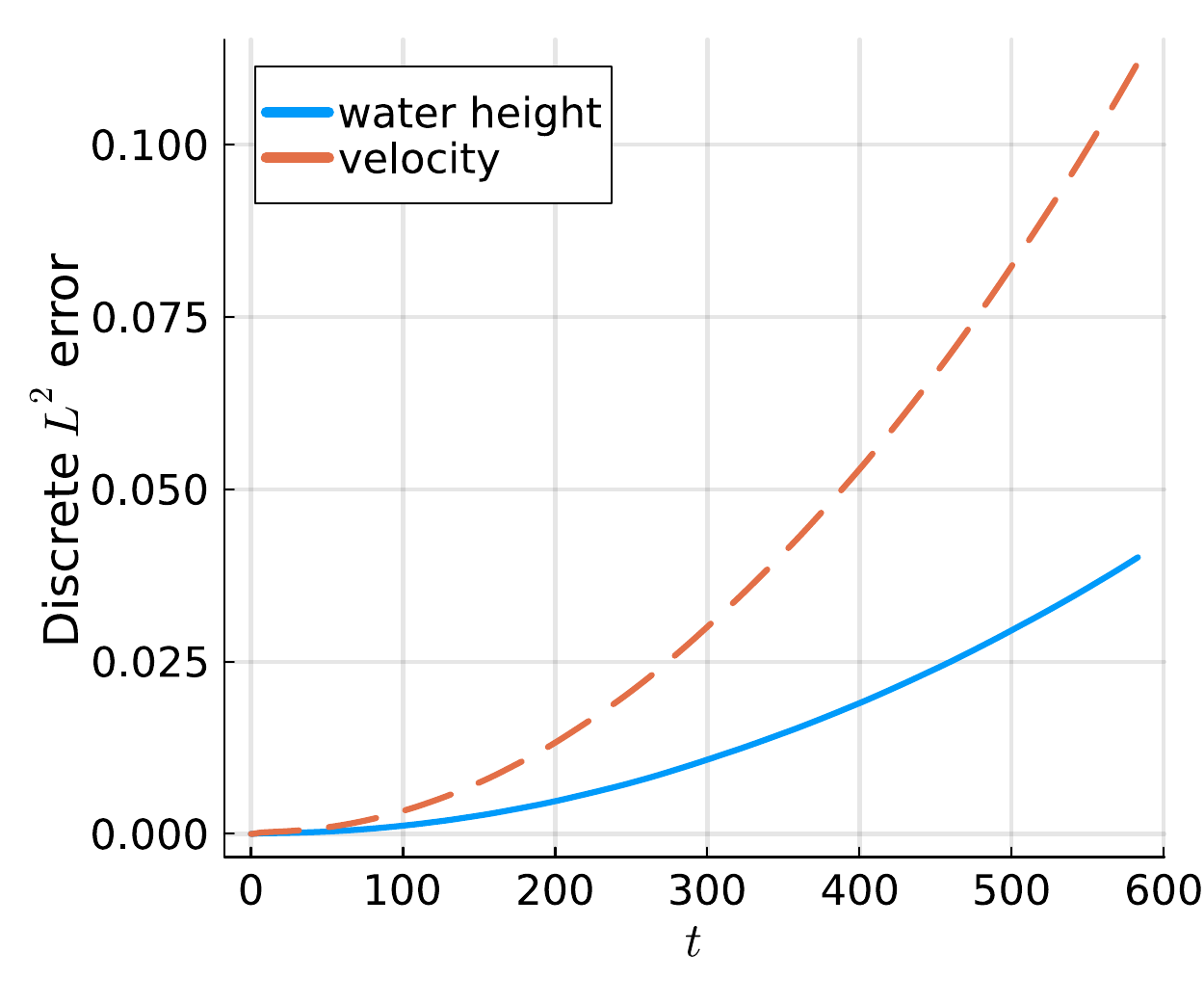}
        \caption{Error without relaxation.}
    \end{subfigure}%
    \hspace{\fill}
    \begin{subfigure}{0.49\textwidth}
    \centering
        \includegraphics[width=0.9\textwidth]{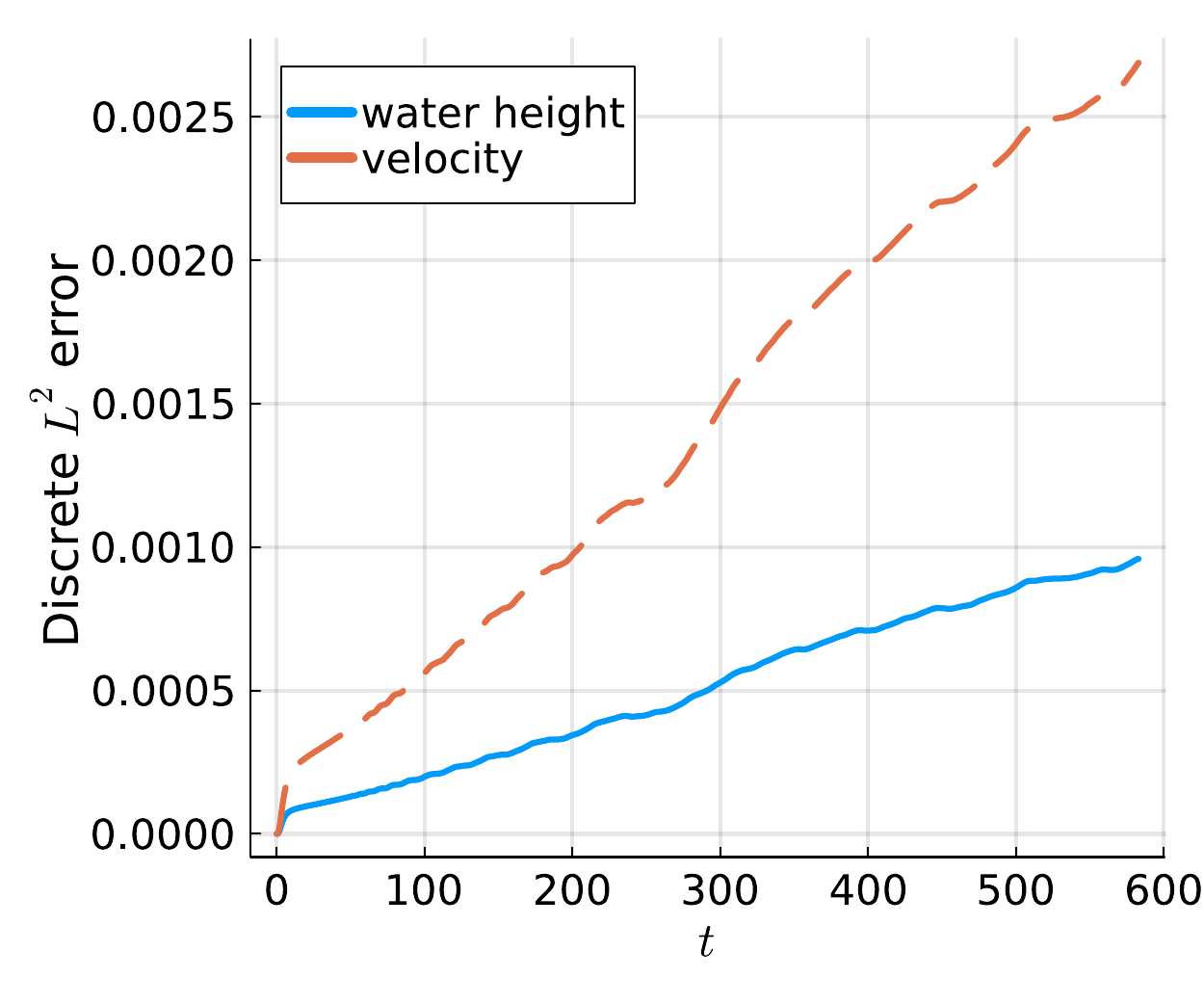}
        \caption{Error with relaxation.}
    \end{subfigure}%
    \\
    \medskip
    \begin{subfigure}{0.49\textwidth}
    \centering
        \includegraphics[width=0.9\textwidth]{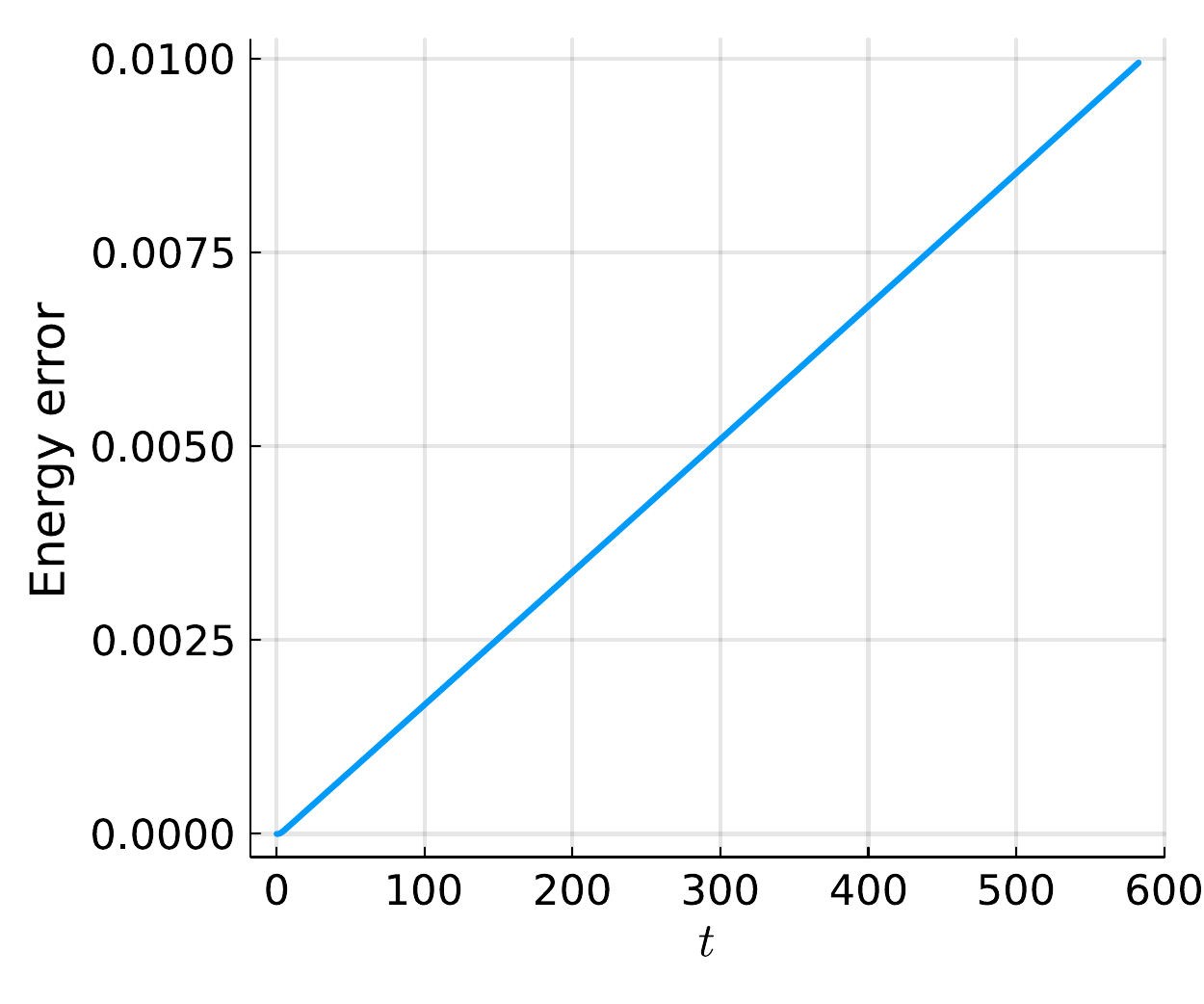}
        \caption{Energy without relaxation.}
    \end{subfigure}%
    \hspace{\fill}
    \begin{subfigure}{0.49\textwidth}
    \centering
        \includegraphics[width=0.9\textwidth]{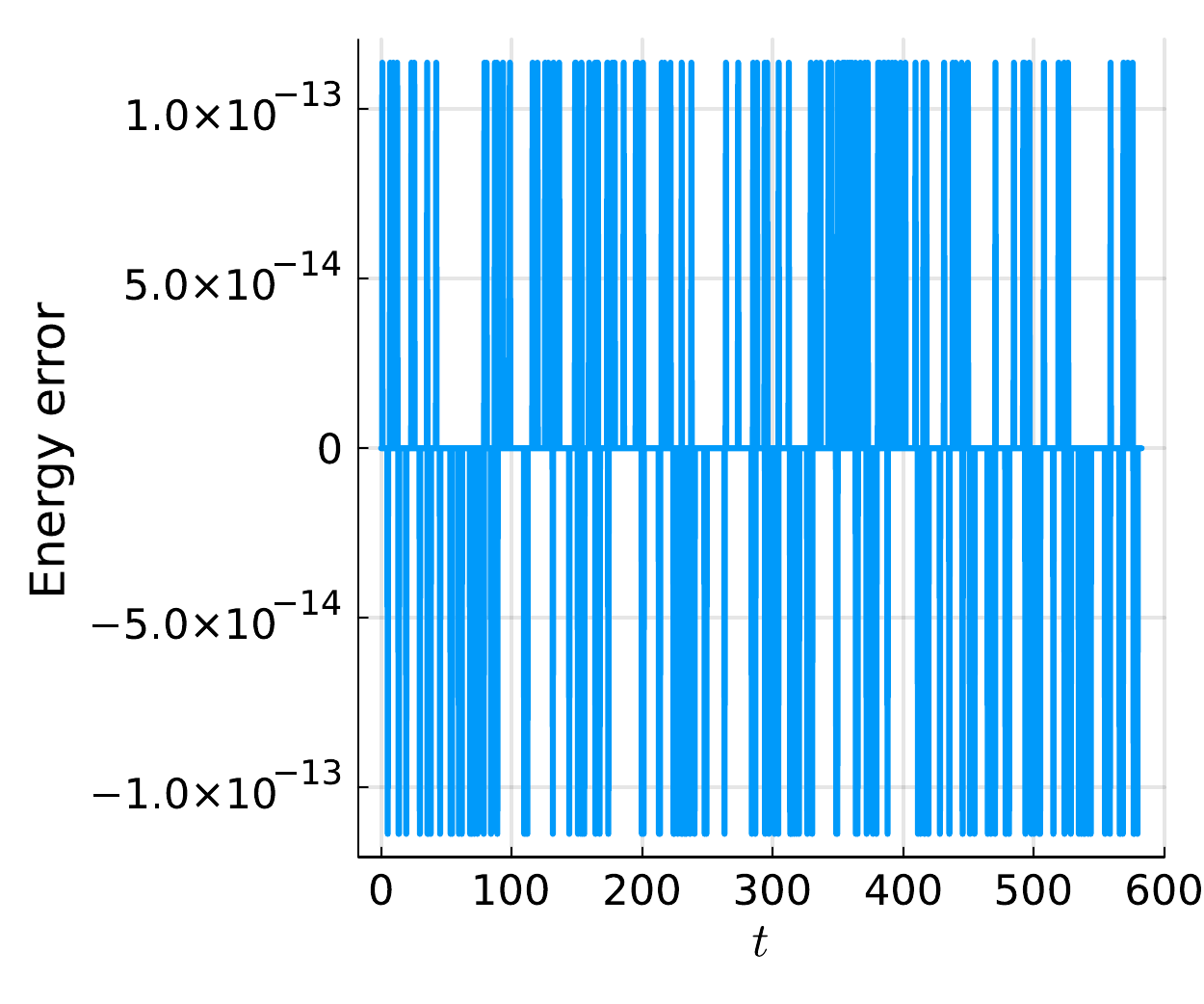}
        \caption{Energy with relaxation.}
    \end{subfigure}%
    \caption{Long-time soliton propagation for the Serre-Green-Naghdi
             equations. Energy-conserving Fourier
             pseudospectral methods, and fifth-order Runge-Kutta method of \cite{tsitouras2011runge}
             with and without relaxation to conserve the energy.
             Top: discrete $L^2$ errors of the numerical
             solutions. Bottom: energy error after 20 periods.}
    \label{fig:error_growth}
\end{figure}

\subsection{Riemann problem}
\label{sec:riemann_problem}

We consider a Riemann problem following the setup by \cite{TKACHENKO2023111901}.
 We use a smoothed initial profile
\begin{equation}
    h(x,0) =h_R+\dfrac{h_L-h_R}{2} \bigl( 1 - \tanh(x / \alpha) \bigr),\quad
        u(x,0) =0,
\end{equation}
with $\alpha = 2$.
The analysis of Riemann invariants of the shallow-water system,
coupled with the analysis of the Whitham system for the
SGN equations \cite{el06,gnst20,TKACHENKO2023111901,el05}
allow to recover the approximate values $(h^*,u^*)$ of the mean flow
dividing the rarefaction wave and the dispersive shock zones as
\begin{equation}
h^* =\dfrac{(\sqrt{h_L}+\sqrt{h_R})^2}{4},
\quad  u^* = 2( \sqrt{gh^*}-\sqrt{gh_R}),\quad a^+=\delta_0-\dfrac{1}{12}\delta_0^2+O(\delta_0^3),
\end{equation}
where $a^+$ is the second-order asymptotic approximation of the amplitude
of the leading soliton
and $\delta_0=|h_R-h_L|$.  We take $h_L = 1.8$ and $h_R = 1.0$, and solve
the problem until  time
$t = 47.434$  on a large domain  $[-600, 600]$. Only
the results in the interval $[-300, 300]$ are retained.
Figure~\ref{fig:riemann_problem} shows  solutions  for the hyperbolic
approximation and the original SGN equations obtained with
structure-preserving central finite difference operators.
The  results from the two systems agree very well with each
other, the analytical predictions, and the numerical results of
\cite{gnst20,pitt2018behaviour}.

\begin{figure}[htbp]
\centering
    \begin{subfigure}{0.49\textwidth}
    \centering
        \includegraphics[width=0.9\textwidth]{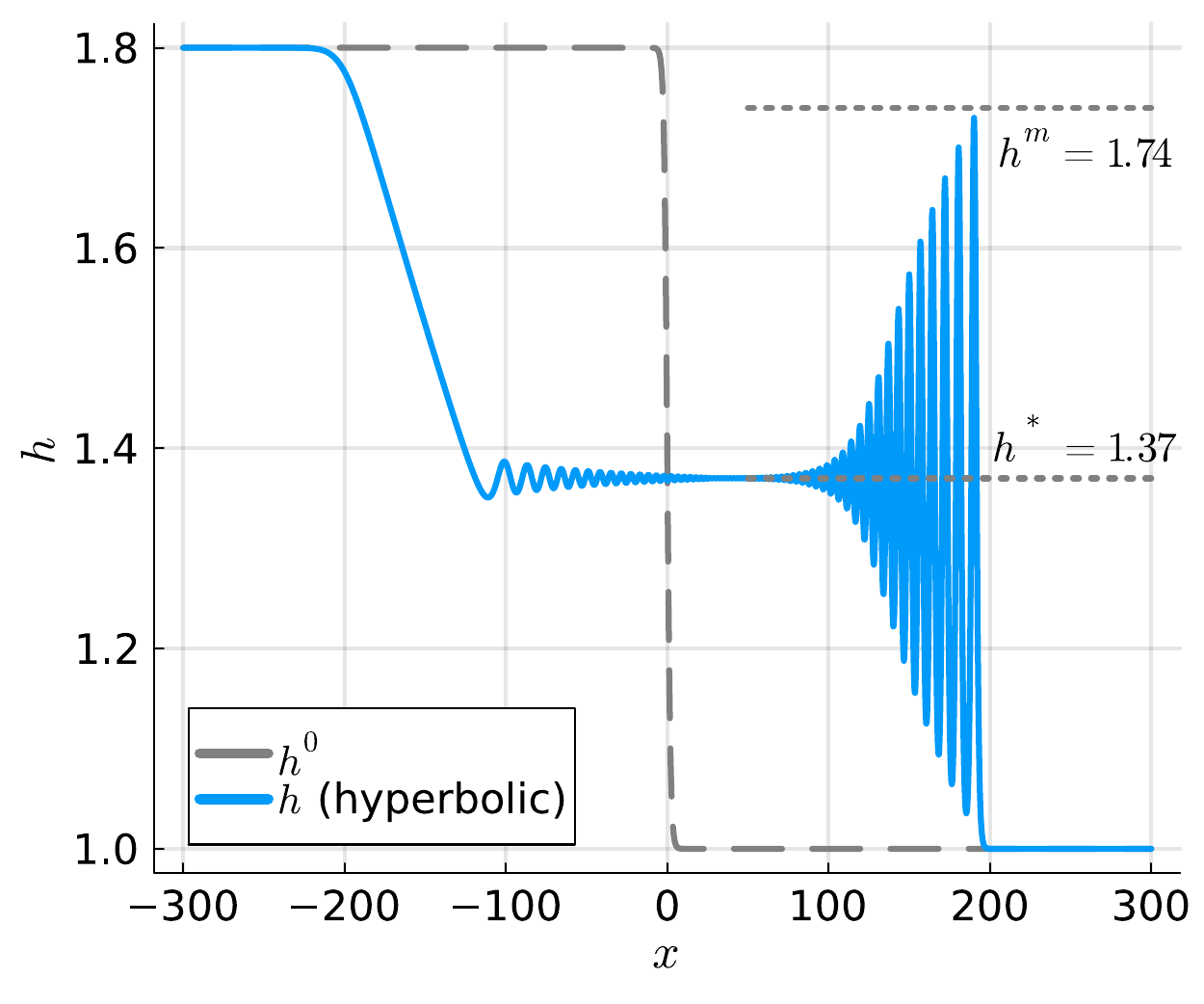}
        \caption{Hyperbolic approximation with $\lambda = 500$.}
    \end{subfigure}%
    \hspace{\fill}
    \begin{subfigure}{0.49\textwidth}
    \centering
        \includegraphics[width=0.9\textwidth]{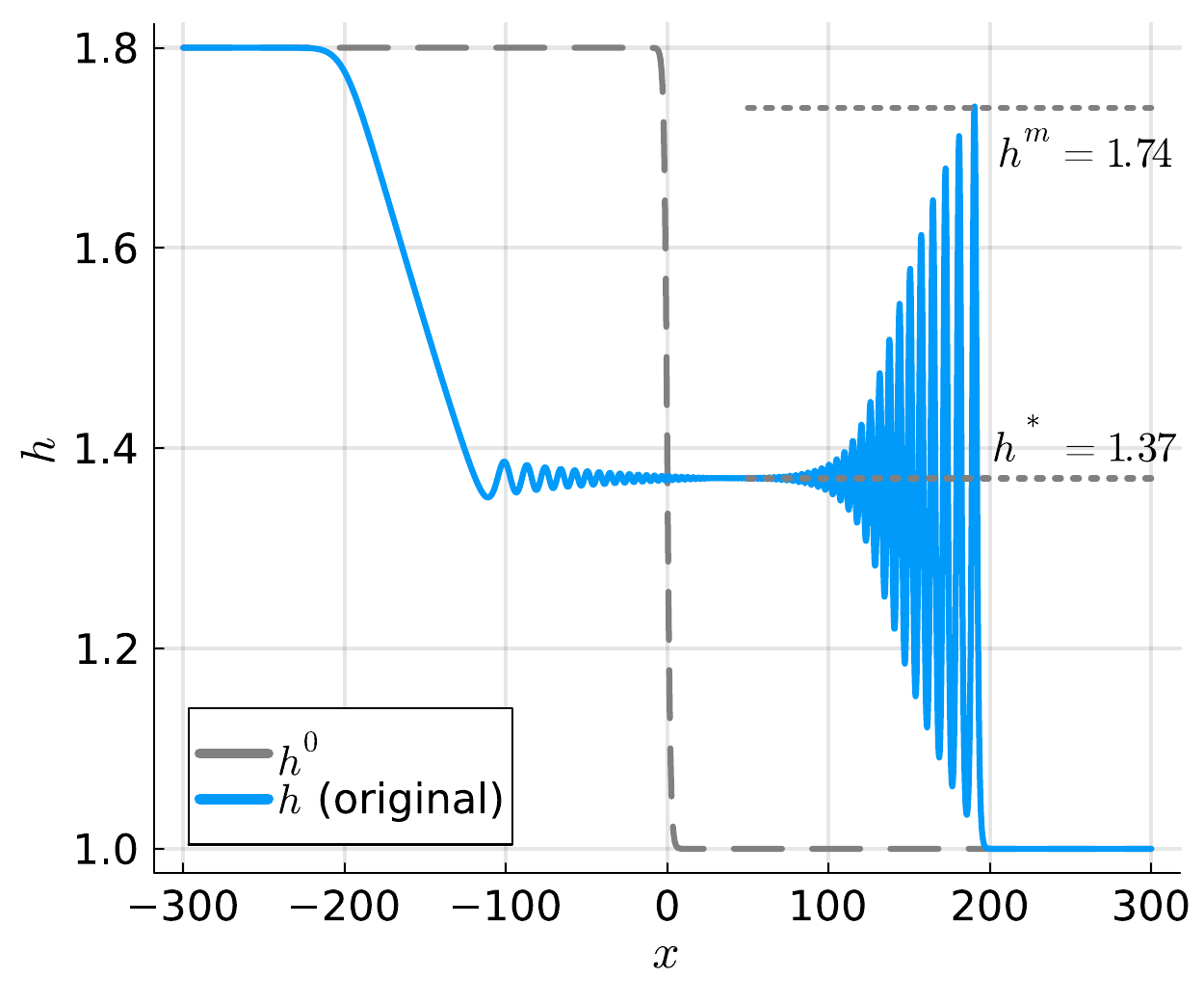}
        \caption{Serre-Green-Naghdi equations.}
    \end{subfigure}%
    \caption{Riemann problem using structure-preserving
             second-order finite differences with $\Delta x=  0.3$.
             The intermediate water height $h^* = 1.37$   and  amplitude
              $h^m = 1.74$ of the leading dispersive wave are in
             accordance with  analytical predictions and  numerical
             results  \cite{gnst20,pitt2018behaviour}.}
    \label{fig:riemann_problem}
\end{figure}

\subsection{Soliton fission}
\label{sec:soliton_fission}

Next, we study the long-time behavior of a dispersive shock wave.
We use the initial condition
\begin{equation}
    h(x,0) = \begin{cases}
        1.8, & |x| < 1, \\
        1.0, & \text{otherwise},
    \end{cases} \quad u(x,0) = 0,
\end{equation}
and discretize the spatial domain $[-500, 500]$ with $10^3$ nodes
using central second-order finite differences.
We integrate the numerical solutions
until $t = 118$, and analyze the leading waves in
the interval $[390, 500]$. We take the values where the water height $h$
is greater than a threshold of $1.001$ and fit analytical  Serre-Green-Naghdi
solitons  to them. For this, we take the median
of the remaining values of $h$ as baseline and use a Nelder-Mead method
\cite{nelder1965simplex,gao2012implementing} implemented in
Optim.jl/Optimization.jl \cite{mogensen2018optim,dixit2023optimization}
to compute a least-squares solution.

\begin{figure}[htb]
\centering
    \begin{subfigure}{0.32\textwidth}
    \centering
        \includegraphics[width=\textwidth]{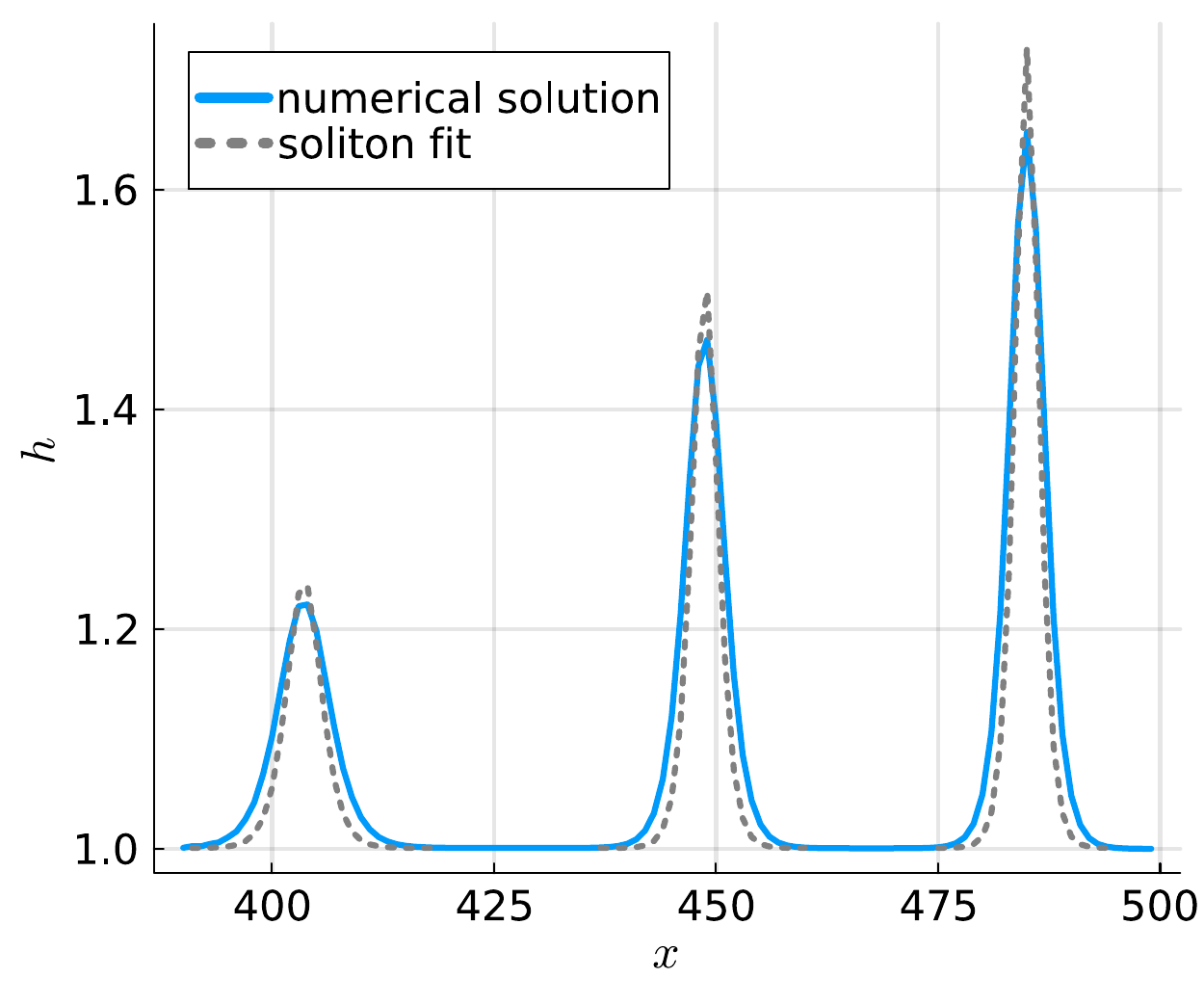}
        \caption{Hyperbolic  with $\lambda = 500$.}
    \end{subfigure}%
    \hspace{\fill}
    \begin{subfigure}{0.32\textwidth}
    \centering
        \includegraphics[width=\textwidth]{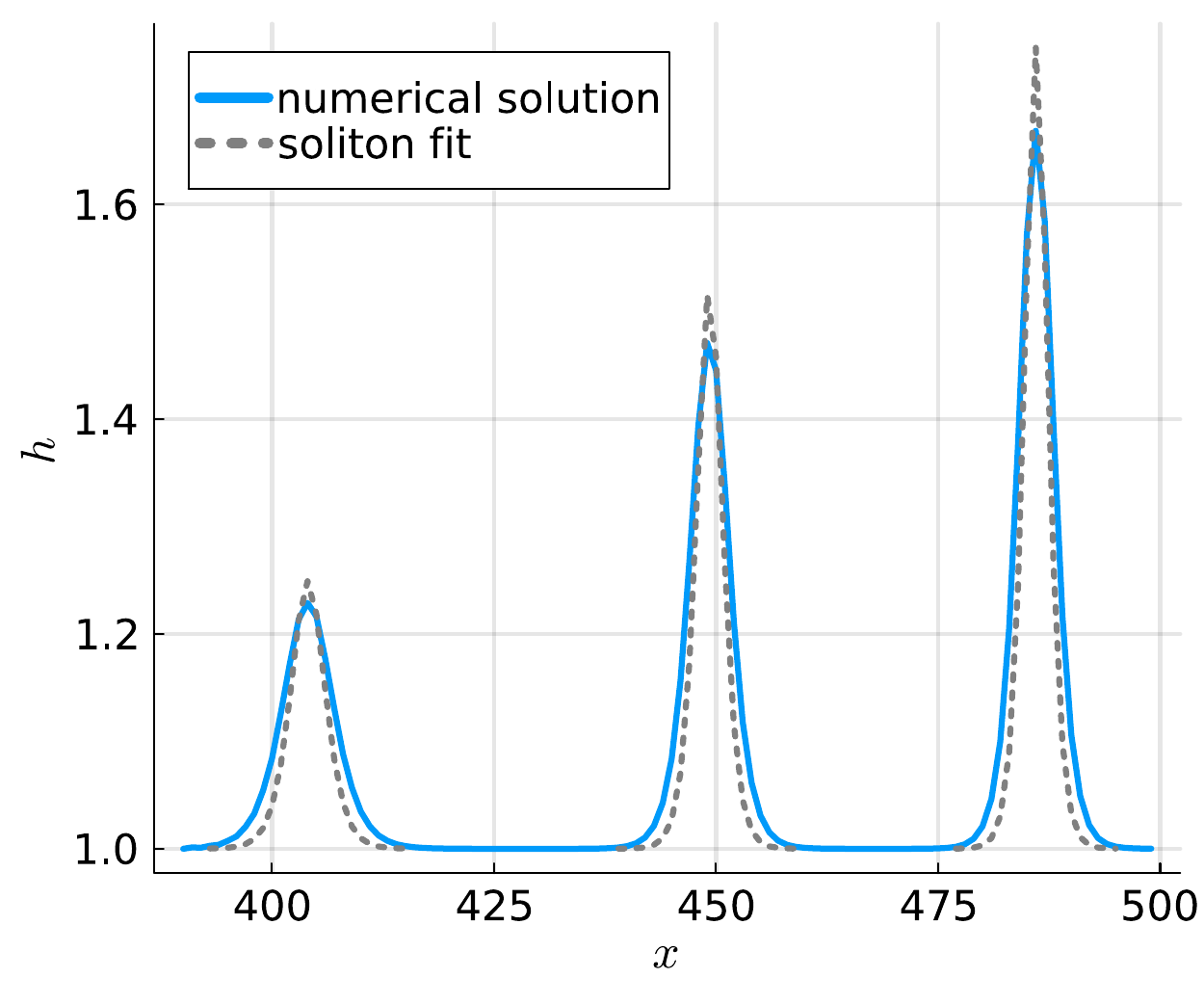}
        \caption{Serre-Green-Naghdi equations.}
    \end{subfigure}%
        \begin{subfigure}{0.32\textwidth}
    \centering
        \includegraphics[width=\textwidth]{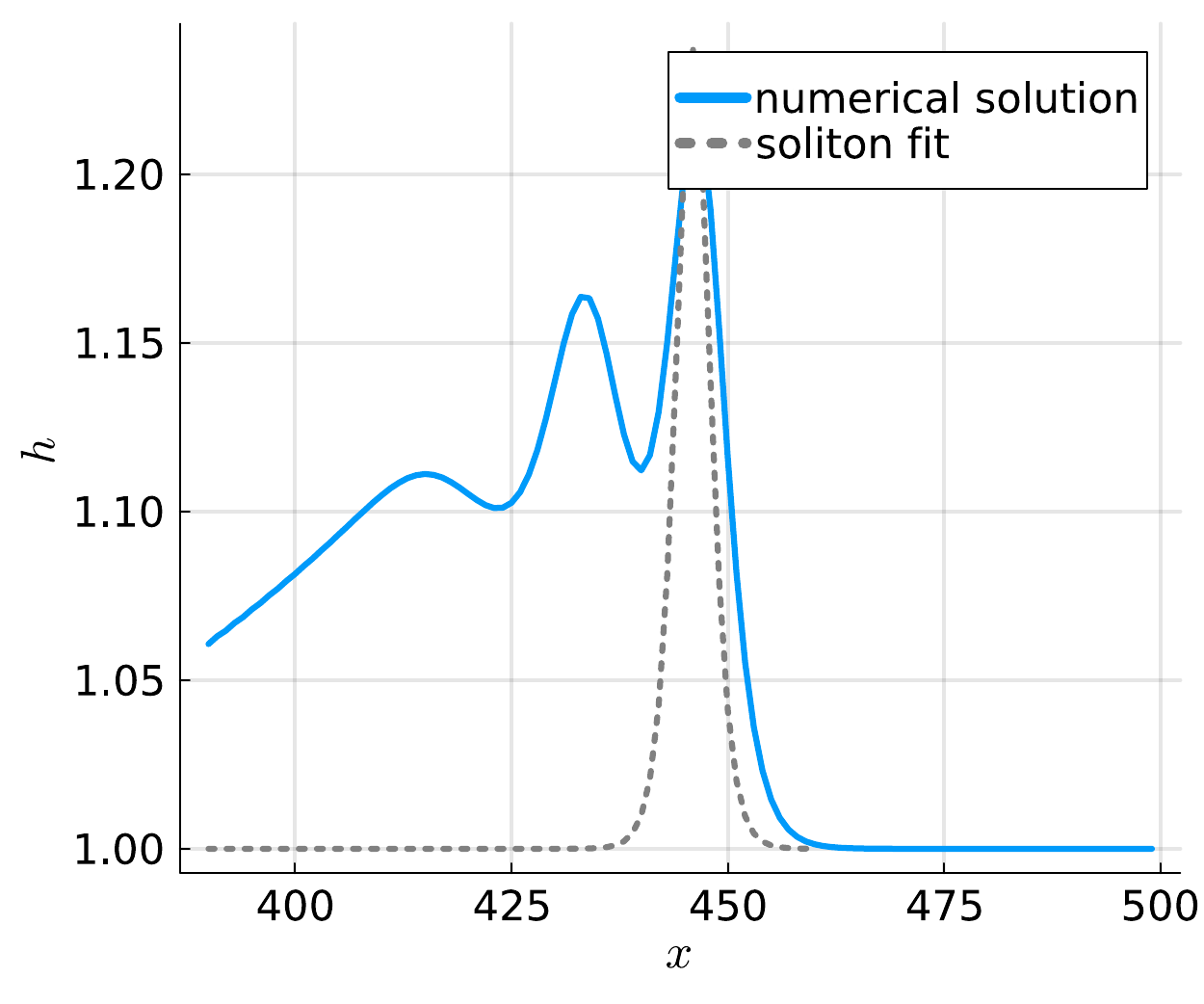}
        \caption{Serre-Green-Naghdi  + AV.}
    \end{subfigure}%
    \\
    \medskip
    \begin{subfigure}{0.32\textwidth}
    \centering
        \includegraphics[width=\textwidth]{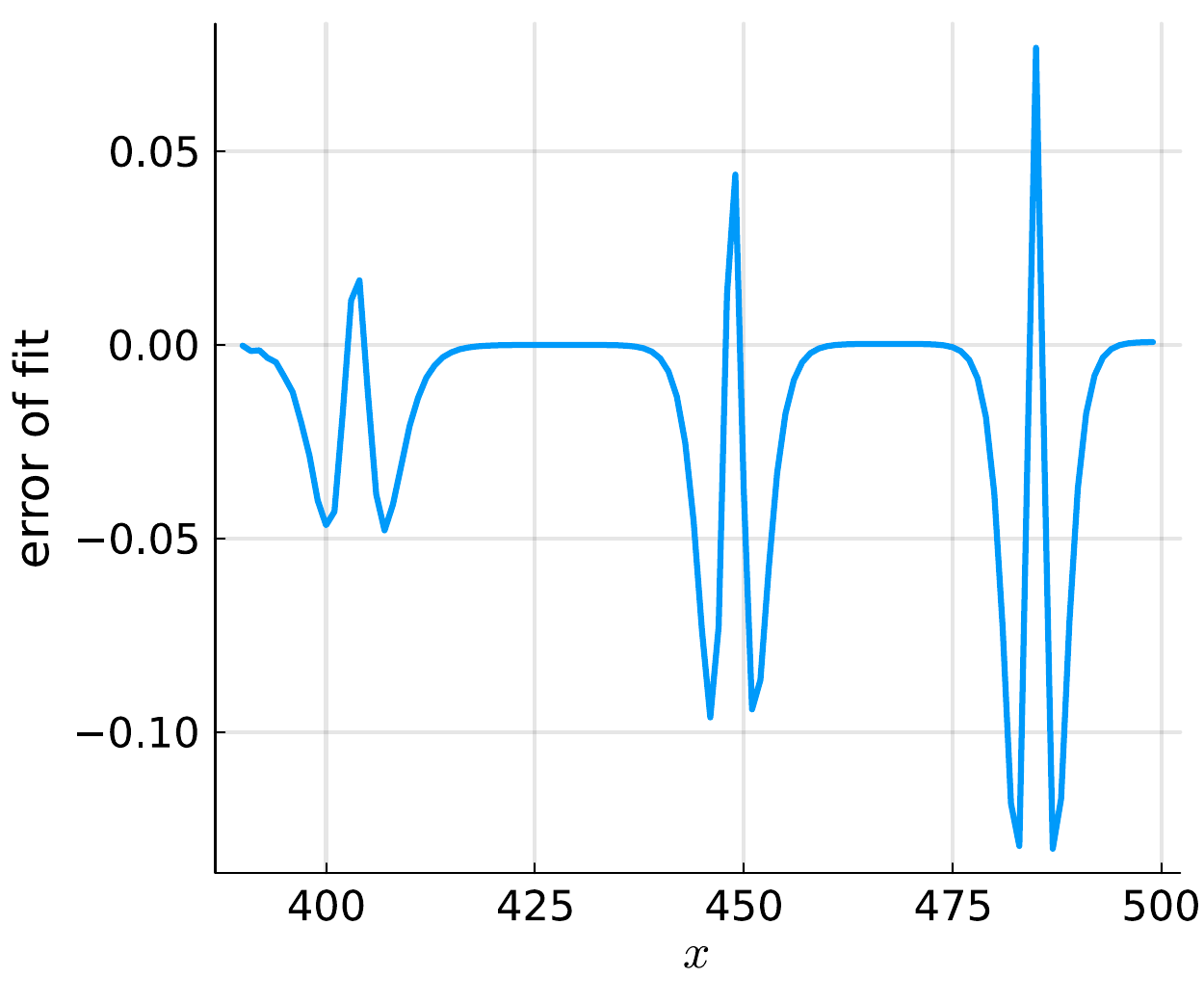}
        \caption{Hyperbolic  with $\lambda = 500$.}
    \end{subfigure}%
    \hspace{\fill}
    \begin{subfigure}{0.32\textwidth}
    \centering
        \includegraphics[width=\textwidth]{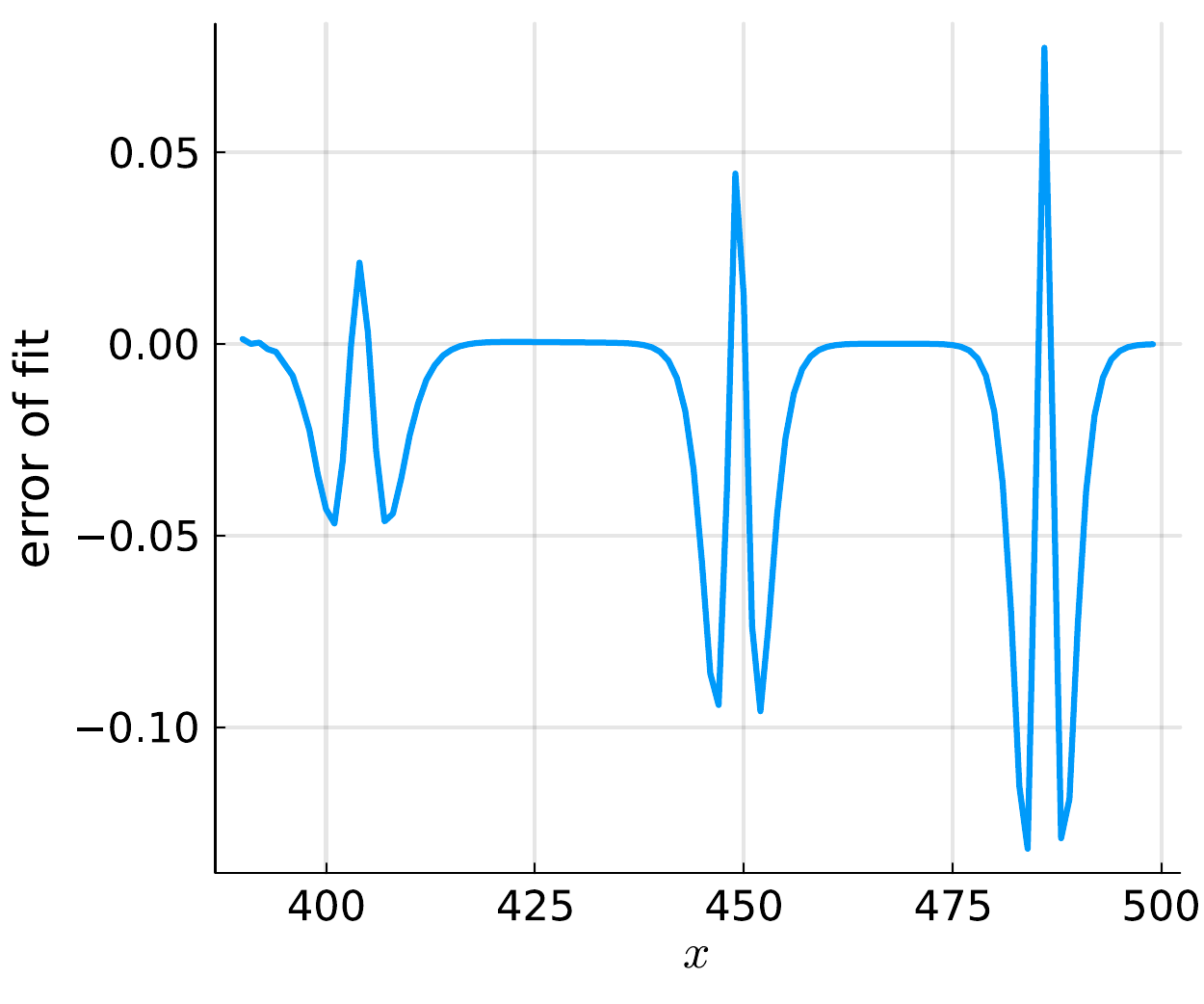}
        \caption{Serre-Green-Naghdi equations.}
    \end{subfigure}%
        \begin{subfigure}{0.32\textwidth}
    \centering
        \includegraphics[width=\textwidth]{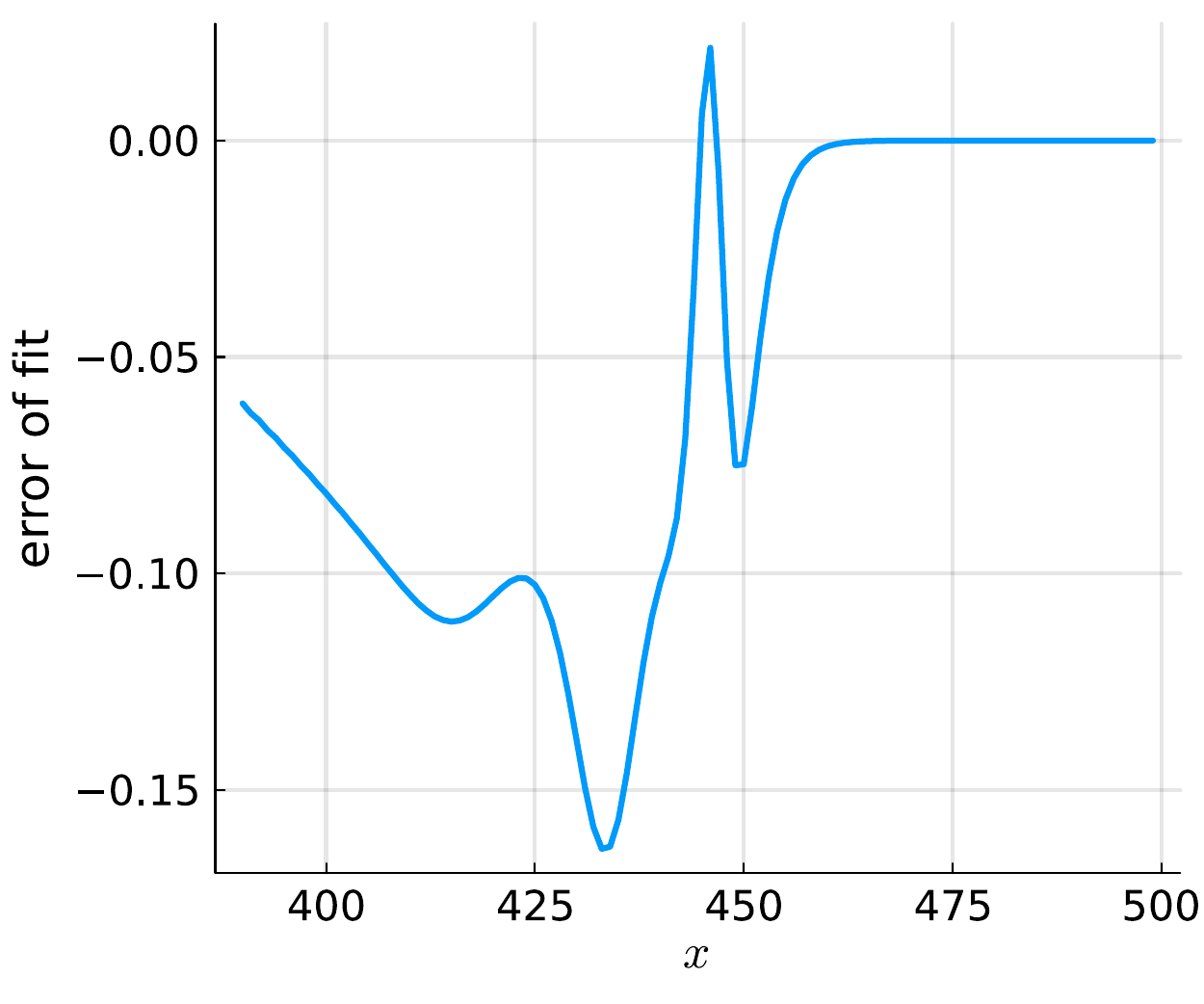}
        \caption{Serre-Green-Naghdi  + AV.}
    \end{subfigure}%
    \caption{Leading soliton waves obtained from the rectangular initial
             condition discretized with second-order finite differences
             with $\Delta x = 1$.
             Left:   hyperbolic approximation with
             $\lambda = 500$. Center: original
             Serre-Green-Naghdi model. Right: original
             Serre-Green-Naghdi model with numerical dissipation. Top: water height and soliton fits.
             Bottom: soliton fit error.}
    \label{fig:soliton_fission}
\end{figure}

The results are shown in Figure~\ref{fig:soliton_fission}. For both the
hyperbolic approximation and the discretization of the original
SGN equations, the numerical solutions agree very well with
the fitted analytical soliton waves. The differences between the numerical
solutions and the fits are roughly two orders of magnitude smaller than the amplitude
of the waves.
To show the impact of numerical viscosity on such long-time computations,
we also plot the results
of the the original SGN equations plus artificial diffusion. The corresponding results with the hyperbolic model are
visually identical. We can see that not only the height of the first wave is much
underestimated, but also its position, certainly due to the dependence of the celerity of the leading wave on its amplitude.
The optimization method does recognize a half soliton shape in the leading front.
This behaviour is further investigated  and commented in the following section.

\subsection{Favre waves}
\label{sec:favre_waves}

The propagation of undular bores, also known as Favre waves, is a classical problem,
see, e.g., \cite{wk95,chass_etal19} and references therein,
for which well-known experiments exist \cite{favre1935,treske1994}.
The initial setup considered here follows, e.g., \cite{chass_etal19,jouy_etal24}.
The initial solution is obtained by a smoothed discontinuity (cf.\ also Figure~\ref{fig.favre-sketch})
$$
\begin{aligned}
h(x,t=0):=   h_0 +  \dfrac{[\![h]\!]}{2}\left\{ 1 - \tanh\bigg(\dfrac{x-x_0}{\alpha}\bigg)  \right\}, \\
u(x,t=0):=   u_0 +  \dfrac{[\![u]\!]}{2}\left\{ 1 - \tanh\bigg(\dfrac{x-x_0}{\alpha}\bigg)  \right\},
\end{aligned}
$$
where
$$
[\![h]\!] := \epsilon h_0,
$$
with $\epsilon$ the nonlinearity, and with  $[\![u]\!]$ satisfying the
shallow-water Rankine-Hugoniot relations,
and in particular
$$
[\![u]\!] =  \sqrt{g\dfrac{h_1+h_0}{2h_0h_1}  }[\![h]\!].
$$
We refer the reader to \cite{chass_etal19,jouy_etal24} for further details on the setup.

\begin{figure}[htbp]
\begin{center}
   \includegraphics[width=0.25\textwidth]{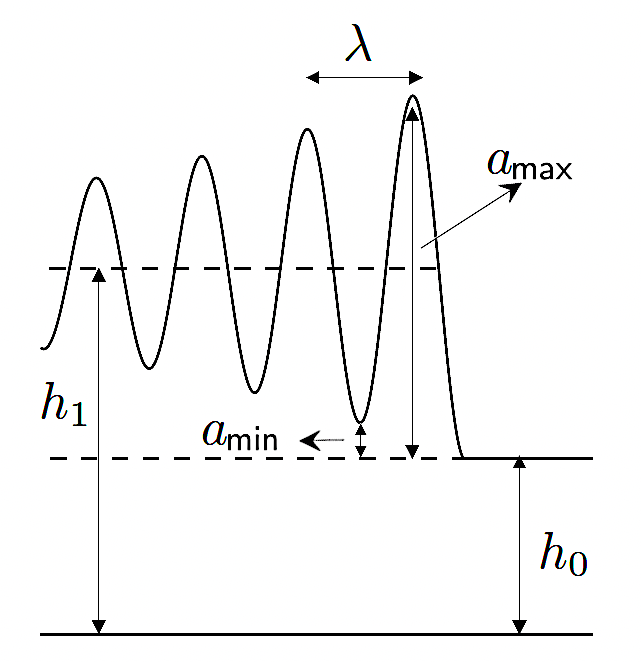}
   \caption{Favre waves:  undular bore sketch, and definition of
           amplitudes $a_\mathrm{min/max}$ and wavelength~$\lambda$.}
   \label{fig.favre-sketch}
\end{center}
\end{figure}

As in   \cite{chass_etal19,jouy_etal24}  we  consider   at first the
short-time bore evolution
 for    three    values of the non-linearity
$\epsilon\in\{0.1,\,0.2,\,0.3\}$.
The  free-surface elevation  for different values of the dimensionless time $\tilde t:= t\sqrt{g/h_0}$
is compared to  fully nonlinear potential solutions from \cite{wk95}.
The results obtained with fourth-order structure-preserving finite differences on a  relatively coarse mesh with $\Delta x = 0.125$ are shown in Figures~\ref{fig:favre_waves-hyp}
and \ref{fig:favre_waves-orig}.
Our results compare well with the fully nonlinear potential solutions, and to those of  \cite{chass_etal19}.
For larger values like $\epsilon = 0.3$ some  limitations, related to the weakly dispersive
character of the model itself, can be seen.  The value  $\lambda = 500$ seems again large enough for
 the hyperbolic approximation and the original formulations  to give visually indistinguishable results.
For completeness, we also report  in each picture the results obtained with  artificial viscosity.
For these short-time simulations we cannot  see any impact of numerical dissipation.

\begin{figure}[htbp]
\centering
    \begin{subfigure}{0.32\textwidth}
    \centering
        \includegraphics[width=\textwidth]{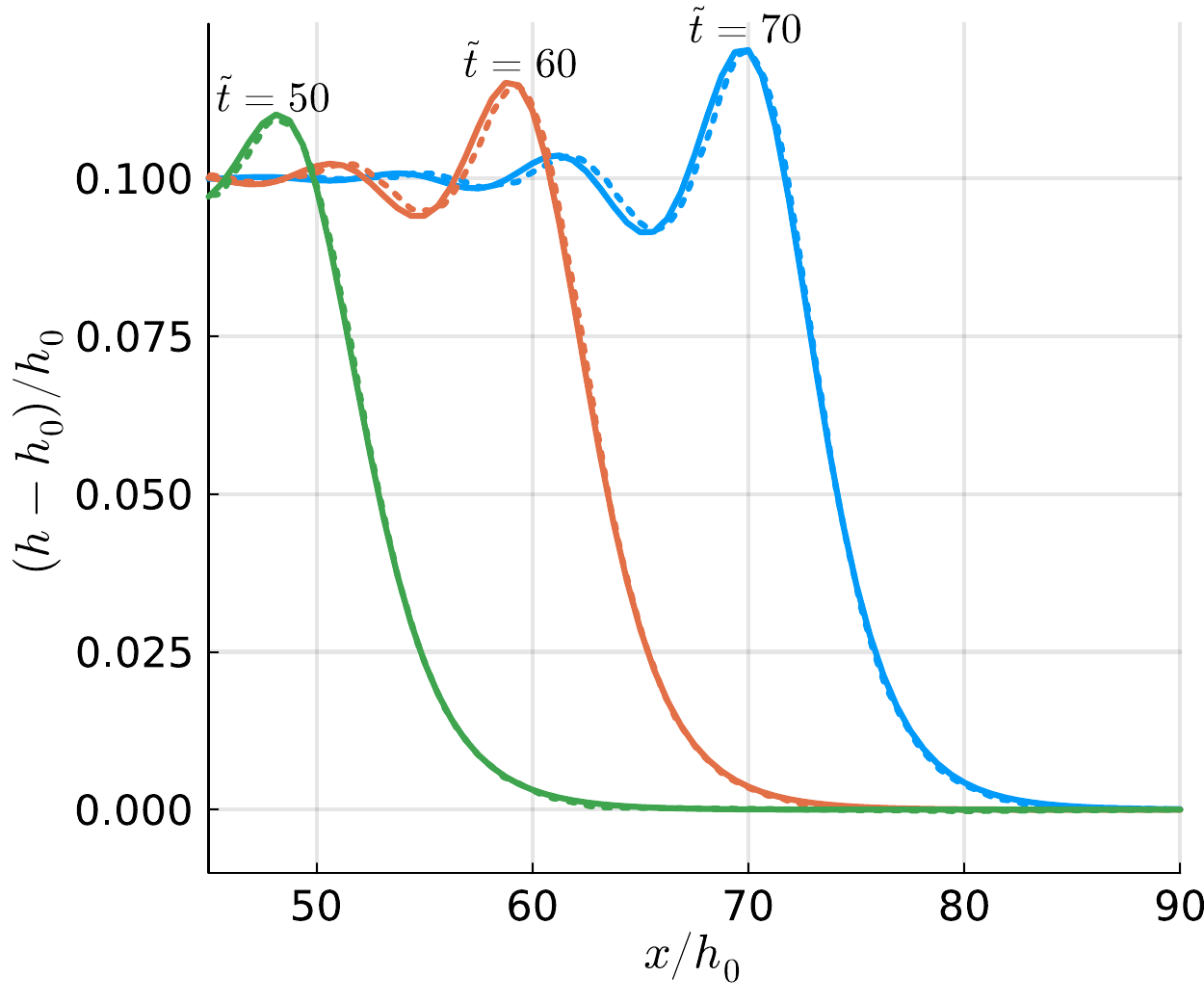}
%        \caption{Hyperbolic approximation with $\lambda = 500$, $\epsilon = 0.1$.}
    \end{subfigure}%
    \hspace{\fill}
    \begin{subfigure}{0.32\textwidth}
    \centering
        \includegraphics[width=\textwidth]{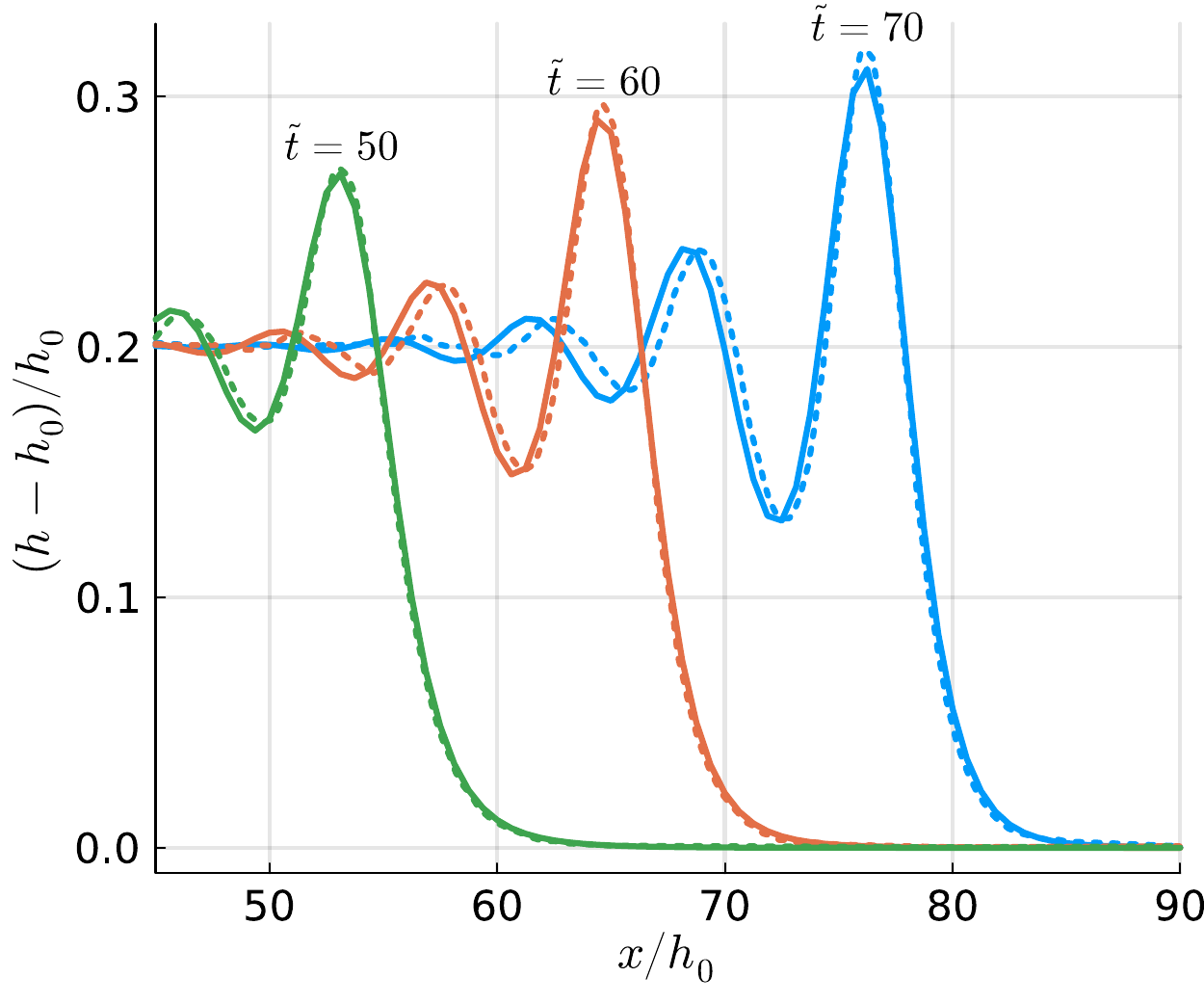}
%        \caption{Hyperbolic approximation with $\lambda = 500$, $\epsilon = 0.2$.}
    \end{subfigure}%
    \hspace{\fill}
    \begin{subfigure}{0.32\textwidth}
    \centering
        \includegraphics[width=\textwidth]{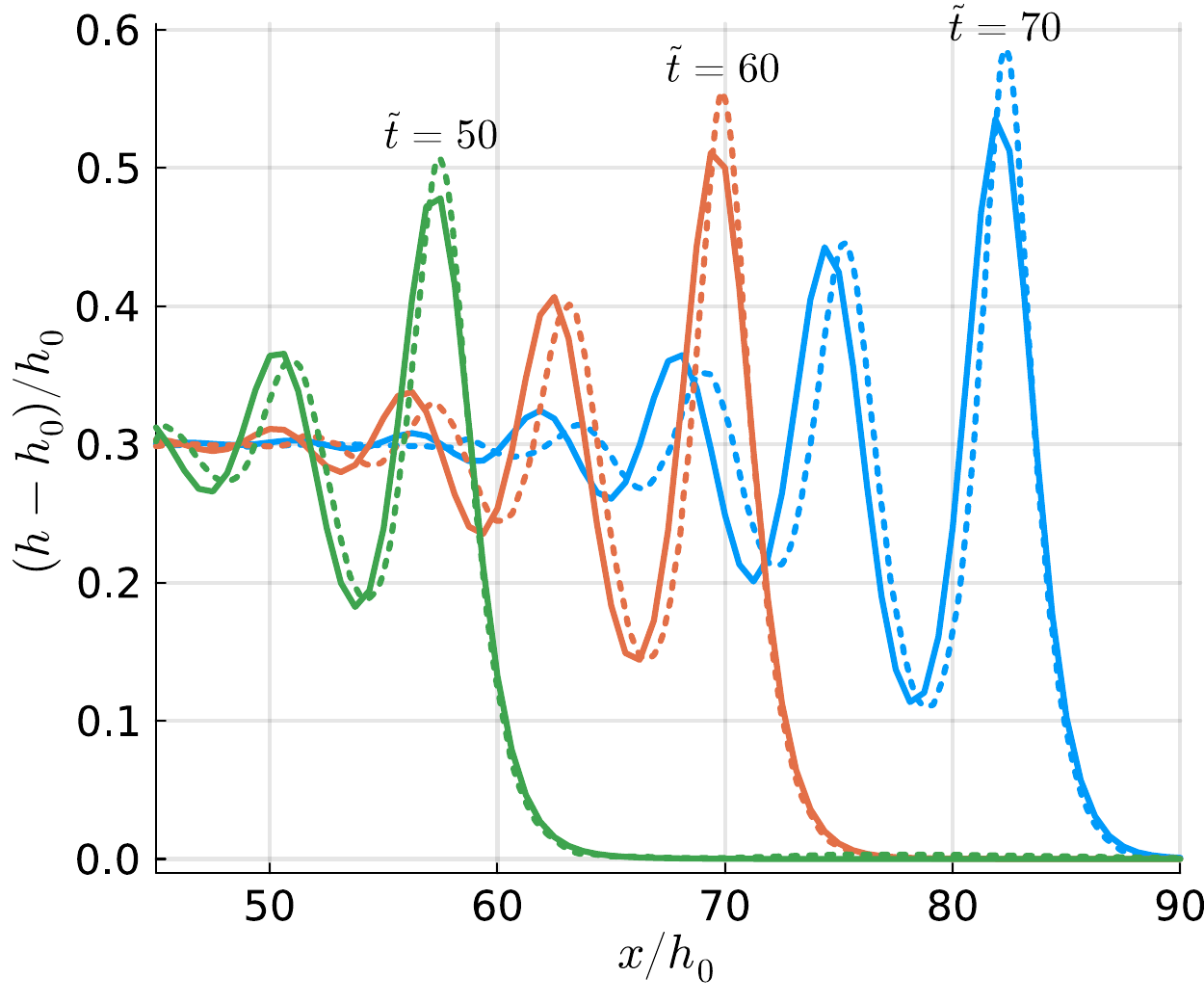}
%        \caption{Hyperbolic approximation with $\lambda = 500$, $\epsilon = 0.3$.}
    \end{subfigure}%
    \\
    \medskip
    \begin{subfigure}{0.32\textwidth}
    \centering
        \includegraphics[width=\textwidth]{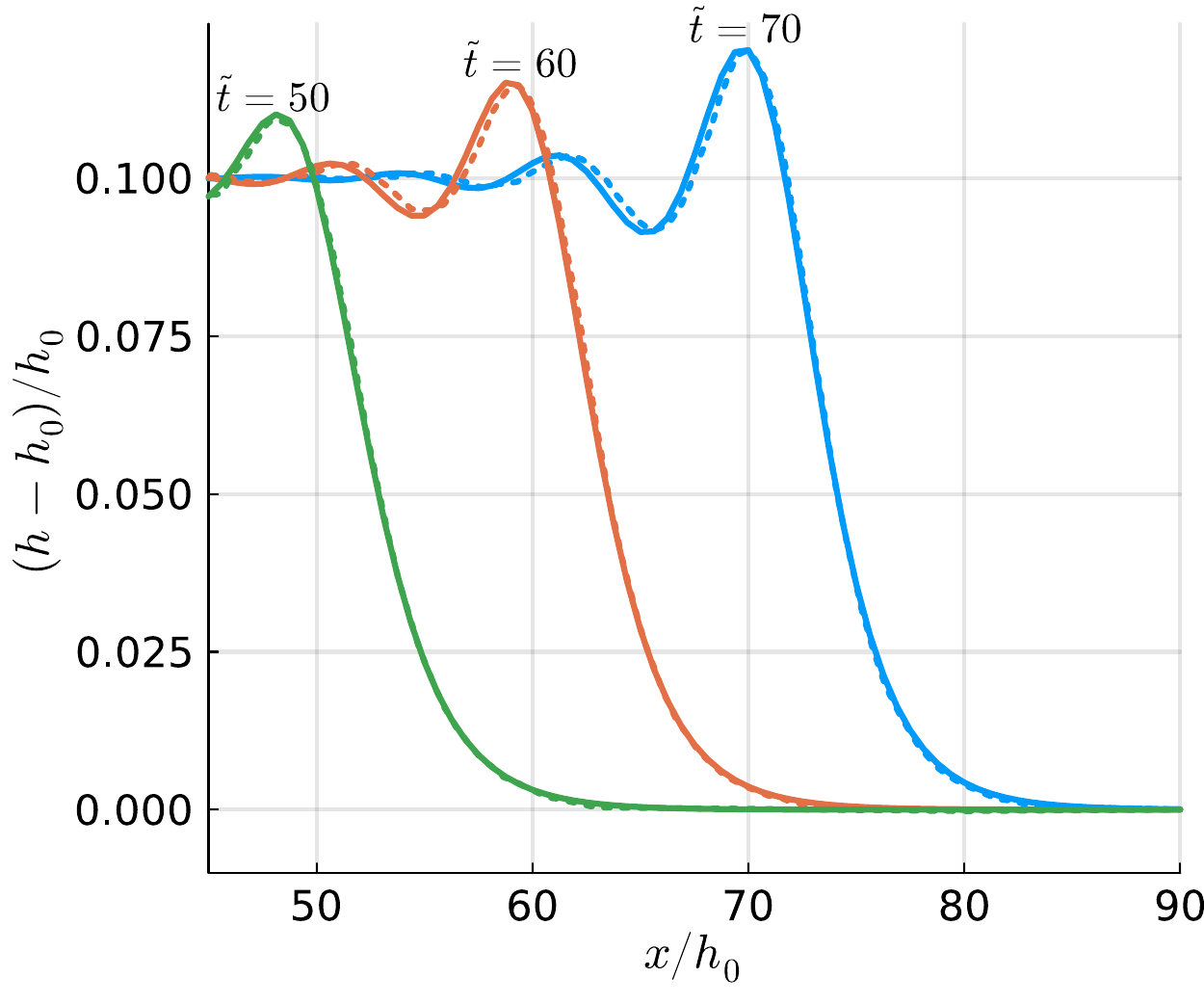}
%        \caption{Serre-Green-Naghdi equations with $\epsilon = 0.1$.}
    \end{subfigure}%
    \hspace{\fill}
    \begin{subfigure}{0.32\textwidth}
    \centering
        \includegraphics[width=\textwidth]{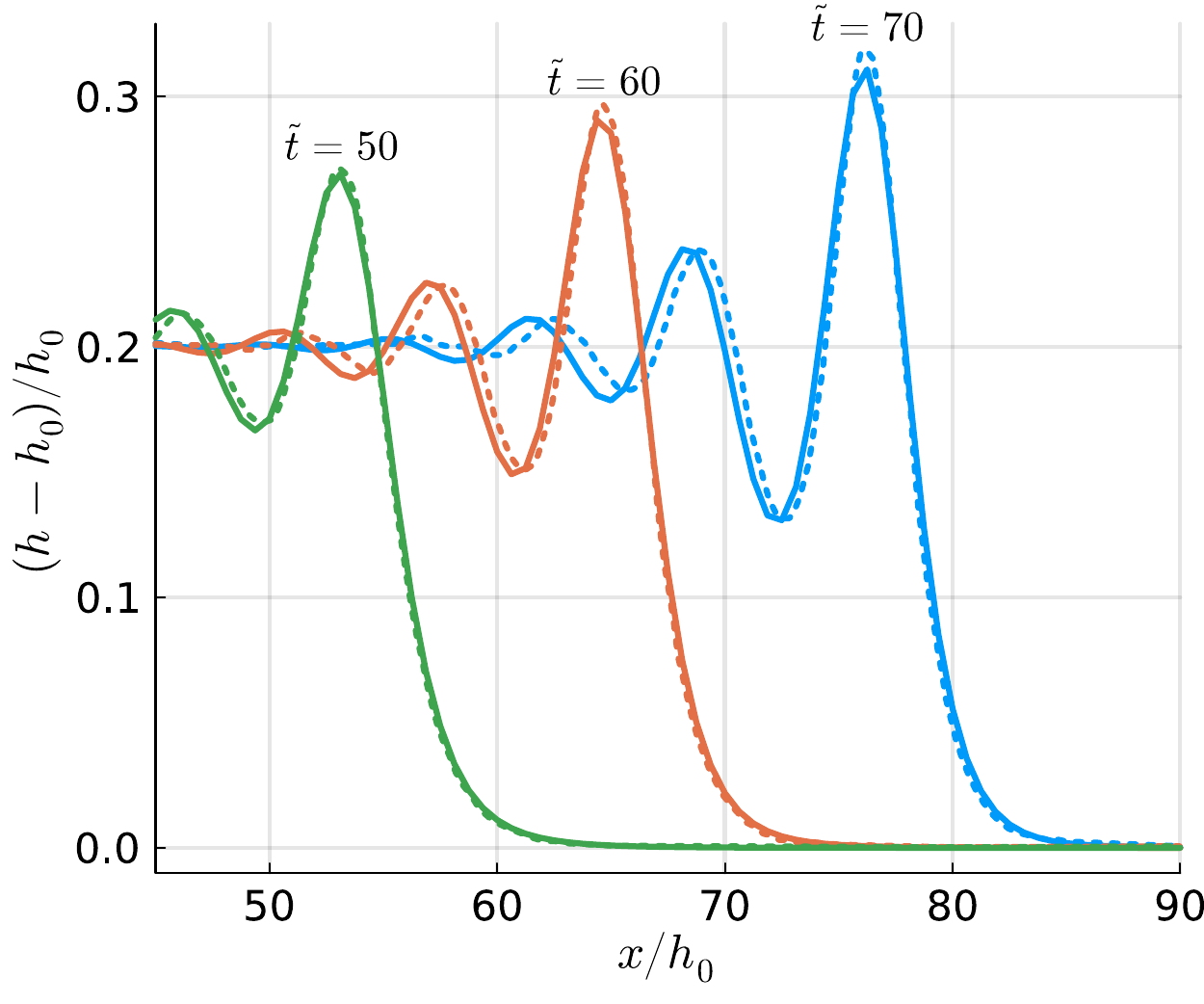}
%        \caption{Serre-Green-Naghdi equations with $\epsilon = 0.2$.}
    \end{subfigure}%
    \hspace{\fill}
    \begin{subfigure}{0.32\textwidth}
    \centering
        \includegraphics[width=\textwidth]{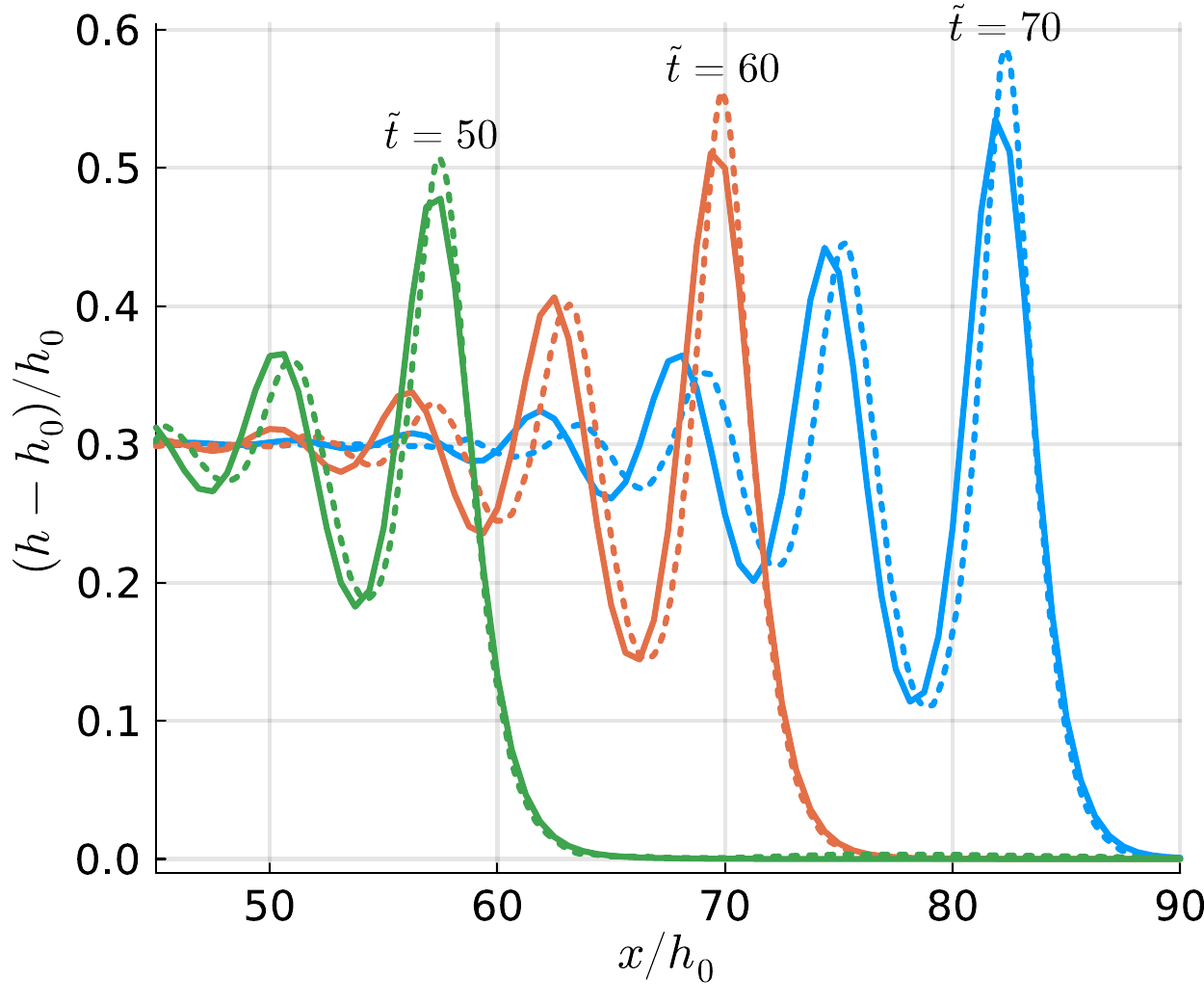}
%        \caption{Serre-Green-Naghdi equations with $\epsilon = 0.3$.}
    \end{subfigure}%
    \caption{Favre wave: hyperbolic formulation with $\lambda = 500$, and  grid spacing $\Delta x = 0.125$.
           Numerical solutions (solid lines)   with  nonlinearity  $\epsilon = 0.1$ (left),  $\epsilon = 0.2$ (center),
             and $\epsilon = 0.3$ (right).
             Top: energy-conservative fourth-order finite differences with central operators.
             Bottom: fourth-order finite differences with central operators and artificial viscosity.
              The dashed lines show the fully nonlinear potential solutions from \cite{wk95}.}
    \label{fig:favre_waves-hyp}
\end{figure}

\begin{figure}[htbp]
\centering
    \begin{subfigure}{0.32\textwidth}
    \centering
        \includegraphics[width=\textwidth]{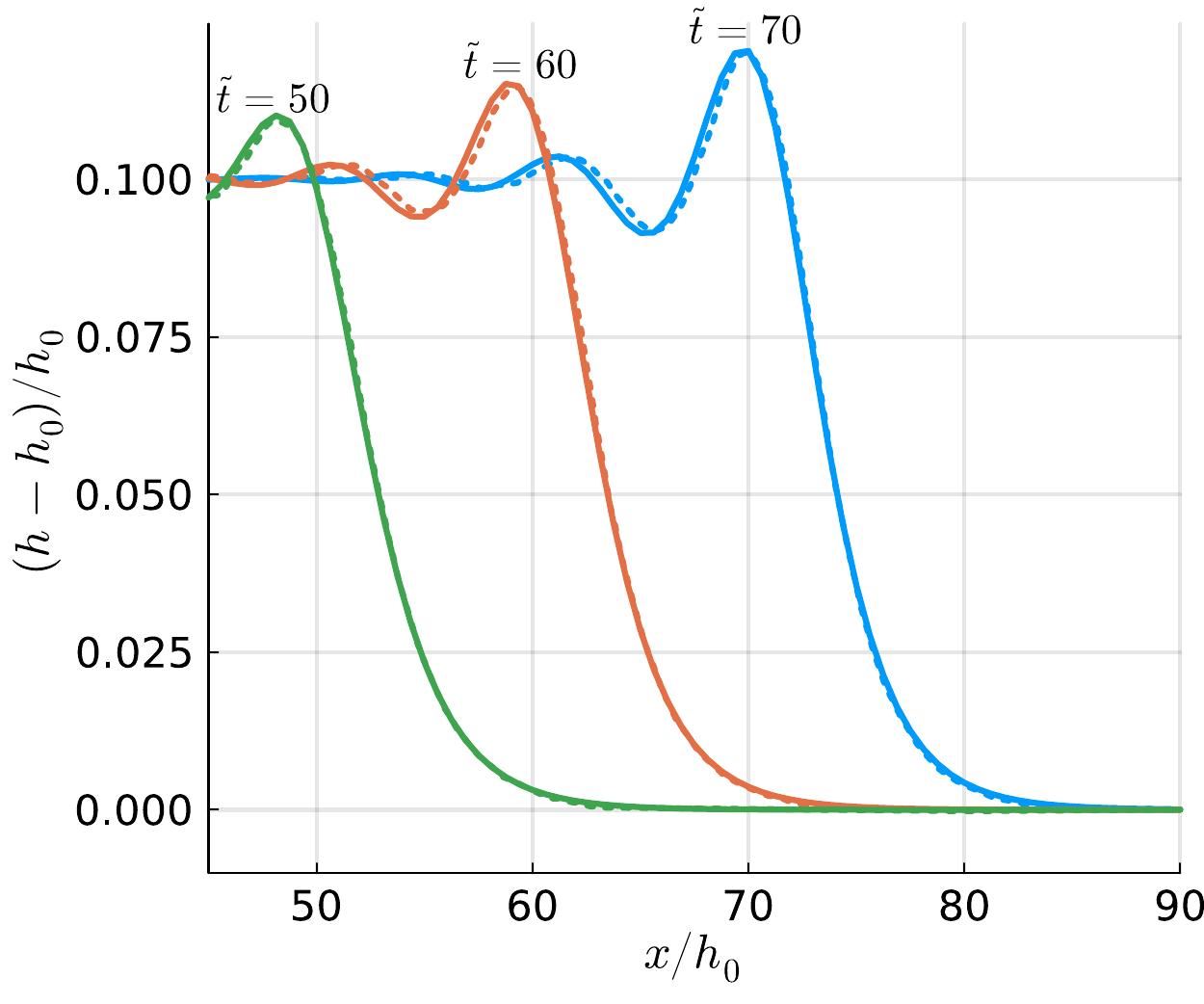}
%        \caption{Hyperbolic approximation with $\lambda = 500$, $\epsilon = 0.1$.}
    \end{subfigure}%
    \hspace{\fill}
    \begin{subfigure}{0.32\textwidth}
    \centering
        \includegraphics[width=\textwidth]{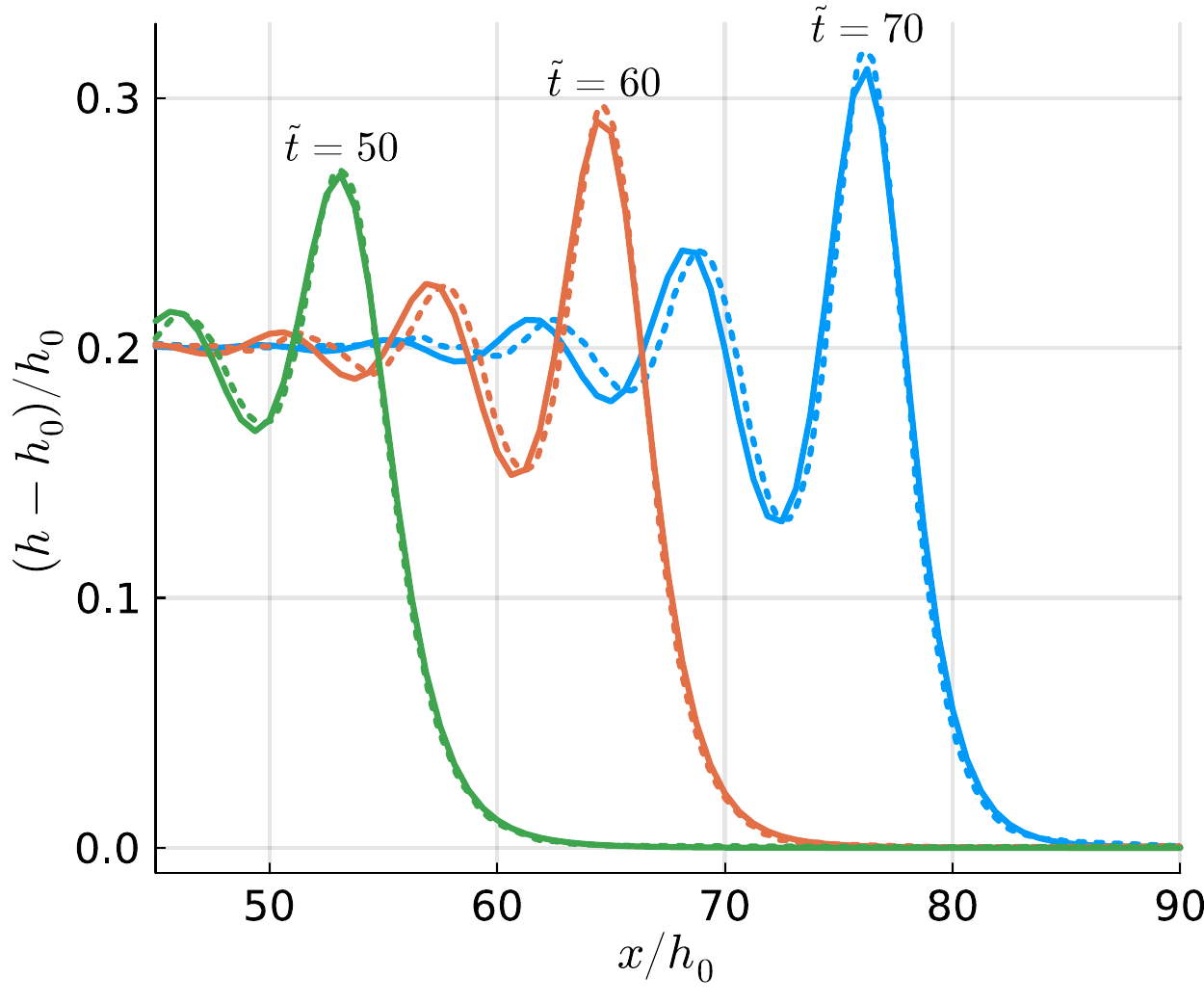}
%        \caption{Hyperbolic approximation with $\lambda = 500$, $\epsilon = 0.2$.}
    \end{subfigure}%
    \hspace{\fill}
    \begin{subfigure}{0.32\textwidth}
    \centering
        \includegraphics[width=\textwidth]{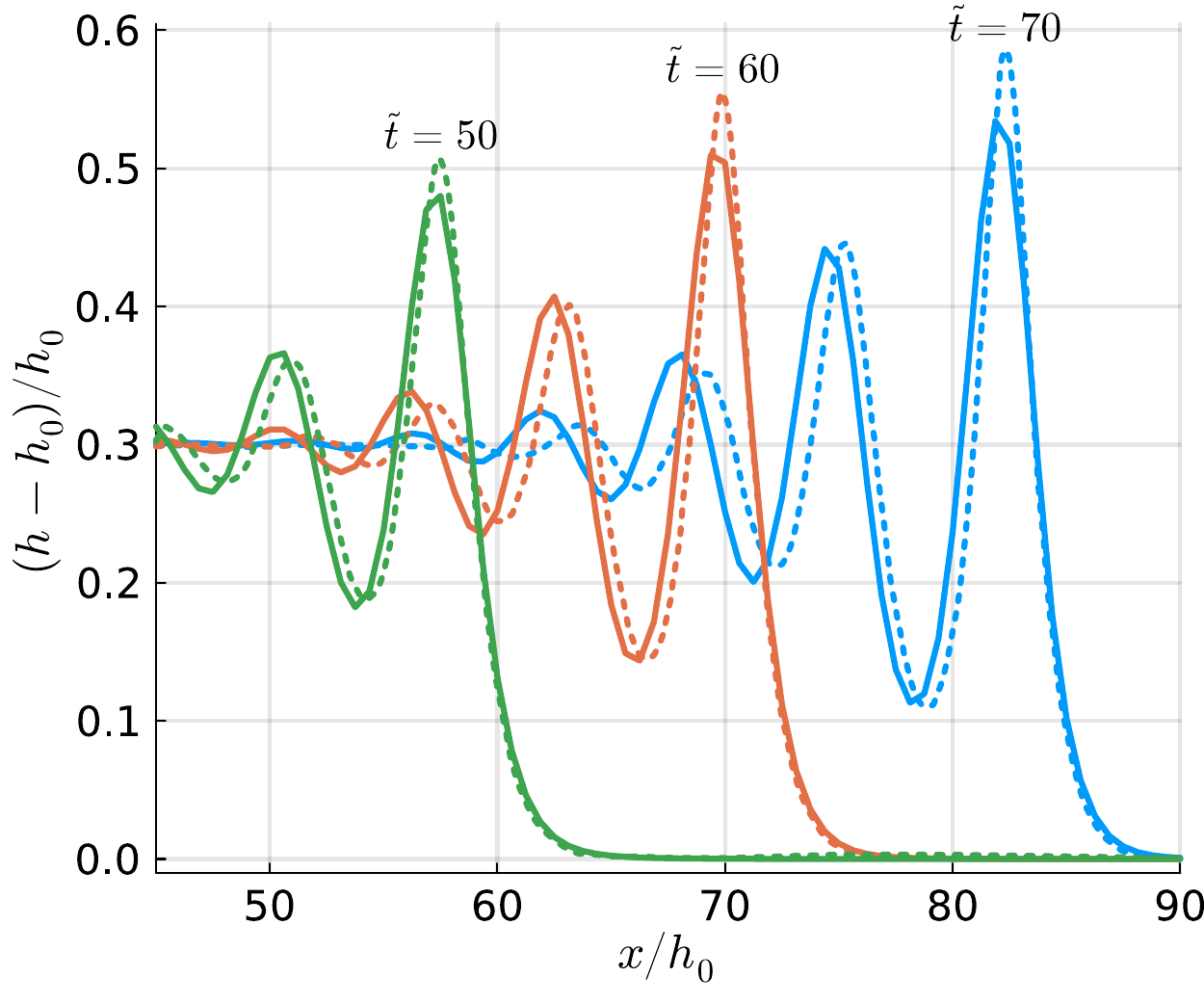}
%        \caption{Hyperbolic approximation with $\lambda = 500$, $\epsilon = 0.3$.}
    \end{subfigure}%
    \\
    \medskip
    \begin{subfigure}{0.32\textwidth}
    \centering
        \includegraphics[width=\textwidth]{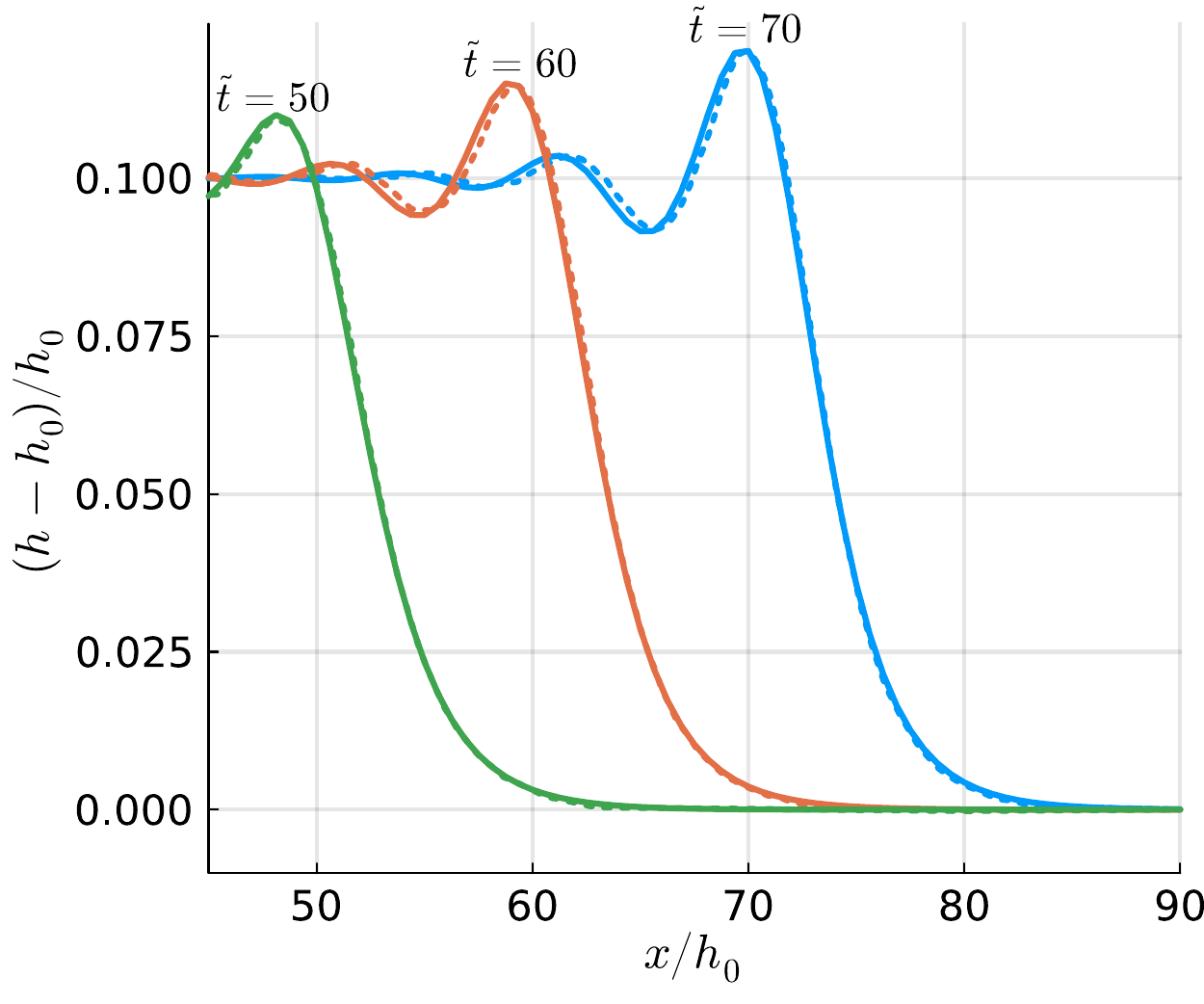}
%        \caption{Serre-Green-Naghdi equations with $\epsilon = 0.1$.}
    \end{subfigure}%
    \hspace{\fill}
    \begin{subfigure}{0.32\textwidth}
    \centering
        \includegraphics[width=\textwidth]{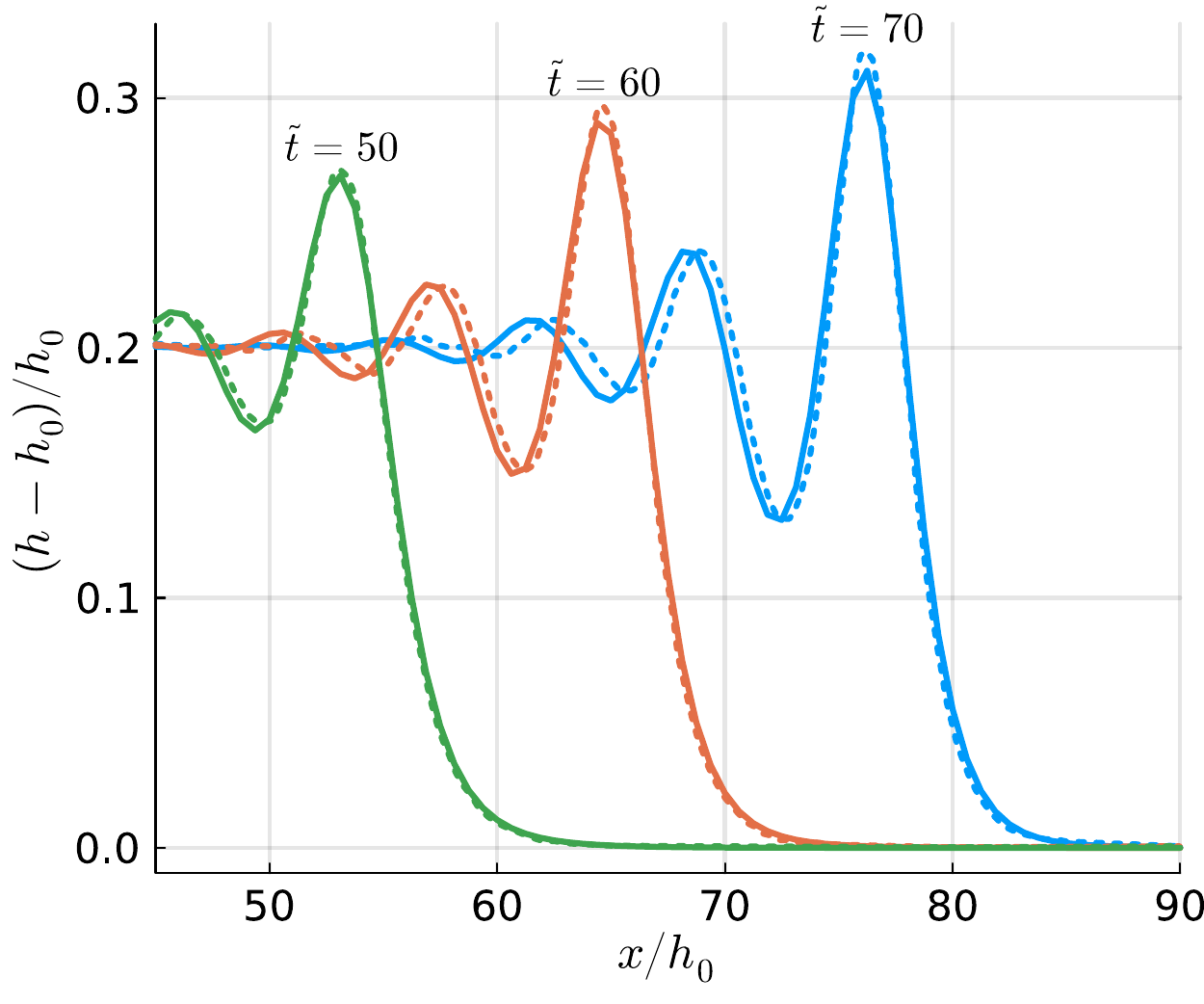}
%        \caption{Serre-Green-Naghdi equations with $\epsilon = 0.2$.}
    \end{subfigure}%
    \hspace{\fill}
    \begin{subfigure}{0.32\textwidth}
    \centering
        \includegraphics[width=\textwidth]{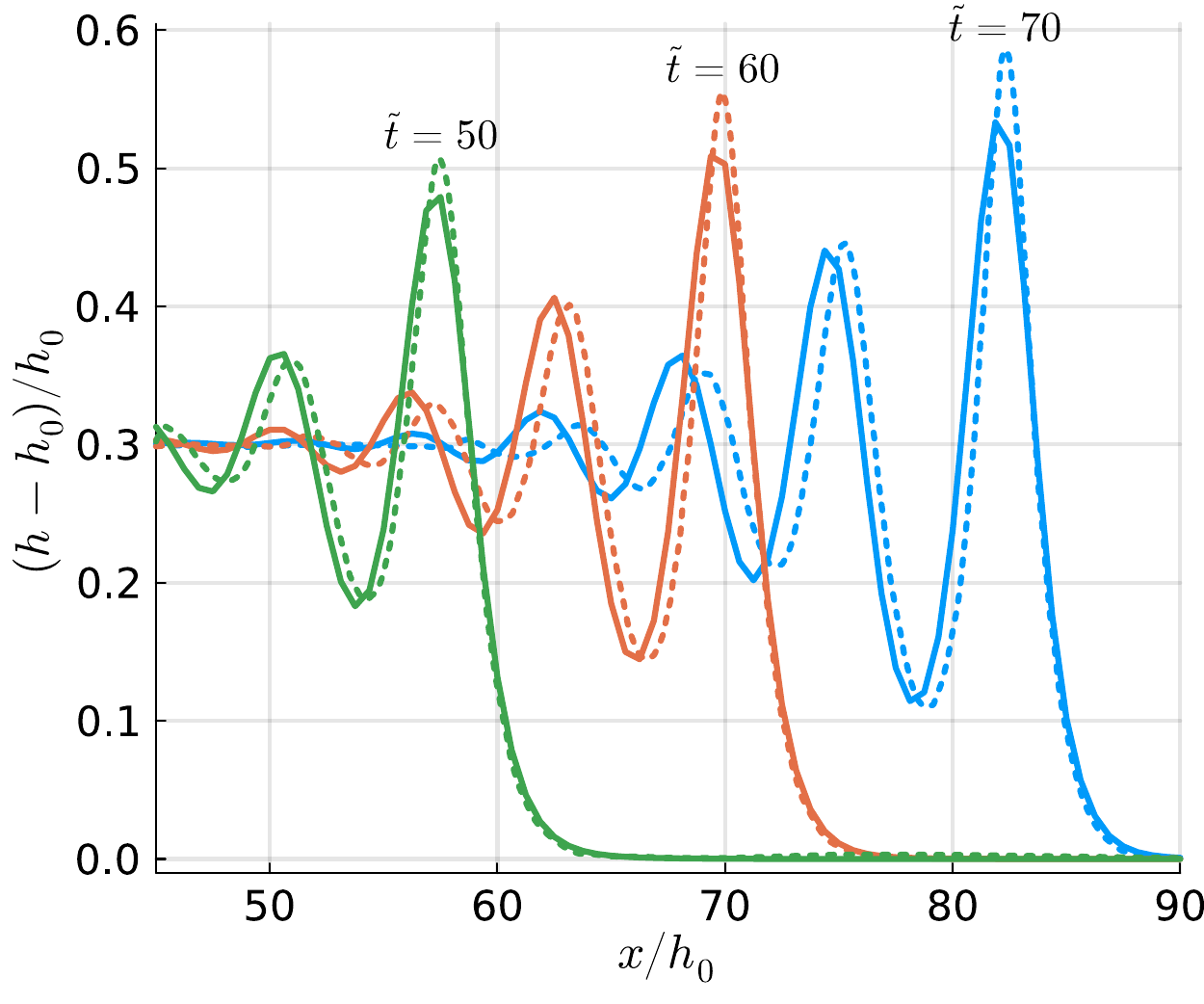}
%        \caption{Serre-Green-Naghdi equations with $\epsilon = 0.3$.}
    \end{subfigure}%
    \caption{Favre wave: original SGN system, and  grid spacing $\Delta x = 0.125$.
           Numerical solutions (solid lines)   with  nonlinearity  $\epsilon = 0.1$ (left),  $\epsilon = 0.2$ (center),
             and $\epsilon = 0.3$ (right).
             Top : energy-conservative fourth-order finite differences with central operators.
             Bottom : fourth-order finite differences with central operators and artificial viscosity.
              The dashed lines show the fully nonlinear potential solutions from \cite{wk95}.}
    \label{fig:favre_waves-orig}
\end{figure}

\subsubsection{Long-time propagation}

As shown in \cite{jouy_etal24,10.1093/imamat/hxad030,https://doi.org/10.1111/sapm.12694}
this problem is extremely sensitive to the presence of dissipative processes such as  friction
or viscous regularization. In absence of dissipation,   soliton fission occurs.
Dissipation generates undular bores of lower amplitudes and finite wavelength.
This fact is also known, for simpler models such as KdV and BBM, from the modulation theory
\cite{EL201611}. As it turns out, numerical dissipation plays exactly the same
role, which may lead  to gross underestimations of the wave heights on coarse meshes
as the results in \cite{jouy_etal24} demonstrate. \\

To investigate this aspect we consider  the propagation for a large  dimensionless time  $\tilde t = 1500$.
To save space we only consider the solution of the system in its original  formulation, but
similar conclusion are obtained when solving the hyperbolic approximation. The spatial domain considered is now $[-3000,\,3000]$
and the initial discontinuity is set at $x_0=-1000$. We plot two sets of results using second- and fourth-order schemes.
The bore front is visualized in Figures~\ref{fig:favre_waves-long1-sol1} for second-order schemes, and \ref{fig:favre_waves-long1-sol2}
for fourth-order ones. From these figures we can see that the first solitary wave is already resolved on the coarsest mesh    for the structure-preserving schemes.
In the second-order case, a phase error is observed for the secondary waves, as one might expect
due to  the impact of discrete dispersion. However, the amplitudes are close to the finer mesh solution.

The fourth-order structure-preserving schemes  have already resolved the solution on the coarsest mesh.
The schemes with artificial viscosity behave much like in presence of a viscous regularization \cite{10.1093/imamat/hxad030,https://doi.org/10.1111/sapm.12694},
with much lower amplitudes and no fission of solitons.
In the fourth-order case,   doubling the number of nodes allows to obtain a reasonable prediction of  wave height and position.
In the second-order case even with two refinements the dissipative method still provides large amplitude underestimations.

\begin{figure}[htbp]
\centering
    \begin{subfigure}{0.32\textwidth}
    \centering
        \includegraphics[width=\textwidth]{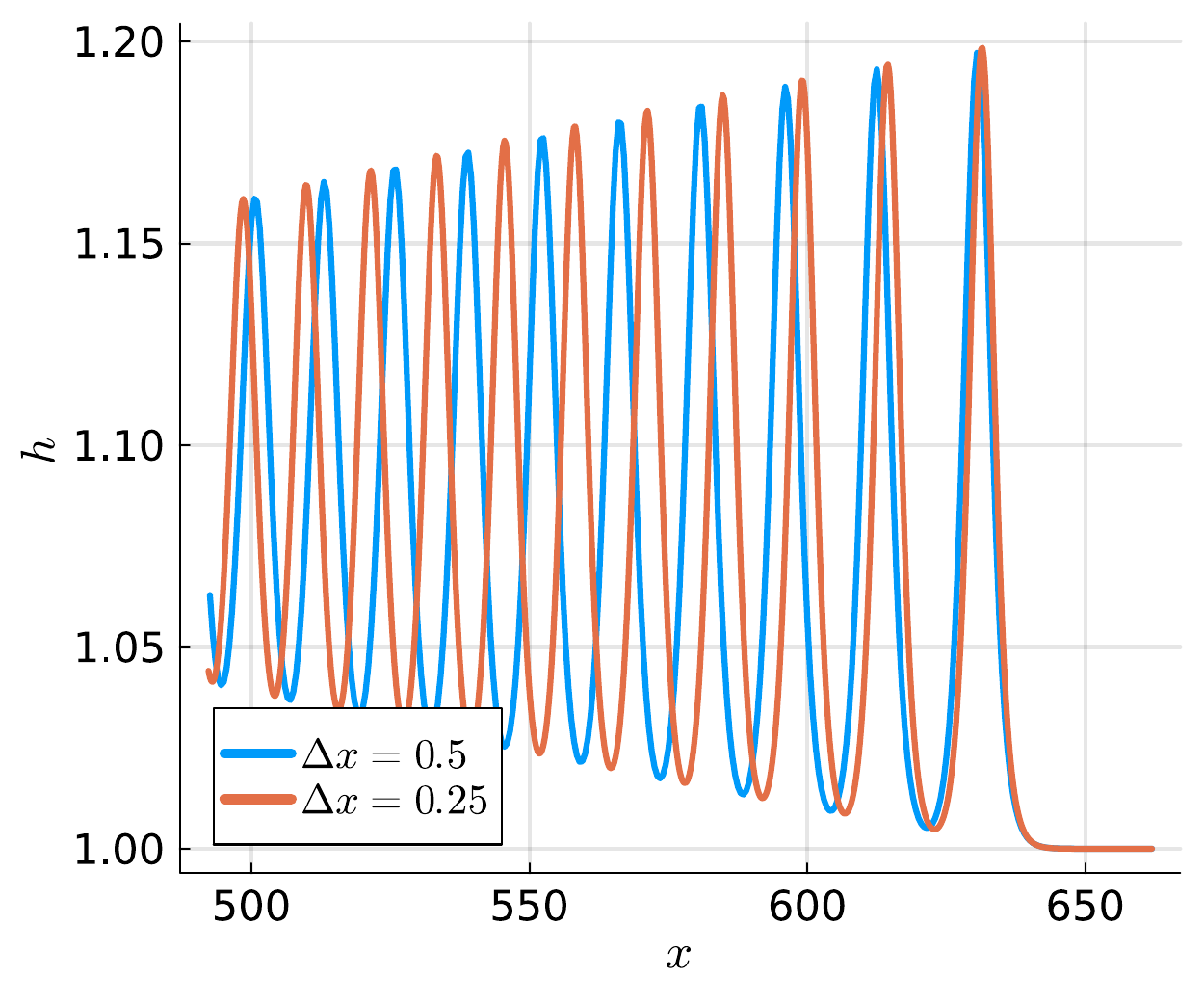}
    \end{subfigure}%
    \hspace{\fill}
    \begin{subfigure}{0.32\textwidth}
    \centering
        \includegraphics[width=\textwidth]{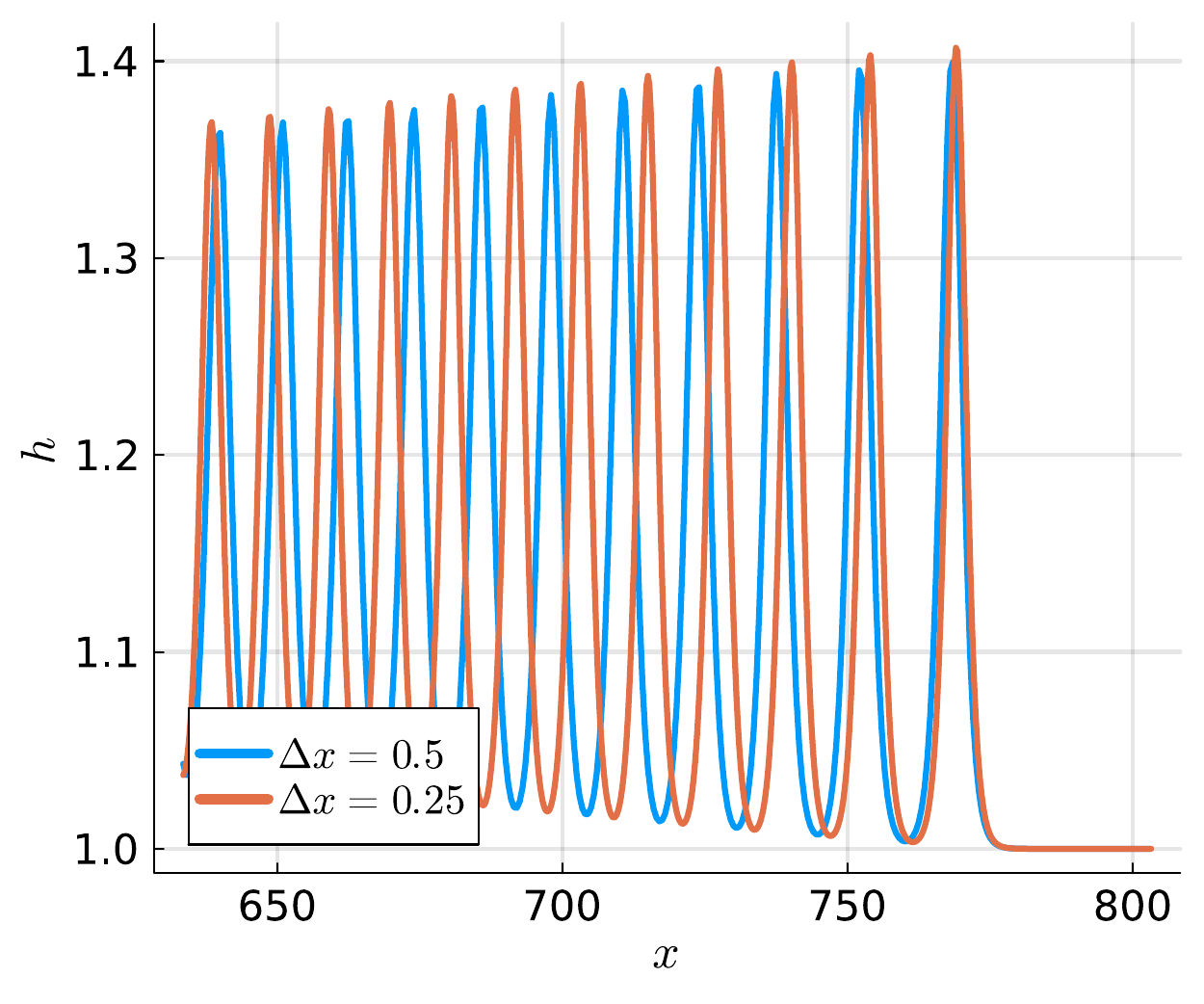}
    \end{subfigure}%
    \hspace{\fill}
    \begin{subfigure}{0.32\textwidth}
    \centering
        \includegraphics[width=\textwidth]{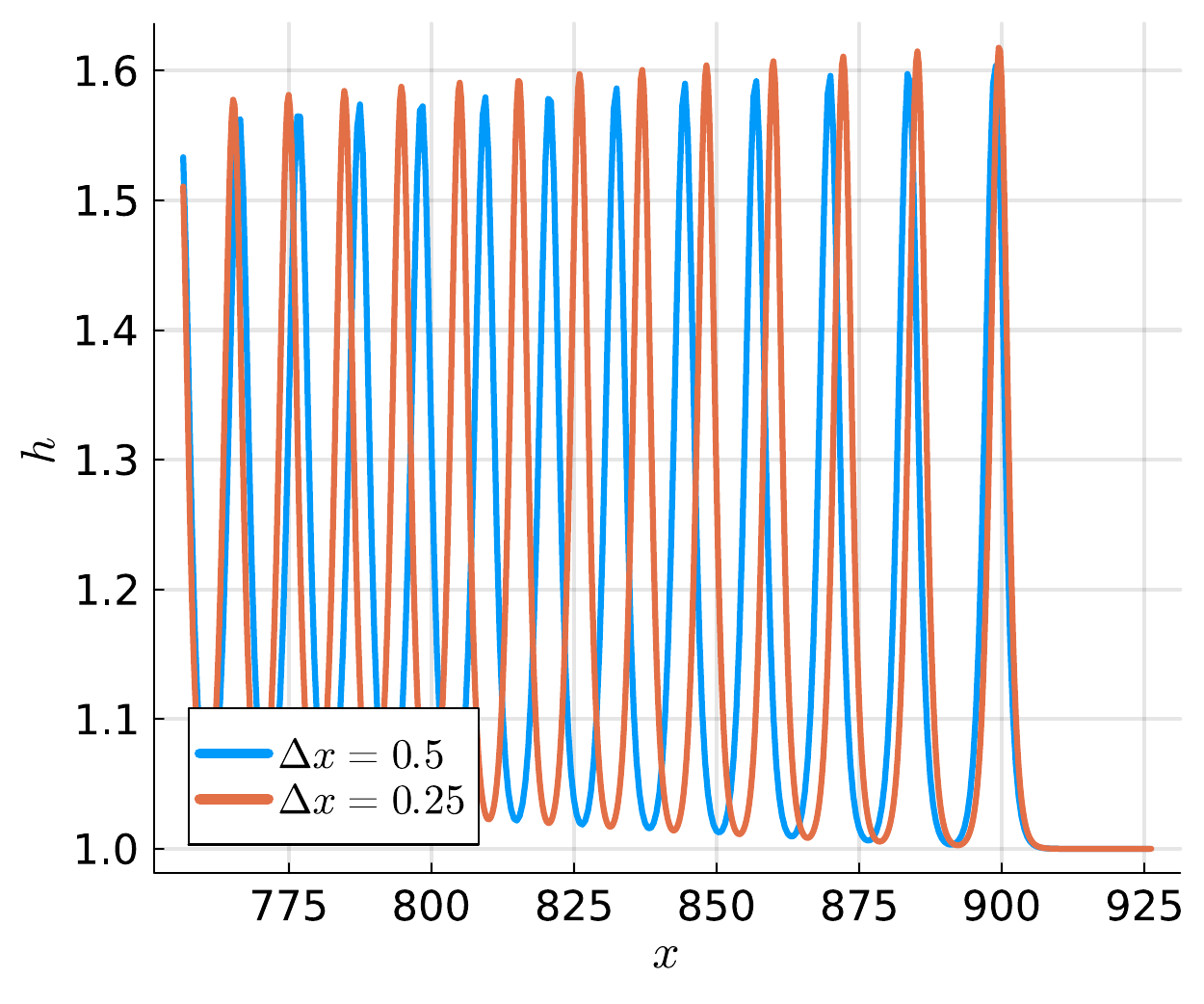}
    \end{subfigure}%
    \\
    \medskip
    \begin{subfigure}{0.32\textwidth}
    \centering
        \includegraphics[width=\textwidth]{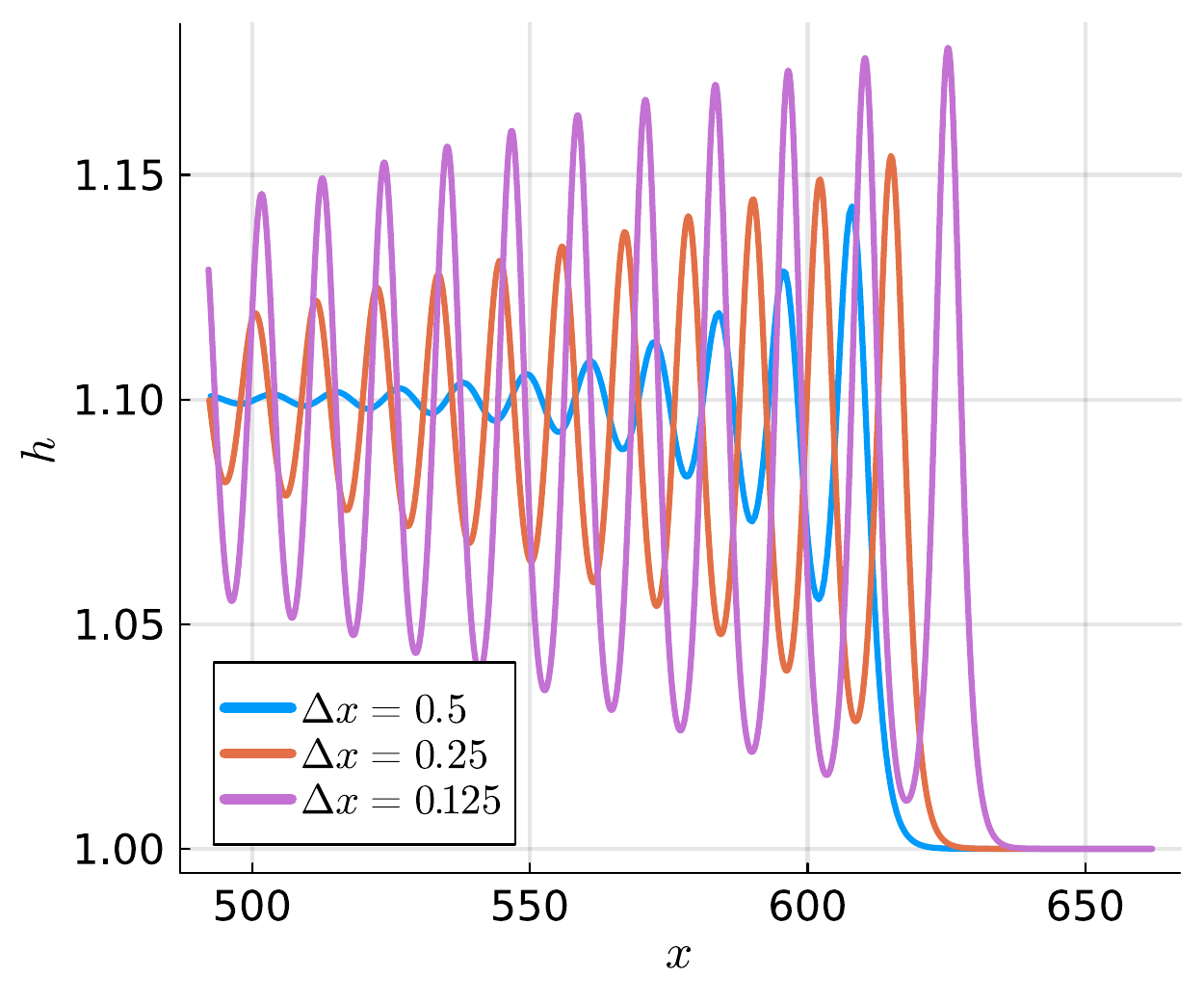}
    \end{subfigure}%
    \hspace{\fill}
    \begin{subfigure}{0.32\textwidth}
    \centering
        \includegraphics[width=\textwidth]{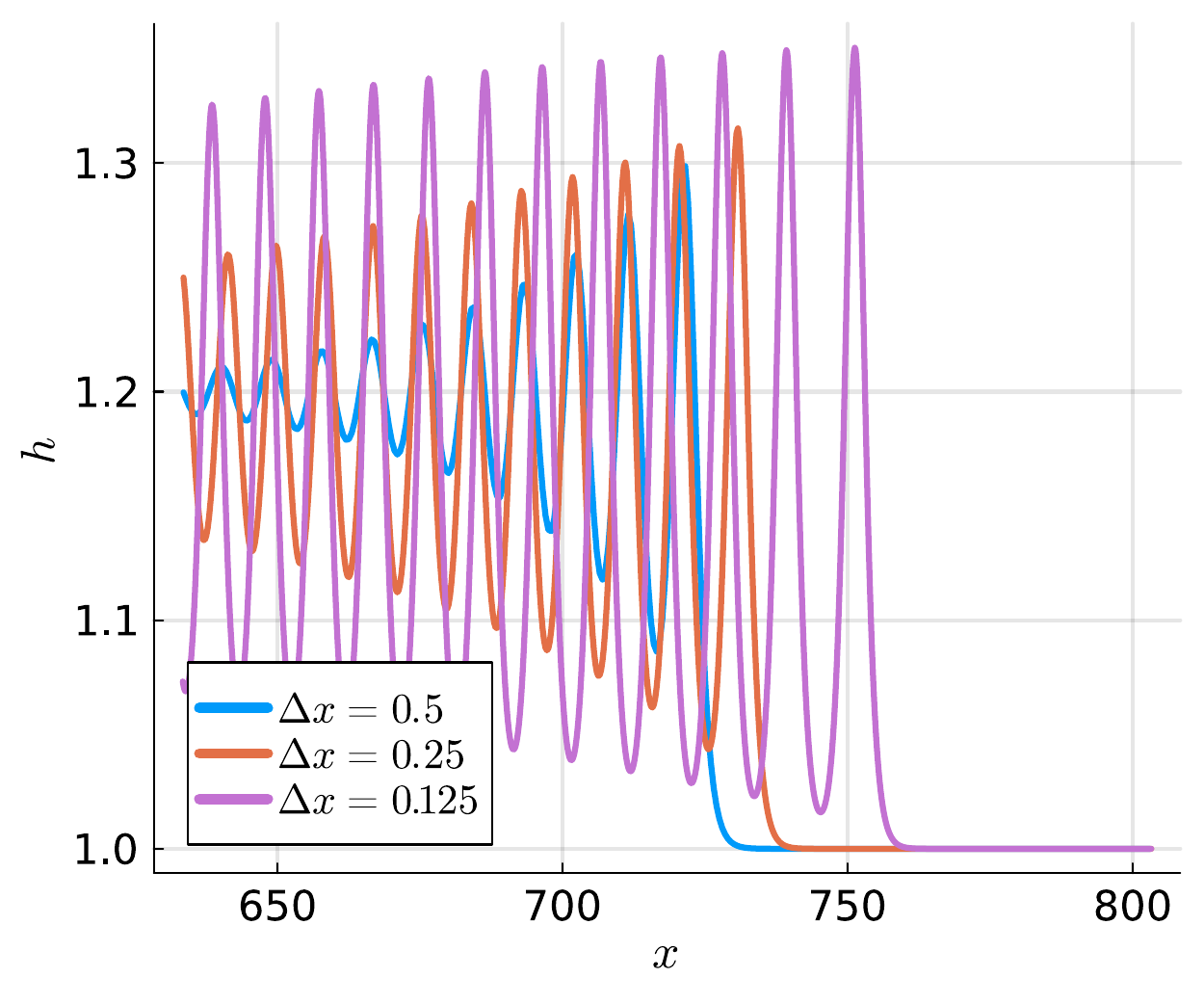}
    \end{subfigure}%
    \hspace{\fill}
    \begin{subfigure}{0.32\textwidth}
    \centering
        \includegraphics[width=\textwidth]{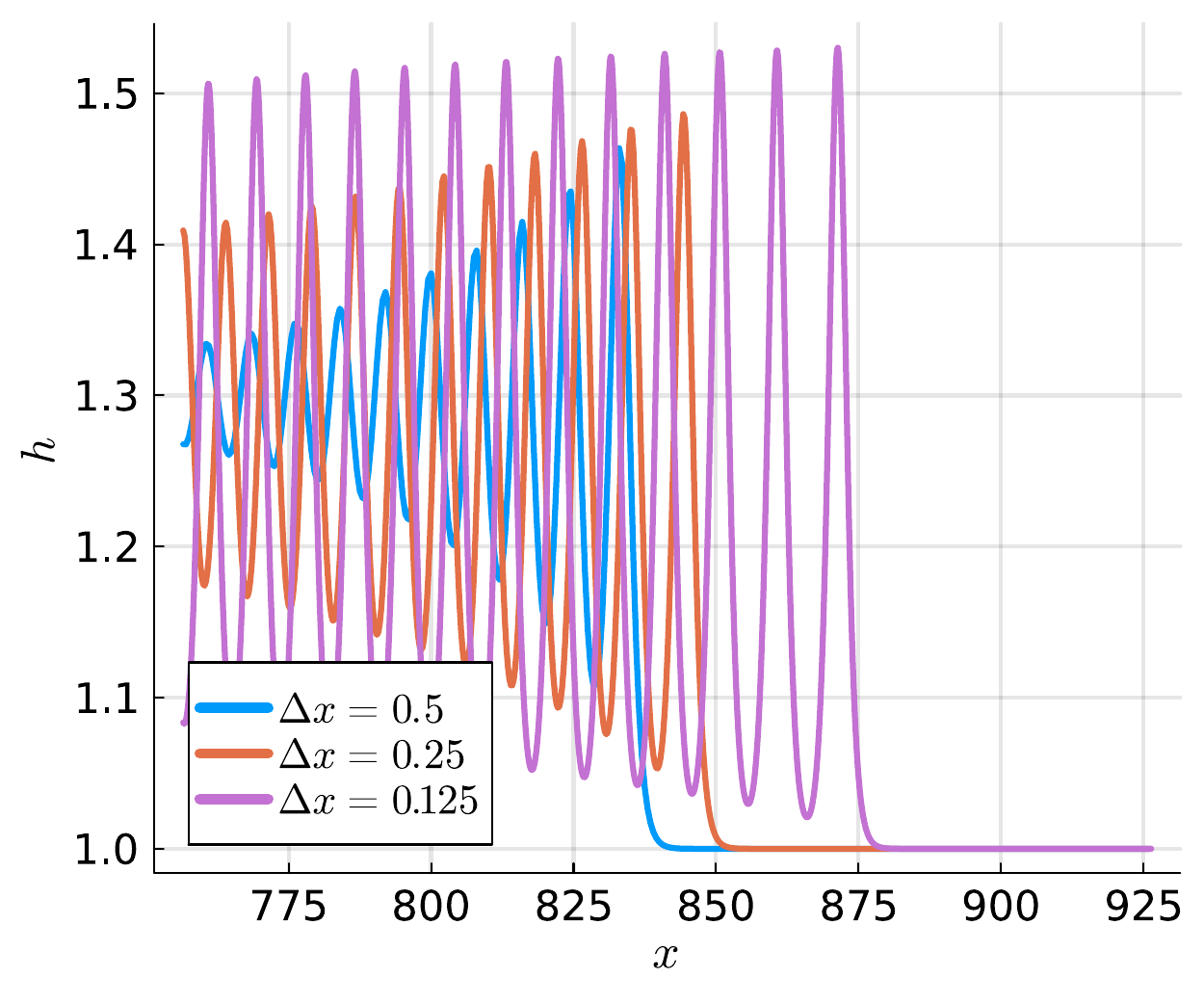}
    \end{subfigure}%
    \caption{Favre waves. Solutions of the original SGN system at dimensionless time $\tilde t=1500$ for nonlinearities $\epsilon =0.1$ (left),
    $\epsilon =0.2$ (center), and $\epsilon =0.3$ (right).
 Top: structure-preserving second-order finite difference scheme. Bottom: second-order finite difference  with artificial viscosity. }
    \label{fig:favre_waves-long1-sol1}
\end{figure}

\begin{figure}[htbp]
\centering
    \begin{subfigure}{0.32\textwidth}
    \centering
        \includegraphics[width=\textwidth]{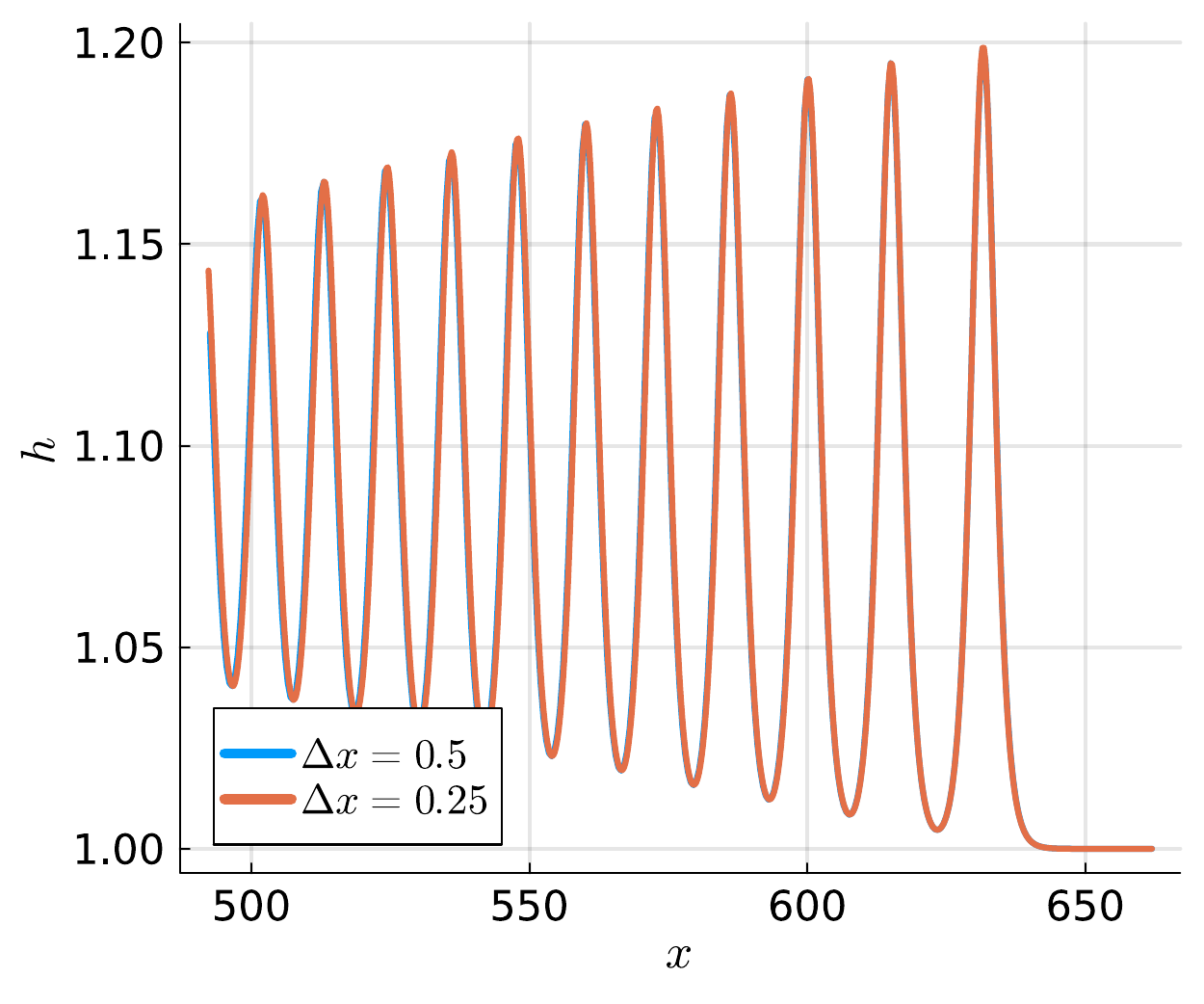}
    \end{subfigure}%
    \hspace{\fill}
    \begin{subfigure}{0.32\textwidth}
    \centering
        \includegraphics[width=\textwidth]{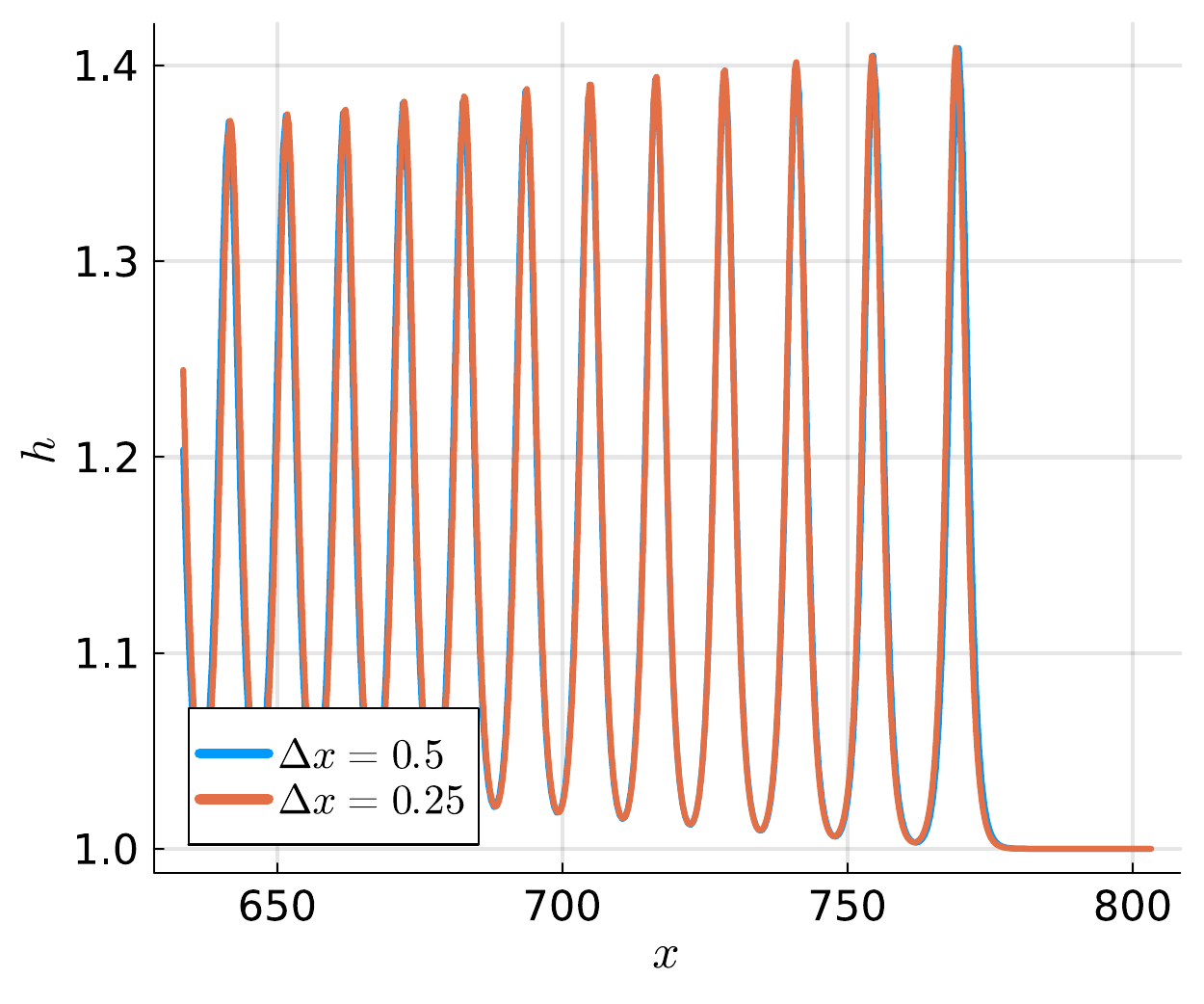}
    \end{subfigure}%
    \hspace{\fill}
    \begin{subfigure}{0.32\textwidth}
    \centering
        \includegraphics[width=\textwidth]{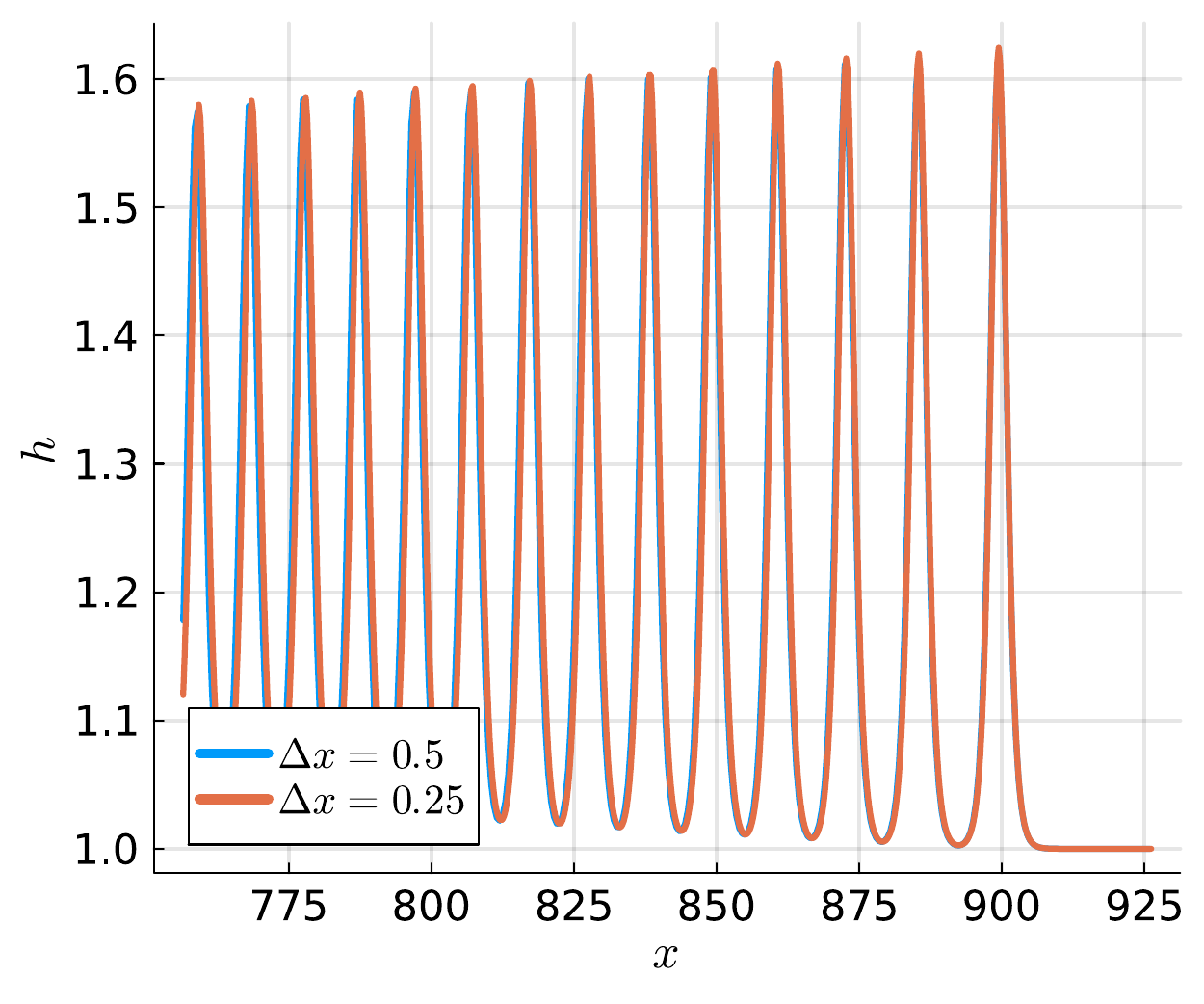}
    \end{subfigure}%
    \\
    \medskip
    \begin{subfigure}{0.32\textwidth}
    \centering
        \includegraphics[width=\textwidth]{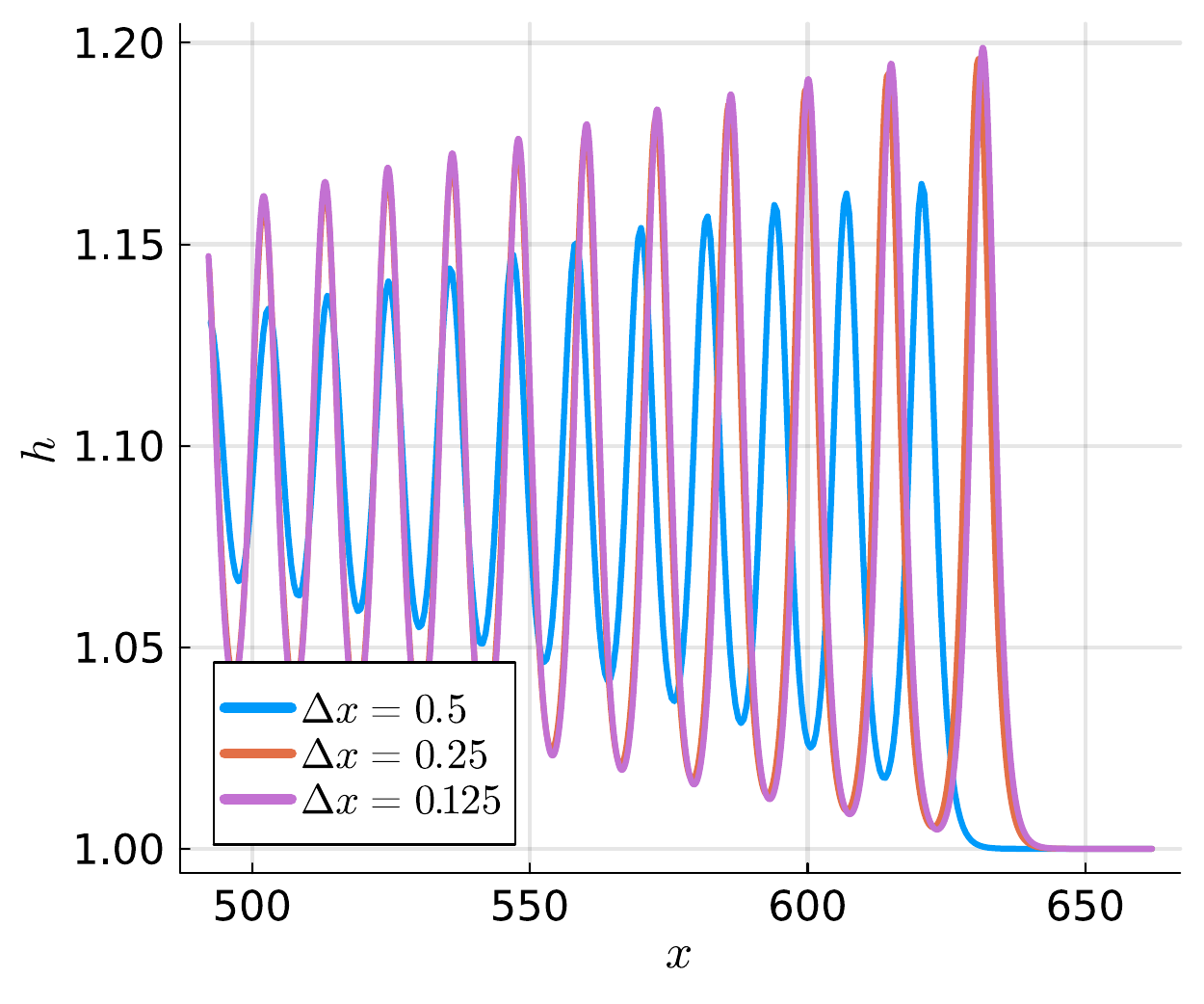}
    \end{subfigure}%
    \hspace{\fill}
    \begin{subfigure}{0.32\textwidth}
    \centering
        \includegraphics[width=\textwidth]{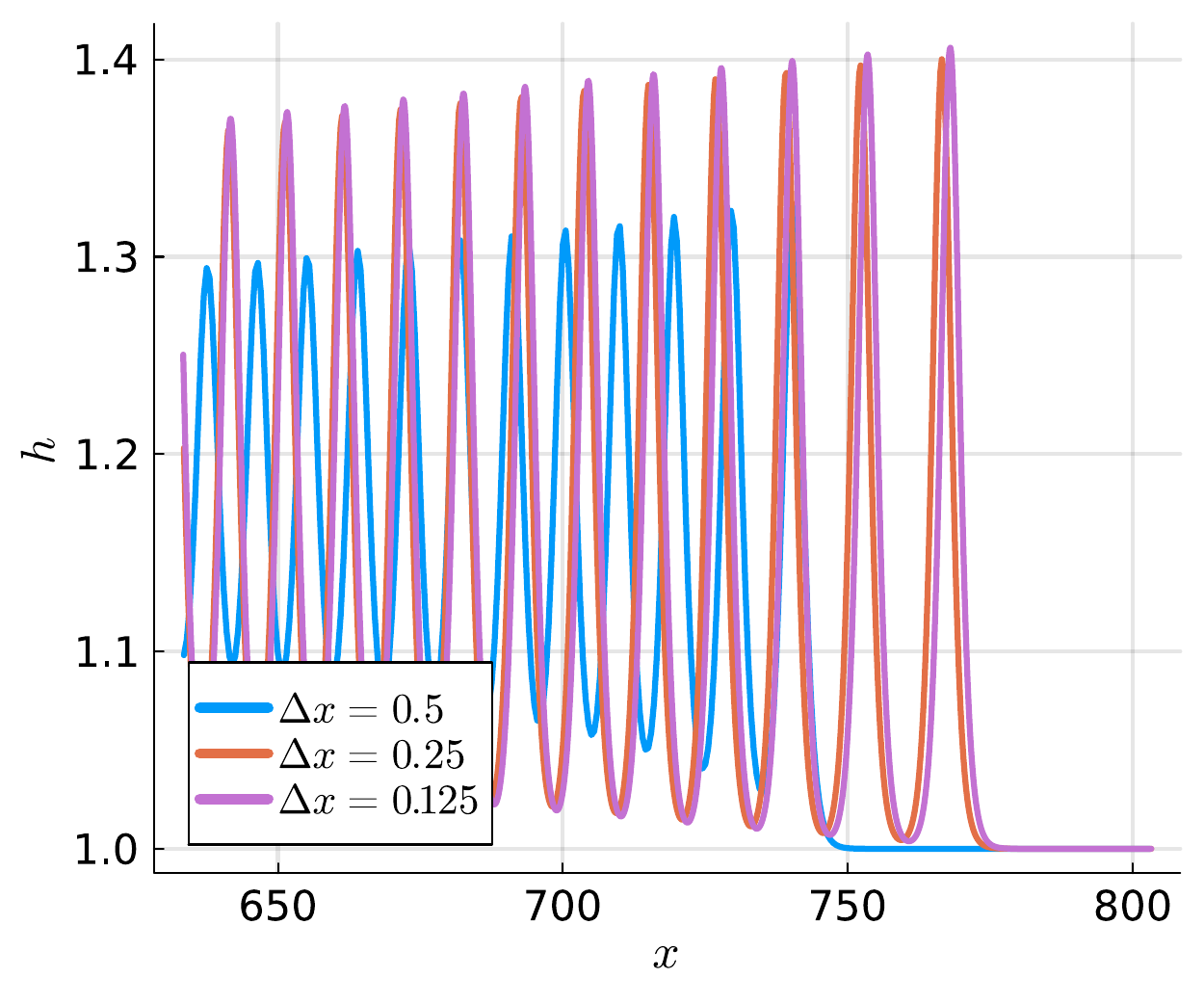}
    \end{subfigure}%
    \hspace{\fill}
    \begin{subfigure}{0.32\textwidth}
    \centering
        \includegraphics[width=\textwidth]{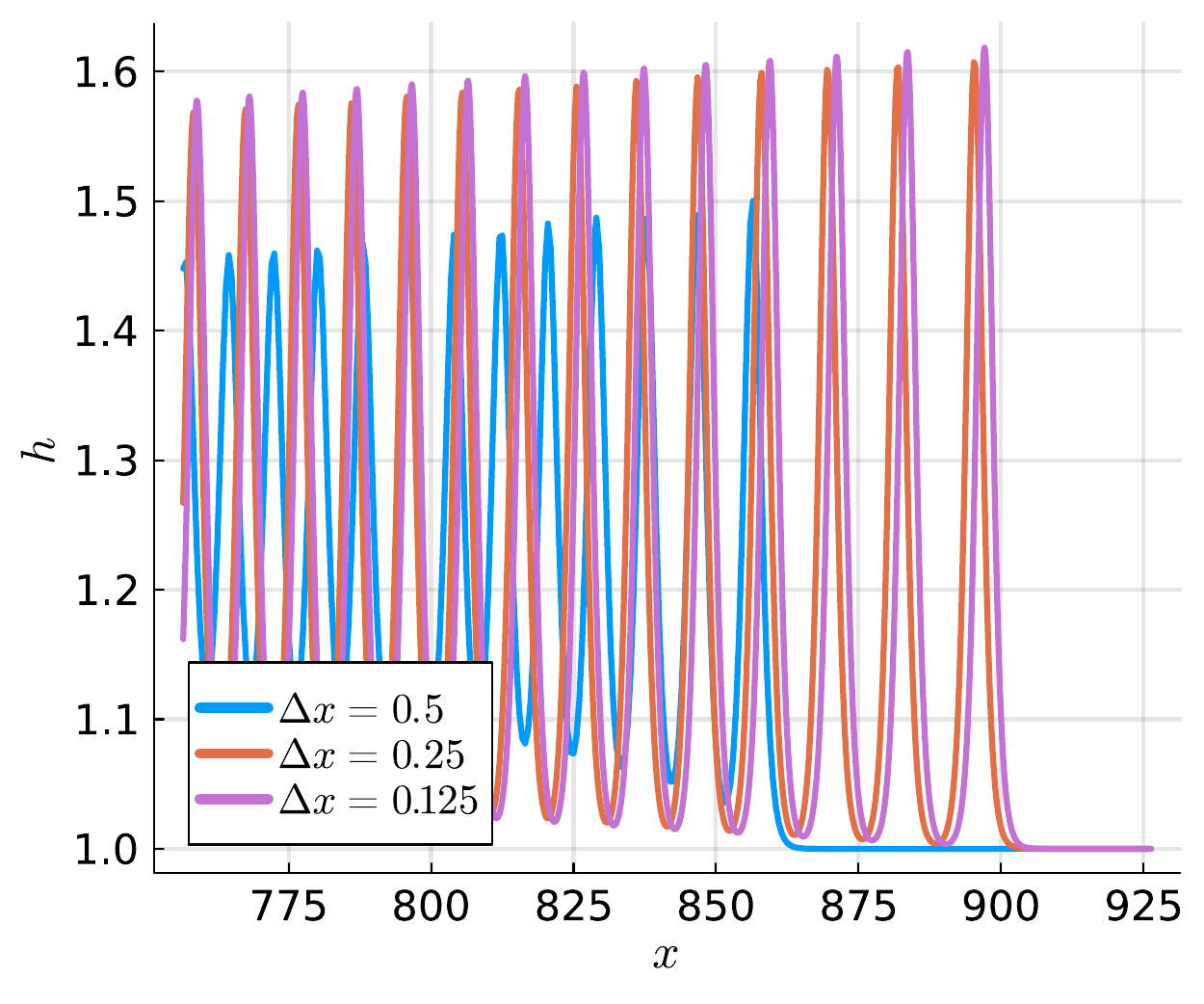}
    \end{subfigure}%
    \caption{Favre waves. Solutions of the original SGN system at dimensionless time $\tilde t=1500$ for nonlinearities $\epsilon =0.1$ (left),
    $\epsilon =0.2$ (center), and $\epsilon =0.3$ (right).
 Top: structure-preserving fourth-order finite difference scheme. Bottom: fourth-order finite difference  with artificial viscosity. }
    \label{fig:favre_waves-long1-sol2}
\end{figure}

\begin{figure}[htbp]
\centering
    \begin{subfigure}{0.32\textwidth}
    \centering
        \includegraphics[width=\textwidth]{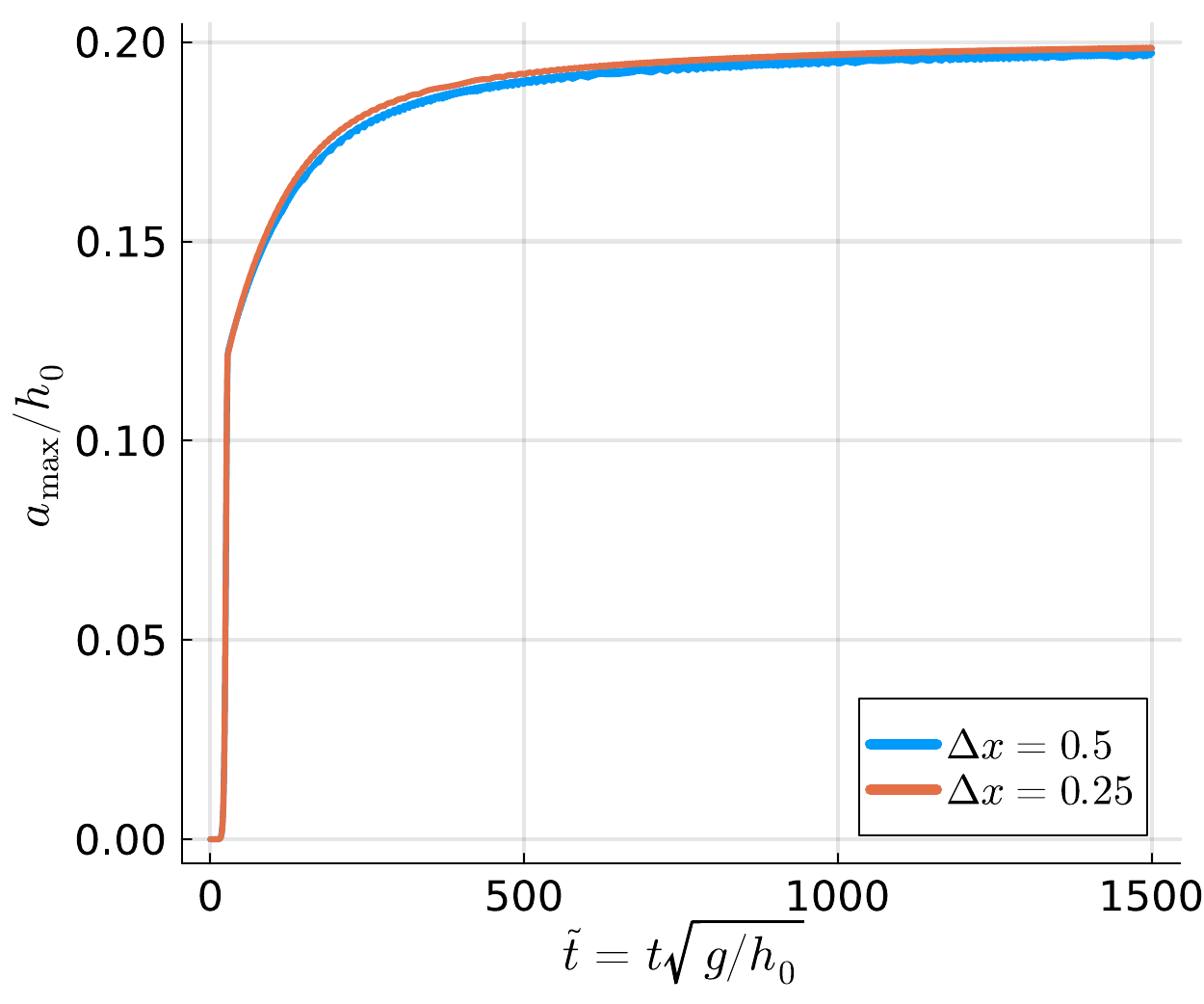}
    \end{subfigure}%
    \hspace{\fill}
    \begin{subfigure}{0.32\textwidth}
    \centering
        \includegraphics[width=\textwidth]{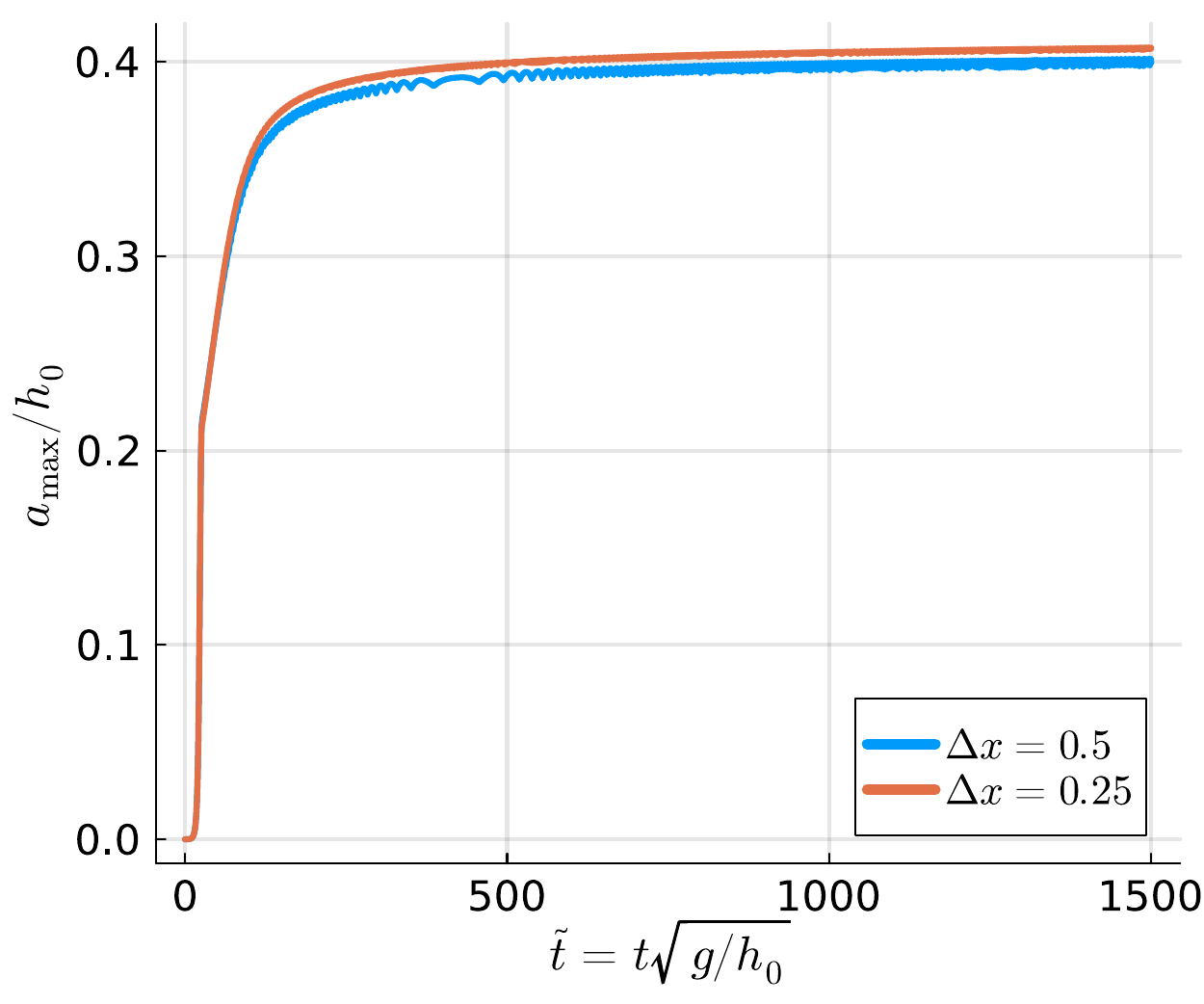}
    \end{subfigure}%
    \hspace{\fill}
    \begin{subfigure}{0.32\textwidth}
    \centering
        \includegraphics[width=\textwidth]{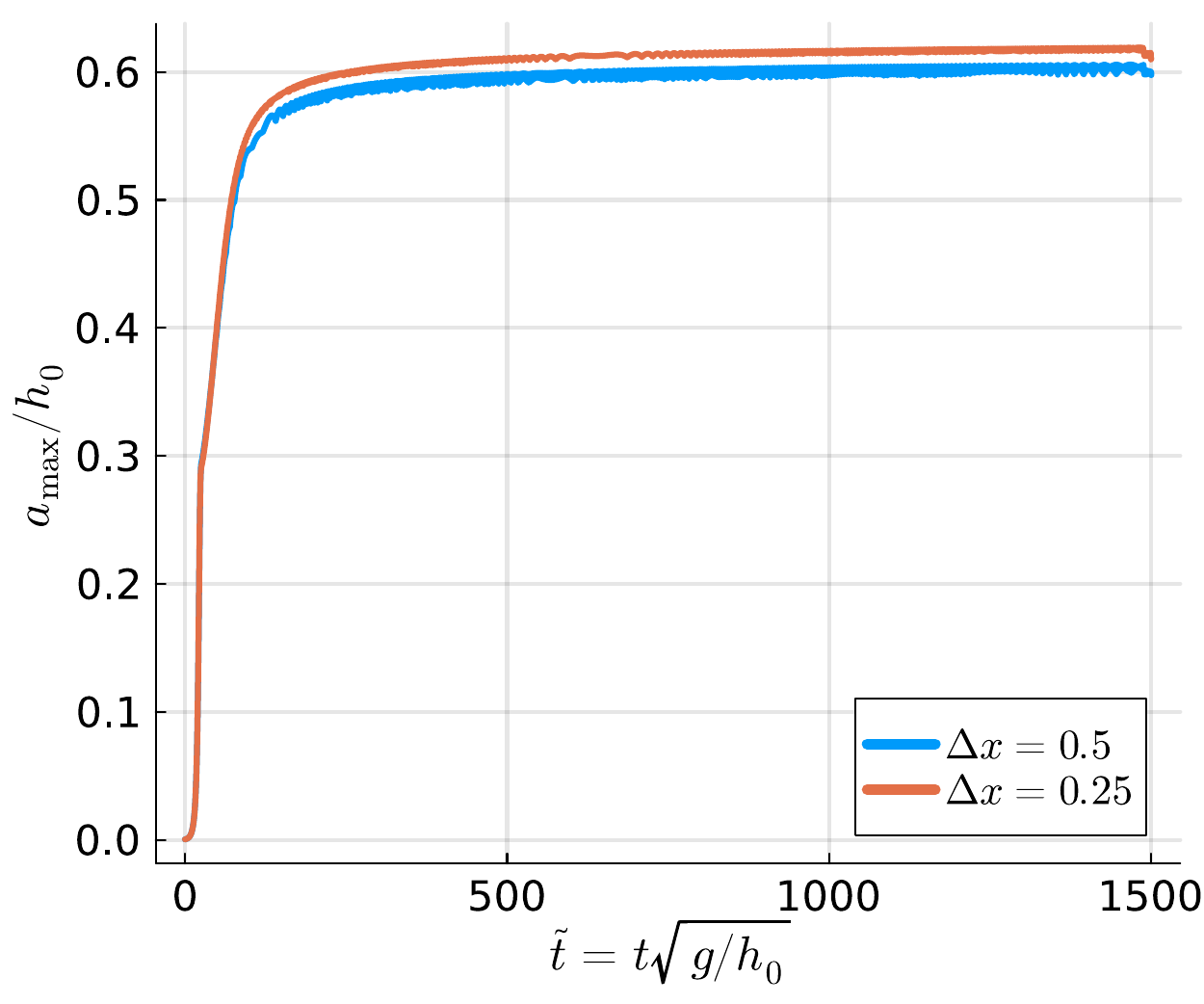}
    \end{subfigure}%
    \\
    \medskip
    \begin{subfigure}{0.32\textwidth}
    \centering
        \includegraphics[width=\textwidth]{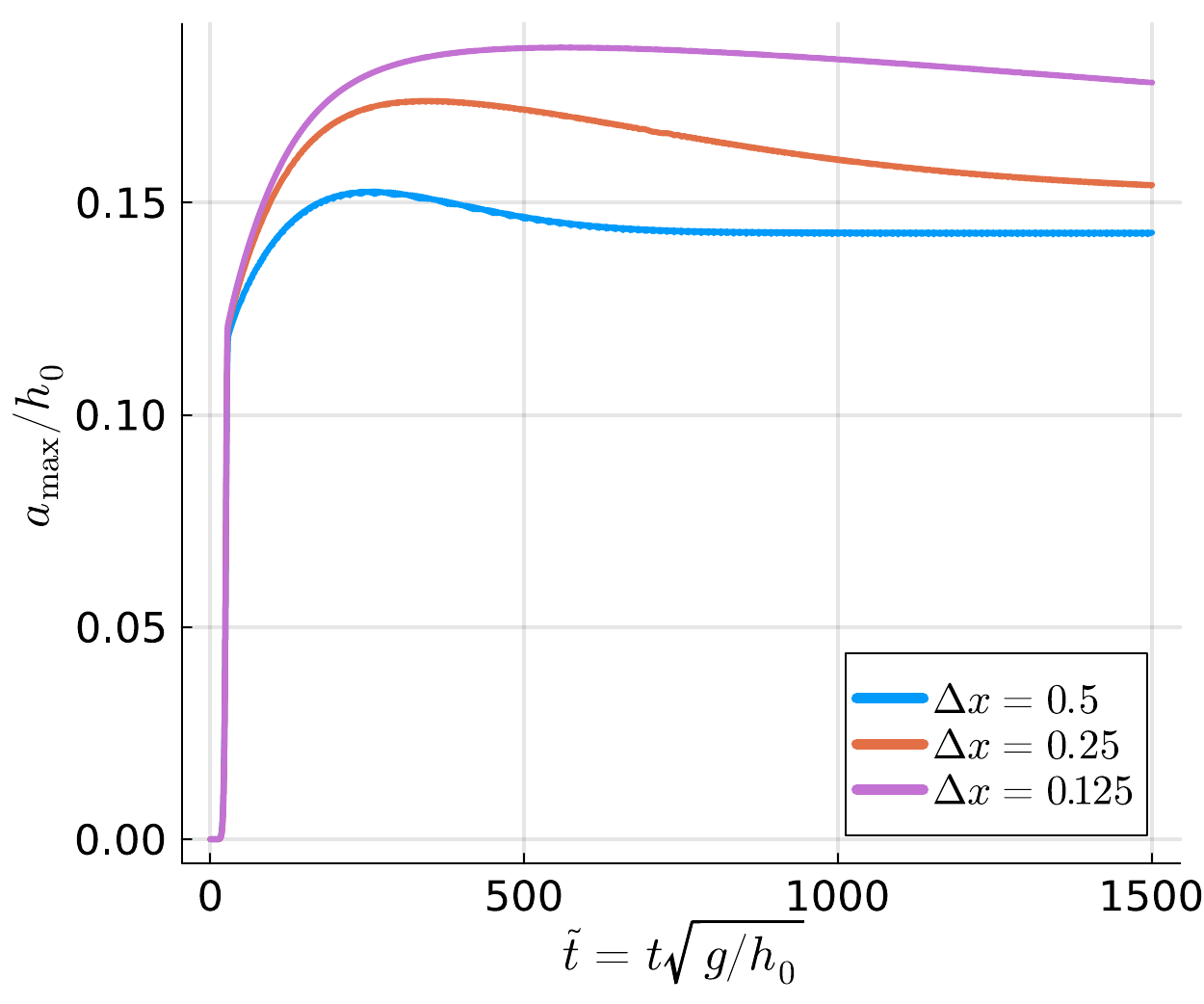}
    \end{subfigure}%
    \hspace{\fill}
    \begin{subfigure}{0.32\textwidth}
    \centering
        \includegraphics[width=\textwidth]{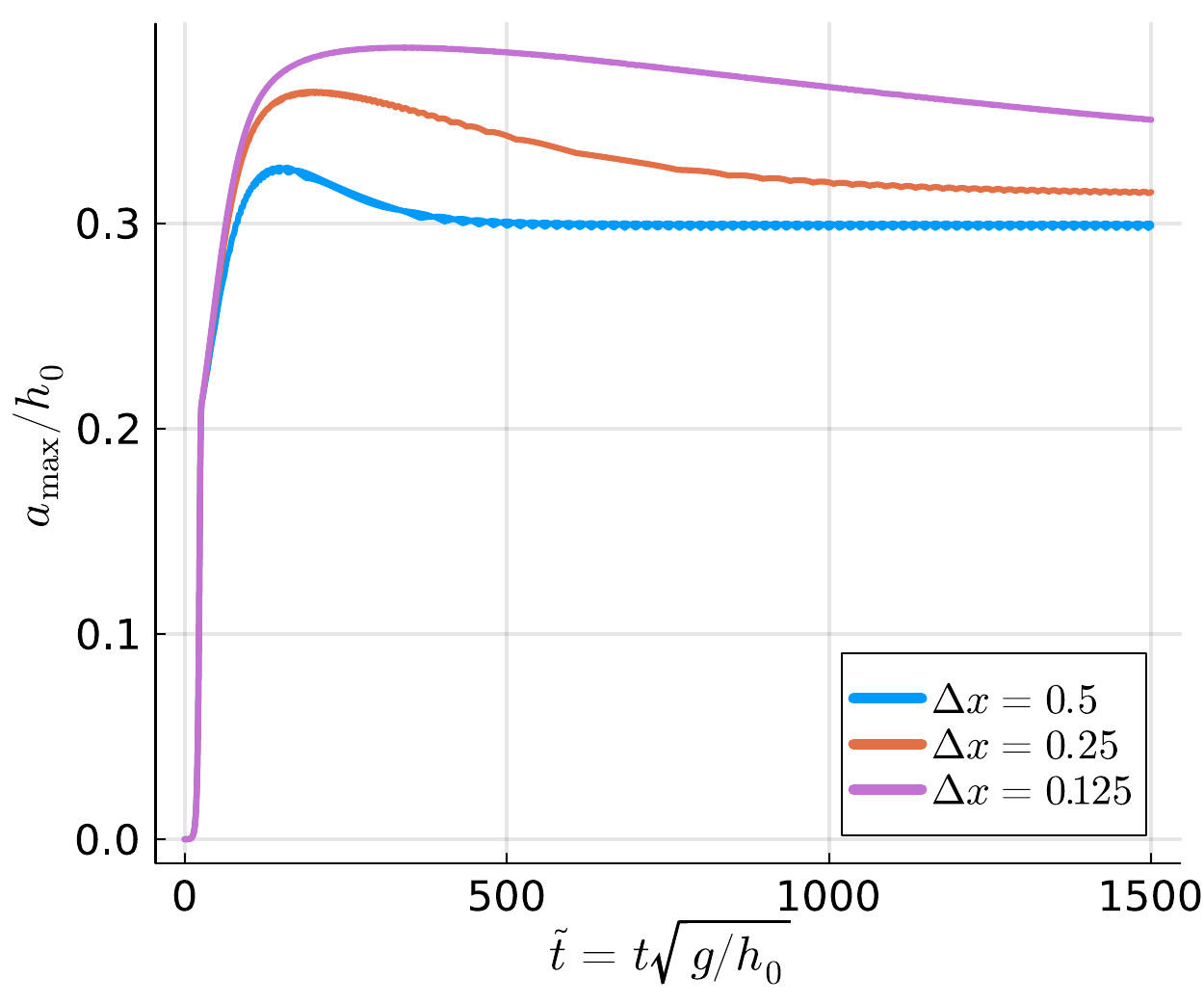}
    \end{subfigure}%
    \hspace{\fill}
    \begin{subfigure}{0.32\textwidth}
    \centering
        \includegraphics[width=\textwidth]{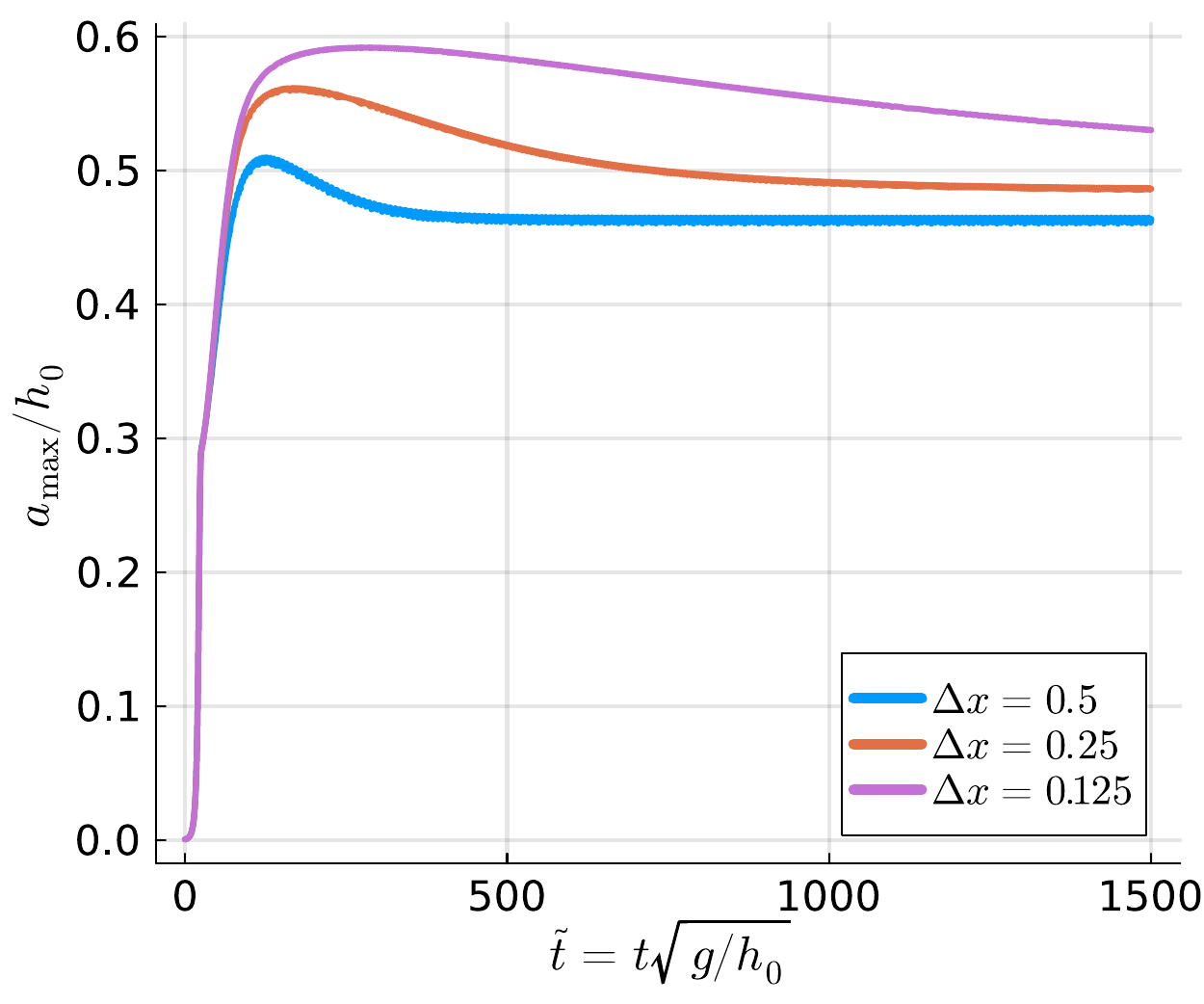}
    \end{subfigure}%
    \caption{Favre waves. Evolution of the maximum amplitude   obtained from  the original SGN system   with  nonlinearities $\epsilon =0.1$ (left),
    $\epsilon =0.2$ (center), and $\epsilon =0.3$ (right).
 Top: structure-preserving second-order finite difference scheme. Bottom: second-order finite difference  with artificial viscosity. }
    \label{fig:favre_waves-long1-amax1}
\end{figure}

\begin{figure}[htbp]
\centering
    \begin{subfigure}{0.32\textwidth}
    \centering
        \includegraphics[width=\textwidth]{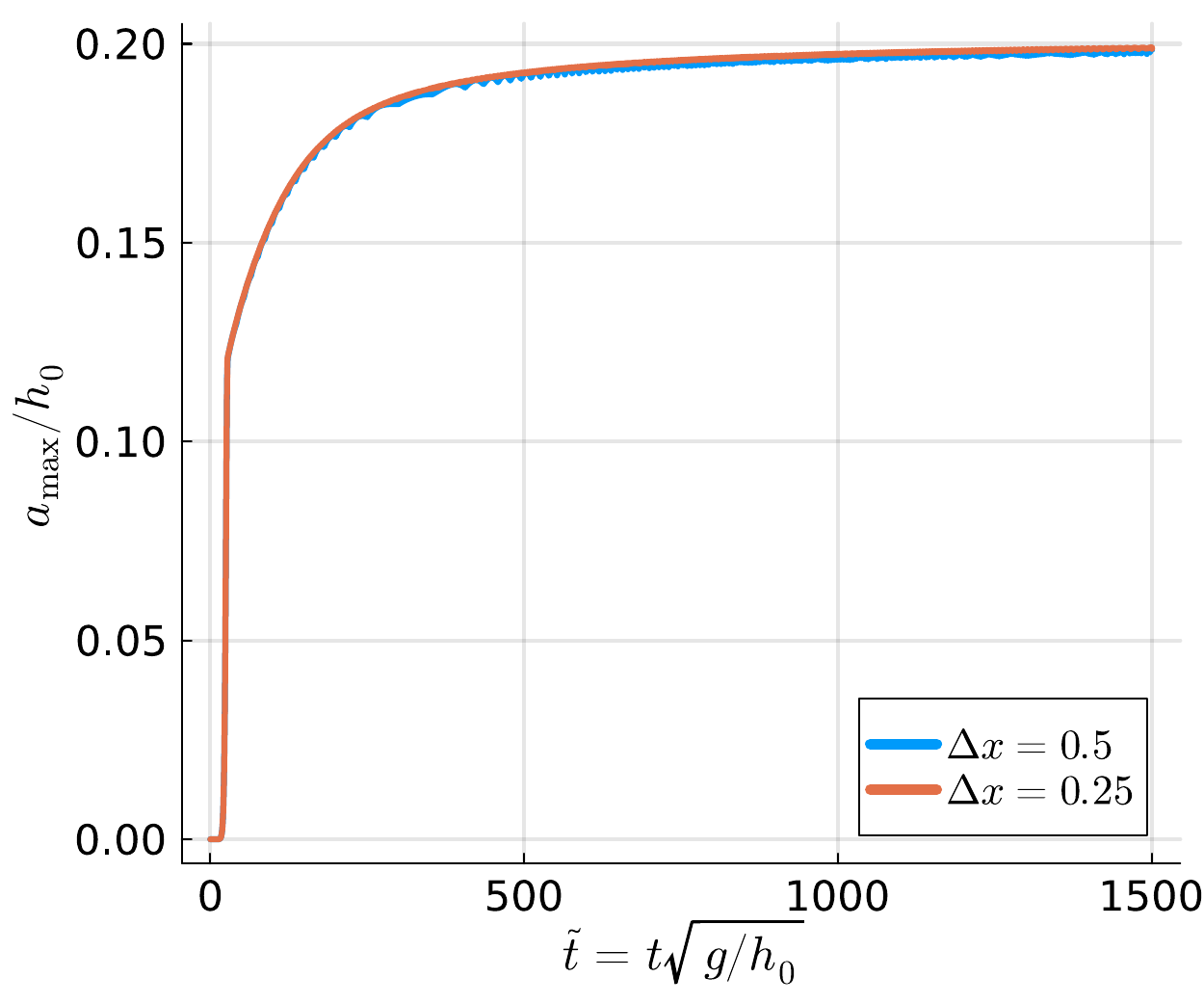}
    \end{subfigure}%
    \hspace{\fill}
    \begin{subfigure}{0.32\textwidth}
    \centering
        \includegraphics[width=\textwidth]{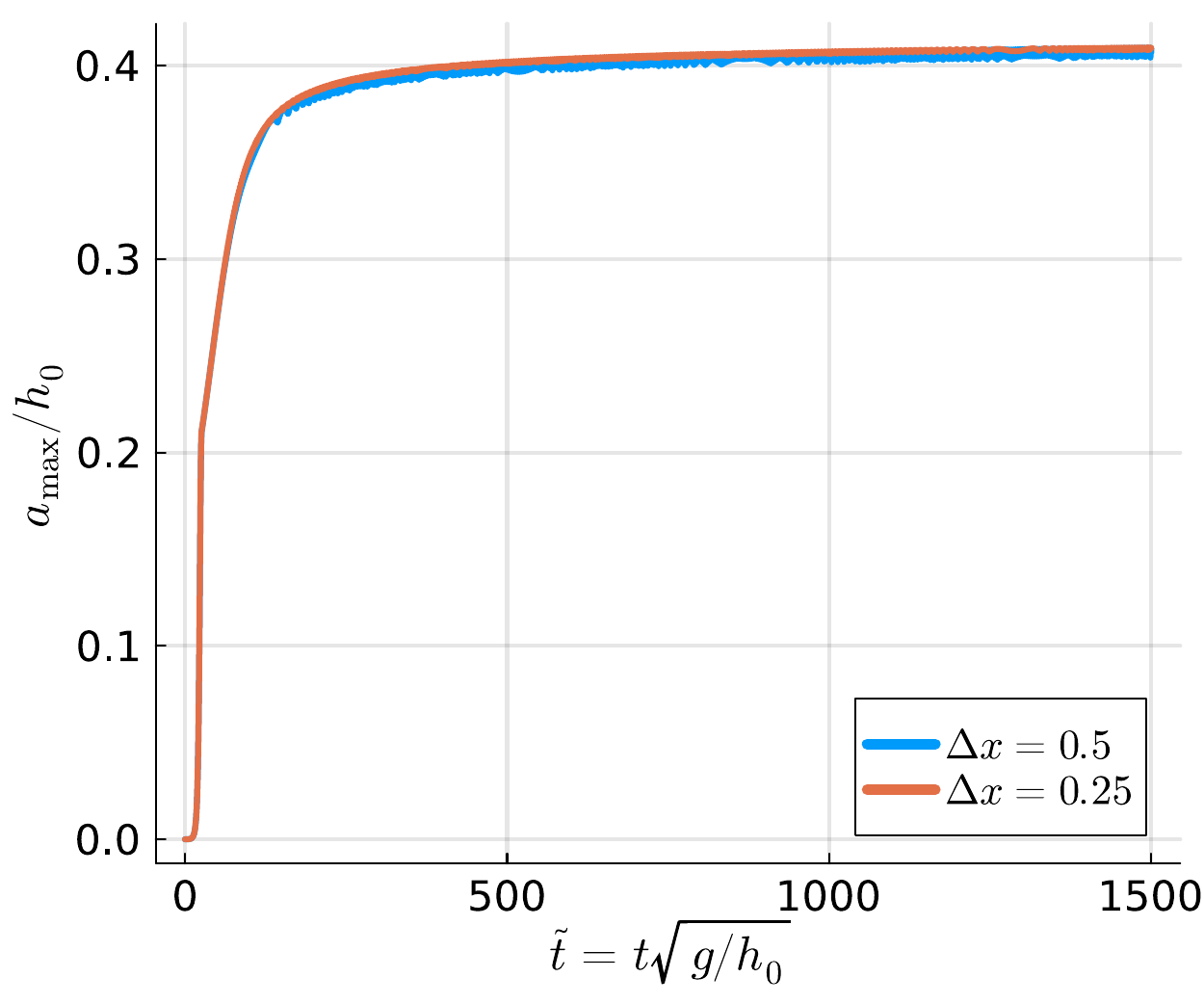}
    \end{subfigure}%
    \hspace{\fill}
    \begin{subfigure}{0.32\textwidth}
    \centering
        \includegraphics[width=\textwidth]{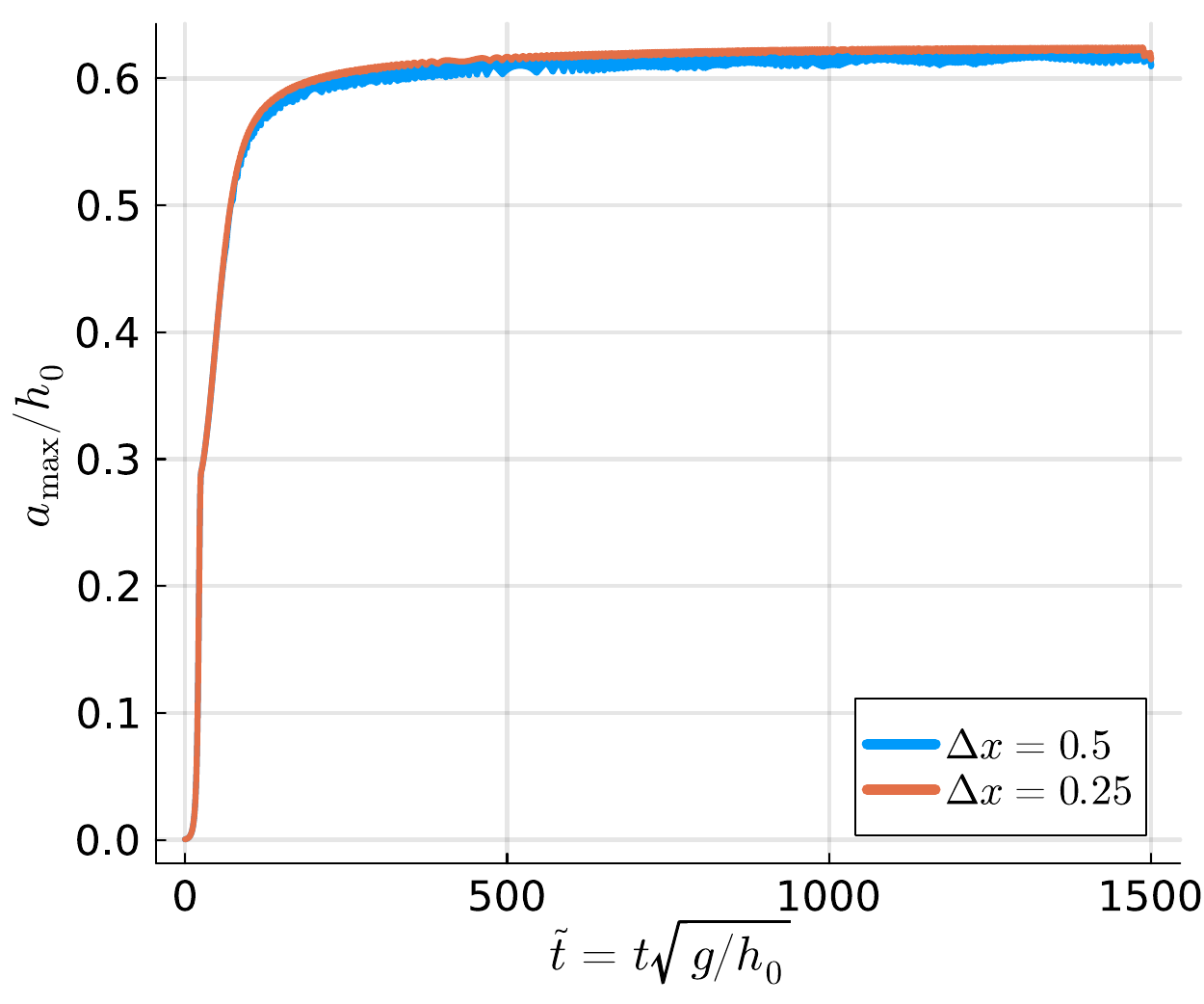}
    \end{subfigure}%
    \\
    \medskip
    \begin{subfigure}{0.32\textwidth}
    \centering
        \includegraphics[width=\textwidth]{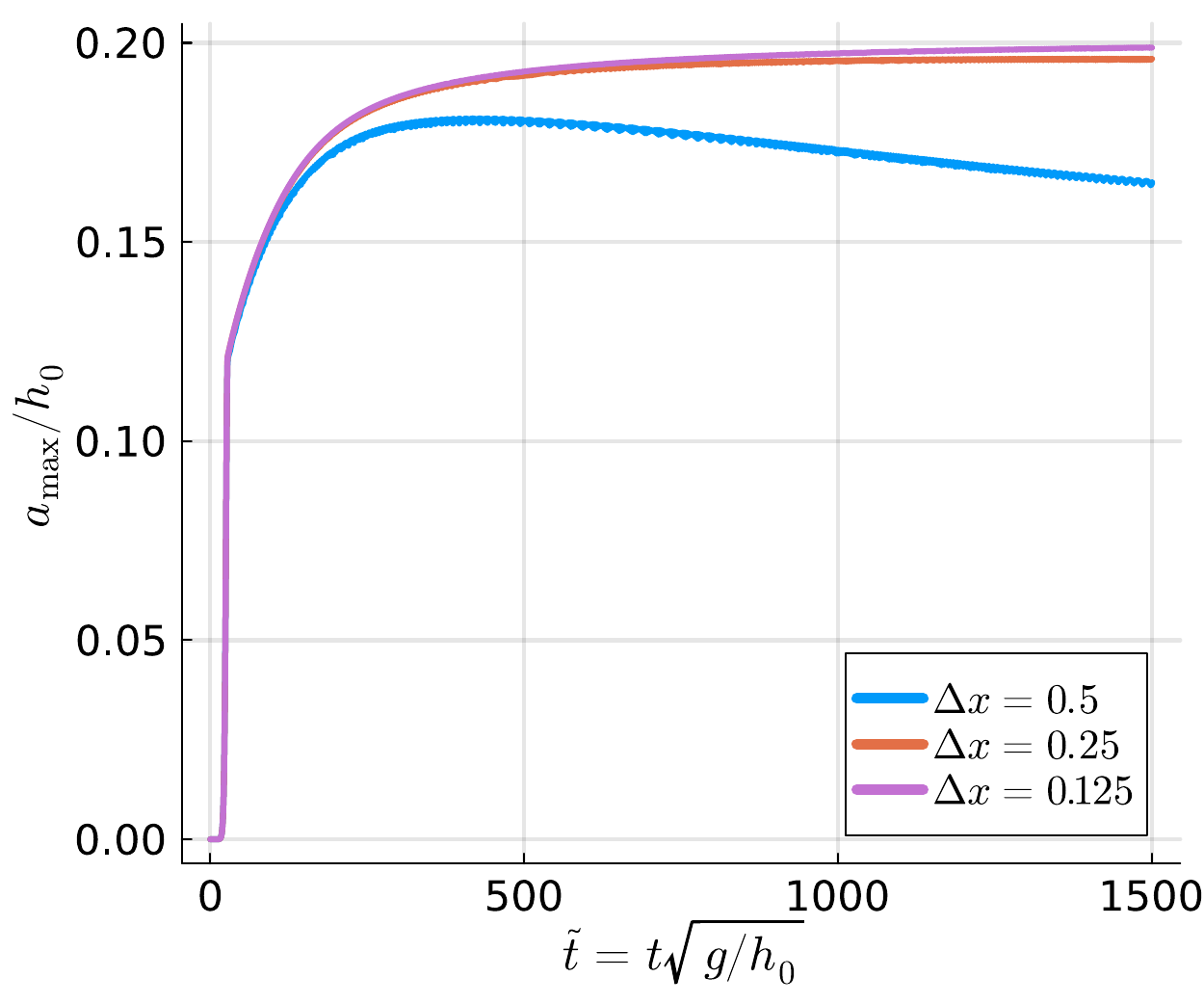}
    \end{subfigure}%
    \hspace{\fill}
    \begin{subfigure}{0.32\textwidth}
    \centering
        \includegraphics[width=\textwidth]{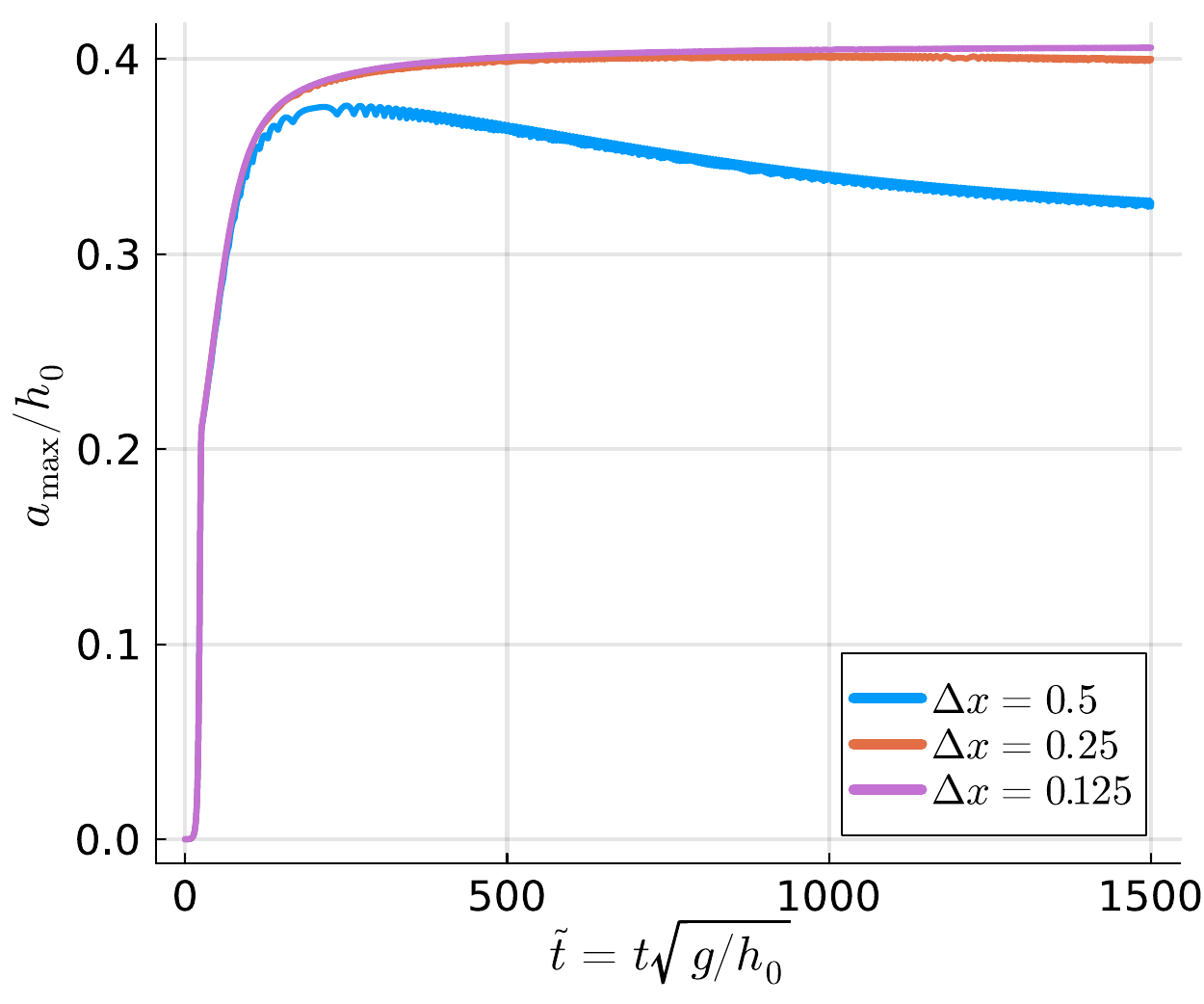}
    \end{subfigure}%
    \hspace{\fill}
    \begin{subfigure}{0.32\textwidth}
    \centering
        \includegraphics[width=\textwidth]{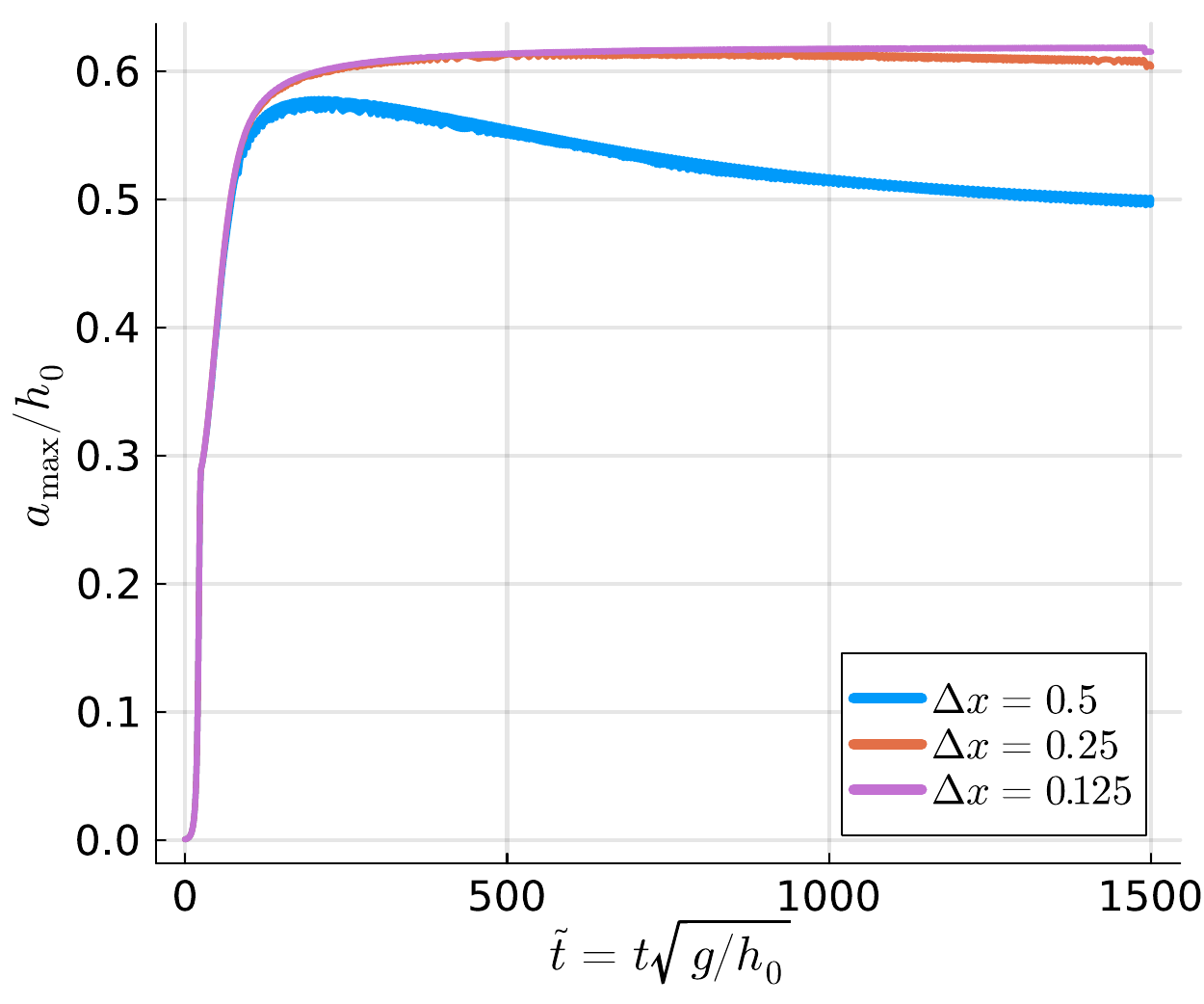}
    \end{subfigure}%
    \caption{Favre waves. Evolution of the maximum amplitude   obtained from  the original SGN system   with  nonlinearities $\epsilon =0.1$ (left),
    $\epsilon =0.2$ (center), and $\epsilon =0.3$ (right).
 Top: structure-preserving fourth-order finite difference scheme. Bottom: fourth-order finite difference with artificial viscosity. }
    \label{fig:favre_waves-long1-amax2}
\end{figure}

These observations are confirmed by the plots of the maximum amplitude in Figures~\ref{fig:favre_waves-long1-amax1}--\ref{fig:favre_waves-long1-amax2}.
The structure-preserving schemes provide already on the coarsest mesh an excellent approximation of the converged height.
We can see that two more refinements would be required with a
second-order scheme to match this value, while one refinement is required when using a fourth-order method.
These results are qualitatively in line with those of \cite{jouy_etal24}.
They generalize such results to genuinely structure-preserving discretizations
of the Serre-Green-Naghdi equations.  We refer to  the last reference   for similar results when physical dissipation (friction) is included.\\

We do not show the wave lengths $\lambda$ of the Favre waves, since they increase
over time (in the absence of friction/dissipation).
This is in accordance with the results we have observed for the soliton fission problem in Section~\ref{sec:soliton_fission}.
However, we  compare for completeness the numerical maximal amplitudes
$a_\mathrm{max}$   with the data by \cite{favre1935,treske1994}.
To this end,  we compute $a_\mathrm{max}$  for different Froude numbers
$$
    \mathrm{Fr} = \frac{\sigma}{\sqrt{g h_0}} = \sqrt{ (1 + \epsilon)  ( 1 + \epsilon / 2 )  }
$$
by choosing $\epsilon \in \{0.02, 0.06, 0.10, \dots, 0.30\}$ for $\Delta x = 0.02$,
where $\sigma$ is the non-dispersive/average bore speed.
We stop the simulations when the first wave (with amplitude $a_\mathrm{max}$) has travelled
roughly the same distance as in the experiments, i.e., $\SI{63.5}{m}$.
The results are shown in Figure~\ref{fig:favre_amplitude_over_froude}.
The numerical and experimental data agree very well for Froude numbers $\mathrm{Fr} < 1.25$.
For larger Froude numbers, wave breaking modelling is required to capture the correct amplitudes.

\begin{figure}[htbp]
\centering
    \begin{subfigure}{0.49\textwidth}
    \centering
        \includegraphics[width=\textwidth]{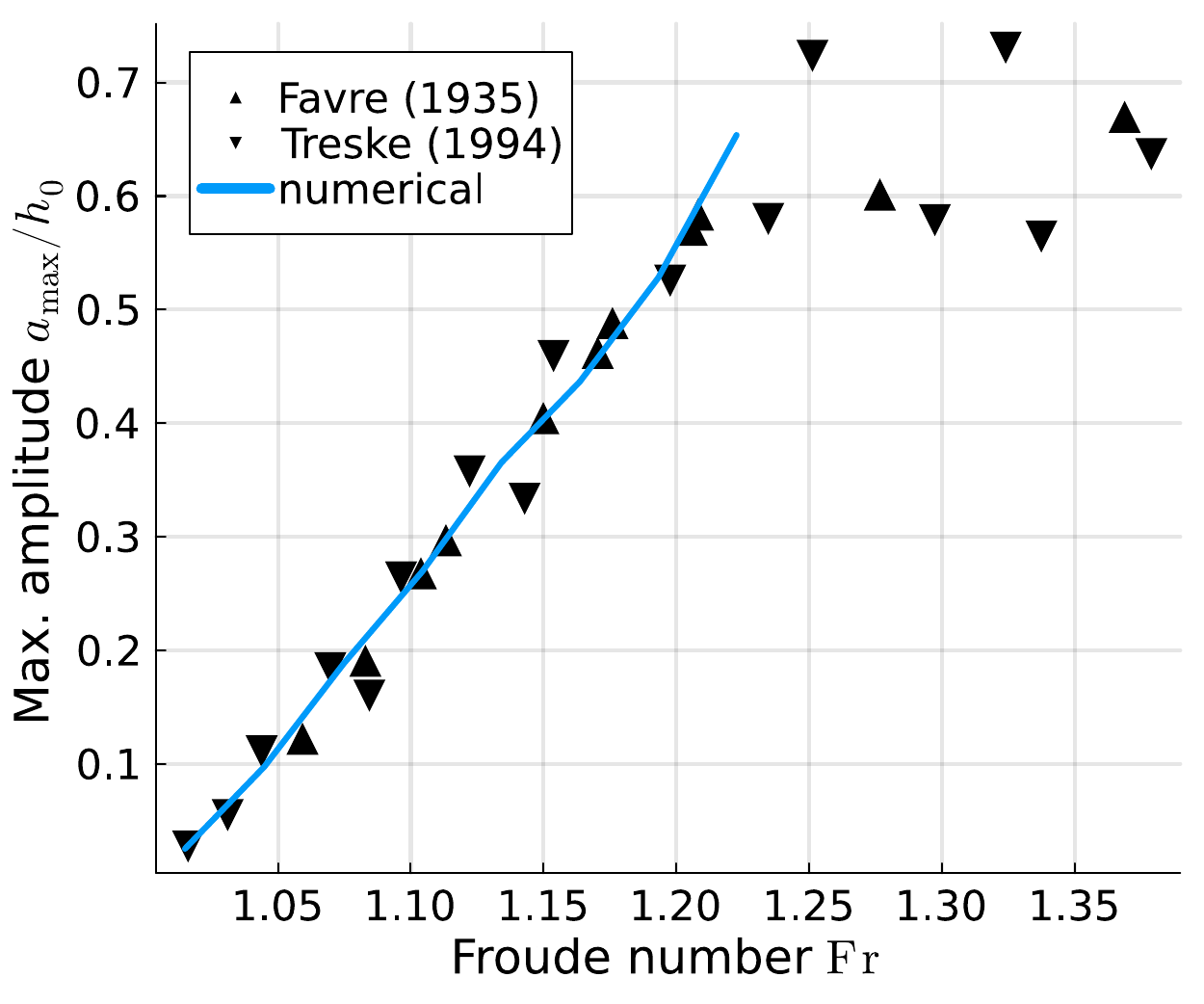}
        \caption{Hyperbolic approximation with $\lambda = 500$.}
    \end{subfigure}%
    \hspace{\fill}
    \begin{subfigure}{0.49\textwidth}
    \centering
        \includegraphics[width=\textwidth]{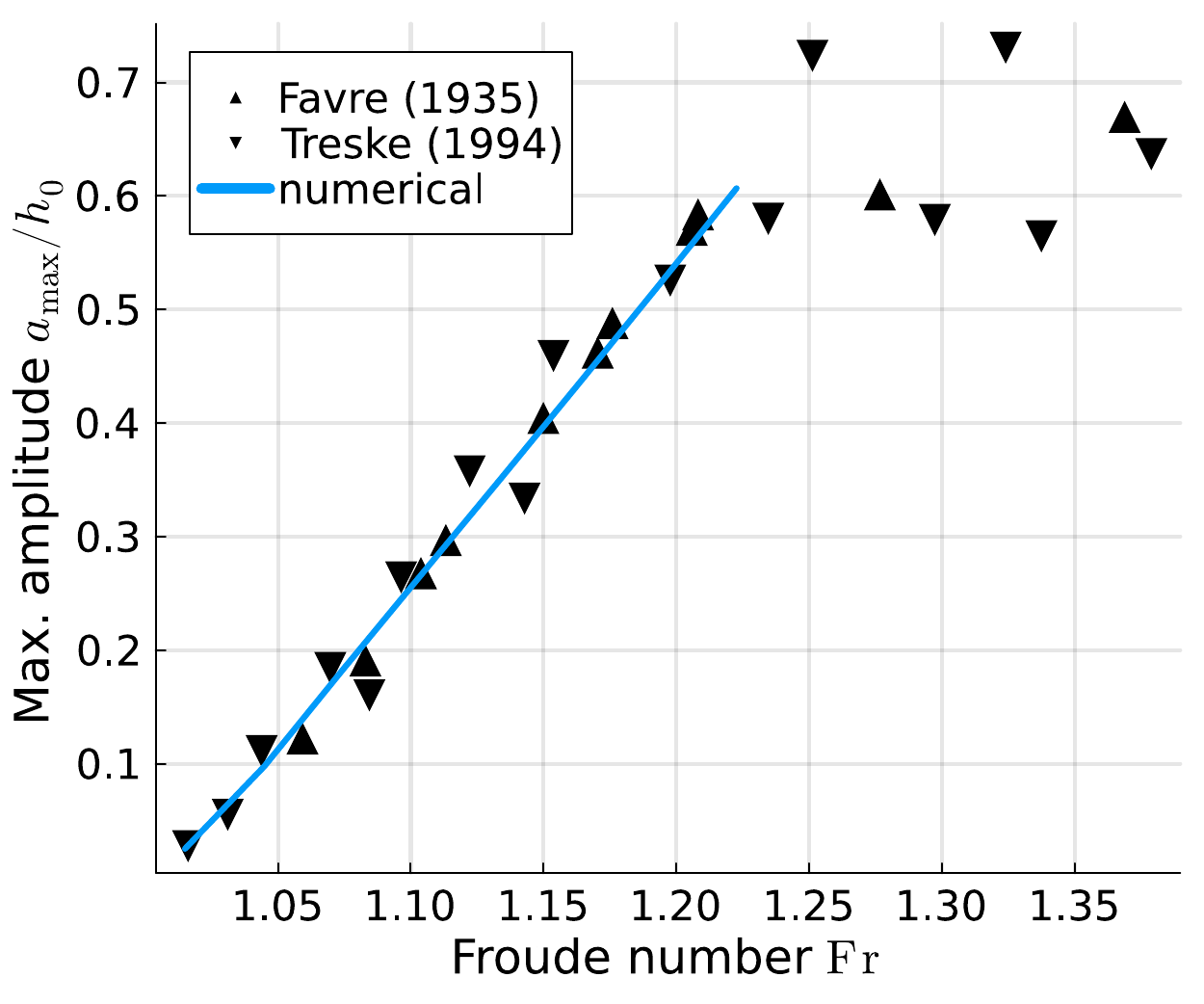}
        \caption{Serre-Green-Naghdi equations.}
    \end{subfigure}%
    \caption{Favre waves: maximal amplitude $a_\mathrm{max}$ against
             Froude number $\mathrm{Fr} = \sigma / \sqrt{g h_0}$. Comparison with experimental data of
             Favre \cite{favre1935} and Treske \cite{treske1994}.
             Numerical results with  fourth-order structure-preserving finite differences
              with central operators for
             the hyperbolic approximation and upwind operators for the
             original Serre-Green-Naghdi equations.}
    \label{fig:favre_amplitude_over_froude}
\end{figure}

\subsection{Dingemans experiment}
\label{sec:dingemans_experiment}

In this section, we compare numerical results obtained with our new
energy-conserving methods with experimental data from
\cite{dingemans1994comparison,dingemans1997water}. This setup is
similar to classical test cases such as \cite{beji1993experimental}
that have been used to validate numerical models for water waves, e.g.,
\cite{madsen1996boussinesq,kazolea2024}.

\begin{figure}[htbp]
\centering
    \includegraphics[width=0.6\textwidth]{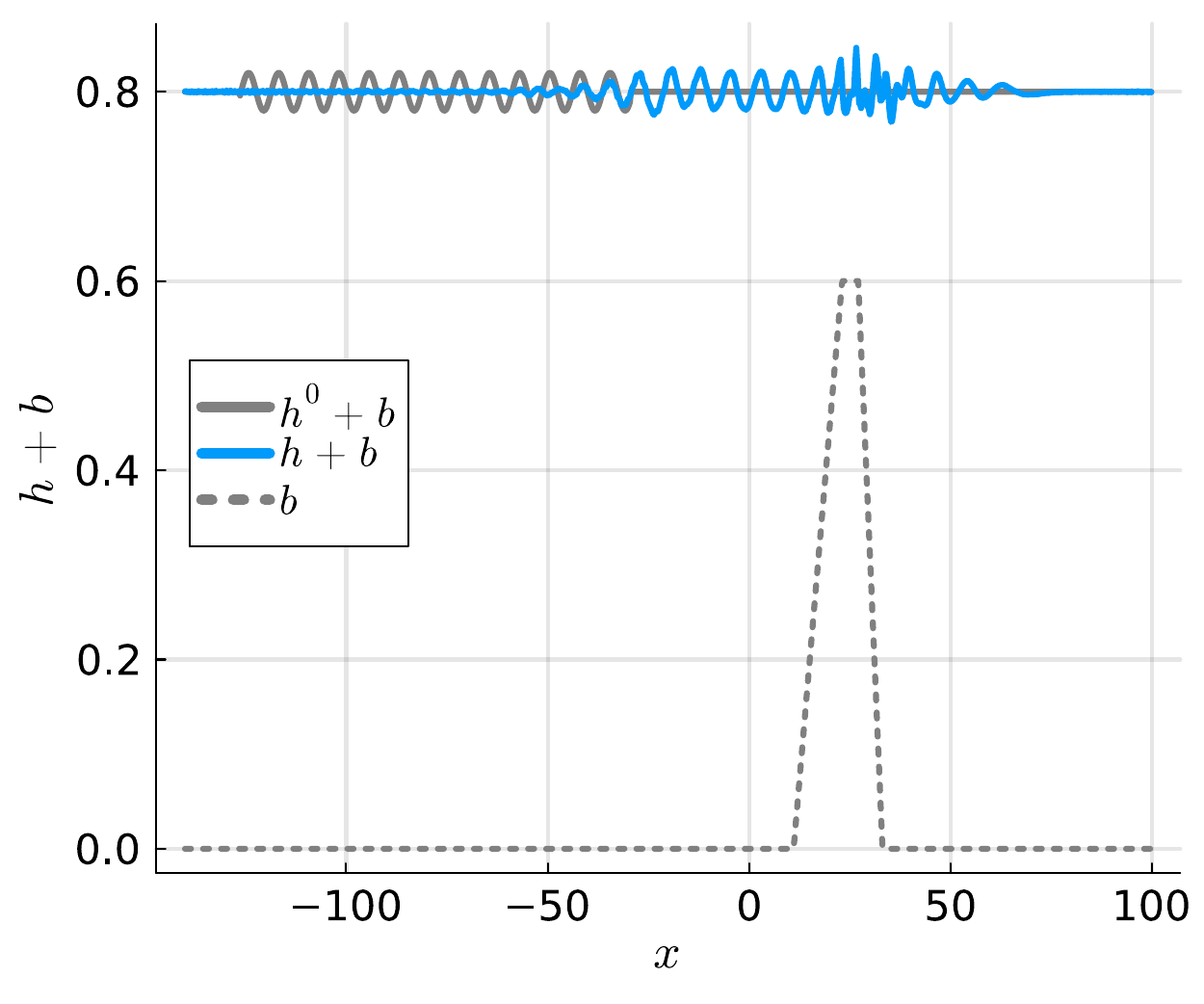}
    \caption{Initial setup and numerical solution at time
             $t = 40$ obtained with
             fourth-order accurate upwind finite difference methods
             with $\Delta x = 0.24$
             applied to the mild-slope approximation of the
             Serre-Green-Naghdi equations for the Dingemans experiment.}
    \label{fig:dingemans_solution}
\end{figure}

The initial setup as well as a numerical solution of the
mild-slope approximation are shown in Figure~\ref{fig:dingemans_solution}.
The original experiment of Dingemans
\cite{dingemans1994comparison,dingemans1997water} used a wave maker at
$x = 0$ to produce water waves with an initial amplitude of $A = 0.02$
moving to the right. For the numerical simulations, choose the spatial
domain $[-140, 100]$ and initialize the numerical solution with
a sinusoidal perturbation of the still water height $h = 0.8$
with amplitude $A = 0.02$. The phase of the perturbations and the
corresponding velocity perturbation are chosen based on the dispersion
relation of the Euler equations as in
\cite{svard2023novel,lampert2024structure}. The offset of the perturbation
is chosen manually such that the phase at the first wave gauge matches the
experimental data reasonably well.

The bottom is flat except a trapezoidal bar starting at $x = 11.01$.
Between $x = 11.01$ and $x = 23.04$, the bottom increases linearly
from $b = 0$ to $b = 0.6$. The bottom has a small plateau between
$x = 23.04$ and $x = 27.04$ with $b = 0.6$ and decreases linearly
from $b = 0.6$ to $b = 0$ between $x = 27.04$ and $x = 33.07$.

\begin{figure}[htbp]
\centering
    \includegraphics[width=0.7\textwidth]{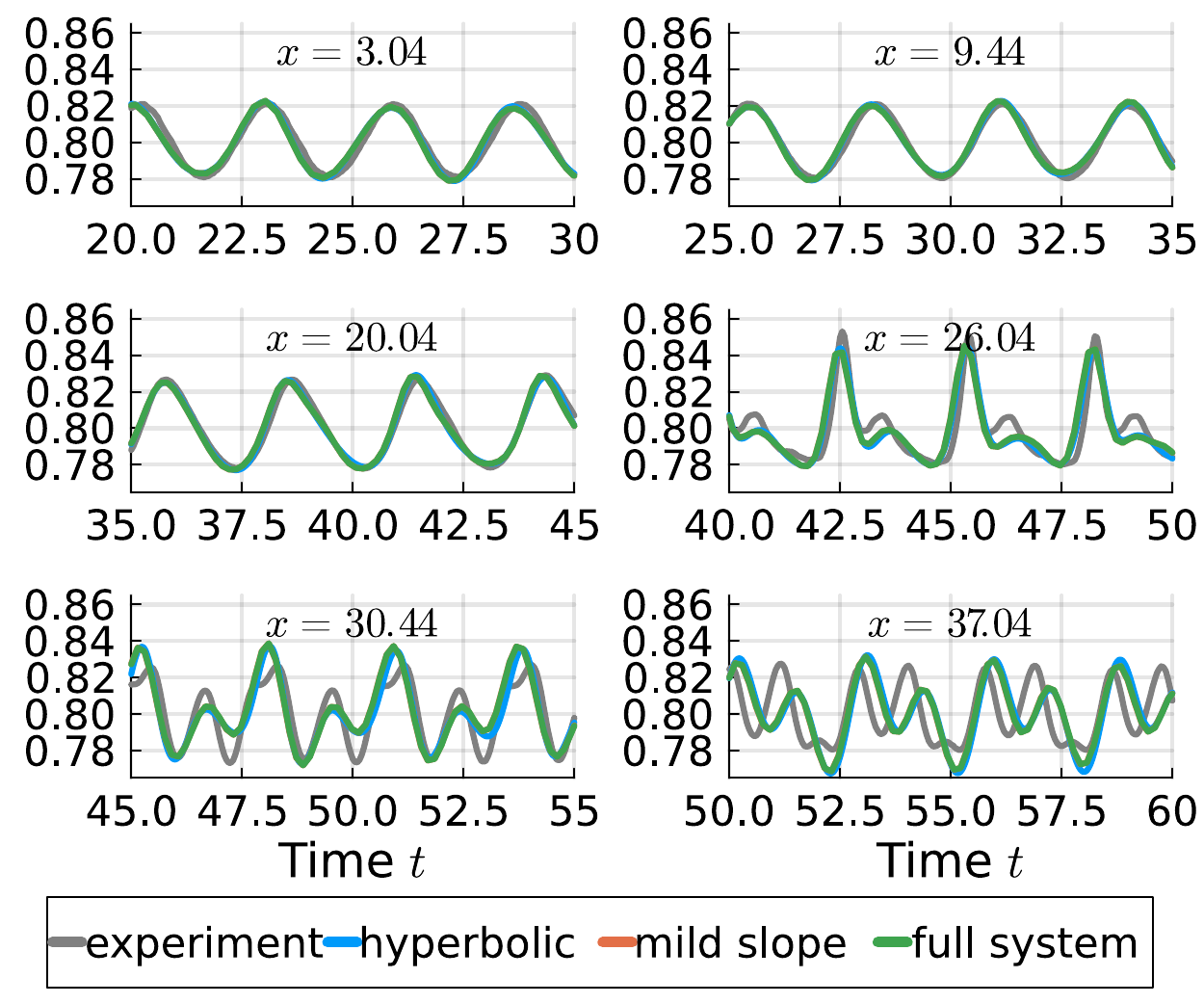}
    \caption{Experimental data of
             \cite{dingemans1994comparison,dingemans1997water}
             and total water height $h + b$ of the numerical solutions at the
             wave gauges over time obtained with fourth-order accurate
             finite difference methods with $\Delta x = 0.24$. The hyperbolic
             approximation uses $\lambda = 500$.}
    \label{fig:dingemans_solutions_at_gauges}
\end{figure}

The values of the numerical solutions are compared to the experimental data
at six wave gauges in Figure~\ref{fig:dingemans_solutions_at_gauges}.
First, we observe that the numerical solutions agree very well with
each other --- the results obtained using the hyperbolic approximation and
the original SGN equations with/without mild-slope
approximation are nearly indistinguishable.
Moreover, the numerical results agree very well with the experimental data
at the first three wave gauges. The agreement is less good but still
qualitatively correct at the remaining wave gauges above and to the right
of the plateau of the trapezoidal bar. This is within the limitations
of the model used in terms of dispersion relation \cite{rf14,filippini2015nonlinear}. However,  the amplitudes
of the numerical solutions are still in  agreement with the experiments.

\subsection{Preliminary comparison of runtime efficiency}
\label{sec:computational_costs}

The relative computational costs of discretizations of the
hyperbolic approximation and the original SGN equations
depend strongly on the parameter $\lambda$. For most numerical
results presented above, we have chosen $\lambda = 500$ as a compromise
between accuracy and computational costs since the
numerical solutions obtained with the hyperbolic approximation and this
value of the parameter $\lambda$ are visually (nearly) indistinguishable
from the results obtained with the original SGN equations.
The only exception is the Riemann problem in Section~\ref{sec:riemann_problem},
where a value of $\lambda = 1000$ is required to obtain visually
indistinguishable results.

\begin{table}[htbp]
\begin{center}
    \caption{Benchmarks of the total runtime of the numerical solutions
             obtained with the hyperbolic approximation and the original
             Serre-Green-Naghdi equations using finite differences with
             $N$ nodes.}
    \label{tab:runtime}
    \begin{subfigure}{\textwidth}
        \centering
        \caption{Setup of the conservation tests with variable bathymetry
                 and second-order central operators
                 (Sections~\ref{sec:upwind_vs_central} and
                 \ref{sec:conservation_tests}).}
        \begin{tabular}{c cc cc}
            \toprule
                & hyperbolic
                & hyperbolic
                & original
                & original \\
            $N$ & ($\lambda = 500$)
                & ($\lambda = 1000$)
                & (mild slope)
                & (full system) \\
            \midrule
            1000 & \SI{73.0 \pm 21.0}{ms}
                 & \SI{94.0 \pm 3.3}{ms}
                 & \SI{178.3 \pm 2.3}{ms}
                 & \SI{189.7 \pm 4.2}{ms} \\
            2000 & \SI{234.0 \pm 38.0}{ms}
                 & \SI{301.1 \pm 7.2}{ms}
                 & \SI{359.1 \pm 8.5}{ms}
                 & \SI{378.0 \pm 11.0}{ms} \\
            3000 & \SI{441.4 \pm 4.9}{ms}
                 & \SI{611.0 \pm 18.0}{ms}
                 & \SI{536.2 \pm 7.4}{ms}
                 & \SI{581.0 \pm 29.0}{ms} \\
            4000 & \SI{776.8 \pm 5.1}{ms}
                 & \SI{1.089 \pm 0.048}{s}
                 & \SI{724.4 \pm 3.0}{ms}
                 & \SI{781.0 \pm 32.0}{ms} \\
            5000 & \SI{1.345 \pm 0.031}{s}
                 & \SI{1.869 \pm 0.051}{s}
                 & \SI{967.0 \pm 27.0}{ms}
                 & \SI{941.0 \pm 20.0}{ms} \\
            \bottomrule
        \end{tabular}
    \end{subfigure}%
    \\
    \medskip
    \begin{subfigure}{\textwidth}
        \centering
        \caption{Setup of the Favre waves with $\epsilon = 0.2$ and
                 fourth-order central/upwind operators for the
                 hyperbolic/original equations
                 (Section~\ref{sec:favre_waves}).}
        \begin{tabular}{c cc cc cc}
            \toprule
                & hyperbolic
                & hyperbolic
                & original
                & original
                & original \\
            $N$ & ($\lambda = 500$)
                & ($\lambda = 1000$)
                & (flat bottom)
                & (mild slope)
                & (full system) \\
            \midrule
            1000 & \SI{68.3 \pm 2.5}{ms}
                 & \SI{96.6 \pm 5.2}{ms}
                 & \SI{132.8 \pm 4.1}{ms}
                 & \SI{163.5 \pm 6.1}{ms}
                 & \SI{162.3 \pm 1.9}{ms} \\
            2000 & \SI{236.0 \pm 11.0}{ms}
                 & \SI{339.0 \pm 23.0}{ms}
                 & \SI{300.1 \pm 4.3}{ms}
                 & \SI{358.5 \pm 6.6}{ms}
                 & \SI{382.0 \pm 12.0}{ms} \\
            3000 & \SI{517.0 \pm 40.0}{ms}
                 & \SI{741.3 \pm 3.9}{ms}
                 & \SI{517.6 \pm 5.8}{ms}
                 & \SI{667.5 \pm 4.8}{ms}
                 & \SI{664.0 \pm 13.0}{ms} \\
            4000 & \SI{1.1796 \pm 0.0073}{s}
                 & \SI{1.895 \pm 0.014}{s}
                 & \SI{753.0 \pm 10.0}{ms}
                 & \SI{964.0 \pm 12.0}{ms}
                 & \SI{971.4 \pm 8.3}{ms} \\
            5000 & \SI{2.0021 \pm 0.006}{s}
                 & \SI{3.128 \pm 0.025}{s}
                 & \SI{1.109 \pm 0.003}{s}
                 & \SI{1.338 \pm 0.009}{s}
                 & \SI{1.368 \pm 0.012}{s} \\
            \bottomrule
        \end{tabular}
    \end{subfigure}
\end{center}
\end{table}

To give a first impression of the computational costs of the methods,
we benchmark the total runtime (wallclock time) required to compute
the numerical solutions from Sections~\ref{sec:upwind_vs_central}
and \ref{sec:favre_waves}
on a single core of a MacBook (M2 chip) using the Julia package
BenchmarkTools.jl \cite{chen2016robust}. The results are reported in
Table~\ref{tab:runtime}. While we do not aim to present a detailed
performance study including the effects of various constraints and
effects, the results show that there is no significant difference
between the two versions of the original Serre-Green-Naghdi equations
with variable bathymetry. Moreover, the computational costs of the
hyperbolic approximation appear to increase faster with the number of
grid nodes than the costs of the original system. In particular,
the hyperbolic approximation with $\lambda = 500$ is faster than the
original system (by a factor of roughly three) for $N = 1000$ nodes. For
$N \in \{2000, 3000\}$, the runtimes of the hyperbolic approximation
with $\lambda = 500$ are still smaller than the runtimes of the original
systems. This changes around $N = 4000$ nodes; for $N = 5000$ nodes,
the original systems are faster than the hyperbolic approximation.

However, these performance benchmarks are done with the research code
we have implemented for this article. This code is not optimized for
performance. While we expect that the efficiency of the hyperbolic
version should be reasonably good, the elliptic solves required for the
original system are likely to be suboptimal. In particular, most of the
total runtime is spent assembling (multiplying sparse/diagonal matrices)
and solving (Cholesky factorization of SuiteSparse) the elliptic problems.

\section{Summary and conclusions}
\label{sec:summary}

We have developed structure-preserving numerical methods for the
Serre-Green-Naghdi equations in their original formulation and
the first-order hyperbolic approximation of \cite{favrie2017rapid,busto21}.
Starting with the hyperbolic approximation for flat bathymetry in
Section~\ref{sec:SGN_hyperbolic_flat}, we have derived the methods
for models with increasing complexity, including variable bathymetry
for the hyperbolic approximation (Section~\ref{sec:SGN_hyperbolic_variable})
and the original Serre-Green-Naghdi equations
(Sections~\ref{sec:SGN_original_mild} and \ref{sec:SGN_original_full}).
All methods conserve the total water mass, the total energy, and are
well-balanced with respect to the lake-at-rest steady state. Moreover,
the numerical methods discretizing the original Serre-Green-Naghdi equations
conserve the total momentum for flat bathymetry.\\

We have demonstrated the suitability of the novel structure-preserving
numerical methods in a range of numerical experiments, including academic
test cases such as convergence tests. We have also demonstrated the
importance of energy-conserving methods for long-time simulations of
solitary waves in Section~\ref{sec:error_growth}, where energy conservation
reduces the error growth in time from quadratic to linear.
Even without exact preservation in time, we have also shown
the impact of energy conservation in providing correct predictions
of wave heights in long-time propagation.
Moreover, we have shown that our numerical methods reproduce experimental
data, e.g., for Favre waves (Section~\ref{sec:favre_waves}) and
the flow over a trapezoidal bar (Section~\ref{sec:dingemans_experiment}).\\

Preliminary performance benchmarks show that the hyperbolic approximation
can be very efficient on coarse meshes. On finer meshes, the original
formulations of the Serre-Green-Naghdi equations are more efficient
(cf.\ Section~\ref{sec:computational_costs}).\\

Similar to Jouy et al.\ \cite{jouy_etal24}, we have found that
structure-preserving numerical methods for the Serre-Green-Naghdi
equations can efficiently capture the qualitatively correct behavior
of water waves in a range of scenarios. Compared to methods including
artificial dissipation/viscosity, the energy-preserving methods
show the correct long-time behavior of the amplitude and shape
of waves even on coarse meshes with low-order discretizations.
This poses the question of the
appropriateness of using entropy/energy dissipation as a stability criterion.
Certainly, structure-preserving methods are a promising approach for
the numerical simulation of water waves on coarse meshes (required
in practice) and long-time simulations, typical of the operational context.\\

Our investigations can be extended in several directions. For example,
we could check whether there are more energy-conservative split forms
starting from the two-parameter family of energy-conservative split forms
of the classical shallow water equations of \cite{ranocha2017shallow}.
Moreover, we could investigate the influence of possible other split forms
of the non-hydrostatic pressure term of the Serre-Green-Naghdi equations, other upwind
versions of the non-hydrostatic pressure term,
and more general narrow-stencil second-derivative SBP operators
instead of upwind operators, e.g., in
Lemma~\ref{lem:SGN_original_mild_prim_SBP_upwind}.
Further extensions include other boundary conditions, e.g., a
reflective/wall boundary condition \cite{noelle2022class}.

\appendix

\section*{Acknowledgments}

HR was supported by the Deutsche Forschungsgemeinschaft
(DFG, German Research Foundation, project numbers 513301895 and 528753982
as well as within the DFG priority program SPP~2410 with project number 526031774)
and the Daimler und Benz Stiftung (Daimler and Benz foundation,
project number 32-10/22).
MR is a member of the Cardamom team, Inria at University of Bordeaux.
We thank Oswald Knoth and Joshua Lampert for finding typos in the preprint.

% \bibliographystyle{plain}
% \bibliography{biblio}
\printbibliography

\end{document}